\newcommand{\cc}[1]{\mathcal{#1}}
\newcommand{\bC}{{\mathbb C}}
\newcommand{\bZ}{{\mathbb Z}}
\newcommand{\bN}{{\mathbb N}}
\newcommand{\bR}{{\mathbb R}}
\newcommand{\abs}[1]{\left|{#1}\right|}
\newcommand{\norm}[1]{\lVert#1\rVert}
\newcommand{\vol}{\operatorname{vol}}
\newcommand{\Mod}{\operatorname{Mod}}
\newcommand{\dMod}{\operatorname{dMod}}
\newcommand{\ip}[2]								
{\left< #1 , #2 \right>}
\newcommand{\N}{\mathbb{N}}						
\newcommand{\Z}{\mathbb{Z}}						
\newcommand{\R}{\mathbb{R}}						
\newcommand{\K}{\mathbb{K}}						
\renewcommand{\H}{\mathbb{H}}
\newcommand{\eps}{\varepsilon}					
\newcommand{\dd}								
{\mathop{}\!\mathrm{d}}						
\newcommand{\ddn}[1]							
{\mathop{}\!\mathrm{d^{#1}}}
\newcommand{\hodge}{\mathtt{\star}\hspace{1pt}}	
\newcommand{\smallnorm}[1]						
{\lVert #1 \rVert}
\DeclareMathOperator{\id}{id}					
\DeclareMathOperator{\intr}{int}				
\DeclareMathOperator{\tor}{Tor}					
\DeclareMathOperator{\spt}{spt}					
\DeclareMathOperator{\lip}{lip}
\DeclareMathOperator{\maxfun}{{\mathbf{M}}}		
\newcommand{\loc}{\mathrm{loc}}					
\def\XXint#1#2#3{{\setbox0=\hbox{$#1{#2#3}{\int}$}
		\vcenter{\hbox{$#2#3$}}\kern-.5\wd0}}
\newcommand{\moddens}{\operatorname{Mod}}
\newcommand{\modform}{\operatorname{dMod}}
\newcommand{\adm}{\operatorname{adm}}
\newcommand{\essadm}{\operatorname{ess\,adm}}
\newcommand{\cA}{\mathcal{A}} \newcommand{\cB}{\mathcal{B}}
\newcommand{\cC}{\mathcal{C}} 
\newcommand{\cE}{\mathcal{E}} \newcommand{\cF}{\mathcal{F}}
 \newcommand{\cL}{\mathcal{L}}
\newcommand{\cM}{\mathcal{M}} 
\newcommand{\cR}{\mathcal{R}} \newcommand{\cS}{\mathcal{S}}
 \newcommand{\cW}{\mathcal{W}}
\newtheorem{theorem}{Theorem}[section]
\newtheorem{cor}[theorem]{Corollary}
\newtheorem{lemma}[theorem]{Lemma}
\newtheorem{prop}[theorem]{Proposition}
\newtheorem{definition}[theorem]{Definition}
\newtheorem{remark}[theorem]{Remark}
\newtheorem*{namedtheorem}{\theoremname}
\newcommand{\theoremname}{testing}
\newenvironment{named}[1]{\renewcommand{\theoremname}{#1}\begin{namedtheorem}}{\end{namedtheorem}}
\begin{document}
	
\title{On the Moduli of Lipschitz Homology Classes}

\author{Ilmari Kangasniemi}
\address{Department of Mathematics, Syracuse University, Syracuse,
NY 13244, USA }
\email{kikangas@syr.edu}

\author{Eden Prywes}
\address{Department of Mathematics, Princeton University, Princeton, NJ 08544, USA}
\email{eprywes@princeton.edu}
	
\thanks{Eden Prywes was partially supported by the NSF grant RTG-DMS-1502424.}
\subjclass[2020]{Primary 31B15; Secondary 28A75, 512F30}
\keywords{Surface Modulus, Duality, Lipschitz Manifolds,  Lipschitz Homology, Extremal Length of Vector Measures, Sobolev Differential Forms}
	
\begin{abstract}
    We define a type of modulus $\modform_p$ for Lipschitz surfaces based on $L^p$-integrable measurable differential forms, generalizing the vector modulus of Aikawa and Ohtsuka. We show that this modulus satisfies a homological duality theorem, where for H\"older conjugate exponents $p, q \in (1, \infty)$, every relative Lipschitz $k$-homology class $c$ has a unique dual Lipschitz $(n-k)$-homology class $c'$ such that $\modform_p^{1/p}(c) \modform_q^{1/q}(c') = 1$ and the Poincar\'e dual of $c$ maps $c'$ to 1.  As $\modform_p$ is larger than the classical surface modulus $\moddens_p$, we immediately recover a more general version of the estimate $\moddens_p^{1/p}(c) \moddens_q^{1/q}(c') \leq 1$, which appears in works by Freedman and He and by Lohvansuu. Our theory is formulated in the general setting of Lipschitz Riemannian manifolds, though our results appear new in the smooth setting as well. We also provide a characterization of closed and exact Sobolev forms on Lipschitz manifolds based on integration over Lipschitz $k$-chains.
\end{abstract}
	
\maketitle
	
\section{Introduction}
In \cite{Fuglede_surface-modulus}, Fuglede defined the modulus for a family of measures.  He used this to define the modulus for a family of Lipschitz surfaces, extending the classical definition for a family of paths.  In this paper, we give a definition for a new version of the modulus of a family of surfaces using differential forms, which we denote $\dMod$.  We are able to prove that our definition possesses duality properties that Fuglede's classical definition of the modulus of surfaces does not satisfy. The main theorem of the paper is as follows:
\begin{theorem}\label{thm:mainthm}
    Let $M$ be a closed Lipschitz Riemannian manifold.  Let $c \in H_k^{\lip}(M;\bZ)$ be a non-torsion element of the rank $k$ Lipschitz homology group.  For all $p \in (1,\infty)$, there exists a unique element $c' \in H^{\lip}_{n-k}(M;\bR)$ so that the Poincar\'e dual of $c$ maps $c'$ to $1$ and
    \begin{align*}
        \dMod_p(c)^{1/p}\dMod_q(c')^{1/q} = 1,
    \end{align*}
    where $p^{-1}+q^{-1} = 1$.
\end{theorem}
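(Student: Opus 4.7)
My approach is to cast $\dMod_p(c)$ as a convex $L^p$-minimization over closed differential forms, extract a unique extremal form, and construct $c'$ from its Hodge-star dual.

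\emph{Reformulation and extremal form.} Combining the definition of $\dMod_p$ with the paper's characterization of closed Sobolev forms via Lipschitz chains, I would first show that
\[
\dMod_p(c) = \inf\Bigl\{\int_M |\omega|^p : \omega \in L^p\Omega^k(M),\ d\omega = 0\ \text{weakly},\ \int_c \omega = 1\Bigr\}.
\]
The admissible set is a nonempty, convex, weakly closed subset of $L^p\Omega^k(M)$ (nonempty because $c$ is non-torsion, so $\mathrm{PD}(c)$ admits a smooth closed representative that can be rescaled to pair to $1$ with $c$). Uniform convexity of $L^p$ for $p\in(1,\infty)$ and weak lower semicontinuity of $\|\cdot\|_p$ then produce a unique minimizer $\omega^\ast$.

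\emph{Euler--Lagrange and construction of $c'$.} Perturbing $\omega^\ast$ by closed $\varphi$ with $\int_c\varphi=0$ yields $\int_M\langle|\omega^\ast|^{p-2}\omega^\ast,\varphi\rangle\,d\vol=0$. Setting $\eta^\ast := \hodge(|\omega^\ast|^{p-2}\omega^\ast)\in L^q\Omega^{n-k}(M)$, this orthogonality together with the Sobolev-form characterization identifies $\eta^\ast$ as weakly closed with cohomology class $[\eta^\ast]=\dMod_p(c)\cdot\mathrm{PD}(c)$; the scalar is pinned by the computation $\int_M\omega^\ast\wedge\eta^\ast=\int_M|\omega^\ast|^p=\dMod_p(c)$. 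I then define $c'\in H^{\lip}_{n-k}(M;\bR)$ as the unique class whose Poincar\'e dual in $H^k(M;\bR)$ equals $[\omega^\ast]$; Lipschitz Poincar\'e duality, also developed in the paper, makes this well-defined. Immediately $\int_{c'}\bigl(\eta^\ast/\dMod_p(c)\bigr)=\mathrm{PD}(c)(c')=1$, verifying both the pairing condition of the theorem and the admissibility of $\eta^\ast/\dMod_p(c)$ for $c'$.

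\emph{Modulus identity and uniqueness.} Since $|\hodge\alpha|=|\alpha|$, one has $|\eta^\ast|=|\omega^\ast|^{p-1}$ and hence $\|\eta^\ast\|_q^q=\int_M|\omega^\ast|^{q(p-1)}=\int_M|\omega^\ast|^p=\dMod_p(c)$, giving $\|\eta^\ast/\dMod_p(c)\|_q^q=\dMod_p(c)^{1-q}$ and $\dMod_p(c)^{1/p}\dMod_q(c')^{1/q}\le 1$. For the reverse inequality, the exponent identity $(p-1)(q-2)+(p-2)=0$ forces $\hodge(|\eta^\ast|^{q-2}\eta^\ast)=\pm\omega^\ast/\dMod_p(c)^{q-1}$, so $\eta^\ast/\dMod_p(c)$ itself satisfies the Euler--Lagrange condition for the $\dMod_q(c')$-problem and is therefore its unique minimizer, forcing equality. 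Uniqueness of $c'$ follows because any other class meeting both conditions of the theorem would produce, by running the same argument in reverse, a second extremal for $\dMod_p(c)$, contradicting the uniqueness of $\omega^\ast$.

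\emph{Main obstacle.} The crux is executing the Euler--Lagrange step rigorously on a Lipschitz manifold. One must verify that $|\omega^\ast|^{p-2}\omega^\ast$ and $|\eta^\ast|^{q-2}\eta^\ast$ lie in the class of Sobolev forms to which the paper's closed/exact-form characterization applies, so that $[\eta^\ast]$ is a genuine de Rham class equal to $\dMod_p(c)\cdot\mathrm{PD}(c)$ and the Hodge-star interchange between the $L^p$- and $L^q$-problems is justified. Sign bookkeeping between the Hodge star, Poincar\'e duality, and the pairing $\mathrm{PD}(c)(c')$ will also need care to ensure the intersection number comes out to exactly $+1$.
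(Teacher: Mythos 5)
Your plan essentially reproduces the paper's argument. The paper proves the result by first showing the relative version (Theorem \ref{thm:mainthmrelative}) and specializing, but the core mechanism is the same as yours: extract the unique $L^p$-minimizer $\omega$ in $\essadm_p^k(c)$, show it is $p$-harmonic so that $\abs{\omega}^{p-2}\hodge\omega$ is weakly closed, normalize to get a representative $\zeta$ of the Poincar\'e dual of $c$, define $c'$ as the Poincar\'e dual of $[\omega]$ (up to sign), and verify that $\zeta$ is the $q$-modulus minimizer for $c'$. Your reformulation of $\dMod_p(c)$ as a constrained minimization over closed forms with $\int_c\omega=1$ is a cleaner packaging, but it silently invokes three facts the paper proves at length: that every form in $\essadm_p^k(c)$ is automatically weakly closed (Lemma \ref{lem:admissible_are_weakly_closed}, which in turn rests on Theorem \ref{thm:integral_characterization_thm}), that adding a weakly exact form preserves admissibility (Lemma \ref{lem:adding_a_weakly_exact_form}), and crucially that $c$ itself is not $p$-exceptional (Proposition \ref{prop:no_exc_hom_classes}) so that ``$\int_c \omega$'' is a well-defined number. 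You should also note that ``smooth'' closed forms are not available on a Lipschitz manifold; the correct nonemptiness argument, as in Proposition \ref{prop:poincare_dual_connection}, uses the de Rham map and a $q$-harmonic representative of the Poincar\'e dual.

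Two concrete gaps remain. First, the sign factor matters and must be resolved, not just flagged: the paper defines $\zeta = (-1)^{k(n-k)}\norm{\omega}_{L^p}^{-p}\abs{\omega}^{p-2}\hodge\omega$ and takes $c'$ to be the Poincar\'e dual of $[(-1)^{k(n-k)}\omega]$, precisely because $\hodge(\abs{\omega}^{p-2}\omega)\wedge\omega = (-1)^{k(n-k)}\abs{\omega}^p\vol$ introduces this factor in the pairing $\int_M\alpha\wedge\omega$; without it the pairing condition $c^*_{PD}(c')=1$ comes out as $(-1)^{k(n-k)}$ rather than $1$. Second, the uniqueness step is underdeveloped: ``running the argument in reverse to produce a second extremal'' does not immediately work, because it is not obvious that the Hodge-dual of the $L^q$-minimizer for a competing class $a$ is admissible for $\dMod_p(c)$. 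The paper instead proceeds by the equality case of H\"older: from $c^*_{PD}(a)=1$ one has $1 = \int_M\omega'\wedge\zeta \leq \norm{\omega'}_{L^p}\norm{\zeta}_{L^q} = 1$, so equality in H\"older pins down $\omega'$ as a pointwise multiple of $\abs{\zeta}^{q-2}\hodge\zeta$, giving $\omega'=(-1)^{k(n-k)}\omega$ and hence $a=c'$ by injectivity of Poincar\'e duality. You should carry out this H\"older argument explicitly rather than appeal to a vague reversal of the construction.
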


\subsection{Modulus and duality}
The study of path modulus was initiated by Beurling and Ahlfors \cite{ahlforsbeurling1950} in order to study quasiconformal maps.  If $\Gamma$ is a family of paths in $\Omega \subset \bC$, then the \emph{$p$-modulus} is defined as
\begin{align*}
    \Mod_p(\Gamma) = \inf \bigg \{ \int_\Omega \rho^p : \rho \in L^p(\Omega) \text{ and }\int_\gamma \rho \ge 1 \text{ for } \gamma \in \Gamma\bigg \}.
\end{align*}
The $2$-modulus of path families is preserved by planar conformal maps and is preserved up to a constant by planar quasiconformal maps.  
It is also connected to capacity problems and the calculus of variations. 

The $p$-modulus satisfies a duality property for topological quadrilaterals.  Let $Q$ be a topological quadrilateral with sides $E_1,E_2,E_3$ and $E_4$ arranged cyclically.  If $\Gamma_1$ is the family of paths connecting $E_1$ to $E_3$ in $Q$ and $\Gamma_2$ is the family of paths connecting $E_2$ to $E_4$ in $Q$, then
\begin{align*}
    \Mod_p(\Gamma_1)^{1/p}\Mod_q(\Gamma_2)^{1/q} = 1,
\end{align*}
where $p^{-1}+q^{-1} = 1$.

Fuglede \cite{Fuglede_surface-modulus} defined the modulus for a family of measures in an analogous way.  Let $\cc M$ be a family of measures on a set $\Omega \subset \bR^n$.  The $p$-modulus of $\cc M$ is defined as
\begin{align*}
    \Mod_p(\cc M) = \inf \bigg \{ \int_\Omega \rho^p : \rho \in L^p(\Omega) \text{ and }\int_\Omega \rho d\mu \ge 1 \text{ for } \mu \in \cc M\bigg \}.
\end{align*}
Since Lipschitz surfaces always have corresponding surface measures, this definition also gives a definition for a family of Lipschitz surfaces.  Fuglede was interested in duality results that resemble the duality for quadrilaterals in the curves setting and was able to prove such a theorem for the $2$-modulus of surfaces. In particular, he proved that the collection of all curves that connect a compact set $K \subset \R^n$ to $\infty$ has dual $2$-modulus to the collection of Lipschitz hypersurfaces that separate $K$ from $\infty$ \cite[Theorem 9]{Fuglede_surface-modulus}.

Gehring \cite{gehring1961} and Ziemer \cite{Ziemer_Modulus-duality} proved a similar duality results for the exponent $p = n$.  Gehring showed, under smoothness assumptions on the surfaces, that the $n$-modulus of a family of curves connecting two compact continua is dual to the $n/(n-1)$-modulus of the family of surfaces separating them.
Ziemer extended the result to remove the smoothness assumption on the surface family. 

A general duality theory for families of surfaces does not hold.  This was observed by Freedman and He in \cite{Freedman-He_Modulus-duality}.  However, they pose similar capacity problems, which do satisfy duality properties in the cases they are interested in, namely solid tori in $3$-dimensional space. They are also able to show that for families of surfaces $A$ and $B$ which are dual in a homological sense, one has
\begin{align*}
    \Mod_p(A)^{1/p}\Mod_q(B)^{1/q} \le 1,
\end{align*}
where $p^{-1}+q^{-1} = 1$.

In \cite{Lohvansuu_duality}, Lohvansuu studied the modulus of surfaces in Lipschitz cubes.  Let $Q = h(I^n)$, where $I = [0,1]$ and $h$ is a bilipschitz map. Let $A = h(\partial I^k\times  I^{n-k})$ and $B = h( I^k\times \partial I^{n-k})$.  In this case, the relative homology satisfies $H^k(Q,A;\bZ) \cong H^k(Q,B;\bZ) \cong \bZ$.  If $\Gamma_A$ and $\Gamma_B$ are the generators of the homology groups, Lohvansuu proved that
\begin{align*}
    \Mod_p(\Gamma_A)^{1/p}\Mod_q(\Gamma_B)^{1/q} \le 1,
\end{align*}
where $p^{-1}+q^{-1} = 1$.

\subsection{Modulus with differential forms}
The above results indicate that finding a modulus theory that gives a duality property is not trivial even in settings where the topology is not complicated, like a cube or torus.  For this reason, in this paper we focus on a modulus that uses differential forms instead of densities.
In \cite{Aikawa-Ohtsuka_vector-modulus}, Aikawa and Ohtsuka defined a vector modulus that assigned a modulus to a family of vector valued measures.  This is very similar to the modulus we define, however they only consider families of curves.  They are able to show duality properties in this setting and relate their theory to capacity properties of sets.

Let $\cS$ be a family of Lipschitz $k$-chains in $\Omega \subset \bR^n$.  We define the \emph{$p$-differential form modulus} as
\begin{align*}
    \dMod_p(\cS) = \inf \int_\Omega |\omega|^p,
\end{align*}
where the infimum is taken over all $k$-forms $\omega \in L^p(\Omega)$ such that
\begin{align*}
    \int_\sigma \omega \ge 1,
\end{align*}
for $\sigma \in \cS$, outside a set of zero $p$-modulus in $\cS$.  Excluding an exceptional set in $\cS$ is necessary, as noted in \cite{Aikawa-Ohtsuka_vector-modulus} and discussed in Section \ref{sec:moddifforms}.  The differential form modulus is particularly suited to proving duality theorems.  

We prove Theorem \ref{thm:mainthm} by proving a more general theorem that includes manifolds with boundary.  Let $M$ be a closed $n$-dimensional Lipschitz submanifold of an oriented boundaryless Lipschitz Riemannian reference $n$-manifold $\cR$. Additionally, suppose that $\partial M = D \cup E$, where $D$ and $E$ are closed $(n-1)$-dimensional Lipschitz submanifolds of $\partial M$ with $\partial D = \partial E$.

\begin{theorem}\label{thm:mainthmrelative}
    Let $p,q \in (1,\infty)$ satisfy $p^{-1} + q^{-1} = 1$.  If $c \in  H_k^{\lip}(M,D;\bZ)$ is a non-torsion element, then there exists a unique element $c' \in H_{n-k}^{\lip}(M,E;\bR)$ such that the Poincar\'e dual of $c$ maps $c'$ to $1$ and
    \begin{align*}
        \dMod_p(c)^{1/p}\dMod_q(c')^{1/q} = 1.
    \end{align*}
\end{theorem}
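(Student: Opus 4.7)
My approach is to extract $c'$ from the unique $L^p$-extremal admissible form $\omega_p$ for $\dMod_p(c)$ via the Hodge star and Poincar\'e--Lefschetz duality, and to obtain the modulus identity from the equality case in H\"older's inequality.

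First I establish existence of a unique $L^p$-extremal admissible form. The admissible set is a closed convex subset of $L^p\Omega^k(M)$, and uniform convexity of $L^p$ for $p \in (1,\infty)$ yields a unique minimizer $\omega_p$ with $\norm{\omega_p}_p^p = \dMod_p(c)$. Perturbing $\omega_p$ by admissibility-preserving exact forms, and invoking the paper's characterization of closed and exact Sobolev forms on Lipschitz manifolds, gives an Euler--Lagrange identity that realizes $\omega_p$ as a weakly closed form with tangential trace vanishing on $D$; hence $[\omega_p] \in H^k(M,D;\bR)$ with $\int_c \omega_p = 1$. The same identity forces the auxiliary form
\[
\eta := \dMod_p(c)^{-1}|\omega_p|^{p-2}\hodge\omega_p \in L^q\Omega^{n-k}(M)
\]
to be weakly closed with tangential trace vanishing on $E$, so $[\eta] \in H^{n-k}(M,E;\bR)$.

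Next I define $c' \in H^{\lip}_{n-k}(M,E;\bR)$ as the image of $[\omega_p]$ under the Poincar\'e--Lefschetz isomorphism $H^k(M,D;\bR) \cong H_{n-k}(M,E;\bR)$. Cap-product compatibility then gives
\[
\langle PD(c), c'\rangle = \int_M \omega_p \wedge PD(c) = \int_c \omega_p = 1,
\]
establishing the Poincar\'e duality condition. The auxiliary form $\eta$ satisfies $\int_{c'}\eta = \int_M\omega_p\wedge\eta = \dMod_p(c)^{-1}\norm{\omega_p}_p^p = 1$ and so is admissible for $\dMod_q(c')$; combined with the computation $\norm{\eta}_q^q = \dMod_p(c)^{1-q}$ (using $(p-1)q = p$), this gives $\dMod_p(c)^{1/p}\dMod_q(c')^{1/q} \leq 1$. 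For the reverse inequality, the Sobolev-form framework yields $\int_M\omega_p\wedge\tilde\eta \geq 1$ for any $\tilde\eta$ admissible for $c'$ (by reducing $\tilde\eta$ to its closed part via Hodge-type decomposition and applying the pairing identity), and H\"older then gives $1 \leq \norm{\omega_p}_p\norm{\tilde\eta}_q$; taking the infimum over $\tilde\eta$ yields $\dMod_p(c)^{1/p}\dMod_q(c')^{1/q} \geq 1$.

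Uniqueness of $c'$ follows from strict convexity of $\dMod_q^{1/q}$ on the affine hyperplane $\{c'' \in H^{\lip}_{n-k}(M,E;\bR) : \langle PD(c), c''\rangle = 1\}$: the modulus identity forces the specific value $\dMod_q(c'') = \dMod_p(c)^{1-q}$, which by the argument above is the infimum on this hyperplane, attained uniquely by $c'$. The principal obstacle is the Euler--Lagrange step: showing, in the low-regularity Lipschitz Riemannian setting where classical Hodge theory is unavailable, that the $L^p$-minimizer $\omega_p$ is weakly closed with the Dirichlet condition on $D$ and that the nonlinear Hodge transform $|\omega_p|^{p-2}\hodge\omega_p$ is weakly closed with the Dirichlet condition on $E$. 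This is exactly where the paper's characterization of closed and exact Sobolev forms via integration over Lipschitz chains becomes essential, replacing PDE boundary regularity with direct homological testing against Lipschitz cycles.
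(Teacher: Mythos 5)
Your main construction tracks the paper's proof closely: the $L^p$-modulus minimizer $\omega_p$, the nonlinear Hodge transform $\eta \propto |\omega_p|^{p-2}\hodge\omega_p$, identification of $c'$ via Poincar\'e--Lefschetz duality, and the H\"older computation for the modulus identity are all the same core steps (the paper packages the Hodge transform and its relation to the Poincar\'e dual in Proposition~\ref{prop:poincare_dual_connection}, whereas you derive it bottom-up from the minimizer; these are equivalent orderings of the same argument). Two points are worth noting.

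First, a minor omission: you never address why $\essadm_p^k(c)$ is non-empty. This is where the hypothesis that $c$ is non-torsion actually enters: the paper first passes to the Poincar\'e dual $c^*_{PD,\R}$, chooses its $q$-harmonic de Rham representative $\zeta \neq 0$ (nonzero precisely because $c$ is non-torsion), and exhibits $|\zeta|^{q-2}\hodge\zeta/\|\zeta\|_q^q$ as an admissible form. Without this, the existence of $\omega_p$ is unjustified.

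Second, and more seriously, the uniqueness argument as stated is incorrect. You assert that $\dMod_q^{1/q}$ is strictly convex on the affine hyperplane $\{c'' : \langle PD(c), c''\rangle = 1\}$ and that $\dMod_p(c)^{1-q}$ is the \emph{infimum} of $\dMod_q$ there. Neither holds. The function $c'' \mapsto \dMod_q(c'')^{1/q}$ is homogeneous of degree $-1$ (since $\dMod_q(\lambda c'') = \lambda^{-q}\dMod_q(c'')$), so it is not convex on any linear set through a scaling direction, and a direct attempt to prove convexity via the minimizers fails because admissibility of $\frac{1}{2}(\omega_a + \omega_b)$ for $\frac{1}{2}(a+b)$ is not automatic. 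Moreover the direction is reversed: writing the constraint as $1 = \int_M \zeta''_0 \wedge \zeta$ for the $p$-harmonic representative $\zeta''_0$ of $c''_{PD}$ and applying H\"older gives $\|\zeta''_0\|_p \geq \|\zeta\|_q^{-1} = \dMod_p(c)^{1/p}$, hence $\dMod_q(c'')^{1/q} = \|\zeta''_0\|_p^{-1} \leq \dMod_p(c)^{-1/p}$; that is, $\dMod_p(c)^{1-q}$ is the \emph{supremum} of $\dMod_q$ on the hyperplane. The quantity that is actually a norm (and strictly convex) is $c''_{PD} \mapsto \inf\{\|\alpha\|_p : [\alpha] = c''_{PD}\}$, and uniqueness comes from minimizing this norm on the hyperplane; equivalently (and this is what the paper does) one uses the equality case of H\"older to force the candidate minimizer $\xi$ for $\dMod_q(a)$ to satisfy $\|\xi\|_q^{-q}|\xi|^{q-2}\hodge\xi = (-1)^{k(n-k)}\omega$, and then injectivity of the modulus-minimizer map (Proposition~\ref{prop:modminimizerinjective}) gives $a = c'$. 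Your argument as stated would need to be reformulated along one of these lines to be valid.
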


Theorem \ref{thm:mainthm} follows from Theorem \ref{thm:mainthmrelative} by letting $M = \cR$ and having $\partial M =  D = E = \emptyset$.  In the case where $H_k^{\lip}(M,D;\bZ) \cong H_{n-k}^{\lip}(M,E;\bZ) \cong \bZ$, as in the setting of \cite{Lohvansuu_duality}, we can strengthen our theorem to ensure that $c' \in H_{n-k}^{\lip}(M,E;\bZ)$.
\begin{theorem}\label{thm:mainthmZ}
	Let $p,q \in (1,\infty)$ satisfy $p^{-1} + q^{-1} = 1$.  If $H_k^{\lip}(M,D;\bZ) \cong H_{n-k}^{\lip}(M,E;\bZ) \cong \bZ$ and $c \in  H_k^{\lip}(M,D;\bZ)$ and $c' \in H_{n-k}^{\lip}(M,E;\bZ)$ are the generators of the homology groups, then
    \begin{align*}
        \dMod_p(c)^{1/p}\dMod_q(c')^{1/q} = 1.
    \end{align*}
\end{theorem}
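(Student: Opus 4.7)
The plan is to deduce Theorem \ref{thm:mainthmZ} from Theorem \ref{thm:mainthmrelative} together with the unimodularity of the Lefschetz-duality pairing on integer Lipschitz homology. First, I apply Theorem \ref{thm:mainthmrelative} to the integer generator $c \in H_k^{\lip}(M,D;\bZ)$, which is non-torsion because $H_k^{\lip}(M,D;\bZ) \cong \bZ$. This produces a unique class $\tilde c' \in H_{n-k}^{\lip}(M,E;\bR)$ with $\langle \mathrm{PD}(c), \tilde c'\rangle = 1$ and $\dMod_p(c)^{1/p}\dMod_q(\tilde c')^{1/q}=1$. Since $H_{n-k}^{\lip}(M,E;\bZ)\cong \bZ$ is torsion-free, the change-of-coefficients map identifies $H_{n-k}^{\lip}(M,E;\bR)\cong\bR$ with basis the image of the integer generator $c'$, so I can write $\tilde c' = \lambda c'$ for a unique $\lambda \in \bR$.

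Second, I pin down $\lambda$. The integer-valued pairing $(a,b) \mapsto \langle \mathrm{PD}(a), b\rangle$ on the free abelian groups $H_k^{\lip}(M,D;\bZ) \otimes H_{n-k}^{\lip}(M,E;\bZ)$ is unimodular by Lefschetz duality for the oriented Lipschitz manifold $(M;D,E)$; since both groups are isomorphic to $\bZ$, the integer $m := \langle \mathrm{PD}(c), c'\rangle$ obtained by evaluating on generators must be $\pm 1$. By naturality of Poincaré duality under the coefficient inclusion $\bZ \hookrightarrow \bR$, the first step gives $\lambda m = 1$, forcing $\lambda = \pm 1$ and hence $\tilde c' = \pm c'$.

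Finally, I note that the differential-form modulus is invariant under negation of a homology class: if $\omega$ is admissible for the family representing $-c'$ outside an exceptional subfamily, then $-\omega$ is admissible for the family representing $c'$ outside the same exceptional subfamily, and the two forms have equal $L^q$-norm. Hence $\dMod_q(\tilde c') = \dMod_q(\pm c') = \dMod_q(c')$, and substituting this equality into the conclusion of Theorem \ref{thm:mainthmrelative} yields $\dMod_p(c)^{1/p}\dMod_q(c')^{1/q} = 1$, as desired.

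The main obstacle I anticipate is the invocation of unimodularity of the integer Lefschetz pairing in step two, since the paper works in the Lipschitz category rather than the smooth one. This should reduce to classical Lefschetz duality via the identification of Lipschitz relative homology with simplicial homology on a smooth (or PL) triangulation of $M$ respecting the decomposition $\partial M = D \cup E$; if such an identification is not already available in the framework set up earlier in the paper, it would need to be established separately. Everything else (applying Theorem \ref{thm:mainthmrelative}, tracking the scalar $\lambda$, and verifying sign-invariance of $\dMod_q$) is formal once the pairing is known to be unimodular.
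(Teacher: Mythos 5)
Your argument is correct and follows essentially the same route as the paper: both apply Theorem \ref{thm:mainthmrelative}, identify the real-coefficient dual class with $\pm c'$ via the fact that $\langle \mathrm{PD}(c), c'\rangle = \pm 1$, and conclude; your explicit verification that $\dMod_q(-c')=\dMod_q(c')$ usefully spells out a step the paper leaves implicit in the phrase ``the desired duality property holds for $c$ and $c'$ due to it holding for $c$ and $a$.'' As for the obstacle you flag, the paper handles unimodularity by citing \cite[Theorem 3.43]{Hatcher_AlgTopo} for Poincar\'e--Lefschetz duality in singular homology of topological manifolds and relying on the universal coefficient theorem together with the standard identification of Lipschitz singular homology with continuous singular homology, so no additional argument beyond what the paper sets up is required.
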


An immediate consequence of Theorem \ref{thm:mainthmZ} is the following corollary, which generalizes the main result in \cite{Lohvansuu_duality}.
\begin{cor}\label{cor:modineq}
	Let $p, q \in (1, \infty)$ with $p^{-1} + q^{-1} = 1$. Suppose that $H_{k}^{\lip}(M,D; \Z) \cong H_{n-k}^{\lip}(M,E; \Z) \cong \Z$ for some $k \in \{1, \dots, n-1\}$. Let $c$ and $c'$ be the generators of $H_{k}^{\lip}(M; \Z)$ and $H_{n-k}^{\lip}(M; \Z)$, respectively. Then 
	\[
		\moddens_p(c)^{\frac{1}{p}} \moddens_q(c')^{\frac{1}{q}} \leq 1.
	\]
\end{cor}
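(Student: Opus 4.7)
The plan is to deduce Corollary \ref{cor:modineq} directly from Theorem \ref{thm:mainthmZ} together with the pointwise comparison $\moddens_p(c) \le \dMod_p(c)$ that is mentioned in passing in the abstract and introduction. This makes the corollary essentially a one-line consequence once the inequality between the two moduli is verified.

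First I would establish $\moddens_p(c) \le \dMod_p(c)$ for any Lipschitz homology class. Given a $k$-form $\omega \in L^p$ that is admissible for $\dMod_p(c)$, meaning $\int_\sigma \omega \ge 1$ for every Lipschitz $k$-chain $\sigma$ representing $c$ outside a subfamily of zero $p$-modulus, the pointwise norm $\rho := |\omega|$ is a Borel density in $L^p$ with $\|\rho\|_p = \|\omega\|_p$. The integral of a $k$-form over a Lipschitz $k$-chain is dominated by the integral of its pointwise norm against Hausdorff $k$-measure, so for every non-exceptional $\sigma$,
\[
\int_\sigma \rho \, dH^k \;\ge\; \left|\int_\sigma \omega\right| \;\ge\; 1.
\]
Thus $\rho$ is admissible for $\moddens_p(c)$ in Fuglede's sense (which, like $\dMod_p$, tolerates exceptional subfamilies of zero $p$-modulus). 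Passing to the infimum over $\omega$ yields the desired inequality. The same argument applied to $q$ and $c'$ gives $\moddens_q(c') \le \dMod_q(c')$.

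Second, I would invoke Theorem \ref{thm:mainthmZ}: under the hypothesis $H_k^{\lip}(M,D;\Z) \cong H_{n-k}^{\lip}(M,E;\Z) \cong \Z$, the integer generators $c$ and $c'$ satisfy $\dMod_p(c)^{1/p} \dMod_q(c')^{1/q} = 1$. Chaining this equality with the two inequalities from the previous paragraph gives
\[
\moddens_p(c)^{\frac{1}{p}} \moddens_q(c')^{\frac{1}{q}} \;\le\; \dMod_p(c)^{\frac{1}{p}} \dMod_q(c')^{\frac{1}{q}} \;=\; 1,
\]
which is the conclusion of the corollary.

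There is no serious obstacle here; the argument is formal once the correct definitions are in place. The only point requiring care is matching the exceptional-set conventions between $\moddens_p$ and $\dMod_p$, ensuring that a form admissible modulo a zero-$p$-modulus subfamily yields a density admissible on the \emph{same} subfamily. This is immediate from the definitions, and all the substantive work of the paper has already been carried out in proving Theorem \ref{thm:mainthmZ}.
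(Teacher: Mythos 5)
Your proposal is correct and takes essentially the same route as the paper: the paper deduces the corollary by combining Theorem \ref{thm:mainthmZ} with the comparison $\moddens_p(\cS) \leq \modform_p(\cS)$ recorded as \eqref{eq:modulus_comparison}, which follows exactly from the observation you make that $\abs{\omega}$ is an admissible density whenever $\omega$ is an admissible form. Your spelling out of the comparison inequality is a restatement of what the paper establishes just before \eqref{eq:modulus_comparison}, so there is no substantive difference.
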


As mentioned above, a version of Theorem \ref{thm:mainthmZ} does not in general hold for the classical surface modulus defined by Fuglede, due to a counterexample given by Freedman and He in \cite{Freedman-He_Modulus-duality}. This counterexample is a solid torus with a $180^{\circ}$ twist, the slices of which are dumbbell-shaped surfaces isometric to $S = B^2((1,0), 2^{-1}) \cup B^2((-1,0), 2^{-1}) \cup ([-1, 1] \times [-\eps, \eps])$. See Figure \ref{fig:Freedman-He_example} for an illustration. A flat version of this example can be defined by  $M = (S \times [0, 1])/\sim$, with the equivalence relation $\sim$ given by $(z, 0) \sim (-z, 1)$.

\begin{figure}[h]
    \centering
    \includegraphics[width=10cm]{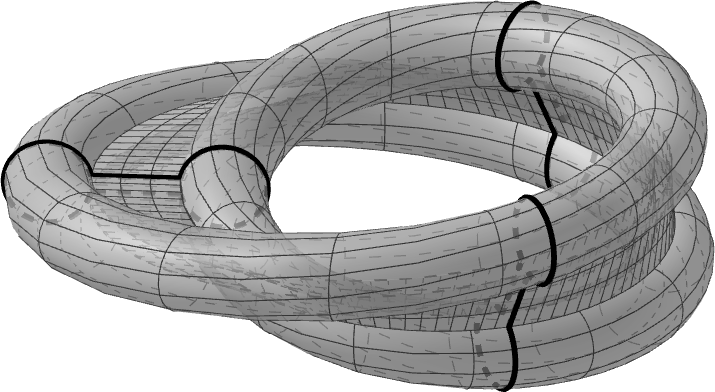}
    \caption{The dumbbell-shaped twisted solid torus of Freedman and He that acts as a counterexample to a version of Theorem \ref{thm:mainthmZ} for the classical modulus.}
    \label{fig:Freedman-He_example}
\end{figure}

Indeed, in the context of Theorem \ref{thm:mainthmZ}, $M$ is an oriented Lipschitz Riemannian manifold with $H_1(M,\emptyset;\bZ) \cong H_2(M,\partial M;\bZ) \cong \bZ$. If $c \in H_1(M,\emptyset;\bZ)$ is a generator, then the density $\rho$ which is $1$ in the handle $[-2^{-1}, 2^{-1}] \times [-\eps, \eps] \times [0, 1]$ and $0$ elsewhere will be admissible for $c$, as any loop winding around $M$ exactly once that doesn't stay in the handle has to cross it. Hence, $\moddens_p(c) \leq \eps$. However, for a generator $c'$ of $H_2(M,\partial M;\bZ)$, by taking a constant density $\rho'$ in the end-part $(B^2((1,0), 2^{-1}) \cup B^2((-1,0), 2^{-1})) \times [0, 1]$ of the dumbbell, one finds an upper bound for $\moddens_q(c')$ independent of $\eps$. Thus, for small enough $\eps$, one has $\moddens_p(c)^{1/p} \moddens_q(c')^{1/q} < 1$.

Similarly to the planar case discussed earlier, the classical path modulus is connected to the theory of quasiconformal maps in that quasiconformal maps on $\bR^n$ preserve the $n$-modulus, up to a multiplicative constant. For surface modulus, only partial results have been shown.  For example, Rajala in \cite{Rajala_smallK} showed a result for the image of sphere families by quasiregular maps. It is an interesting question whether there exists a class of $k$-surfaces that is invariant under quasiconformal transformations and has a reasonable definition for the $n/k$-modulus that is quasi-preserved by quasiconformal maps. A similar question can also be asked for the differential form modulus.

\subsection{Ideas of the proofs}

We now outline the proofs of the main theorems in this paper. Since our setting is that of Lipschitz manifolds, there are technical difficulties in defining the integration theory for surfaces. This is mainly due to the fact that the Riemannian metric $g$ for a Lipschitz manifold $M$ is defined almost everywhere.  Thus, the surface measure of an arbitrary Lipschitz $k$-chain $\sigma$ on $M$, which is used to define the $p$-modulus and consequently $p$-exceptional sets, is not necessarily defined in an intrinsic way. We are however able to define $p$-exceptional families of Lipschitz $k$-chains on $M$ without first defining the $p$-modulus of surfaces on $M$, essentially relying on the general modulus of measures and the quasi-preservation of surface measures under bilipschitz maps. 

This definition of $p$-exceptional sets unlocks the requisite integration theory of $L^p$-forms over $p$-almost every Lipschitz $k$-chain on $M$ and moreover allows us to define the classical $p$-modulus on $M$ using the fact that $p$-almost every Lipschitz $k$-chain has a surface measure. Following this, we obtain versions of the de Rham theorem and Stokes' theorem in the setting of Sobolev forms and Lipschitz chains. Our arguments take advantage of the integration theory of flat forms by Whitney \cite{Whitney_book} and Federer \cite{Federer_book}, which also applies in our setting of Lipschitz manifolds.

We then prove a characterization of weak exactness and closedness for Sobolev differential forms via integration over Lipschitz $k$-chains. This characterization is similar in spirit to Fuglede's original motivation for defining $p$-modulus. In \cite[Section 3]{Fuglede_surface-modulus}, he showed that the $p$-modulus can be used to detect when vector fields have zero divergence or curl. In particular, our version generalizes Fuglede's results to differential forms of arbitrary degree, applies on Lipschitz manifolds and also includes the characterization of weakly vanishing boundary values. The following theorem is a simplified version of this characterization not involving boundary values; the full statement can be found in Section \ref{sec:stokes}. While the theorem does not use differential form modulus, it is an essential ingredient in its study.

\begin{theorem}\label{thm:characterization_thmintro}
    Let $M$ be an oriented Lipschitz Riemannian $n$-manifold without boundary. Let $p \in (1,\infty)$ and $\omega \in L^p(\wedge^kT^*M)$. The form $\omega$ is weakly closed if and only if 
    \begin{align*}
        \int_\sigma \omega = 0,
    \end{align*}
    for all $\sigma \in \partial C_{k+1}^{\lip}(M;\bZ)$ outside a $p$-exceptional set. Additionally, $\omega$ is weakly exact if and only if
    \begin{align*}
        \int_\sigma \omega = 0,
    \end{align*}
    for all $\sigma \in C_{k}^{\lip}(M;\bZ) \cap \ker(\partial)$ outside a $p$-exceptional set.
\end{theorem}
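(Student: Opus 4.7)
The plan is to prove each of the two equivalences separately. The forward directions follow from the Stokes theorem for $L^p$ forms and $p$-almost every Lipschitz chain established earlier in the paper, while the converses require more work: local mollification for closedness, and the Sobolev--Lipschitz de Rham theorem for exactness.

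For the forward implication of the closedness characterization, assume $d\omega = 0$ weakly. Stokes gives $\int_{\partial\tau}\omega = \int_\tau d\omega = 0$ for $p$-almost every $\tau \in C_{k+1}^{\lip}(M;\bZ)$, and since $\partial$ carries $p$-exceptional families of $(k+1)$-chains to $p$-exceptional families of $k$-chains, the vanishing descends to $p$-almost every $\sigma \in \partial C_{k+1}^{\lip}(M;\bZ)$. The forward direction for exactness is analogous: if $\omega = d\alpha$ with $\alpha$ a Sobolev primitive, then for $p$-almost every cycle $\sigma$, Stokes applied to $\alpha$ gives $\int_\sigma\omega = \int_{\partial\sigma}\alpha = 0$.

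For the converse of closedness, I would use local mollification. A partition of unity argument subordinate to a bilipschitz atlas reduces verification of weak closedness to a fixed chart, i.e.\ to an open subset of $\bR^n$. There, mollify $\omega$ to a smooth form $\omega_\epsilon = \omega * \phi_\epsilon$. For any smooth polyhedral $(k+1)$-chain $\tau$ supported at distance greater than $\epsilon$ from the chart boundary, Fubini gives
\[
\int_{\partial\tau}\omega_\epsilon = \int_{\bR^n}\phi_\epsilon(y)\int_{\partial\tau - y}\omega \, dy.
\]
The inner integral vanishes for Lebesgue-almost every $y$ by a Fubini-type application of Fuglede's lemma to the translation family $y \mapsto \partial\tau - y$ of boundary chains, so $\int_{\partial\tau}\omega_\epsilon = 0$ for every admissible smooth boundary. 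The classical smooth theory then yields $d\omega_\epsilon \equiv 0$ pointwise; passing to the limit $\epsilon \to 0$ with $\omega_\epsilon \to \omega$ in $L^p_\loc$ gives $d\omega = 0$ weakly in the chart, as desired.

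For the converse of exactness, boundaries are cycles, so the hypothesis together with the closedness equivalence forces $\omega$ to be weakly closed; hence $[\omega]$ defines a class in the $L^p$ de Rham cohomology. Under the Sobolev--Lipschitz de Rham isomorphism from earlier in the paper, this class corresponds to a Lipschitz singular cohomology class with pairing $[\sigma] \mapsto \int_\sigma\omega$ (well-defined on homology classes by a generic-representative argument using closedness of $\omega$ and the vanishing on boundaries). The hypothesis forces this pairing to vanish on every Lipschitz homology class, whence $[\omega] = 0$ in cohomology and $\omega$ is weakly exact. The principal obstacle in this program is the Fubini step in the closedness converse: verifying that translates of a fixed boundary chain avoid a given $p$-exceptional family for Lebesgue-almost every translation parameter rests crucially on the measure-theoretic definition of $p$-exceptional sets and a careful application of Fuglede's lemma to a parametrized translation family.
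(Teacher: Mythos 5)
Your proposal gets the converse directions essentially right and aligned with the paper's strategy (local mollification plus a Fubini/translation argument for closedness, and the Sobolev--Lipschitz de Rham isomorphism for exactness, the latter also implicitly needing the fact that Lipschitz homology classes are never $p$-exceptional, i.e.\ Proposition \ref{prop:no_exc_hom_classes}). The forward direction for exactness is also fine: for a cycle $\sigma$ one has $\int_{\partial\sigma}\alpha_j = 0$ trivially, so the Fuglede lemma applied to the $W^{d,\infty}$ approximations $\alpha_j$ of a primitive gives the result. However, the forward direction for closedness contains a genuine gap.

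The step that fails is the claim that ``$\partial$ carries $p$-exceptional families of $(k+1)$-chains to $p$-exceptional families of $k$-chains.'' This is false, and the paper explicitly constructs a counterexample in the discussion immediately after Corollary \ref{cor:lipschitz_stokes_L^p}: fix a $k$-cycle $\sigma$ with $k<n$ and consider the family of chains $\sigma+\sigma'$ with $\sigma'$ disjoint from $\sigma$. Since every such chain passes through the null-measure image of $\sigma$, this family is $p$-exceptional, yet its family of boundaries $\{\partial\sigma'\}$ is not. The underlying problem is that $p$-exceptionality of a family of $(k+1)$-chains is a statement about $(k+1)$-dimensional surface measure, which has no control over the $k$-dimensional surface measure of the boundaries. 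Concretely, when you invoke Corollary \ref{cor:lipschitz_stokes_L^p} the exceptional set is $E_1 \cup E_2$ where $E_1$ is $p$-exceptional but only $\partial E_2$ (not $E_2$) is $p$-exceptional as $k$-chains; your argument needs $\partial E_1$ to be $p$-exceptional, and this does not follow. The paper's Lemma \ref{lem:chara_implication_1} circumvents this entirely: since $d\omega=0$, the $L^p$--$L^\infty$ cohomology isomorphism (Corollary \ref{cor:Lp_Linfty_cohom_eq}) lets one write $\omega = \omega' + d\tau$ with $\omega' \in W^{d,\infty}$; for $\omega'$ Stokes (Proposition \ref{prop:Lipschitz_stokes}) holds for \emph{every} chain with no exceptional set, while the $d\tau$ contribution is handled by the exactness forward direction. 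Without some such global cohomological reduction, the $p$-a.e.\ Stokes theorem alone does not deliver the conclusion, so this is a missing idea rather than a cosmetic oversight. Note also that this gap propagates into your converse-of-exactness sketch, whose ``generic-representative'' well-definedness argument tacitly relies on the forward closedness statement; the paper instead passes to a $W^{d,\infty}$ representative via Corollary \ref{cor:Lp_Linfty_cohom_eq} before invoking Proposition \ref{prop:de_Rham_thm_in_Lip_setting}.
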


Following this, we define the modulus $\dMod_p$ of differential forms and study admissible forms for a homology class $c$. An $L^p$-differential form $\omega$ is \emph{admissible} for $c$ if
\begin{align*}
    \int_\sigma \omega \ge 1
\end{align*}
for $\sigma \in c$, outside family that is $p$-exceptional in terms of the classical modulus of surfaces. The key observation is that in this case, we can use the characterization theory outlined in Theorem \ref{thm:characterization_thmintro} to show that $\omega$ is in fact weakly closed, despite us having no regularity assumptions on $\omega$ beyond $L^p$-integrability. This directly connects admissibility of forms for homology classes to Sobolev-de Rham cohomology theory.

With these preliminaries we are finally able to show the duality theorems, Theorems \ref{thm:mainthm} and \ref{thm:mainthmrelative}. The proof uses the connection between $p$-differential form modulus and the capacity of differential forms, which was observed in \cite{Freedman-He_Modulus-duality}.
We first show that for a Lipschitz homology class $c$, the modulus problem admits a unique minimizer $\omega$. In particular, we prove in Section \ref{sec:moddifforms} that this minimizer satisfies the following properties.

\begin{theorem}\label{thm:modcapconnection}
    Let $M$ be a closed $n$-dimensional Lipschitz submanifold in an oriented Lipschitz Riemannian reference $n$-manifold $\cR$ and let $D$ be a closed $(n-1)$-dimensional Lipschitz submanifold of $\partial M$. Let $p \in (1,\infty)$ and let $c \in H^{\lip}_k(M,D;\bZ)$, with $0 < k < n$.  If $c$ is not a torsion element, then there exists a unique differential form $\omega$ such that $\omega$ is $L^p$-integrable, weakly closed, $p$-harmonic, $\omega$ has weakly vanishing tangential part in $D$, $|\omega|^{p-2}\omega$ has weakly vanishing normal part in $E = \partial M \setminus D$ and
    \begin{align*}
        \dMod_p(c) = \int_M |\omega|^p \vol_M = \inf_\tau \int_M |\omega + d\tau|^p \vol_M,
    \end{align*}
    where the infimum is over all $L^p$-integrable $\tau$, with an $L^p$-integrable weak exterior derivative $d\tau$ and with weakly vanishing tangential part in $D$.
\end{theorem}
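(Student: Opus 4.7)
The plan is to apply the direct method of the calculus of variations to the admissibility class
\[
    \cA_c = \Bigl\{\omega \in L^p(\wedge^k T^*M) : \textstyle\int_\sigma \omega \ge 1 \text{ for } p\text{-a.e.\ } \sigma \in c\Bigr\}.
\]
I would organize the argument into four steps.

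\emph{Existence and uniqueness of a minimizer.} First I would verify that $\cA_c$ is convex (immediate from linearity of $\omega \mapsto \int_\sigma \omega$) and strongly closed in $L^p$. Closedness requires a Fuglede-type lemma: if $\omega_j \to \omega$ in $L^p$, a subsequence should satisfy $\int_\sigma \omega_{j_k} \to \int_\sigma \omega$ outside a $p$-exceptional family of chains, and since countable unions of $p$-exceptional sets are $p$-exceptional, admissibility passes to the limit. Non-torsion of $c$ guarantees $\cA_c \ne \emptyset$ through a smooth representative of the real Poincar\'e dual of $c$. Uniform convexity of $L^p$ for $p \in (1,\infty)$ then produces a unique $\omega \in \cA_c$ with $\int_M |\omega|^p \vol_M = \dMod_p(c)$.

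\emph{Weak closedness and vanishing tangential trace on $D$.} By the boundary version of Theorem~\ref{thm:characterization_thmintro}, every $\omega \in \cA_c$ is automatically weakly closed and has weakly vanishing tangential part on $D$. The reasoning: admissibility, combined with the fact that $\int_\sigma \omega$ must be constant across $p$-a.e.\ representative of $c$ at the minimum, forces $\int_\sigma \omega = 0$ on $p$-a.e.\ chain $\sigma$ that is trivial relative to $D$ in $H_k^{\lip}(M,D;\bZ)$, and the boundary characterization identifies this vanishing with the two stated properties.

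\emph{Euler--Lagrange equation: $p$-harmonicity and normal trace on $E$.} Let $\tau \in L^p(\wedge^{k-1} T^*M)$ have $d\tau \in L^p$ and weakly vanishing tangential part on $D$. The boundary Stokes' theorem gives $\int_\sigma d\tau = 0$ for $p$-a.e.\ $\sigma \in c$, so $\omega + t\, d\tau \in \cA_c$ for every $t \in \bR$. Differentiating $\int_M |\omega + t\, d\tau|^p \vol_M$ at $t = 0$ and invoking the minimality of $\omega$ yields
\[
    \int_M |\omega|^{p-2} \langle \omega, d\tau \rangle \vol_M = 0
\]
for all such $\tau$, which is the weak statement of $\delta(|\omega|^{p-2}\omega) = 0$ together with the weakly vanishing normal part of $|\omega|^{p-2}\omega$ on $E$. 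Combined with Step~2, this is the asserted $p$-harmonicity with the stated boundary conditions.

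\emph{Infimum identity and main obstacle.} Each $\omega + d\tau$ above belongs to $\cA_c$, so $\int_M |\omega + d\tau|^p \vol_M \ge \dMod_p(c) = \int_M |\omega|^p \vol_M$, with equality realized by $\tau = 0$; this gives the infimum identity. I expect the chief technical difficulty to lie in Step~1, namely establishing strong $L^p$-closedness of $\cA_c$ via the Fuglede-type passage of $\int_\sigma \omega_j$ to $\int_\sigma \omega$ along a subsequence outside a $p$-exceptional family. This relies critically on the $p$-exceptional set framework and $p$-a.e.\ integration theory of $L^p$-forms over Lipschitz $k$-chains developed earlier. A secondary care is the correct invocation of the boundary versions of Theorem~\ref{thm:characterization_thmintro} and Stokes' theorem in Steps~2 and~3, both of which are to be supplied by Section~\ref{sec:stokes}.
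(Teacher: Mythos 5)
Your outline mirrors the paper's argument: Proposition~\ref{prop:minimizer_exists} handles existence, uniqueness, and closedness of the admissibility class (your Step~1), Lemma~\ref{lem:admissible_are_weakly_closed} gives weak closedness and vanishing tangential trace (your Step~2), and Lemmas~\ref{lem:adding_a_weakly_exact_form}--\ref{lem:p_harm_in_cohom_class} plus Proposition~\ref{prop:minimizer_is_p-harm_compact} give the Euler--Lagrange equation and boundary conditions (your Steps~3--4). You also correctly identify the Fuglede-type convergence lemma and the characterization Theorem~\ref{thm:integral_characterization_thm} as the key external inputs, and your non-emptiness argument via a $W^{d,\infty}$ representative of the Poincar\'e dual is the paper's.

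However, Step~2 contains a genuine gap. You assert that ``$\int_\sigma\omega$ must be constant across $p$-a.e.\ representative of $c$ at the minimum,'' and from this deduce that the integral vanishes on relatively trivial cycles. But constancy of $\int_\sigma\omega$ over the homology class is exactly what one gets \emph{after} knowing $\omega$ is weakly closed (via Stokes and the Sobolev de Rham isomorphism), so using it to prove weak closedness is circular. It is also not true that minimality of the $L^p$-norm forces the admissibility constraint $\int_\sigma\omega\ge 1$ to be tight $p$-a.e.\ without further argument: this is a substantive claim, not a free consequence of the direct method. The paper instead proves, for \emph{every} admissible form (not just the minimizer), that $\int_\sigma\omega = 0$ for $p$-a.e.\ $\sigma\in C_k^{\lip}(D;\bZ)+\partial C_{k+1}^{\lip}(M;\bZ)$ by a contradiction argument. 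The missing technical ingredient that makes this work is Lemma~\ref{lem:sum_exceptional_set}: if $\Sigma+\Sigma'$ is $p$-exceptional, then one of $\Sigma,\Sigma'$ must be, so that $\Mod_p(\Sigma+\Sigma')\ge 2^{-p}\min(\Mod_p(\Sigma),\Mod_p(\Sigma'))$. With this in hand, one takes a non-exceptional subfamily $\cS_k$ of relatively trivial cycles where $\int_\sigma\omega\le -1/k$, a non-exceptional subfamily $\cC_l\subset c$ where $\int_\sigma\omega\le l$, and observes that $\cC_l + a\cS_k\subset c$ has all integrals $< 1$ for $a$ large, yet is not $p$-exceptional by Lemma~\ref{lem:sum_exceptional_set} --- contradicting admissibility. (The case $\int_\sigma\omega > 0$ on a non-exceptional family reduces to this one since $\Mod_p(\cS_k)=\Mod_p(-\cS_k)$.) Without this lemma, the passage from ``admissibility + characterization theorem'' to weak closedness does not go through.

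Two smaller remarks. In Step~1, you would need strict convexity for the uniqueness claim; uniform convexity gives this (it also gives strong convergence of the minimizing sequence, which is a valid alternative to the paper's Banach--Alaoglu plus Mazur route). In Step~3, the integration by parts used to read off the vanishing normal trace of $|\omega|^{p-2}\omega$ on $E$ requires Lemma~\ref{lem:mixedboundarydata} (so that $\tau\wedge|\omega|^{p-2}\hodge\omega$ has vanishing tangential trace on $\partial M$, justifying the boundary-free Stokes identity); you gesture at ``boundary Stokes'' but this specific wedge-product lemma is load-bearing.
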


We then connect the form $\omega$ to Poincar\'e duality. In particular, if $M$ is compact, then the Poincar\'e dual of $c$ corresponds to a Sobolev-de Rham cohomology class $[\zeta]$, where we may assume that $\zeta$ solves the capacity problem,
\begin{align*}
    \int_M \abs{\zeta}^q \vol_M = \inf_\tau \int_M \abs{\zeta + d\tau}^q \vol_M,
\end{align*}
with $p^{-1} + q^{-1} = 1$ and with the infimum taken over Sobolev forms $\tau$. We prove that $\zeta$ and $\omega$ are in fact related to each other via the formula
\begin{align*}
    \omega = \frac{|\zeta|^{q-2}\hodge \zeta}{\|\zeta\|_q^q}.
\end{align*} 
This indicates the connection between capacity problems and the differential form modulus. The dual homology class $c'$ of $c$ is then found as the Poincar\'e dual of $[ \omega]$ up to a sign and we prove that $\zeta$ is a $q$-modulus minimizing admissible form for $c'$. The class $c'$ a priori has real coefficients, but in the case where $H_{k}^{\lip}(M,D; \Z) \cong H_{n-k}^{\lip}(M,E; \Z) \cong \Z$, we are able to show that $c'$ in fact corresponds to a generator of $H_{n-k}^{\lip}(M,E; \Z)$.

We remark that key parts of our theory rely on a technical result that Lipschitz homology classes on Lipschitz manifolds are never $p$-exceptional in the sense of the classical modulus of surfaces. Our proof of this fact ended up sufficiently involved to deserve its own section. Indeed, even though the ideas behind this are clear locally, $p$-exceptionality is a global property, so most standard local proofs appear to fail.  Our approach uses a technique based on mollification and leads to a working proof that is mostly local, although the method becomes increasingly technical in the case involving relative homology. 

\subsection*{Structure of this paper}

In Section \ref{sec:modulusofmeasures}, we recall Fuglede's modulus and present the relevant theorems.  In Section \ref{sec:lipschitzmfds}, we recall the necessary background information regarding Lipschitz Riemannian manifolds and in Section \ref{sec:lipintegration} we discuss integration theory on Lipschitz Riemannian manifolds. In Section \ref{sec:sobolevhomology}, we recall the construction of local Sobolev cohomology on Lipschitz manifolds, as well as the related de Rham isomorphisms. In Section \ref{sec:pexceptional}, we prove that Lipschitz homology classes are not $p$-exceptional, a technical result we require for the duality theory. In Section \ref{sec:stokes}, we prove the characterization of weak exactness and closedness of differential forms outlined in Theorem \ref{thm:characterization_thmintro}.  In Section \ref{sec:moddifforms}, we define the differential form modulus and prove its basic properties. This includes a proof for Theorem \ref{thm:modcapconnection}. Section \ref{sec:duality} then includes the proofs of the main duality theorems for differential form modulus.

\subsection*{Acknowledgements} The authors would like to thank the AMS's Mathematical Research Communities, where this project was initiated in the Analysis on Metric Spaces session. They would also like to thank the participants of the session for helpful conversations on the topic, especially Mario Bonk and Sylvester Eriksson-Bique. The first author would like to thank Princeton University for hosting his visit, which was of significant use in completing this paper. 

\section{Modulus of measures}\label{sec:modulusofmeasures}

We recall Fuglede's definition for the modulus of a family of measures (see \cite{Fuglede_surface-modulus}).

\begin{definition}\label{def:fuglede_modulus}
	Let $(X, \mu)$ be a complete measure space and let $\cM_\mu$ be the family of all measures $\nu$ on $X$ such that every $\mu$-measurable set is $\nu$-measurable for all $\nu \in \cM_\mu$. If $\Sigma \subset \cM_\mu$, then a measurable function $f \colon X \to [0, \infty]$ is called \emph{admissible for $\Sigma$} if
	\begin{equation}\label{eq:admissibility}
		\int_X f \dd \nu \geq 1
	\end{equation}
	for every $\nu \in \Sigma$. We denote the set of admissible functions for $\Sigma$ by $\adm(\Sigma)$. The \emph{$p$-modulus} of $\Sigma$ for $p \in [1, \infty)$ is given by
	\[
	\moddens_{p}(\Sigma) = \inf_{f \in \adm(\Sigma)} \int_X \abs{f}^p \dd\mu.
	\]
\end{definition}

A family of measures $\Sigma$ on $(X, \mu)$ is called \emph{$p$-exceptional} if $\moddens_p(\Sigma) = 0$. A property of measures is said to hold for \emph{$p$-almost every ($p$-a.e.) $\nu \in \cM_\mu$} if it holds outside a $p$-exceptional subset of $\cM_\mu$. 

We recall some basic properties of the $p$-modulus (see \cite[Chapter I]{Fuglede_surface-modulus}). Given a measure $\nu$, we use $\overline{\nu}$ to denote its completion. Moreover, if $\Sigma, \Sigma' \subset \cM_\mu$, then we denote $\Sigma \geq \Sigma'$ if, for every $\sigma \in \Sigma$, there exists $\sigma' \in \Sigma'$ such that $\sigma(A) \geq \sigma'(A)$ for all $\mu$-measurable $A \subset X$. In this case, we say that $\Sigma'$ \emph{minorizes} $\Sigma$.

\begin{lemma}\label{lem:Fuglede_properties}
	Let $(X, \mu)$ be a complete measure space and let $p \in [1, \infty)$. 
	\begin{enumerate}
		\item \label{enum:fuglede_outer_measure} The modulus $\moddens_p$ is an outer measure on $\cM_\mu$.
		\item \label{enum:fuglede_minorization} If $\Sigma, \Sigma' \subset \cM_\mu$ and $\Sigma \geq \Sigma'$, then $\moddens_p(\Sigma) \leq \moddens_p(\Sigma')$.
		\item \label{enum:fuglede_measure_zero} If $E \subset X$ is such that $\mu(E) = 0$, then $\nu(E) = 0$ for $p$-a.e.\ $\nu \in \cM_\mu$.
		\item \label{enum:fuglede_integrability} If $f \in L^p(X, \mu)$, then $f \in L^1(X, \overline{\nu})$ for $p$-a.e.\ $\nu \in \cM_\mu$.
		\item \label{enum:fuglede_convergence} If $f \in L^p(X, \mu)$ and $f_i \to f$ in $L^p(X, \mu)$, then there exists a subsequence $f_{i_j}$ of $f_i$ such that $f_{i_j} \to f$ in $L^1(X, \overline{\nu})$ for $p$-a.e.\ $\nu \in \cM_\mu$.
	\end{enumerate}
\end{lemma}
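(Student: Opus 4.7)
The plan is to prove the five assertions more or less in order, relying throughout on a handful of standard tricks: the zero function for the empty family, pointwise $\ell^p$-combinations of admissible functions for countable subadditivity, dilations of a single admissible function to control moduli of level sets, and a Borel--Cantelli-style argument built from countable subadditivity. These are all Fuglede's original arguments from \cite{Fuglede_surface-modulus}, so no deep obstacle is anticipated; the plan is essentially to reproduce them in sequence.

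For (1), I first note that the zero function is admissible for $\emptyset$, giving $\moddens_p(\emptyset) = 0$, and monotonicity follows from $\adm(\Sigma') \subseteq \adm(\Sigma)$ whenever $\Sigma \subseteq \Sigma'$. For countable subadditivity, I pick near-minimizers $f_i \in \adm(\Sigma_i)$ with $\int_X f_i^p \dd \mu \leq \moddens_p(\Sigma_i) + \eps 2^{-i}$ and form $f = \paren{\sum_i f_i^p}^{1/p}$. Any $\nu \in \bigcup_i \Sigma_i$ lies in some $\Sigma_{i_0}$, so $f \geq f_{i_0}$ yields $\int f \dd \nu \geq 1$, and Tonelli gives $\int f^p \dd \mu = \sum_i \int f_i^p \dd \mu \leq \sum_i \moddens_p(\Sigma_i) + \eps$; letting $\eps \to 0$ finishes the argument. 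For (2), if $f \in \adm(\Sigma')$ and $\nu \in \Sigma$ is minorized by some $\nu' \in \Sigma'$, then approximating $f$ from below by nonnegative simple functions supported on $\mu$-measurable sets shows $\int f \dd \nu \geq \int f \dd \nu' \geq 1$, so $\adm(\Sigma') \subseteq \adm(\Sigma)$.

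For (3), with $\mu(E) = 0$, I write the potentially exceptional family as $\{\nu : \nu(E) > 0\} = \bigcup_n \{\nu : \nu(E) > 1/n\}$; the function $n \mathbf{1}_E$ is admissible for the $n$-th piece and has vanishing $L^p(\mu)$-norm, so each piece has zero modulus and subadditivity concludes. Part (4) is nearly identical: for $f \in L^p(X, \mu)$, the set $B_n = \{\nu : \int \abs{f} \dd \overline{\nu} > n\}$ admits $\abs{f}/n$ as an admissible function, hence $\moddens_p(B_n) \leq n^{-p} \norm{f}_{L^p(\mu)}^p$; the set $\{\nu : \int \abs{f} \dd \overline{\nu} = \infty\}$ is contained in $B_n$ for every $n$ and thus has zero modulus.

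For (5), I pass to a subsequence (not relabeled) with $\norm{f_i - f}_{L^p(\mu)} \leq 2^{-i}$. For fixed $k \in \bN$, the set $E_{i,k} = \{\nu : \int \abs{f_i - f} \dd \overline{\nu} > 1/k\}$ admits $k \abs{f_i - f}$ as an admissible function, so $\moddens_p(E_{i,k}) \leq k^p 2^{-ip}$. Subadditivity then gives $\moddens_p\paren{\bigcup_{i \geq N} E_{i,k}} \to 0$ as $N \to \infty$, so the limsup family $A_k = \bigcap_N \bigcup_{i \geq N} E_{i,k}$ has modulus zero; summing over $k$ shows $\bigcup_k A_k$ is $p$-exceptional. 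For any $\nu$ outside $\bigcup_k A_k$ and every $k$, only finitely many $i$ satisfy $\int \abs{f_i - f} \dd \overline{\nu} > 1/k$, so $f_i \to f$ in $L^1(X, \overline{\nu})$. The main point requiring care throughout is the measurability of $f$ and of $\abs{f_i - f}$ with respect to $\overline{\nu}$, which is built into the definition of $\cM_\mu$ requiring every $\mu$-measurable set to be $\nu$-measurable; this lets the integrals $\int \abs{f} \dd \overline{\nu}$ be treated as well-defined extended real numbers for every $\nu \in \cM_\mu$ and validates the Tonelli step in the proof of (1).
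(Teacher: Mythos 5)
Your proof is correct and reproduces the standard arguments from Fuglede's original paper, which is exactly what the paper itself invokes (it gives no proof, only the citation to \cite[Chapter I]{Fuglede_surface-modulus}). All five parts are handled by the expected devices — the $\ell^p$-combination of near-minimizers for subadditivity, comparison of admissible sets for minorization, dilations of $n\mathbf{1}_E$ and $\abs{f}/n$ for the zero-measure and integrability statements, and a Borel--Cantelli argument over the level sets $E_{i,k}$ for the convergence; the measurability bookkeeping via the definition of $\cM_\mu$ is also addressed appropriately.
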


The following lemma gives a characterization for $p$-exceptional sets.

\begin{lemma}[{\cite[Theorem 2]{Fuglede_surface-modulus}}]\label{lem:Fuglede_exceptional_lemma}
	Let $(X, \mu)$ be a complete measure space, let $\Sigma \subset \cM_\mu$ and let $p \in [1, \infty)$. The set $\Sigma$ is $p$-exceptional if and only if there exists a function $f \in L^p(X, \mu)$ such that $f \geq 0$ and
	\[
		\int_X f \dd \sigma = \infty
	\]
	for every $\sigma \in \Sigma$.
\end{lemma}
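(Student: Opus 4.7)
The plan is to prove each direction separately; both are short, with the nontrivial direction amounting to a standard summation trick.

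For the easy direction, suppose $f \in L^p(X, \mu)$ is nonnegative and satisfies $\int_X f \, d\sigma = \infty$ for every $\sigma \in \Sigma$. Then for any $\eps > 0$, the function $\eps f$ is admissible for $\Sigma$, since $\int_X \eps f \, d\sigma = \infty \geq 1$ for each $\sigma \in \Sigma$. Hence $\moddens_p(\Sigma) \leq \int_X (\eps f)^p \, d\mu = \eps^p \|f\|_p^p$, and letting $\eps \to 0$ gives $\moddens_p(\Sigma) = 0$.

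For the converse, suppose $\moddens_p(\Sigma) = 0$. For each $n \in \bN$, use the definition of the infimum to select a nonnegative admissible function $f_n \in \adm(\Sigma)$ with $\int_X f_n^p \, d\mu < 2^{-np}$, so $\|f_n\|_p < 2^{-n}$. Define $f = \sum_{n=1}^\infty f_n$ as a pointwise sum of nonnegative measurable functions. The triangle inequality in $L^p$ (applied to the partial sums) yields $\|f\|_p \leq \sum_{n=1}^\infty \|f_n\|_p < \sum_{n=1}^\infty 2^{-n} = 1$, so $f \in L^p(X, \mu)$. In particular, $f$ is finite $\mu$-almost everywhere.

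It remains to verify the integral condition against every $\sigma \in \Sigma$. Since the $f_n$ are nonnegative and measurable, monotone convergence gives
\[
	\int_X f \dd \sigma = \sum_{n=1}^\infty \int_X f_n \dd \sigma \geq \sum_{n=1}^\infty 1 = \infty
\]
for each $\sigma \in \Sigma$, using admissibility of $f_n$. The only point requiring mild care is that the $f_n$ are defined on $(X, \mu)$, so $\sigma$-measurability of each $f_n$ follows from the assumption that every $\mu$-measurable set is $\nu$-measurable for $\nu \in \cM_\mu$, which applies to $\sigma \in \Sigma \subset \cM_\mu$. This is the only conceptual subtlety; the geometric mechanism is simply that $p$-exceptionality gives a summable sequence of admissible functions whose sum lies in $L^p$ and blows up along every $\sigma \in \Sigma$.
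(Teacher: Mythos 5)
Your proof is correct and coincides with the standard argument used by Fuglede in the cited reference; the paper itself only cites \cite[Theorem 2]{Fuglede_surface-modulus} without reproducing a proof. The key summation trick with $\|f_n\|_p < 2^{-n}$ and monotone convergence against each $\sigma$ is exactly the classical mechanism, and your measurability remark is the right (and only) point of care given Definition \ref{def:fuglede_modulus}.
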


For two sets $\Sigma, \Sigma' \subset \cM_\mu$, we say that $\Sigma'$ \emph{weakly minorizes} $\Sigma$ if, for every $\sigma \in \Sigma$, there exists $\sigma' \in \Sigma'$ and $c_\sigma \in (0, 1)$ such that $\sigma(A) \geq c_\sigma \sigma'(A)$ for all $\mu$-measurable $A \subset X$. In this case, we denote $\Sigma \gtrsim \Sigma'$. Since the constant $c_\sigma$ in the definition of weak minorization can depend on $\sigma$, there is no estimate on $\Mod_p(\Sigma)$ in terms of $\Mod_p(\Sigma')$ if $\Sigma \gtrsim \Sigma'$. However, we do get the following result for exceptional sets as an immediate corollary of Lemma \ref{lem:Fuglede_exceptional_lemma}.

\begin{cor}\label{cor:weak_minorization_p-exc}
	Let $(X, \mu)$ be a complete measure space, let $\Sigma, \Sigma' \subset \cM_\mu$ and let $p \in [1, \infty)$. If $\Sigma'$ is $p$-exceptional and $\Sigma \gtrsim \Sigma'$, then $\Sigma$ is $p$-exceptional.
\end{cor}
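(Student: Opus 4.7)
The plan is to apply Fuglede's exceptional-set criterion (Lemma \ref{lem:Fuglede_exceptional_lemma}) to transfer a witness of $p$-exceptionality from $\Sigma'$ to $\Sigma$. Since $\Sigma'$ is $p$-exceptional, Lemma \ref{lem:Fuglede_exceptional_lemma} furnishes a nonnegative function $f \in L^p(X, \mu)$ with $\int_X f \dd \sigma' = \infty$ for every $\sigma' \in \Sigma'$. The goal is to show that the same $f$ also witnesses $p$-exceptionality of $\Sigma$, from which Lemma \ref{lem:Fuglede_exceptional_lemma} yields $\moddens_p(\Sigma) = 0$.

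The key step is to upgrade the pointwise set inequality $\sigma(A) \geq c_\sigma \sigma'(A)$ to an integral inequality $\int_X f \dd \sigma \geq c_\sigma \int_X f \dd \sigma'$ for the chosen $f$ and each $\sigma \in \Sigma$. First I would verify this for nonnegative simple functions $s = \sum_i a_i \chi_{A_i}$ with $\mu$-measurable sets $A_i$, where it is immediate from linearity. Since every $\nu \in \cM_\mu$ makes $\mu$-measurable sets $\nu$-measurable, the standard monotone approximation of $f$ by an increasing sequence of such simple functions, together with the monotone convergence theorem applied to both $\sigma$ and $\sigma'$, extends the inequality to $f$ itself.

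Having established $\int_X f \dd \sigma \geq c_\sigma \int_X f \dd \sigma'$ and knowing the right-hand side equals $\infty$ with $c_\sigma > 0$, it follows that $\int_X f \dd \sigma = \infty$ for every $\sigma \in \Sigma$. Thus $f$ satisfies the hypotheses of Lemma \ref{lem:Fuglede_exceptional_lemma} for $\Sigma$, and we conclude that $\Sigma$ is $p$-exceptional.

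I do not expect any genuine obstacle here; the only point worth attention is the measurability bookkeeping, namely ensuring that the simple functions approximating $f$ can be chosen on $\mu$-measurable sets so that they are automatically $\sigma$- and $\sigma'$-integrable in the sense used by the definition of $\cM_\mu$. This is routine once one recalls the completion conventions of Lemma \ref{lem:Fuglede_properties}.
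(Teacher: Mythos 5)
Your proof is correct and follows precisely the route the paper intends: the paper itself presents this as "an immediate corollary of Lemma \ref{lem:Fuglede_exceptional_lemma}," and your argument (extract the witness $f$ for $\Sigma'$, upgrade the set-inequality $\sigma \geq c_\sigma \sigma'$ to the integral inequality via simple-function approximation and monotone convergence, then apply the lemma in the other direction) is the standard way to make that immediacy precise. The measurability bookkeeping you flag is indeed routine: $f$ is measurable with respect to the $\mu$-measurable $\sigma$-algebra, every such set is $\nu$-measurable for $\nu \in \cM_\mu$ by the definition of $\cM_\mu$, and $f \geq 0$, so all integrals against $\sigma$ and $\sigma'$ are well defined (possibly $+\infty$).
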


A measurable function $f \colon X \to [0, \infty]$ is \emph{$p$-weakly admissible} for $\Sigma \subset \cM_\mu$ if \eqref{eq:admissibility} holds for $p$-almost every $\nu \in \cM_\mu$.
The set of such functions is denoted by $\essadm_p(\Sigma)$. 
It follows that
\begin{equation}\label{eq:modulus_essential_def}
	\moddens_{p}(\Sigma) = \inf_{f \in \essadm_p(\Sigma)} \int_X \abs{f}^p \dd\mu.
\end{equation}
If $p > 1$, the infimum in \eqref{eq:modulus_essential_def} is in fact a minimum with a unique minimizer (see the discussion in \cite[pp.\ 181--182]{Fuglede_surface-modulus}).

\section{Lipschitz manifolds}\label{sec:lipschitzmfds}

\subsection{Lipschitz manifolds without boundary}\label{subsect:lip_mfld_wo_bdry}

Let $\cR$ be a topological $n$-manifold without boundary, where we assume all manifolds are Hausdorff and second-countable. An atlas $\cA$ of charts $\phi \colon \Omega_{\phi} \to U_{\phi}$ with $\Omega_\phi \subset \R^n$ and $U_\phi \subset \cR$ is a \emph{Lipschitz atlas} if, for any two charts $\phi, \psi \in \cA$, the transition map $\phi^{-1} \circ \psi \colon \psi^{-1}(U_\phi \cap U_\psi) \to \phi^{-1}(U_\phi \cap U_\psi)$ is bilipschitz. A topological manifold $\cR$ equipped with a Lipschitz atlas $\cA$ is called a \emph{Lipschitz manifold}. The charts $\phi \in \cA$ are called \emph{bilipschitz charts}.

A \emph{Lipschitz Riemannian metric} $g$ on $\cR$ is a choice of a measurable Riemannian metric $\phi^* g \in \Gamma(T^*\Omega_\phi \otimes T^*\Omega_\phi)$ for every chart $\phi \in \cA$. .  The metric satisfies that for all $\phi,\psi \in \cA$ and $v \in \bR^n$, there exists $C_\phi \in [1,\infty)$ so that $C_\phi^{-1} \abs{v}^2 \leq \phi^* g(v, v) \leq C_\phi \abs{v}^2$ a.e.\ on $\Omega_\phi$ and $(\phi^{-1} \circ \psi)^* (\phi^* g) = \psi^* g$ a.e.\ on $\psi^{-1}(U_\phi \cap U_\psi)$.
A Lipschitz manifold $\cR$ equipped with a Lipschitz Riemannian metric $g$ is a \emph{Lipschitz Riemannian manifold}.

With a Lipschitz structure, we may define the set of measurable differential forms, $\Gamma(\wedge^kT^*M)$ in the usual way, for $M \subset \cR$ measurable (see e.g.\ \cite{Teleman_LipManifoldIndex}).  If $M$ is orientable, we may also define a volume form, denoted $\vol_g$.
If $f \colon \cR \to [0, \infty]$ is a measurable function we can define the usual integral of $f$ against the volume form using a continuous partition of unity. In particular, $\vol_g$ induces a measure on $\cR$. Moreover, if $\omega, \omega' \in \Gamma(\wedge^k T^* \cR)$, we obtain a well-defined measurable inner product $\ip{\omega}{\omega'} \colon \cR \to \R$. 
This allows us to define point-wise norms $\abs{\omega} \colon \cR \to [0, \infty)$ for measurable $k$-forms a.e.\ on $\cR$. We thus obtain the spaces $L^p(\wedge^k T^*D)$ and $L^p_\loc(\wedge^k T^*D)$ for any open set $D \subset \cR$ in the usual way, where the $L^p$-norm is given by
\[
	\norm{\omega}_{L^p} = \left(\int_{\cR} \abs{\omega}^p \vol_g \right)^\frac{1}{p}.
\] 

\subsection{Sobolev spaces on Lipschitz manifolds}

Let $\Omega$ be a domain in $\R^n$. We recall that if $\omega \in L^1_\loc(\wedge^k T^* \Omega)$, then $d\omega \in L^1_\loc(\wedge^{k+1} T^* \Omega)$ is a \emph{weak differential of $\omega$} if
\begin{equation}\label{eq:smooth_test_form_for_weak_d}
	\int_\Omega d\omega \wedge \eta = (-1)^{k+1} \int_\Omega \omega \wedge d\eta
\end{equation}
for all $\eta \in C^\infty_0(\wedge^{n-k-1} T^* \Omega)$. The Sobolev space of all $\omega \in L^p(\wedge^k T^* \Omega)$ with a weak differential $d\omega \in L^p(\wedge^k T^* \Omega)$ is denoted by $W^{d,p}(\wedge^k T^* \Omega)$. Locally integrable versions $W^{d,p}_\loc(\wedge^k T^* \Omega)$ are defined accordingly.

If $\Upsilon \subset \R^n$ is another domain and $f \colon \Omega \to \Upsilon$ is bilipschitz, then the Lusin $(N^{-1})$-property of $f$ along with the almost everywhere differentiability of $f$ yields a measurable pull-back $f^* \omega$ on $\Omega$ for any measurable $k$-form $\omega$ on $\Upsilon$. We recall the following chain rule for such pull-backs, sketching its proof for the convenience of the reader (See also \cite[Lemma 2.22]{Donaldson-Sullivan_Acta} and \cite[Lemma 3.6]{Iwaniec-Martin_Acta}). 

\begin{lemma}\label{lem:bilip_chain_rule}
	Let $\Omega, \Upsilon \subset \R^n$ be connected open domains, let $f \colon \Omega \to \Upsilon$ be bilipschitz and let $p \in [1, \infty]$. If $\omega \in W^{d, p}(\wedge^k T^*\Upsilon)$, then $f^* \omega \in W^{d, p}(\wedge^k T^* \Omega)$ and $d f^* \omega = f^* d \omega$.
\end{lemma}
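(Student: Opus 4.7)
The plan is a double approximation: approximate the form $\omega$ by smooth forms, and handle smooth forms by mollifying the map $f$. The statement is local, so I work on a relatively compact subdomain $\Omega' \Subset \Omega$ with $f(\overline{\Omega'}) \Subset \Upsilon$.

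First I verify the $L^p$ estimate for pull-backs. Since $\mathrm{Lip}(f) \leq L < \infty$, the a.e.\ bound $\abs{Df} \leq L$ gives $\abs{f^* \omega}(x) \leq L^k \abs{\omega}(f(x))$ for a.e.\ $x$. Combining this with the bilipschitz change of variables, whose Jacobian is a.e.\ bounded above by $L^n$, yields $\norm{f^* \omega}_{L^p(\Omega)} \leq L^{k + n/p}\norm{\omega}_{L^p(\Upsilon)}$ and an analogous bound for $f^* d\omega$. Thus $L^p$-membership is automatic, and the task reduces to verifying $d(f^* \omega) = f^* d\omega$ in the weak sense.

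For smooth $\omega \in C^\infty(\wedge^k T^* \Upsilon)$, mollify the map: set $f_\delta = f * \phi_\delta$ on $\Omega'$. For sufficiently small $\delta$, the map $f_\delta$ is smooth, $\mathrm{Lip}(f_\delta) \leq L$ uniformly in $\delta$, $f_\delta(\Omega') \subset \Upsilon$, $f_\delta \to f$ uniformly on $\Omega'$, and $Df_\delta \to Df$ a.e.\ along a subsequence. The classical $C^1$ chain rule gives $d(f_\delta^* \omega) = f_\delta^* d\omega$ pointwise, so for any $\eta \in C^\infty_0(\wedge^{n-k-1} T^* \Omega')$,
\[
    \int_{\Omega'} (f_\delta^* d\omega) \wedge \eta = (-1)^{k+1} \int_{\Omega'} (f_\delta^* \omega) \wedge d\eta.
\]
Since the coefficients of $\omega$ and $d\omega$ are continuous on a neighborhood of $f(\overline{\Omega'})$ and the entries of $Df_\delta$ are uniformly $L^\infty$-bounded, dominated convergence lets me pass $\delta \to 0$ on both sides, establishing \eqref{eq:smooth_test_form_for_weak_d} for $f^* \omega$ and $f^* d\omega$ in the smooth case.

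For general $\omega \in W^{d,p}(\wedge^k T^* \Upsilon)$, mollify $\omega$ locally on $\Upsilon$ to produce smooth forms $\omega_j$ with $\omega_j \to \omega$ and $d\omega_j \to d\omega$ in $L^p_\loc(\Upsilon)$. The pull-back estimate then yields $f^* \omega_j \to f^* \omega$ and $f^* d\omega_j \to f^* d\omega$ in $L^p_\loc(\Omega)$. Since $d(f^* \omega_j) = f^* d\omega_j$ by the previous step, passing to the $L^p$ limit in \eqref{eq:smooth_test_form_for_weak_d} against any fixed test form $\eta$ yields the weak identity for $\omega$. I expect the main obstacle to be the dominated-convergence step in the smooth case: the pointwise expression for $f_\delta^* \omega$ is a polynomial combination (via $k \times k$ minors of $Df_\delta$) of the entries of $Df_\delta(x)$ and the values of $\omega$ at $f_\delta(x)$, so one must verify that uniform $L^\infty$ bounds on $Df_\delta$ together with a.e.\ convergence of the entries and uniform convergence of $f_\delta$ suffice to pass to the limit in each such polynomial expression. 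This works, but it requires explicit inspection of the pull-back formula rather than an abstract appeal to functoriality.
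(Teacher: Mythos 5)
Your proof is correct and follows essentially the same double-approximation strategy as the paper: establish the $L^p$ pull-back estimate via bilipschitz change of variables, mollify $f$ to handle smooth $\omega$ via dominated convergence, then mollify $\omega$ and pass to the $L^p$ limit. The only cosmetic difference is that the paper works with a compactly supported extension $\omega'$ of $\omega$ before mollifying, whereas you mollify locally — both choices serve the same purpose.
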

\begin{proof}
	Let $L$ be the bilipschitz constant of $f$. If $\omega \in L^p(\wedge^k T^*\Upsilon)$, then $f^* \omega \in L^p(\wedge^k T^*\Omega)$ for $p \in [1, \infty)$ by the following standard change of variables argument:
	\begin{equation}\label{eq:bilip_Lp_estimate}
		\int_\Omega \abs{f^*\omega}^p \leq \int_\Omega (\abs{\omega}^p \circ f) \abs{Df}^{kp}
		\leq L^{n+kp} \int_\Omega (\abs{\omega}^p \circ f) J_f
		= L^{n+kp} \int_{f(\Omega)} \abs{\omega}^p.
	\end{equation}
	The same holds for $p = \infty$ by using $\abs{f^*\omega} \leq L^k (\abs{\omega} \circ f)$ and the Lusin properties of $f$.
	
	Checking that $d f^* \omega = f^* d \omega$ weakly is also a standard approximation argument. Indeed, one selects $\Omega'$ compactly contained in $\Omega$, chooses a compactly supported $\omega' \in W^{d,p}_0(\wedge^k T^* \R^n)$ that coincides with $\omega$ on $f(\Omega')$ and mollifies both $f$ and $\omega'$ to obtain $f_i \in C^\infty(\Omega', \R^n)$ and $\omega_i \in C^\infty_0(\wedge^k T^* \R^n)$. We have $d f_i^* \omega_j = f_i^* d \omega_j$. Since $\abs{f_i - f} \to 0$ uniformly, $\abs{Df_i} \leq L$ a.e.\ on $\Omega'$, $Df_i \to Df$ pointwise a.e.\ on $\Omega$ and each $\omega_j$ and $d\omega_j$ is uniformly continuous, dominated convergence yields that $f_i^*\omega_j \to f^* \omega_j$ and $f_i^* d\omega_j \to f^* d\omega_j$ in the $L^1$-norm on $\Omega'$. Hence, it follows that $f^* d \omega_j = d f^* \omega_j$. Then since $\omega_j \to \omega'$ and $d\omega_j \to d\omega'$ in the $L^1$-norm, \eqref{eq:bilip_Lp_estimate} yields that $f^* \omega_j \to f^* \omega' = f^*\omega$ and $f^* d\omega_j \to f^* d\omega' = f^* d\omega$ in the $L^1$-norm on $\Omega'$. Hence, $d f^* \omega = f^* d \omega$ weakly on $\Omega'$. This generalizes to $\Omega$ by the locality of weak differentials.
\end{proof}

For an open $U$ in an oriented Lipschitz Riemannian manifold, we define that $d\omega \in L^1_\loc(\wedge^{k+1} T^*U)$ is a weak differential of $\omega \in L^1_\loc(\wedge^{k} T^*U)$ if $\phi^* d\omega$ is a weak differential of $\phi^* \omega$, for every bilipschitz chart $\phi$ with $U_\phi \subset U$. This lets us define Sobolev spaces $W^{d,p}(\wedge^k T^* U)$ and $W^{d,p}_\loc(\wedge^k T^* U)$. The space $W^{d,p}(\wedge^k T^* U)$ has the norm $\norm{\omega}_{W^{d,p}} = \norm{\omega}_{L^p} + \norm{d\omega}_{L^p}$. It can be shown that $d\omega$ is a weak differential of $\omega \in L^1_\loc(\wedge^{k} T^*U)$ precisely if it satisfies \eqref{eq:smooth_test_form_for_weak_d} for test forms $\eta \in W^{d, \infty}(\wedge^{n-k-1} T^* U)$ with $\spt \eta \subset U$ compact.

We also state the following approximation results.

\begin{lemma}\label{lem:W_infinity_approximation}
	Let $U$ be an open set in an oriented Lipschitz Riemannian manifold $\cR$ and let $p \in [1, \infty)$.  The space $W^{d,\infty}(\wedge^k T^* U)$ is dense in both $L^p(\wedge^k T^* U)$ and $W^{d,p}(\wedge^k T^* U)$.
\end{lemma}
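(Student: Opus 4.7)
The plan is to reduce the density problem to Euclidean mollification via a Lipschitz partition of unity subordinate to bilipschitz charts. The key Euclidean input is that if $\omega_0 \in W^{d,p}(\wedge^k T^* \Omega)$ is compactly supported in a domain $\Omega \subset \R^n$, then its componentwise mollification $\omega_0 * \rho_\eps$ is smooth and compactly supported in $\Omega$ for sufficiently small $\eps > 0$, and lies in $W^{d,\infty}(\wedge^k T^* \Omega)$, since the convolution of an $L^p$-function with a smooth compactly supported kernel is a bounded smooth function with bounded derivatives. Because $d$ commutes with mollification, $\omega_0 * \rho_\eps \to \omega_0$ in $W^{d,p}$ as $\eps \to 0$.

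On $U$, I would choose a locally finite open cover $\{V_i\}$ with $\overline{V_i} \Subset U_{\phi_i}$ for bilipschitz charts $(\phi_i, U_{\phi_i})$, together with a subordinate Lipschitz partition of unity $\{\chi_i\}$ that has bounded overlap in the sense that $\sum_i |d\chi_i| \leq L$ almost everywhere on $U$ for some constant $L$; such a partition is obtained on a Lipschitz Riemannian manifold by a standard Whitney-type construction. For $\omega \in W^{d,p}(\wedge^k T^* U)$, the partial sums $\omega_N := \sum_{i=1}^N \chi_i \omega$ converge to $\omega$ in $W^{d,p}$: using $\sum_i \chi_i = 1$ and $\sum_i d\chi_i = 0$ together with local finiteness, one gets pointwise a.e.\ convergence of both $\omega_N \to \omega$ and $d\omega_N \to d\omega$, and the bounded-overlap assumption supplies the dominating function $(1+L)|\omega| + |d\omega| \in L^p$, so dominated convergence applies.

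For each fixed $N$, $\omega_N$ is supported in finitely many charts. In chart $i \leq N$, Lemma \ref{lem:bilip_chain_rule} transports $\chi_i \omega$ to a compactly supported form $(\phi_i)^*(\chi_i \omega) \in W^{d,p}(\wedge^k T^* \Omega_{\phi_i})$; mollifying and then pushing forward by $\phi_i$ yields a compactly supported Lipschitz form on $U$, and the finite sum of these lies in $W^{d,\infty}(\wedge^k T^* U)$ and converges in $W^{d,p}$ to $\omega_N$ as $\eps \to 0$. A diagonal choice of $N$ and $\eps$ then produces $W^{d,\infty}$-approximations of $\omega$ in $W^{d,p}$. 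Density in $L^p$ follows from the same construction, simplified because only $L^p$-convergence of mollifications is needed and the bounded-overlap condition can be dropped. The main technical obstacle is ensuring bounded overlap of the partition of unity, which is what converts the pointwise a.e.\ vanishing of the tail term $(\sum_{i > N} d\chi_i) \wedge \omega$ into $L^p$-convergence; without it, only $L^p_\loc$-convergence of the partial sums is guaranteed.
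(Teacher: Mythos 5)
Your approach—partition of unity subordinate to bilipschitz charts, mollification in the charts, push-forward and sum—is the same approach the paper takes. The difference lies in how the error is distributed across the countably many pieces, and that is where your proposal has a genuine gap.

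The paper never needs the partial sums $\sum_{i\le N}\chi_i\omega$ to converge to $\omega$ in $W^{d,p}$. It takes $\omega = \sum_i \chi_i\omega$ only as a locally finite pointwise identity and then assigns each piece an individual error budget: $\chi_i\omega$ is approximated by a $W^{d,\infty}$-form $\alpha_i$ supported in $U_i$ with $\|\chi_i\omega-\alpha_i\|_{W^{d,p}} < 2^{-i}\eps$, after which $\|\omega - \sum_i\alpha_i\|_{W^{d,p}} \le \sum_i 2^{-i}\eps < \eps$ by absolute convergence. No global bound on $\sum_i|d\chi_i|$ is ever invoked. Your argument instead funnels the problem through the truncations $\omega_N$ and requires $\omega_N\to\omega$ in $W^{d,p}$, which is exactly what your bounded-overlap hypothesis $\sum_i|d\chi_i|\le L$ a.e.\ is designed to supply.

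That hypothesis is not delivered by a Whitney-type construction on $U$. Whitney covers have diameters comparable to $\dist(\cdot,\partial U)$, so the Lipschitz constants of the subordinate bump functions scale like $1/\dist(\cdot,\partial U)$; with bounded multiplicity one gets $\sum_i|d\chi_i|(x)\sim 1/\dist(x,\partial U)$, which is unbounded near $\partial U$ and in fact fails to be locally $L^p$ there for $p\ge 1$. Hence $(\sum_i|d\chi_i|)\,|\omega|$ need not lie in $L^p$ and the dominated-convergence step for the tail $(\sum_{i>N}d\chi_i)\wedge\omega$ can break. Taking a uniformly Lipschitz partition of unity on the ambient $\cR$ instead does give $\sum_i|d\chi_i|\le L$, but then $\spt\chi_i$ is no longer compactly contained in $U$, and you lose the room to mollify inside each chart without leaving $U$. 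The clean repair is precisely the paper's geometric-series error allocation, which sidesteps any pointwise bound on $\sum_i|d\chi_i|$ altogether.
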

\begin{proof}
	We sketch the case $\omega \in W^{d,p}(\wedge^k T^* U)$, as the proof of the other case is analogous. Let $\eps > 0$. We take a countable locally finite cover of $U$ with charts $\phi_i \colon \Omega_i \to U_i$, $i \in \bN$ and use a subordinate $W^{d, \infty}$-partition of unity to find $\omega_i \in W^{d,p}(\wedge^k T^* U)$ such that $\omega = \sum_i \omega_i$ and $\spt \omega_i \subset U_i$. We take a smooth convolution approximation $\eta_i$ of $\phi_i^* \omega_i$ such that $\norm{\eta_i - \phi_i^* \omega_i}_{W^{d,p}} <  C_i^{-(k+1)p-n} 2^{-i} \eps$, where $C_i$ is the constant such that $C_i^{-1} \abs{v} \leq \sqrt{(\phi_i^* g)_x(v,v)} \leq  C_i\abs{v}$ for every $v \in \R^n$ and a.e.\ $x \in \Omega_i$. The extension of $(\phi_i^{-1})^* \eta_i$ by zero defines a $W^{d,\infty}$-form on $U$ such that $\norm{\omega_i - (\phi_i^{-1})^* \eta_i}_{W^{d,p}} < 2^{-i} \eps$. The sum of these forms gives the desired approximation of $\omega$.
\end{proof}

\subsection{Lipschitz domains and submanifolds}

A connected, closed region $M \subset \cR$ in a Lipschitz $n$-manifold $\cR$ is called a \emph{(weakly) Lipschitz domain} if it satisfies the following condition: For any $x \in \partial M$, there exists a bilipschitz chart $\phi \in \cA$ such that $x \in U_\phi$, $\phi (\H^n_+ \cap \Omega_\phi) = \intr(M) \cap U_\phi$, $\phi (\H^n_- \cap \Omega_\phi) = (\cR \setminus M) \cap U_\phi$ and $\phi (\partial\H^n_+ \cap \Omega_\phi) = \partial M \cap U_\phi$. Here, $\H^n_+ = \{x \in \R^n : x_n > 0\}$ and $\H^n_- = \{x \in \R^n : x_n < 0\}$ are half-spaces of $\R^n$. A chart $\phi$ as above is denoted a \emph{bilipschitz boundary chart of $M$}. Note that we allow for a Lipschitz domain $M$ to be non-compact. More generally, a \emph{closed Lipschitz $n$-submanifold} $M$ of $\cR$ is a locally finite disjoint union of Lipschitz domains in $\cR$; this is equivalent to the definition of a Lipschitz domain without the connectedness assumption. If a closed Lipschitz $n$-submanifold is compact, we refer to it as a \emph{compact Lipschitz $n$-submanifold}.

The boundary $\partial M$ of a closed Lipschitz $n$-submanifold $M$ is a Lipschitz $(n-1)$-manifold, as the restrictions of $\phi \in \cA$ to $\partial \H^n_+ \cong \R^{n-1}$ provide a Lipschitz atlas $\cA_{\partial M}$. For subsets $A \subset \partial M$, we use $\intr(A)$ and $\partial A$ to refer to the interior and boundary, respectively, in $\partial M$.

We can define $L^p$ and $L^p_\loc$-spaces of differential forms on a closed Lipschitz $n$-submanifold $M$ in the usual way. We consequently define Sobolev spaces $W^{d,p}(\wedge^k T^* M)$ by requiring that $d\omega \in L^p(\wedge^{k+1} T^* M)$ is a weak differential of $\omega \in L^p(\wedge^k T^* M)$ in $\intr(M)$ and also $W^{d,p}_\loc$-spaces with a similar definition. For the rest of this section, we let $M$ be a closed Lipschitz $n$-submanifold in $\cR$, $U'$ be an open subset of $\cR$ and $U = U' \cap M$.

Suppose that $D$ is a closed Lipschitz $(n-1)$-submanifold in $\partial M$. Let $\omega \in W^{d, 1}_\loc(\wedge^k T^* U)$. \emph{The tangential part $\omega_T$ of $\omega$ vanishes weakly on $D$} if the extension of $d\omega$ by zero to $(U' \setminus \partial M) \cup (U \cap \intr(D))$ is the weak differential of the extension of $\omega$ by zero to $(U' \setminus \partial M) \cup (U \cap \intr(D))$. This is only dependent on $U$, $M$ and $D$ and not the choice of $U'$. The space of $\omega \in W^{d,p}(\wedge^k T^* U)$ with a weakly vanishing $\omega_T$ on $D$ is denoted $W^{d,p}_{T(D)}(\wedge^k T^* U)$ and a similar notation $W^{d,p}_{T(D), \loc}(\wedge^k T^* U)$ is used for the locally integrable version.

The Lipschitz Riemannian metric $g$ on $\cR$ defines a Hodge star operator $\hodge_g$ on $\cR$. The weak codifferential is defined by $d^* \omega = (-1)^{n(k-1) + 1} \hodge_g d \hodge_g \omega$, whenever $\omega \in L^1_\loc(\wedge^k T^* U)$ is such that $\hodge_g \omega$ is weakly differentiable. The space of all forms $\omega \in L^p(\wedge^k T^* U)$ with a weak codifferential $d^* \omega \in L^p(\wedge^{k-1} T^* U)$ is denoted $W^{d^*,p}(\wedge^k T^* U)$ and locally integrable versions of the spaces are defined accordingly. 

For $\omega \in W^{d^*,1}_\loc(\wedge^k T^* U)$, the \emph{normal part $\omega_N$ of $\omega$ vanishes weakly on $D$} if the tangential part $(\hodge_g\omega)_T$ of $\hodge_g \omega$ vanishes weakly on $D$. In particular, this occurs if the extension of $d^* \omega$ by zero to $(U' \setminus \partial M) \cup (U \cap \intr(D))$ is the weak codifferential of the extension of $\omega$ by zero to $(U' \setminus \partial M) \cup (U \cap \intr(D))$. We denote the space of Sobolev $k$-forms $\omega \in W^{d^*,p}(\wedge^k T^* U)$ with weakly vanishing normal part on $D$ by $W^{d^*,p}_{N(D)}(\wedge^k T^* U)$ and a corresponding locally integrable version $W^{d^*,p}_{N(D), \loc}(\wedge^k T^* U)$ is also defined.

We note that the spaces $W^{d,p}_{T(D)}(\wedge^k T^* U)$ and $W^{d^*, p}_{N(D)}(\wedge^k T^* U)$ are complete. Indeed, this is a direct consequence of the completeness of $W^{d,p}(\wedge^k T^* U)$ and $W^{d^*, p}(\wedge^k T^* U)$, as taking extensions by zero past $\intr(D)$ has no effect on $L^p$-convergence.

We prove a version of Lemma \ref{lem:W_infinity_approximation} for $W^{d,p}_{T(D)}(\wedge^k T^* M)$. For smooth domains $M$ in smooth manifolds $\cR$, the case $D = \partial M$ can be found in \cite[Corollary 3.8]{Iwaniec-Scott-Stroffolini}. Our proof is somewhat similar to theirs, except the handling of charts that meet $\partial D$ requires some technical modifications.

\begin{lemma}\label{lem:W_infinity_approx_boundary}
	Let $M$ be a closed Lipschitz $n$-submanifold in an oriented Lipschitz Riemannian $n$-manifold $\cR$ without boundary, let $U \subset M$ be open in $M$ and let $D$ be a closed Lipschitz $(n-1)$-submanifold in $\partial M$. If $p \in [1, \infty)$, then every $\omega \in W^{d,p}_{T(D)}(\wedge^k T^* U)$ can be approximated in $\norm{\cdot}_{W^{d,p}}$-norm by $\omega_j \in W^{d,\infty}_{T(D)}(\wedge^k T^* U)$ with $\spt \omega_j \subset U \setminus D$.
\end{lemma}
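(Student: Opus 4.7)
The plan is to prove the lemma by a standard partition-of-unity reduction to local approximations in bilipschitz charts of four types: (a) interior charts with $U_i \cap \partial M = \emptyset$; (b) boundary charts with $U_i \cap \partial M \subset \intr(D)$; (c) boundary charts with $U_i \cap \partial M \subset \intr(\partial M \setminus D)$; and (d) corner charts meeting $\partial D$, chosen so as to straighten both $\partial M$ and $\partial D$ simultaneously, i.e.\ so that $\phi_i^{-1}(D \cap U_i) = \{x_n = 0,\, x_{n-1} \geq 0\} \cap \Omega_i$ and $\phi_i^{-1}((\partial M \setminus D) \cap U_i) = \{x_n = 0,\, x_{n-1} \leq 0\} \cap \Omega_i$. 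The existence of such straightened corner charts follows from iterated Lipschitz collar-neighborhood arguments, applied first to $\partial M$ in $\cR$ and then to $\partial D$ as a Lipschitz $(n-2)$-submanifold of $\partial M$. With a subordinate Lipschitz partition of unity $\{\eta_i\}$, I would approximate each compactly supported $\eta_i \omega$ separately, with per-chart error $2^{-i}\eps$, and sum.

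For cases (a)--(c), standard techniques suffice. Case (a) is direct mollification in the chart. For case (b), I would follow the Iwaniec-Scott-Stroffolini construction: extend $\alpha := \phi_i^*(\eta_i \omega)$ by zero across $\partial \H^n_+ \cap \Omega_i$ using the weakly vanishing tangential part to get $\tilde\alpha \in W^{d,p}(\R^n)$, translate by $\delta e_n$ so that $\spt T_\delta \tilde\alpha \subset \{x_n \geq \delta\}$, and mollify by $\rho_\epsilon$ with $\epsilon < \delta/2$. Continuity of translation and mollification in $L^p$ gives convergence $(T_\delta \tilde\alpha) * \rho_\epsilon \to \tilde\alpha$ in $W^{d,p}(\R^n)$, and the pullback by $(\phi_i^{-1})^*$ is a $W^{d,\infty}$ form supported in $\intr(M) \cap U_i$, disjoint from $D$. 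For case (c), the tangential condition is vacuous in the chart, so any standard density result for $W^{d,p}(\H^n_+)$ (e.g.\ via reflection-mollification) produces a $W^{d,\infty}$ approximation whose support may touch $\partial M$ but is disjoint from $D$ since $U_i \cap D = \emptyset$.

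Case (d) is the technical heart of the proof. In the straightened chart, with $D_i = \{x_n = 0, x_{n-1} \geq 0\} \cap \Omega_i$ and $E_i = \{x_n = 0, x_{n-1} \leq 0\} \cap \Omega_i$, pick a smooth cutoff $\psi \colon \R \to [0,1]$ with $\psi \equiv 1$ on $[1,\infty)$ and $\psi \equiv 0$ on $(-\infty,0]$, and split $\alpha = \psi(x_{n-1})\alpha + (1-\psi(x_{n-1}))\alpha$. The first summand $\psi(x_{n-1})\alpha$ vanishes on $\{x_{n-1} \leq 0\}$, hence on $E_i$, so its tangential part vanishes weakly on the whole $\partial\H^n_+ \cap \Omega_i$ (on $D_i$ by inheritance from $\alpha$, on $E_i$ because the summand is zero there); thus case (b) applies to it, yielding an approximation supported in $\intr(\H^n_+) \cap U_i$. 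For the remainder $(1-\psi(x_{n-1}))\alpha$, which is supported in $\{x_{n-1} \leq 1\}$ and may still meet $D_i$ in the strip $\{0 \leq x_{n-1} \leq 1\}$, I would iterate using rescaled cutoffs $\psi_s(t) = \psi(t/s)$ with $s \to 0^+$, peeling off further case-(b)-compatible pieces and leaving remainders supported on ever thinner strips $\{x_{n-1} \leq s\}$ around $\partial D_i$. The convergence of this peeling in $W^{d,p}$ relies on a Hardy-type estimate for the tangential part of $\alpha$ near $D_i$, which holds because $\alpha \in W^{d,p}_{T(D)}$ and $D_i$ is a flat Lipschitz portion of $\partial \H^n_+$; this estimate balances the $s^{-1}$-blowup of $|d\psi_s|$ against the shrinking $L^p$-mass of $\alpha^T$ on strips of width $s$.

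The principal obstacle is the convergence analysis in case (d): simultaneously preserving the weakly vanishing tangential condition at each peeling step, ensuring that each peeled piece fits into the case-(b) construction, and quantitatively balancing the cutoff derivative against the tangential Hardy estimate and dominated convergence on slabs near $\partial D$. Once these parameters ($s$ and the case-(b) parameters $\delta,\epsilon$) are matched appropriately, summing the local approximations via the partition of unity yields the desired $W^{d,\infty}_{T(D)}$-approximation with support in $U \setminus D$.
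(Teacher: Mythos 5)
Your overall structure — locally finite cover with four chart types (interior, inside $\intr D$, inside $\partial M \setminus D$, and corner/mixed charts at $\partial D$), a subordinate $W^{d,\infty}$ partition of unity, and reduction to single-chart approximations — matches the paper's approach, as does your identification of the corner charts as the technical heart. You are also correct that cases (a)--(c) are handled by standard translation-and-mollification in the spirit of Iwaniec--Scott--Stroffolini. Where you diverge is the mechanism for the corner case, and there your argument has a genuine gap.

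In case (d), your peeling argument requires that $d\psi_s \wedge \alpha \to 0$ in $L^p$ as $s \to 0$, where $\psi_s$ is a cutoff of $x_{n-1}$, i.e.\ a Hardy-type estimate of the form $s^{-p}\int_{\{0 < x_{n-1} < s\}} \abs{\alpha}^p \to 0$. You claim this follows because $\alpha \in W^{d,p}_{T(D)}$, but this is not available: the space $W^{d,p}$ controls only $d\alpha$, not the full gradient of $\alpha$, so the Hardy inequality has no purchase, and moreover the weakly vanishing tangential condition on $D_i \subset \{x_n = 0\}$ constrains behavior transversal to $\partial M$ (the $x_n$-direction) rather than transversal to $\partial D$ (the $x_{n-1}$-direction where your cutoff acts). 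A concrete counterexample: take $\alpha = f(x_{n-1})\,dx_n$ with $f, f' \in L^p$ and $f(0) \neq 0$. Then $\alpha$ has weakly vanishing tangential part on all of $\partial \H^n_+$ automatically (since $\alpha$ has no tangential component), so $\alpha \in W^{d,p}_{T(D)}$, yet $\norm{d\psi_s \wedge \alpha}_{L^p}^p \sim s^{-p} \int_0^s \abs{f}^p \sim s^{1-p}\abs{f(0)}^p \to \infty$ for $p > 1$. So $(1-\psi_s)\alpha$ does not converge to zero in $W^{d,p}$, and the peeling does not close. Note also that $d\psi_s \wedge \alpha$ involves both the tangential and normal components of $\alpha$, so restricting attention to "the tangential part" does not isolate the problematic term.

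The paper avoids the Hardy inequality entirely with a different idea: before reflecting and mollifying, it composes with a further bilipschitz chart $\phi_i'$ that bends the straightened domain so that $M$ becomes $\{x_n \geq \max(0, x_1)\}$, $D$ becomes the slanted half-plane $\{x_n = x_1,\, x_1 > 0\}$ and $\partial M \setminus D$ becomes $\{x_n = 0,\, x_1 < 0\}$. After extension by zero past $D$ and reflection across $\{x_n = 0\}$, the resulting form vanishes on the open wedge $\{\abs{x_n} < x_1\}$, so a single translation in the $x_1$-direction (rather than the $x_n$-direction used in cases (b) and (c)) pushes the support to a positive distance from that wedge, and mollification then preserves the support condition. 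This replaces the delicate quantitative balance your approach requires with a purely geometric repositioning, and is the key idea you would need to import to make case (d) work.
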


Before we begin the proof, we recall the following reflection property of $W^{d,p}$-forms in the Euclidean setting. It is discussed in \cite[p.48, 50]{Iwaniec-Scott-Stroffolini}. 

\begin{lemma}\label{lem:reflection_in_Rn}
	Let $B$ be the unit ball $B^n(0, 1)$ in $\R^n$ and suppose that $\omega \in W^{d,p}(\wedge^k T^* (B \cap \H^n_+))$ for some $p \in [1, \infty]$. Extend $\omega$ to $B \cap \H^n_-$ by $\Psi^*(\omega) = -\omega$, where $\Psi \colon \R^n \to \R^n$ is the reflection map defined by $\Psi(x_1, \dots, x_n) = \Psi(x_1, \dots, x_{n-1}, -x_n)$. Then the extended $\omega$ is in $W^{d, p}(\wedge^k T^* B)$.
\end{lemma}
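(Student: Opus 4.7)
The plan is to define the natural candidate for the weak exterior derivative of the extended form $\tilde\omega$ on $B$ and to verify the weak-derivative identity by pairing against smooth compactly supported test forms. I would define $\widehat{d\omega}$ on $B$ by setting $\widehat{d\omega}|_{B\cap\H^n_+}=d\omega$ and imposing the same reflection rule $\Psi^*\widehat{d\omega}=-\widehat{d\omega}$ on the lower half-ball. Because $\Psi$ is a Euclidean isometry and pullback by an isometry preserves the pointwise norm of a form, a direct change-of-variables computation yields
\[
    \norm{\tilde\omega}_{L^p(B)}^p = 2\norm{\omega}_{L^p(B\cap\H^n_+)}^p
\]
and similarly for $\widehat{d\omega}$, so both extensions lie in $L^p(B)$.

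To check that $\widehat{d\omega}$ is a weak exterior derivative of $\tilde\omega$, I would fix an arbitrary $\eta\in C_0^\infty(\wedge^{n-k-1}T^*B)$ and split each of $\int_B\tilde\omega\wedge d\eta$ and $\int_B\widehat{d\omega}\wedge\eta$ into the parts over $B\cap\H^n_+$ and $B\cap\H^n_-$. Applying the diffeomorphism $\Psi$ to the lower-half-ball pieces, together with $\Psi^2=\id$, the reflection rule, and the orientation-reversing property of $\Psi$, both integrals can be rewritten as integrals over the upper half-ball paired with $\tilde\eta := \eta + \Psi^*\eta$, the $\Psi$-symmetrization of $\eta$. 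The desired identity therefore collapses to
\[
    \int_{\H^n_+}\omega\wedge d\tilde\eta = (-1)^{k+1}\int_{\H^n_+}d\omega\wedge\tilde\eta.
\]
This is immediate from the definition of the weak $d$ if $\tilde\eta$ were compactly supported in the open half-ball; in general, Stokes' theorem applied to $d(\omega\wedge\tilde\eta)$ shows that the two sides differ by the boundary term $(-1)^k\int_{\partial\H^n_+}\omega\wedge\tilde\eta$. Since $\tilde\eta$ is $\Psi$-invariant by construction, its pullback to $\partial\H^n_+$ has no normal component and equals $2\,\eta_T|_{\partial\H^n_+}$, while the restriction of $\omega$ to $\partial\H^n_+$ is exactly its tangential trace $\omega_T|_{\partial\H^n_+}$. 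Hence the boundary contribution reduces to $2(-1)^k\int_{\partial\H^n_+}\omega_T\wedge\eta_T$, which vanishes in the setting where this lemma is actually invoked in the paper, namely for forms whose tangential trace along $\partial\H^n_+$ is zero (as for the $W^{d,p}_{T(D)}$-forms arising in the proof of Lemma~\ref{lem:W_infinity_approx_boundary}).

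The main technical obstacle will be making the above integration-by-parts step rigorous for a general $W^{d,p}$ form $\omega$, since the tangential trace $\omega_T|_{\partial\H^n_+}$ is a priori only a distribution in a negative-order Sobolev space on $\partial\H^n_+$. The cleanest approach is to first establish the identity when $\omega$ is smooth up to $\partial\H^n_+$, where Stokes' theorem applies classically and the trace-vanishing condition is a pointwise statement, and then to pass to the general $W^{d,p}$ case via density of such forms on the Lipschitz half-ball $B\cap\H^n_+$ (obtained by standard Meyers--Serrin-type arguments combined with a small inward translation in the $x_n$-direction) together with the $L^p$-continuity of the reflection operator established at the start.
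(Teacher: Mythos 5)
Your change-of-variables reduction is correct and isolates the crux: with the paper's stated reflection rule $\Psi^*\tilde\omega=-\tilde\omega$, the weak-derivative identity reduces to a boundary contribution $2(-1)^k\int_{\partial\H^n_+}\omega_T\wedge\eta_T$ coming from the symmetrized test form $\tilde\eta=\eta+\Psi^*\eta$. But this term does not vanish under the lemma's hypotheses, which impose no trace condition on $\omega$, and your claim that the lemma is ``only invoked for forms whose tangential trace vanishes'' is not a proof of the stated lemma --- it is also inaccurate about the paper: in the proof of Lemma~\ref{lem:poincare_with_boundary_vanishing}, the reflection is applied to $\phi^*\omega$ near a boundary point $x\in\partial M\setminus D$, where the only constraint on $\omega$ is closedness; its tangential trace on that piece of $\partial M$ is generically nonzero. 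Indeed, with the sign $\Psi^*\tilde\omega=-\tilde\omega$ the conclusion is false as stated: for $n=2$, $k=1$, $\omega=dx_1$ on $B\cap\H^2_+$, the rule forces the coefficient of $dx_1$ to be extended as an odd function of $x_2$, and the distributional $d\tilde\omega$ then contains the surface measure $-2\,\delta_{\{x_2=0\}}\,dx_1\wedge dx_2\notin L^p(B)$.

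The resolution is a sign flip. The rule $\Psi^*\tilde\omega=-\tilde\omega$ extends the \emph{tangential} components $a_I\,dx_I$ ($n\notin I$) oddly, which is exactly what produces the offending boundary term; what the lemma needs is the symmetric rule $\Psi^*\tilde\omega=+\tilde\omega$, which extends tangential components evenly (the analogue of the classical even Sobolev reflection of functions) and normal components oddly. Redoing your lower-half-ball computation under this convention picks up an extra minus sign, so the symmetrized test form becomes $\tilde\eta=\eta-\Psi^*\eta$, which is $\Psi$-\emph{anti}symmetric. The tangential components of an antisymmetric form are odd in $x_n$ and hence vanish on $\{x_n=0\}$, so $\iota^*\tilde\eta\equiv 0$ on $\partial\H^n_+$ and the Stokes boundary term vanishes for every $\omega$ smooth up to the boundary, with no trace hypothesis at all. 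The density step you describe then carries the identity to general $\omega\in W^{d,p}(\wedge^kT^*(B\cap\H^n_+))$. In short: your overall structure is right, but as written the argument cannot close; fix the sign of the reflection (or, if one insists on the odd reflection, the lemma needs an explicit vanishing-tangential-trace hypothesis, which would then break its use in Lemma~\ref{lem:poincare_with_boundary_vanishing}).
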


\begin{proof}[Proof of Lemma \ref{lem:W_infinity_approx_boundary}]
	Let $B$ be the unit ball $B^n(0, 1)$ in $\R^n$. We can take a locally finite cover of $U$ with bilipschitz charts which are of the following three types:
	\begin{enumerate}
		\item\label{enum:case_interior} $\phi_i \colon B \to U_i$, where $U_i \subset \intr(M)$;
		\item\label{enum:case_boundary_nonvanishing} $\phi_i \colon B \to U_i$, where $\phi_i(B \cap \H^n_+) \subset \intr M$, $\phi_i(B \cap \H^n_-) \subset \cR \setminus M$ and $\phi_i(B \cap \partial \H^n_+) \subset \intr D$;
		\item\label{enum:case_boundary_vanishing} $\phi_i \colon B \to U_i$, where $\phi_i(B \cap \H^n_+) \subset \intr M$, $\phi_i(B \cap \H^n_-) \subset \cR \setminus M$ and $\phi_i(B \cap \partial \H^n_+) \subset \partial M \setminus D$;
		\item\label{enum:case_boundary_mixed} $\phi_i \colon B \to U_i$, where 
		\begin{itemize}
			\item $\phi_i(B \cap \H^n_+) \subset \intr M$,
			\item $\phi_i(B \cap \H^n_-) \subset \cR \setminus M$,
			\item $\phi_i(B \cap \{x \in \R^n : x_n = 0, x_1 > 0 \}) \subset \intr D$,
			\item $\phi_i(B \cap \{x \in \R^n : x_n = 0, x_1 < 0 \}) \subset \partial M \setminus D$,
			\item and $\phi_i(B \cap \{x \in \R^n : x_n = 0, x_1 = 0 \}) \subset \partial D$.
		\end{itemize}
	\end{enumerate}
	Similarly as in the proof of Lemma \ref{lem:W_infinity_approximation}, we can use a $W^{d, \infty}$ partition of unity to reduce the question into the case where $\spt \omega$ is compact and contained in a single chart neighborhood $U_i$. We only present the last case \eqref{enum:case_boundary_mixed}, as it demonstrates all the required techniques and the other cases can be shown by leaving out parts of the argument.
	
	Suppose we are in the last case. We use another bilipschitz chart to further modify our domain. Define $\phi_i' \colon B \to U_i$ by
	\begin{itemize}
		\item $\phi_i'(B \cap \{x \in \R^n : x_n \geq \max(0, x_1)\}) = M \cap U_i$,
		\item $\phi_i'(B \cap \{x \in \R^n : x_n = 0, x_1 < 0 \}) =  U_i \cap (\partial M \setminus D)$,
		\item $\phi_i'(B \cap \{x \in \R^n : x_n = 0, x_1 = 0 \}) = U_i \cap \partial D$,
		\item $\phi_i'(B \cap \{x \in \R^n : x_n = x_1, x_1 > 0\}) = U_i \cap \intr D$.
	\end{itemize}
	We extend $\omega$ by zero outside $M$ and let $\omega' = (\phi_i')^* \omega$ on $B \cap \H^n_+$. By the weak vanishing of $\omega_T$ on $D$, we have that this extension by zero is in $W^{d,p}(\wedge^k T^* \phi_i(B \cap \H^n_+))$. Consequently, $d\omega' = (\phi_i')^* d\omega$ weakly, $\omega' \in W^{d,p}(\wedge^k T^* (B \cap \H^n_+))$ and $\omega' \equiv 0$ on $\{x \in B\cap \H^n_+ : x_n < x_1\}$.
	
	We then extend $\omega'$ to $B$ via the reflection Lemma \ref{lem:reflection_in_Rn}. The form $\omega'$ is compactly supported in $W^{d, p}(\wedge^k T^* B)$ and $\omega'$ vanishes on the set of points $\{x \in B : \abs{x_n} < x_1\}$. Let $\tau_t \colon \R^n \to \R^n$ denote the translation $x \mapsto (x_1 - t, x_2, x_3, \dots, x_n)$. If $j > 0$, then for small enough $t$, $\tau_t^*\omega'$ and $\tau_t^*d\omega'$ are well-defined compactly supported forms in $B^n(0, 1)$ and
	\[
		\int_{B^n(0, 1)} (\abs{\omega' - \tau_t^* \omega'}^p + \abs{d\omega' - d\tau_t^* \omega'}^p) < \eps_j.
	\]
	Moreover, the translated form $\tau_t^* \omega'$ vanishes in a neighborhood of the set $B \cap \{x : \abs{x_n} \leq x_1\}$. Hence, this remains true for convolutions with a small enough mollifying kernel and we may use a convolution to pick a smooth approximation $\omega_j' \in C^\infty_0(\wedge^k T^* (B \setminus \{x : \abs{x_n} \leq x_1\}))$ with $\smallnorm{\omega_j' - \tau_t^* \omega'}_p + \smallnorm{d\omega_j' - d\tau_t^* \omega'}_p < \eps_j$. Then $((\phi_i')^{-1})^* \omega_j'$ yields the desired approximation in the chart $U_i$, concluding the proof of this case. 
\end{proof}

\begin{remark}
	Note that the same argument as above can be used in the smooth manifold setting to prove a smooth approximation version of Lemma \ref{lem:W_infinity_approx_boundary}. The only change one has to do is to replace the condition $x_n \geq x_1$ in the definition of $\phi_i'$ by a condition of the form $x_n \geq f(x_1)$, where $f$ is a smooth non-decreasing function such that $f(t) = 0$ if $t \leq 0$ and $f(t) > 0$ if $t > 0$.
\end{remark}

We end this section with a lemma about wedge products and vanishing tangential parts, the proof of which takes advantage of Lemma \ref{lem:W_infinity_approx_boundary}.

\begin{lemma}\label{lem:mixedboundarydata}
	Let $M$ be a closed Lipschitz $n$-submanifold in an oriented Lipschitz Riemannian $n$-manifold $\cR$ without boundary, let $p, q, r \in [1, \infty]$ with $p^{-1} + q^{-1} \geq r^{-1}$ and let $U \subset M$ be open in $M$. Let $D \subset \partial M$ be a closed Lipschitz $(n-1)$-submanifold and let $E = \partial M \setminus\intr D$.  If $\omega \in W^{d,p}_{T(D)}(\wedge^kT^*U)$ and $\tau \in W^{d,q}_{T(E)}(\wedge^{l}T^*U)$, where $0 \le l+k \leq n$, then $\omega \wedge \tau \in W^{d,r}_{T(\partial M)}(\wedge^{l+k}T^*U)$.
\end{lemma}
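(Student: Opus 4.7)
The plan is to approximate $\omega$ and $\tau$ separately by essentially bounded forms whose supports avoid $D$ and $E$, respectively, then observe that the wedge products of the approximants are supported away from all of $\partial M = D \cup E$, and finally pass to the limit.

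First, by Lemma~\ref{lem:W_infinity_approx_boundary}, I choose sequences $\omega_j \in W^{d,\infty}_{T(D)}(\wedge^k T^* U)$ with $\spt \omega_j \subset U \setminus D$ converging to $\omega$ in the $W^{d,p}$-norm, and $\tau_j \in W^{d,\infty}_{T(E)}(\wedge^l T^* U)$ with $\spt \tau_j \subset U \setminus E$ converging to $\tau$ in the $W^{d,q}$-norm. A direct check by smooth approximation in charts, analogous to the argument used in Lemma~\ref{lem:bilip_chain_rule}, gives $\omega_j \wedge \tau_j \in W^{d,\infty}(\wedge^{k+l} T^* U)$ together with the weak Leibniz rule
\[
d(\omega_j \wedge \tau_j) = d\omega_j \wedge \tau_j + (-1)^k \omega_j \wedge d\tau_j.
\]

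The key geometric step is the support inclusion. Since $E = \partial M \setminus \intr(D)$, we have $D \cup E = \partial M$, and therefore
\[
\spt(\omega_j \wedge \tau_j) \cup \spt d(\omega_j \wedge \tau_j) \subset \spt \omega_j \cap \spt \tau_j \subset (U \setminus D) \cap (U \setminus E) = U \setminus \partial M.
\]
At every $x \in \partial M \cap U$, either $x \in D$, in which case $\omega_j$ vanishes in a neighborhood of $x$ in $U$ (by closedness of $\spt \omega_j$ in $U$ and the normality of manifolds), or $x \in E$, in which case $\tau_j$ vanishes in such a neighborhood. Either way, $\omega_j \wedge \tau_j$ vanishes in a neighborhood of $\partial M$ in $U$, so its extension by zero across $\partial M$ to the ambient open set is a $W^{d,\infty}$ form. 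This places $\omega_j \wedge \tau_j$ in $W^{d,\infty}_{T(\partial M)}(\wedge^{k+l} T^* U)$.

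To conclude, I pass to the limit using H\"older's inequality. Writing the telescoping differences $\omega_j \wedge \tau_j - \omega \wedge \tau = (\omega_j - \omega) \wedge \tau_j + \omega \wedge (\tau_j - \tau)$ and the analogous expression for the Leibniz terms of $d(\omega_j \wedge \tau_j)$, H\"older with the exponents satisfying $p^{-1} + q^{-1} \geq r^{-1}$ together with the uniform boundedness of $\norm{\omega_j}_{W^{d,p}}$ and $\norm{\tau_j}_{W^{d,q}}$ yields $L^r$-convergence of both $\omega_j \wedge \tau_j$ and $d(\omega_j \wedge \tau_j)$. Thus $\omega \wedge \tau \in W^{d,r}$ with weak differential $d\omega \wedge \tau + (-1)^k \omega \wedge d\tau$, and the completeness of $W^{d,r}_{T(\partial M)}$, noted earlier in this section, forces the limit into $W^{d,r}_{T(\partial M)}(\wedge^{k+l} T^* U)$. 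The main technical obstacle I anticipate is tracking the H\"older bounds carefully across the full exponent range, including the endpoint cases where $p$ or $q$ equals $\infty$; the support-based tangential vanishing itself is conceptually clean, provided one exploits that Lemma~\ref{lem:W_infinity_approx_boundary} furnishes approximants with support avoiding \emph{all} of $D$ (not merely $\intr D$), which is what makes the complementarity $D \cup E = \partial M$ translate into neighborhood vanishing.
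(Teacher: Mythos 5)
Your proposal is correct and follows essentially the same route as the paper: approximate $\omega$ and $\tau$ via Lemma~\ref{lem:W_infinity_approx_boundary} so that the supports miss $D$ and $E$ respectively, observe that the wedge products of the approximants then have support disjoint from $\partial M = D \cup E$ and hence lie in $W^{d,\infty}_{T(\partial M)}$, and conclude by passing to the limit and invoking completeness of $W^{d,r}_{T(\partial M)}$. Your telescoping-plus-H\"older step is just a more explicit phrasing of the paper's assertion that $\omega_i \wedge \tau_i \to \omega \wedge \tau$ in the $W^{d,r}$-norm.
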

\begin{proof}
	By standard properties of wedge products of weakly differentiable forms, we have $\omega \wedge \tau \in W^{d, r}(\wedge^{k+l} T^* U)$ with $d(\omega \wedge \tau) = d\omega \wedge \tau + (-1)^k \omega \wedge d\tau$. By Lemma \ref{lem:W_infinity_approx_boundary}, we may approximate $\omega$ by $\omega_i \in W^{d,\infty}_{T(D)}(\wedge^kT^*U)$ such that $\spt \omega_i \subset M \setminus D$ and $\tau$ by $\tau_i \in W^{d,\infty}_{T(E)}(\wedge^l T^*U)$ such that $\spt \tau_i \subset M \setminus E$. Consequently, $\spt(\omega_i \wedge \tau_i)$ does not meet $\partial M$ and hence  $\omega_i \wedge \tau_i \in W^{d,\infty}_{T(\partial M)} (\wedge^{k+l} T^* U)$. Since also $\omega_i \wedge \tau_i \to \omega \wedge \tau$ in the $W^{d,r}$-norm on $U$, the claim follows by completeness of $W^{d,r}_{T(\partial M)}(\wedge^{k+l} T^* U)$. 
\end{proof}

\section{Lipschitz chains and integration}\label{sec:lipintegration}

Let $\cR$ be a Lipschitz $n$-manifold without boundary. A map $\sigma \colon \Delta_k \to \cR$ that is Lipschitz, where $\Delta_k \subset \R^k$ is the (closed) standard $k$-simplex, is called a \emph{singular Lipschitz $k$-simplex}. Given a field of coefficients $\K$, a \emph{Lipschitz $k$-chain on $\cR$ with coefficients in $\K$} is a formal sum $\sigma = k_1 \sigma_1 + \dots + k_m \sigma_m$ of singular Lipschitz $k$-simplices $\sigma_i$, where $k_i \in \K$. We denote the space of all such Lipschitz $k$-chains by $C^{\lip}_k(\cR; \K)$. If $U$ is a subset of $\cR$, then $C^{\lip}_k(U; \K)$ is the space of all $\sigma = k_1 \sigma_1 + \dots + k_m \sigma_m \in C^{\lip}_k(\cR; \K)$ such that the image of each $\sigma_i$ is contained in $U$. We also obtain a boundary map $\partial \colon C^{\lip}_{k+1}(U; \K) \to C^{\lip}_{k}(U; \K)$ in the usual way.

\subsection{Lipschitz chains in Euclidean domains}

We first go over the key properties of the space $C^{\lip}_k(\Omega; \R)$, where $\Omega \subset \R^n$ is a connected open domain. 

Suppose that $\sigma \colon \Delta_k \to \Omega$ is a singular Lipschitz $k$-simplex. Rademacher's theorem yields a derivative $D\sigma(x) \colon \R^k \to \R^n$ for almost every $x \in \Delta_k$. We may define a corresponding Borel measure $\nu_{\sigma}$ for $\sigma$ on $\Omega$ by
\begin{equation}\label{eq:nu_sigma_def}
	\int_\Omega \eta \dd\nu_{\sigma} = \int_{\Delta_k} (\eta \circ \sigma) \abs{J_{\sigma}} \vol_k.
\end{equation}
for any Borel measurable function $\eta \colon \Omega \to \R$. Here, the absolute value of the Jacobian $\abs{J_{\sigma}}$ is defined a.e.\ on $\Delta_k$ by
\begin{equation}\label{eq:Jacobian}
	\abs{J_{\sigma}}(x) = \sqrt{\det D^T\sigma(x) D\sigma(x)}.
\end{equation}
Moreover, we can extend this definition to Lipschitz $k$-chains $\sigma = k_1 \sigma_1 + \dots + k_m \sigma_m$ by $\nu_{\sigma} = k_1 \nu_{\sigma_1} + \dots + k_m \nu_{\sigma_m}$. Using the measure $\nu_{\sigma}$, we define the modulus of a family $\cS$ of Lipschitz $k$-chains, denoted 
\[
	\moddens_p(\cS) = \moddens_p\left(\left\{ \overline{\nu_{\sigma}} : \sigma \in \cS \right\}\right).
\]

Let $\sigma \colon \Delta_k \to \Omega$ be a singular Lipschitz $k$-simplex in $\Omega$ and let $\omega$ be a Borel differential $k$-form on $\Omega$ such that $\abs{\omega} \in L^1(\nu_\sigma)$. We may define the integral of $\omega$ over the surface $\sigma$. The pull-back $\sigma^* \omega$ is a well-defined measurable form since $\omega$ is Borel. Moreover, since $\sigma^* \omega$ is a top-dimensional form in $\Delta_k$, we have
\begin{align*}
	\abs{\sigma^* \omega_x}
	&\leq \abs{\omega_{\sigma(x)}} \abs{J_\sigma}
\end{align*}
and
\begin{equation}\label{eq:norm_integral_estimate}
	\int_{\Delta_k} \abs{\sigma^* \omega} \vol_k 
	\leq \int_{\Delta_k} (\abs{\omega} \circ \sigma) \abs{J_\sigma} \vol_k
	= \int_\Omega \abs{\omega} \dd \nu_S < \infty.
\end{equation}
Thus, $\sigma^* \omega$ is integrable. So 
\begin{equation}\label{eq:surface_integral_def}
	\int_{\sigma} \omega := \int_{\Delta_k} \sigma^* \omega.
\end{equation}
The definition extends to $\sigma \in C_k^{\lip}(\Omega; \R)$ linearly.

We crucially obtain a version of the Fuglede lemma for differential forms.

\begin{lemma}\label{lem:fuglede_for_forms_eucl}
	Let $\Omega \subset \R^n$ be a connected open domain and let $p \in [1, \infty)$. If $\omega \in L^p(\wedge^k T^*\Omega)$ and $\omega_i \to \omega$ in $L^p(\wedge^k T^*\Omega)$, then there exists a subsequence $\omega_{i_j}$ of $\omega_i$ such that 
	\[
		\int_\sigma \omega_{i_j} \to \int_\sigma \omega
	\]
	for $p$-a.e.\ $\sigma \in C_k^{\lip}(\Omega; \R)$.
\end{lemma}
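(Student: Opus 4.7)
The plan is to deduce this lemma from the classical Fuglede convergence property for measure families (Lemma \ref{lem:Fuglede_properties}(\ref{enum:fuglede_convergence})) together with the pointwise norm estimate \eqref{eq:norm_integral_estimate}. The key observation is that the integral of a $k$-form over a singular Lipschitz $k$-simplex is controlled by the integral of its pointwise norm against the associated measure $\nu_\sigma$.

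First, I would pass from the $L^p$-convergence of the forms to the $L^p$-convergence of their pointwise norms. Since $\bigl|\, |\omega_i| - |\omega|\, \bigr| \leq |\omega_i - \omega|$ almost everywhere on $\Omega$, the function $|\omega_i - \omega|$ lies in $L^p(\Omega)$ and tends to $0$ in $L^p(\Omega)$. Applying Lemma \ref{lem:Fuglede_properties}(\ref{enum:fuglede_convergence}) to this sequence (with limit $0$), I extract a subsequence $\omega_{i_j}$ such that
\[
	\int_\Omega |\omega_{i_j} - \omega| \dd \overline{\nu_\sigma} \longrightarrow 0
\]
for $p$-almost every measure in $\cM_\mu$, and in particular for $p$-almost every $\sigma \in C_k^{\lip}(\Omega; \R)$.

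Next, I would address the measurability technicality: the definition of $\int_\sigma \omega$ in \eqref{eq:surface_integral_def} requires Borel representatives, whereas $\omega$ and $\omega_i$ are merely Lebesgue measurable. To handle this, I would fix Borel representatives for $\omega$ and for each $\omega_i$. Lemma \ref{lem:Fuglede_properties}(\ref{enum:fuglede_measure_zero}) ensures that the countably many Lebesgue-null sets on which any two choices of representatives disagree are also $\nu_\sigma$-null for $p$-a.e.\ $\sigma$, so the choice is immaterial for a.e.\ chain. After discarding a further $p$-exceptional family of chains, the integrals $\int_\sigma \omega_{i_j}$ and $\int_\sigma \omega$ are all well-defined and the pull-backs are $\nu_\sigma$-a.e.\ unambiguous.

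Finally, for each such $\sigma$, the linearity of the pull-back and the estimate \eqref{eq:norm_integral_estimate} applied to the form $\omega_{i_j} - \omega$ give
\[
	\left| \int_\sigma \omega_{i_j} - \int_\sigma \omega \right|
	= \left| \int_{\Delta_k} \sigma^*(\omega_{i_j} - \omega) \right|
	\leq \int_\Omega |\omega_{i_j} - \omega| \dd \overline{\nu_\sigma},
\]
which tends to $0$ by the first step. This yields the claim. I do not anticipate a genuine obstacle, the only mildly delicate point being the careful bookkeeping of Borel representatives so that the surface integrals are unambiguously defined on the full-modulus set of chains; this is resolved by invoking Lemma \ref{lem:Fuglede_properties}(\ref{enum:fuglede_measure_zero}) on a countable collection of null sets.
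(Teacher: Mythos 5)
Your proof is correct and follows essentially the same route as the paper's: apply Lemma~\ref{lem:Fuglede_properties}\eqref{enum:fuglede_convergence} to the scalar sequence $\abs{\omega_i - \omega} \to 0$ in $L^p(\Omega)$, then conclude via the estimate \eqref{eq:norm_integral_estimate}. Your remark on Borel representatives is a valid technicality the paper leaves implicit; the only cosmetic quibble is that the opening inequality $\bigl|\,\abs{\omega_i} - \abs{\omega}\,\bigr| \leq \abs{\omega_i - \omega}$ is never used, since what matters is simply that $\abs{\omega_i - \omega} \to 0$ in $L^p$, which is the hypothesis itself.
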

\begin{proof}
	The forms $\omega$ and $\omega_i$ are integrable over $\sigma$ for $p$-a.e.\ $\sigma \in C_k^{\lip}(\Omega; \R)$. Lemma \ref{lem:Fuglede_properties}, \eqref{enum:fuglede_convergence} yields a subsequence $\omega_{i_j}$ such that
	\[
		\int_{\Omega} \abs{\omega_{i_j} - \omega} \dd \nu_\sigma \to 0
	\]
	for $p$-a.e.\ $\sigma \in C_k^{\lip}(\Omega; \R)$. It follows using \eqref{eq:norm_integral_estimate} and \eqref{eq:surface_integral_def} that
	\[
		\int_\sigma (\omega_{i_j} - \omega) \to 0
	\]
	for $p$-a.e.\ $\sigma \in C_k^{\lip}(\Omega; \R)$.
\end{proof}

\subsection{Wolfe's theorem} We have a reasonable integral for $\omega \in L^p(\wedge^k T^*\Omega)$ over $p$-a.e.\ Lipschitz $k$-chain $\sigma \in C_k^{\lip}(\Omega; \R)$. However, a much stronger result can be achieved for $\omega \in W^{d, \infty}(\wedge^k T^* \Omega)$. According to \emph{Wolfe's theorem}, there is a definition of an integral of $\omega \in W^{d, \infty}(\wedge^k T^* \Omega)$ over \emph{all} Lipschitz $k$-chains $\sigma \in C_k^{\lip}(\Omega; \R)$, despite the fact that $\omega$ is defined only up to a set of measure zero. Standard references for this can be found in the books of Whitney \cite[Chapter IX]{Whitney_book} and Federer \cite[4.1.19]{Federer_book}.  See also the work of Petit, Rajala and Wenger \cite{Petit-Rajala-Wenger_Sobolev-Wolfe}, which uses methods closer to the setting of this paper.

We state the precise properties of the integral we require. The following Proposition \ref{prop:wolfe_int_properties} and Lemma \ref{lem:bilip_change_of_vars} are reasonably immediate consequences from the results in \cite{Federer_book} and \cite{Whitney_book}. However, for the reader unfamiliar with this theory, we have given a more detailed account of the proof with precise literary references in the appendix of this paper.

\begin{prop}\label{prop:wolfe_int_properties}
	Let $\Omega \subset \R^n$ be a connected open domain. There exists a unique bilinear map
	\[
		W^{d, \infty}(\wedge^k T^* \Omega) \times C_k^{\lip}(\Omega; \R) \to \R, \quad (\omega, \sigma) \mapsto \int_\sigma \omega
	\]
	with the following properties.
	\begin{enumerate}
		\item \label{enum:wolfe_smoothforms} If $\omega \in W^{d, \infty}(\wedge^k T^* \Omega)$ is smooth, then
		\[
			\int_\sigma \omega = \int_{\Delta_k} \sigma^* \omega.
		\] 
		for all singular Lipschitz $k$-simplices $\sigma \colon \Delta_k \to \Omega$.
		\item \label{enum:wolfe_borelforms} If $\omega \in W^{d, \infty}(\wedge^k T^* \Omega)$ is Borel, then for every $p \in [1, \infty)$,
		\[
			\int_\sigma \omega = \int_{\Delta_k} \sigma^* \omega,
		\] 
		for $p$-a.e.\ singular Lipschitz $k$-simplices $\sigma \colon \Delta_k \to \Omega$.
		\item \label{enum:wolfe_stokes} For all $\sigma \in C_k^{\lip}(\Omega; \R)$ and $\omega \in W^{d, \infty}(\wedge^{k-1} T^* \Omega)$, 
		\[
			\int_{\partial \sigma} \omega = \int_{\sigma} d\omega.
		\]
		\item \label{enum:wolfe_convergence} For all $\sigma \in C_k^{\lip}(\Omega; \R)$ and $\omega, \omega_j \in W^{d, \infty}(\wedge^k T^* \Omega)$ where $j \in \N$, if $\omega_j \to \omega$ almost everywhere pointwise and $\sup_{j} \norm{\omega_j}_{W^{d,\infty}}  < \infty$, then 
		\[
			\lim_{j \to \infty} \int_\sigma \omega_j = \int_{\sigma} \omega.
		\]
		\item \label{enum:wolfe_continuity} If $\sigma \in C_k^{\lip}(\Omega; \R)$, $\tau \in C_{k+1}^{\lip}(\Omega; \R)$ and $\omega \in W^{d, \infty}(\wedge^k T^* \Omega)$, then
		\[
			\abs{\int_{\sigma + \partial \tau} \omega} \leq (\norm{\omega}_{L^\infty} + \norm{d\omega}_{L^\infty})(\nu_{\sigma}(\Omega) + \nu_\tau(\Omega)).
		\]
		\item \label{enum:wolfe_restriction} If $\Omega' \subset \R^n$ is another domain, $\omega \in W^{d, \infty}(\wedge^k T^* \Omega)$ and $\omega' \in W^{d, \infty}(\wedge^k T^* \Omega')$ are such that $\omega\vert_{\Omega \cap \Omega'} = \omega'\vert_{\Omega \cap \Omega'}$ and $\sigma \in C^{\lip}_k(\Omega \cap \Omega'; \R)$, then
		\[
			\int_{\sigma} \omega = \int_{\sigma} \omega'.
		\] 
	\end{enumerate}
\end{prop}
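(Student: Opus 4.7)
The pairing is essentially the content of Wolfe's theorem, which identifies $W^{d,\infty}$-forms with flat $k$-cochains on $\Omega$, while Lipschitz chains embed canonically into flat chains. My plan is to establish the pairing by density from smooth forms, using a flat-norm type estimate to extend continuously.

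Uniqueness is the simpler half. Any $\omega \in W^{d,\infty}(\wedge^k T^*\Omega)$ admits a smooth approximation by mollification $\omega_j$ in a precompact neighborhood of $\spt \sigma$ with $\sup_j \norm{\omega_j}_{W^{d,\infty}} < \infty$ and $\omega_j \to \omega$ pointwise a.e. Property \eqref{enum:wolfe_smoothforms} fixes the pairing on each $\omega_j$, and property \eqref{enum:wolfe_convergence} forces any admissible pairing to agree with $\lim_j \int_\sigma \omega_j$. Hence two such bilinear pairings must agree on all singular Lipschitz simplices, and on all of $C_k^{\lip}(\Omega; \R)$ by bilinearity.

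For existence, I first define $\int_\sigma \omega := \int_{\Delta_k} \sigma^*\omega$ on smooth forms, using Rademacher's theorem to guarantee $\sigma^*\omega \in L^1(\Delta_k)$. The core analytic step is to prove the a priori estimate
\[
\left| \int_{\sigma + \partial\tau} \omega \right| \leq \left(\norm{\omega}_{L^\infty} + \norm{d\omega}_{L^\infty}\right)\left(\nu_\sigma(\Omega) + \nu_\tau(\Omega)\right)
\]
for smooth $\omega$ and Lipschitz $\sigma, \tau$. This follows by combining the pointwise bound $|\sigma^*\omega| \leq (|\omega| \circ \sigma)|J_\sigma|$, the classical smooth Stokes theorem $\int_{\partial\tau}\omega = \int_{\tau} d\omega$ (valid for smooth $\omega$ and Lipschitz $\tau$ via the chain rule from Lemma \ref{lem:bilip_chain_rule} applied face-by-face), and the definition \eqref{eq:nu_sigma_def} of $\nu_\sigma$. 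The right-hand side is precisely the flat-norm bound on $\sigma + \partial\tau$ times the $W^{d,\infty}$-norm of $\omega$. Invoking Wolfe's theorem in the form given in \cite[4.1.19]{Federer_book} or \cite[Chapter IX]{Whitney_book}, one obtains a unique continuous extension of the pairing to all $\omega \in W^{d,\infty}(\wedge^k T^*\Omega)$.

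The remaining properties are then verified by passage to limits. Property \eqref{enum:wolfe_stokes} holds for smooth $\omega$ by the classical Stokes theorem and extends by continuity; property \eqref{enum:wolfe_convergence} follows from the continuity of the flat cochain pairing under uniformly bounded pointwise-a.e.\ convergence; property \eqref{enum:wolfe_continuity} is the extension of the a priori bound; and property \eqref{enum:wolfe_restriction} is immediate from the locality of the construction. For \eqref{enum:wolfe_borelforms}, I would apply Lemma \ref{lem:fuglede_for_forms_eucl} to mollifications of a Borel $\omega$, showing $\int_{\Delta_k}\sigma^*\omega_j \to \int_{\Delta_k}\sigma^*\omega$ for $p$-a.e.\ $\sigma$, and then identify this limit with the value given by \eqref{enum:wolfe_convergence}. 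The main obstacle throughout is that $\omega \in W^{d,\infty}$ is only defined almost everywhere while Lipschitz $k$-chains with $k < n$ have image of measure zero in $\Omega$, so the naive pullback $\sigma^*\omega$ is a priori ill-defined; this is precisely the technical barrier resolved by Wolfe's theorem, and the reason the authors relegate the full verification to the appendix.
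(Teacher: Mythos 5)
You take a genuinely different route from the paper. The paper's proof constructs the integral \emph{directly} via Federer's representation theorem: writing $[\sigma] = \sigma_*[\Delta_k]$ as a flat current and invoking \cite[4.1.18]{Federer_book}, one obtains $L^1$-forms $\xi, \zeta$ with $[\sigma](\omega) = \int_{\R^n}(\omega \wedge \xi + d\omega \wedge \zeta)$ for smooth compactly supported $\omega$, and then the right-hand side is simply \emph{taken as the definition} of $\int_\sigma\omega$ for arbitrary $\omega \in W^{d,\infty}(\wedge^k T^*\Omega)$; no density or limiting argument in the form variable is needed. Your proposal instead defines the pairing on smooth forms, proves the estimate \eqref{enum:wolfe_continuity}, and then claims a ``continuous extension'' to all of $W^{d,\infty}$.

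The central gap is in that extension step. Estimate \eqref{enum:wolfe_continuity} controls the pairing in the \emph{chain} variable (the flat norm of $\sigma + \partial\tau$) for a \emph{fixed} form; it does not give continuity in $\omega$. If $\omega_j, \omega_{j'}$ are two mollifications of a general $\omega \in W^{d,\infty}$, applying your bound to $\omega_j - \omega_{j'}$ produces something proportional to $\norm{\omega_j - \omega_{j'}}_{L^\infty} + \norm{d\omega_j - d\omega_{j'}}_{L^\infty}$, which does not tend to zero since $C^0$ is not dense in $L^\infty$. So \eqref{enum:wolfe_continuity} does not make $\int_\sigma \omega_j$ a Cauchy sequence, and there is no ``unique continuous extension'' in any norm topology on $W^{d,\infty}$. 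The fact that the mollified integrals nevertheless converge to a well-defined limit \emph{is} Wolfe's theorem -- it is the theorem, not a formal consequence of your a priori estimate. Your write-up inverts this: it presents \eqref{enum:wolfe_continuity} as ``the core analytic step'' with Wolfe's theorem as a wrapper, whereas in both Whitney's and Federer's treatments the representation/convergence result is the substance and \eqref{enum:wolfe_continuity} falls out afterward. To make your proposal rigorous one would need to either (i) reproduce Whitney's mollification-and-limit construction together with its convergence proof, or (ii) switch to Federer's explicit representation, which is what the paper does.

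A smaller issue concerns uniqueness: you mollify $\omega$ ``in a precompact neighborhood of $\spt\sigma$'' and then apply \eqref{enum:wolfe_convergence}, but that property requires $\omega_j \in W^{d,\infty}(\wedge^k T^*\Omega)$ converging to $\omega$ a.e.\ on all of $\Omega$, which a local mollification does not give. You need to first invoke \eqref{enum:wolfe_restriction} to localize the integral to a precompact subdomain on which mollification produces globally defined, globally convergent smooth forms; without \eqref{enum:wolfe_restriction}, the mollification argument for uniqueness does not close.
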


Suppose that $f \colon \Omega \to \Upsilon$ is a Lipschitz map between two Euclidean domains. The push-forward $f_* \sigma \in C_k^{\lip}(\Upsilon; \R)$ for every $\sigma \in C_k^{\lip}(\Omega; \R)$ is given by $f_* \sigma = \sigma \circ f$ for any Lipschitz $k$-simplex $\sigma$.  The definition is extended to chains linearly.

\begin{lemma}\label{lem:bilip_change_of_vars}
	Let $\Omega, \Upsilon \subset \R^n$ be connected open domains, let $f \colon \Omega \to \Upsilon$ be bilipschitz and let $p \in [1, \infty]$. If $\omega \in W^{d, p}(\wedge^k T^*\Upsilon)$, then 
	\[
		\int_\sigma f^*\omega = \int_{f_* \sigma} \omega.
	\]
	for $p$-a.e.\ $\sigma \in C^{\lip}_k(\Omega; \R)$ and for all $\sigma \in C^{\lip}_k(\Omega; \R)$ when $p = \infty$.
\end{lemma}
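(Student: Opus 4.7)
The plan is to reduce the finite-$p$ case to the $W^{d,\infty}$ case via Fuglede's lemma, and then prove the $W^{d,\infty}$ case by double approximation: first smoothing $\omega$, then smoothing the bilipschitz map $f$.

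For the finite-$p$ reduction, given $\omega \in W^{d,p}(\wedge^k T^*\Upsilon)$, I apply Lemma \ref{lem:W_infinity_approximation} to obtain $\omega_j \in W^{d,\infty}(\wedge^k T^*\Upsilon)$ with $\omega_j \to \omega$ in $L^p$. The bilipschitz estimate \eqref{eq:bilip_Lp_estimate} gives $f^*\omega_j \to f^*\omega$ in $L^p(\Omega)$, so Lemma \ref{lem:fuglede_for_forms_eucl} applied on $\Omega$ produces a subsequence along which $\int_\sigma f^*\omega_j \to \int_\sigma f^*\omega$ for $p$-a.e.\ $\sigma \in C^{\lip}_k(\Omega;\R)$. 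Applying it again on $\Upsilon$, a further subsequence satisfies $\int_\tau \omega_j \to \int_\tau \omega$ for $p$-a.e.\ $\tau \in C^{\lip}_k(\Upsilon; \R)$. To transfer this to $p$-a.e.\ $\sigma$ via $\tau = f_*\sigma$, I verify that $f_*$ pulls $p$-exceptional families back to $p$-exceptional families: if $g \in L^p(\Upsilon)$ is a witness from Lemma \ref{lem:Fuglede_exceptional_lemma}, then $g \circ f \in L^p(\Omega)$, and the Jacobian comparison $L^{-k}\abs{J_\sigma} \leq \abs{J_{f\circ\sigma}} \leq L^k \abs{J_\sigma}$ (following from the pointwise bilipschitz inequality $L^{-1}\abs{D\sigma\, v} \leq \abs{D(f\circ\sigma)\, v} \leq L\abs{D\sigma\, v}$ combined with Hadamard's inequality applied to \eqref{eq:Jacobian}) forces $\int_\Omega (g \circ f)\, d\nu_\sigma \geq L^{-k}\int_\Upsilon g\, d\nu_{f_*\sigma} = \infty$ whenever $f_*\sigma$ lies in the exceptional family. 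A common sub-subsequence then delivers the identity.

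For the $W^{d,\infty}$ case, where the identity is required for every $\sigma$, I locally mollify to obtain smooth $\omega_j$ with $\omega_j \to \omega$ a.e.\ on $\Upsilon$ and bounded $W^{d,\infty}$-norms on compactly contained subsets. By Lemma \ref{lem:bilip_chain_rule} and the Lusin properties of the bilipschitz $f$, $f^*\omega_j \to f^*\omega$ a.e.\ on $\Omega$ with uniformly bounded $W^{d,\infty}$-norms, so Proposition \ref{prop:wolfe_int_properties}\eqref{enum:wolfe_convergence} reduces the problem to smooth $\omega$. For smooth $\omega$, I approximate $f$ by smooth maps $f_i = f * \phi_{1/i}$ on compactly contained pieces of $\Omega$, so that $f_i \to f$ uniformly on compacts with $\abs{Df_i} \leq L$ and $f_i^*\omega$ smooth. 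Two applications of Proposition \ref{prop:wolfe_int_properties}\eqref{enum:wolfe_smoothforms} yield
\[
    \int_\sigma f_i^*\omega = \int_{\Delta_k}(f_i\circ\sigma)^*\omega = \int_{(f_i)_*\sigma}\omega
\]
for every $\sigma$, and Wolfe's convergence \eqref{enum:wolfe_convergence} yields $\int_\sigma f_i^*\omega \to \int_\sigma f^*\omega$.

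The main obstacle is the remaining convergence $\int_{(f_i)_*\sigma}\omega \to \int_{f_*\sigma}\omega$ as $i \to \infty$. A direct dominated-convergence argument on $\Delta_k$ would require $Df_i(\sigma(x)) \to Df(\sigma(x))$ a.e.\ on $\Delta_k$, which can fail whenever $\sigma(\Delta_k)$ meets the non-Lebesgue set of $Df$ in a set of positive $\Delta_k$-measure, as happens when $\sigma$ lacks the Lusin $N^{-1}$-property. I would bypass this with a flat-norm estimate via Proposition \ref{prop:wolfe_int_properties}\eqref{enum:wolfe_continuity} and \eqref{enum:wolfe_stokes}: form the straight-line homotopy $H_i(x, t) = (1-t)f(\sigma(x)) + t f_i(\sigma(x))$ on $\Delta_k \times [0, 1]$, decomposed as a Lipschitz $(k+1)$-chain, together with a side contribution $\sigma'_i$ arising from $\partial \Delta_k \times [0, 1]$, so that $(f_i)_*\sigma - f_*\sigma = \sigma'_i + \partial H_i$. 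Then \eqref{enum:wolfe_continuity} gives
\[
    \abs{\int_{(f_i)_*\sigma}\omega - \int_{f_*\sigma}\omega} \leq (\norm{\omega}_{L^\infty} + \norm{d\omega}_{L^\infty})(\nu_{\sigma'_i}(\Upsilon) + \nu_{H_i}(\Upsilon)),
\]
and both mass terms tend to $0$ as $\norm{f_i - f}_{L^\infty(\spt\sigma)} \to 0$ with the relevant Jacobians uniformly bounded, completing the proof.
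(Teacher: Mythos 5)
Your proof is correct, and it takes a genuinely different route from the paper's for the key step. The paper handles the $p = \infty$ case by invoking Federer's definition of the push-forward of flat currents: it cites \cite[4.1.14 and 4.1.25]{Federer_book} to get $[f_* \sigma] = f_* [\sigma]$ together with $(f_* [\sigma])(\omega) = \lim_{i} ((f_i)_* [\sigma])(\omega)$ for smooth $\omega$, and then shows that $(f^{-1})^* \xi$, $(f^{-1})^* \zeta$ give a valid representation of $f_* \sigma$, from which the general $W^{d,p}$ identity follows by a single change of variables. You instead replace the appeal to Federer's push-forward with an explicit flat-norm estimate via the straight-line homotopy $H_i(x,t) = (1-t) f(\sigma(x)) + t f_i(\sigma(x))$, controlled by Proposition \ref{prop:wolfe_int_properties}\eqref{enum:wolfe_continuity}; this is essentially a self-contained rederivation of the Federer fact using only the properties of the canonical integral already abstracted in Proposition \ref{prop:wolfe_int_properties}. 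You also handle the general $W^{d,\infty}$-form by a separate mollification of $\omega$ rather than via the flat-norm representation, and you explicitly state and verify the transfer of $p$-exceptional families under $f_*$ using the Jacobian comparison and Lemma \ref{lem:Fuglede_exceptional_lemma} — a point the paper leaves implicit in its one-line $p < \infty$ reduction. The trade-off: your argument is more elementary and does not require the reader to unpack Federer's definition of push-forward, but it needs a second layer of approximation (of $\omega$, not just of $f$) and a double application of Fuglede's lemma, whereas the paper's flat-norm representation argument disposes of the general $W^{d,p}$-case in one change of variables. You are also right to flag that dominated convergence of $\int_{(f_i)_*\sigma} \omega$ can fail on $\Delta_k$ — $\nu_\sigma$ can concentrate on the non-Lebesgue set of $Df$ — which is exactly why the flat-norm bound is needed.
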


Finally, in our applications, we will also have to consider integration of $W^{d, \infty}$-forms over Lipschitz $k$-chains that meet the boundary of a Lipschitz domain $D$. The following property of the integral will play a significant role in making that boundary part of the theory work.

\begin{lemma}\label{lem:wolfe_int_one_half_dependence}
	Let $\Omega \subset \R^n$ be open and let $\omega \in W^{d,\infty}(\wedge^k T^* \Omega)$.  If $\omega \equiv 0$ on $\Omega \cap \H_+^n$, then the integral of $\omega$ over every $\sigma \in C_k^{\lip}(\Omega \cap \overline{\H^n_+}; \R)$ vanishes.
\end{lemma}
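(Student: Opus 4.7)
My plan is to approximate $\omega$ by translates $\omega_t$ in the $+e_n$-direction, which vanish on an \emph{open} neighborhood of the image of $\sigma$, and then pass $t \to 0^+$ by means of the restriction and convergence properties of the Wolfe integral in Proposition~\ref{prop:wolfe_int_properties}. Concretely, I fix $\sigma \in C_k^{\lip}(\Omega \cap \overline{\H^n_+}; \R)$ with compact image $K$, choose a bounded open $\Omega'$ with $K \subset \Omega' \subset\subset \Omega$, and for small $t > 0$ view $T_t(x) = x + te_n$ as a bilipschitz map $\Omega' \to \Omega$ and set $\omega_t := T_t^* \omega$. By Lemma~\ref{lem:bilip_chain_rule}, $\omega_t \in W^{d,\infty}(\wedge^k T^* \Omega')$ with $d\omega_t = T_t^* d\omega$, and since $T_t$ is an isometry, the $L^\infty$-norms of $\omega_t$ and $d\omega_t$ are bounded uniformly in $t$ by those of $\omega$ and $d\omega$.

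The key step uses the hypothesis $\omega \equiv 0$ on $\Omega \cap \H^n_+$, which transfers under translation to $\omega_t \equiv 0$ on the open set $V_t := \Omega' \cap \{x_n > -t\}$; since $K \subset \overline{\H^n_+} \subset V_t$ for every $t > 0$, the chain $\sigma$ lies in $C_k^{\lip}(V_t; \R)$. Applying Proposition~\ref{prop:wolfe_int_properties}\eqref{enum:wolfe_restriction} with the zero form on $V_t$ then gives $\int_\sigma \omega_t = 0$ for every sufficiently small $t > 0$. To pass to the limit, I pick $t_j \to 0^+$: since $\omega \in L^\infty(\Omega') \subset L^p(\Omega')$ for every finite $p$, the $L^p$-continuity of translations gives $\omega_{t_j} \to \omega$ in $L^p(\Omega')$, and after extracting a subsequence I may assume $\omega_{t_j} \to \omega$ pointwise a.e.\ on $\Omega'$. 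The uniform $W^{d,\infty}$-bound then permits an application of Proposition~\ref{prop:wolfe_int_properties}\eqref{enum:wolfe_convergence}, yielding $\int_\sigma \omega = \lim_j \int_\sigma \omega_{t_j} = 0$.

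The main subtlety I anticipate is that $\omega$ is only defined up to a null set, so its value on the hyperplane $\partial \H^n_+$--where $\sigma$ may well be entirely supported--is not constrained by the hypothesis. The point of translating \emph{upward} (rather than not at all) is that the closed half-space $\overline{\H^n_+}$ containing $K$ then sits strictly inside the open half-space $\{x_n > -t\}$ on which $\omega_t$ genuinely vanishes, which is exactly what the restriction property of Proposition~\ref{prop:wolfe_int_properties} requires. A secondary technical point is that pointwise translation of an $L^\infty$ function need not converge a.e., so the passage through $L^p$-convergence and a subsequence extraction is essential in order to invoke the convergence property.
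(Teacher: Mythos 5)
Your proof is correct, but it is genuinely different from the one in the paper. The paper keeps $\omega$ fixed and translates the \emph{chain} upward: defining $\sigma_\eps(x) = \sigma(x) + \eps e_n$, it writes $\sigma - \sigma_\eps$ as the boundary of the straight homotopy $H_\eps$ plus a side piece, invokes the flat-norm continuity estimate of Proposition \ref{prop:wolfe_int_properties}\eqref{enum:wolfe_continuity} to conclude $\abs{\int_{\sigma - \sigma_\eps}\omega} \lesssim \eps$, and uses that $\omega$ vanishes in a neighborhood of $\sigma_\eps$ to get $\int_\sigma\omega = \int_{\sigma - \sigma_\eps}\omega \to 0$. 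You instead keep $\sigma$ fixed and translate the \emph{form}: $\omega_t = T_t^*\omega$ vanishes on an open set containing the image of $\sigma$, so $\int_\sigma\omega_t = 0$ by the restriction property \eqref{enum:wolfe_restriction}, and you pass to the limit via $L^p$-continuity of translations, a subsequence giving pointwise a.e.\ convergence, and the dominated-convergence property \eqref{enum:wolfe_convergence}. Both routes are valid; your diagnosis of the null-set subtlety is exactly right and is what forces the translation in both arguments. The paper's proof is self-contained within the continuity estimate and the same homotopy technique reappears verbatim in the proof of Lemma \ref{lem:canonical_int_vanishing_boundary_values}, which is likely why the authors chose it; your version is conceptually transparent ("$\omega$ is exactly zero where the chain sits after shifting") but carries the extra moving parts of $L^p$-translation continuity and subsequence extraction. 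Two small points you should make explicit if you write this up: choose $\Omega'$ to be a ball (or otherwise convex) so that $V_t$ is connected, as the restriction property is stated for domains; and first restrict $\omega$ to $\Omega'$ via property \eqref{enum:wolfe_restriction} before invoking the convergence property, since that property compares forms living on a common domain.
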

\begin{proof}
	The claim is clear for all $\sigma \in C_k^{\lip}(\Omega \cap \H^n_+; \R)$ by Proposition \ref{prop:wolfe_int_properties} part \eqref{enum:wolfe_restriction}, since $\omega$ vanishes in a neighborhood of the total image of $\sigma$. The part that requires attention is proving the result for a Lipschitz $k$-simplex $\sigma \colon \Delta_k \to \Omega \cap \overline{\H^n_+}$ that meets $\partial \H^n_+$. Let $\sigma$ be such a $k$-simplex.
	
	The proof is via a standard flat approximation argument. For every $\eps > 0$, we define a map $H_\eps \colon \Delta_k \times [0, \eps] \to \Omega \cap \overline{\H^n_+}$ by $H_\eps(x, t) = \sigma(x) + (0, 0, \dots, 0, t)$; for small enough $\eps > 0$ the image is indeed in $\Omega \cap \overline{\H^n_+}$. Let $\sigma_\eps(x) = H_\eps(x, \eps)$. The map $H_\eps$ can be understood as a Lipschitz $(k+1)$-chain; its total measure is $\nu_{H_\eps}(\Omega) = \eps \nu_{\sigma}(\Omega)$ and its boundary consists of $\sigma - \sigma_\eps$ and a part with total measure $\nu_{\partial H_\eps - (\sigma - \sigma_\eps)}(\Omega) = \eps \nu_{\partial \sigma}(\Omega)$. By Proposition \ref{prop:wolfe_int_properties} part \eqref{enum:wolfe_continuity}, we get
	\[
		\abs{\int_{\sigma - \sigma_\eps} \omega} \leq (\norm{\omega}_{L^\infty} + \norm{d\omega}_{L^\infty})(\eps \nu_{\sigma}(\Omega) + \eps \nu_{\partial\sigma}(\Omega))
		\xrightarrow[\eps \to 0]{} 0.
	\]
	On the other hand, $\omega$ vanishes in a neighborhood of $\sigma_\eps$, so
	\[
		\int_{\sigma - \sigma_\eps} \omega = \int_{\sigma} \omega,
	\]
	completing the proof.
\end{proof}

\subsection{Integration of $W^{d, \infty}$-forms in Lipschitz manifolds}

We now have the necessary preliminaries to discuss integration over Lipschitz $k$-chains on Lipschitz manifolds. Suppose that $\cR$ is a Lipschitz Riemannian manifold without boundary with metric $g$ and let $A \subset \cR$ be any subset of $\cR$. Since we have given a definition for Lipschitz maps $\sigma \colon \Delta_k \to \cR$ in Section \ref{subsect:lip_mfld_wo_bdry}, we obtain the spaces $C^{\lip}_k(A; \K)$ of Lipschitz chains on $A$ with $\K \in \{\R, \Z\}$. 

Suppose that $U \subset \cR$ is an open subset of $\cR$ and that $\omega \in W^{d, \infty}(U)$. If $\sigma \in C^{\lip}_k(U; \K)$, then we may subdivide $\sigma$ into a $\sigma' = k_1 \sigma_1 + \dots + k_i \sigma_i$ such that every $\sigma_j$ is a Lipschitz $k$-simplex contained in a single chart neighborhood $U_{\phi_j} \subset U$. Then 
\[
	\int_{\sigma} \omega = \sum_{j=1}^i k_j \int_{\phi_j^{-1} \circ \sigma_j} \phi_j^* \, \omega, 
\]
where we use Wolfe's canonical integral on the right hand side. By Lemma \ref{lem:bilip_change_of_vars}, the right hand side integrals are independent of the choice of chart $\phi_j$. By considering a common refinement of two subdivisions, we see that the integral depends only on $\sigma$.

Next, we consider integration on Lipschitz submanifolds. Let $M$ be a closed Lipschitz $n$-submanifold in $\cR$, let $U \subset M$ be open in $M$ and let $\omega \in W^{d,\infty}(\wedge^k T^* U)$. Our previous definition lets us integrate $\omega$ over all $\sigma \in C_k^{\lip}(U\cap\intr M; \R)$, but we wish to extend this to all $\sigma \in C_k^{\lip}(U; \R)$. By subdivision, it is again enough to consider integration over a single Lipschitz $k$-simplex $\sigma \colon \Delta_k \to U$ contained in a chart neighborhood $U_\phi$, where the remaining non-trivial case is that $\phi \colon B^n(0, 1) \to U_\phi$ maps $B^n(0, 1) \cap \overline{\H^n_+}$ into $M$ and $B^n(0, 1) \setminus \overline{\H^n_+}$ outside $M$.

The form $\phi^* \omega$ is in $W^{d,\infty}(\wedge^k T^* (B^n(0, 1) \cap \H^n_+))$. Hence, we can use the reflection Lemma \ref{lem:reflection_in_Rn} to extend $\phi^* \omega$ to a form $\alpha \in W^{d, \infty}(\wedge^k T^* B^n(0, 1))$. The integral of $\omega$ over $\sigma$ is the integral of $\alpha$ over $\phi^{-1} \circ \sigma$. In order to see that this is independent of our choices, we let $\psi$ be another such chart and let $\alpha'$ be the extension of $\psi^* \omega$ by reflection. Then $\alpha - (\psi^{-1} \circ \phi)^* \alpha'$ vanishes in the subset of $\H^n_+$ where it is defined. Lemma \ref{lem:wolfe_int_one_half_dependence} shows that the integral of $\alpha - (\psi^{-1} \circ \phi)^* \alpha'$ over $\phi^{-1} \circ \sigma$ vanishes and the same integral is obtained by using $\psi$ and $\alpha'$ instead of $\phi$ and $\alpha$. So the integral of $\omega \in W^{d,\infty}(\wedge^k T^* U)$ is well-defined over every $\sigma \in C_k^{\lip}(U; \R)$, when $U$ is an open subset of a closed Lipschitz $n$-submanifold $M$.

We also note that the definition extends to forms $\omega \in W^{d,\infty}_\loc(\wedge^k T^* U)$. This is since there exists an increasing sequence of sets $U_i \subset U$ such that $U_i$ are open in $M$, $\omega \in W^{d,\infty}(\wedge^k T^* U_i)$ and $U = \bigcup_i U_i$. Every Lipschitz $k$-simplex $\sigma \colon \Delta_k \to U$ is then contained in one of the sets $U_i$ by compactness.

By using bilipschitz charts, subdivision of Lipschitz $k$-simplices and Proposition \ref{prop:wolfe_int_properties} part \eqref{enum:wolfe_stokes}, we immediately have the following version of Stokes' theorem for the integral of $W^{d,\infty}_\loc$-forms.

\begin{prop}\label{prop:Lipschitz_stokes}
	Let $M$ be a closed Lipschitz $n$-submanifold in an oriented Lipschitz Riemannian $n$-manifold $\cR$, let $U \subset M$ be open in $M$, $\sigma \in C_k^{\lip}(U; \R)$ be a singular Lipschitz $k$-chain and $\omega \in W^{d,\infty}_\loc(\wedge^{k-1} T^* U)$. Then 
	\[
		\int_{\partial \sigma} \omega = \int_{\sigma} d\omega.
	\]
\end{prop}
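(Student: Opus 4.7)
The claim is presented as an immediate consequence of the Euclidean Stokes theorem in Proposition \ref{prop:wolfe_int_properties}\eqref{enum:wolfe_stokes}, together with subdivision and the reflection extension, so my plan is just to organize those reductions carefully. First I would observe that $\spt \sigma$ is compact, being a finite union of images of $\Delta_k$, so by selecting a precompact open set $V \subset U$ containing $\spt \sigma$ I may assume without loss that $\omega \in W^{d,\infty}(\wedge^{k-1} T^*V)$ rather than merely locally. Next, by the usual barycentric (or more generally, simplicial) subdivision, I would replace $\sigma$ by a chain $\sigma = \sum_j k_j \sigma_j$ whose individual simplices each have image in a single bilipschitz chart neighborhood $U_{\phi_j}$, where $\phi_j$ is either an interior chart (mapping into $\intr M$) or a boundary chart (mapping $B^n(0,1) \cap \overline{\H^n_+}$ into $M$). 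Since $\partial$ commutes with subdivision, it suffices to verify $\int_{\partial \sigma_j}\omega = \int_{\sigma_j} d\omega$ in each chart and sum over $j$.

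In the interior case, Lemma \ref{lem:bilip_chain_rule} gives $\phi_j^* d\omega = d\phi_j^* \omega$ weakly, and Lemma \ref{lem:bilip_change_of_vars} reduces the identity to
\[
    \int_{\phi_j^{-1} \circ \sigma_j} d(\phi_j^* \omega)
    = \int_{\partial(\phi_j^{-1} \circ \sigma_j)} \phi_j^* \omega,
\]
which is exactly Proposition \ref{prop:wolfe_int_properties}\eqref{enum:wolfe_stokes} applied in the Euclidean chart domain. For the boundary case, I would apply the reflection Lemma \ref{lem:reflection_in_Rn} to $\phi_j^* \omega \in W^{d,\infty}(\wedge^{k-1} T^*(B^n(0,1) \cap \H^n_+))$ to obtain an extension $\alpha \in W^{d,\infty}(\wedge^{k-1} T^* B^n(0,1))$. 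The key observation is that since $\Psi^* \alpha = -\alpha$ by construction, we also have $\Psi^* d\alpha = d \Psi^* \alpha = -d\alpha$, so $d\alpha$ is the reflection extension of $d\phi_j^*\omega = \phi_j^* d\omega$. Consequently, by the definition of integration of $W^{d,\infty}$-forms on chains meeting $\partial M$ given just above the proposition, $\int_{\phi_j^{-1} \circ \sigma_j} d\alpha = \int_{\sigma_j} d\omega$ and $\int_{\partial(\phi_j^{-1} \circ \sigma_j)} \alpha = \int_{\partial \sigma_j} \omega$. Proposition \ref{prop:wolfe_int_properties}\eqref{enum:wolfe_stokes} applied to $\alpha$ on the full ball then finishes this case.

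The only real point requiring care is that the definition of the integral across $\partial M$ was shown (via Lemma \ref{lem:wolfe_int_one_half_dependence}) to be independent of the chosen chart and reflection extension, so the identities in the boundary case are unambiguous. Summing $k_j \int_{\sigma_j} d\omega = k_j \int_{\partial \sigma_j} \omega$ over $j$ then yields the proposition. I do not anticipate genuine obstacles; the only subtlety worth double-checking is that $d$ commutes with the reflection extension, which I would verify directly from $\Psi^* \alpha = -\alpha$ as above.
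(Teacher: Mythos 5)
Your proof is correct and takes the same route as the paper, which declares the result ``immediate'' from subdivision, bilipschitz charts, and Proposition \ref{prop:wolfe_int_properties}~\eqref{enum:wolfe_stokes}. The one detail you usefully spell out — that $d$ commutes with the reflection extension, so $d\alpha$ is the reflection extension of $\phi_j^* d\omega = d\phi_j^*\omega$ — is precisely the non-obvious point left implicit in the paper's one-line argument.
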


We also require the following result for the canonical integral regarding boundary simplices.

\begin{lemma}\label{lem:canonical_int_vanishing_boundary_values}
	Suppose that $M$ is a closed Lipschitz $n$-submanifold in an oriented Lipschitz Riemannian $n$-manifold $\cR$ without boundary and that $U \subset M$ is an open subset of $M$. Let $D \neq \emptyset$ be a closed Lipschitz $(n-1)$-submanifold in $\partial M$ and let $\omega \in W^{d,\infty}_{T(D), \loc}(\wedge^k T^* U)$. Then
	\[
		\int_\sigma \omega = 0
	\]
	for every $\sigma \in C^{\lip}_k(D \cap U; \R)$.
\end{lemma}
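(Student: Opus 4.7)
The plan is to approximate $\omega$ by $W^{d,\infty}$-forms that vanish in a neighborhood of $D$, so that their canonical integrals over $\sigma$ vanish trivially, and then to pass to the limit via the continuity property of Wolfe's integral. The weak vanishing of $\omega_T$ on $D$ is the precise hypothesis that enables such an approximation through Lemma \ref{lem:W_infinity_approx_boundary}.

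I first pass to a precompact open neighborhood $V$ of the total image of $\sigma$ in $U$, on which $\omega|_V \in W^{d,\infty}_{T(D)}(\wedge^k T^*V) \subset W^{d,1}_{T(D)}(\wedge^k T^*V)$. Applying Lemma \ref{lem:W_infinity_approx_boundary} with $p = 1$ yields a sequence $\omega_j \in W^{d,\infty}_{T(D)}(\wedge^k T^*V)$ with $\spt \omega_j \subset V \setminus D$ and $\omega_j \to \omega|_V$ in $W^{d,1}(V)$. Inspecting the construction in the proof of Lemma \ref{lem:W_infinity_approx_boundary}, each $\omega_j$ is built from $\omega|_V$ by a $W^{d,\infty}$-partition of unity, bilipschitz chart pullback, reflection, translation by $\tau_t$, and smooth mollification. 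Each of these operations preserves $L^\infty$-bounds --- translation exactly, mollification by $\norm{f * \eta}_\infty \leq \norm{f}_\infty \norm{\eta}_1 = \norm{f}_\infty$, and reflection exactly --- so by the local finiteness of the partition over the compact set $\overline V$, one obtains $\sup_j \norm{\omega_j}_{W^{d,\infty}(V)} < \infty$. A subsequence then converges to $\omega$ almost everywhere pointwise on $V$.

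Since $\spt \omega_j \subset V \setminus D$, each $\omega_j$ vanishes on an open subset of $V$ containing the image of $\sigma$, so by the locality of the canonical integral on Lipschitz submanifolds --- a direct consequence of Proposition \ref{prop:wolfe_int_properties} part \ref{enum:wolfe_restriction} combined with the chart-based definition of integration on $M$ --- one has $\int_\sigma \omega_j = 0$ for every $j$. The continuity of Wolfe's integral from Proposition \ref{prop:wolfe_int_properties} part \ref{enum:wolfe_convergence}, transferred to the submanifold setting via bilipschitz boundary charts and the reflection extension (both of which preserve a.e.\ pointwise convergence and uniform $W^{d,\infty}$-bounds on the chart ball $B$), then forces $\int_\sigma \omega = \lim_j \int_\sigma \omega_j = 0$. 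The main obstacle in this plan is that Lemma \ref{lem:W_infinity_approx_boundary} as stated only guarantees $W^{d,p}$-convergence for finite $p$, whereas Wolfe's continuity requires a uniform $W^{d,\infty}$-bound; this bound has to be extracted from the proof of the lemma rather than read off from its statement.
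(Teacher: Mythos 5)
Your proof is correct, but takes a genuinely different route from the paper's. The paper argues directly: after subdividing $\sigma$ into boundary chart neighborhoods, the weak vanishing of $\omega_T$ on $D$ lets one extend $\phi^*\omega$ by zero across $\partial\H^n_+$, and Lemma~\ref{lem:wolfe_int_one_half_dependence} (together with a small translation $\tau_\eps$ in the chart direction when the chart meets $\partial D$) shows the integral of this extension over a boundary simplex vanishes; the well-definedness of the $W^{d,\infty}$-integral then transfers the conclusion to the reflection extension used to define $\int_\sigma\omega$. Your argument instead approximates $\omega$ in $W^{d,1}$ by forms supported away from $D$ via Lemma~\ref{lem:W_infinity_approx_boundary}, extracts an a.e.\ convergent subsequence with a uniform $W^{d,\infty}$-bound, and appeals to the convergence property of Proposition~\ref{prop:wolfe_int_properties} part~\eqref{enum:wolfe_convergence}. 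This is sound, and you correctly flag the main delicacy: the uniform $W^{d,\infty}$-bound and the a.e.\ pointwise convergence (of both $\omega_j$ and $d\omega_j$) are not part of the statement of Lemma~\ref{lem:W_infinity_approx_boundary}, so they must be read off the construction --- each of bilipschitz pullback, reflection, translation, and mollification preserves $L^\infty$-bounds (with constants controlled on the precompact $\overline V$), and local finiteness of the partition of unity over $\overline V$ keeps the sum under control. The paper's homotopy argument is shorter and stays within the stated results; your route is more functional-analytic in spirit and would apply to any approximation scheme delivering such bounds, at the cost of depending on an unstated refinement of the approximation lemma rather than its cited conclusion.
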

\begin{proof}
	Let $B = B^n(0, 1)$. By subdivision, we may assume that $\sigma$ is a Lipschitz $k$-simplex $\sigma \colon \Delta_k \to D$ that is contained in the image of a boundary chart $\phi \colon B \to V$ with $\phi(B \cap \H^n_+) \subset U$ and $\phi(B \cap \H^n_-) \subset \cR \setminus M$. We wish to show that the integral of $\phi^* \omega$ over $\sigma' = \phi^{-1} \circ \sigma$ vanishes. The case where $V \cap \partial M \subset D$ is simple, as we may just extend $\phi^* \omega$ by zero into $B \cap \H^n_-$ and use Lemma \ref{lem:wolfe_int_one_half_dependence}.
	
	The other case to consider is when $V$ meets $\partial D$, in which case we can set $\phi$ up so that $V \cap D = \phi(\{x \in B : x_n = 0, x_1 > 0\})$. Let $\tau_\eps \colon \R^n \to \R^n$ be the translation map by $(\eps, 0, \dots, 0)$. For small enough $\eps > 0$, $(\tau_\eps)_* \sigma'$ remains in $B$ and the integral of $\phi^* \omega$ over $(\tau_\eps)_* \sigma'$ again vanishes by a use of Lemma \ref{lem:wolfe_int_one_half_dependence}. Similarly as in the proof of Lemma \ref{lem:wolfe_int_one_half_dependence}, we can define $H_\eps \colon \Delta_k \times [0, \eps] \to B$ by $H_\eps(x, t) = (\tau_t)_* \sigma'(x)$ and we have by Proposition \ref{prop:wolfe_int_properties} part \eqref{enum:wolfe_continuity} that
	\begin{align*}
		\abs{\int_{\sigma' - (\tau_\eps)_* \sigma'} \phi^* \omega}
		&\leq (\norm{\phi^*\omega}_{L^\infty} + \norm{d\phi^*\omega}_{L^\infty})(\nu_{H_\eps}(B) + \nu_{\partial H_\eps - (\sigma' - (\tau_\eps)_* \sigma')}(B))\\
		&\leq (\norm{\phi^*\omega}_{L^\infty} + \norm{d\phi^*\omega}_{L^\infty})(\eps \nu_{\sigma'}(B) + \eps \nu_{\partial\sigma'}(B))
		\xrightarrow[\eps \to 0]{} 0.
	\end{align*}
	We conclude that the integral of $\phi^* \omega$ over $\sigma'$ vanishes.
\end{proof}

\subsection{Integration of $L^p$-forms in Lipschitz manifolds}\label{sec:integrationlpforms}

Next, we consider integration of $\omega \in L^p_\loc(\wedge^k T^* \cR)$ over Lipschitz $k$-chains on a Lipschitz Riemannian manifold $\cR$. For this, we need to first define $p$-exceptional families of surfaces. The main technical complication is that you cannot use \eqref{eq:Jacobian} to define $\abs{J_\sigma}$ for every Lipschitz $\sigma \colon \Delta_k \to \cR$ due to $g$ only being defined almost everywhere.

We approach this as follows. Let $\sigma \colon \Delta_k \to \cR$ be Lipschitz. We can choose a subdivision of $\Delta_k$ into finitely many $\Delta_{k,i} \subset \Delta_k$ such that $\sigma(\Delta_{k,i})$ is contained in the image $U_i$ of a bilipschitz chart $\phi_i \colon \Omega_i \to U_i$ for every $i$. Now, for every $i$, the map $\sigma_i = \phi_i^{-1} \circ \sigma \vert_{\Delta_{k,i}}$ is Lipschitz and we can  define a measure $\nu_{\sigma_i}$ on $\Omega_i$ with respect to the Euclidean metric. We then define
\begin{equation}\label{eq:mu_sigma}
	\mu_\sigma = \sum_i (\phi_i)_* \nu_{\sigma_i}.  
\end{equation}

Once we have fixed a measure $\mu_\sigma$ for every Lipschitz $k$-simplex, we can similarly define measures $\mu_\sigma$ for every formal linear combination $\sigma  = c_1 \sigma_1 + \dots + c_k \sigma_k \in C_k^{\lip}(\cR; \R)$ of Lipschitz $k$-simplices $\sigma_i$. We note that the measures $\mu_\sigma$ depend on the choices of subdivisions $\Delta_{k,i}$ and charts $\phi_i$ for each $\sigma$. However, we still have the following.

\begin{lemma}\label{lem:well_def_exc_sets}
	Suppose $\cR$ is a Lipschitz Riemannian manifold and that $\sigma \colon \Delta_k \to \cR$ is a Lipschitz $k$-simplex. Suppose that $\mu_\sigma$ and $\mu_\sigma'$ are both defined using \eqref{eq:mu_sigma}, with different subdivisions and bilipschitz charts. There exists $C \in (0, \infty)$ such that $C^{-1} \mu_\sigma' \leq \mu_\sigma \leq C \mu_\sigma'$. Moreover, if $\cR$ is a smooth Riemannian manifold and if we define $\nu_\sigma$ using \eqref{eq:nu_sigma_def} and \eqref{eq:Jacobian} where $D^T\sigma(x)$ is the adjoint satisfying $g(D\sigma(x) v, w) = g_{\text{eucl}}(v, D^T \sigma(x) w)$, then there exists a constant $C \in (0, \infty)$ such that $C^{-1} \nu_\sigma \leq \mu_\sigma \leq C \nu_\sigma$.
\end{lemma}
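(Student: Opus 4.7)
The plan is to prove both estimates by subdivision to a common refinement, reducing everything to a bilipschitz change-of-variables estimate for the Euclidean Jacobians of Lipschitz maps. The guiding principle is that two Lipschitz-chart descriptions differ by the bilipschitz transition $\phi_i^{-1}\circ\phi_j'$, whose derivative distorts $k$-dimensional volumes by a factor at most $L^k$, where $L$ is the bilipschitz constant; and in the smooth case, the chart-Euclidean metric is uniformly comparable to $\phi^{*}g$ on compact sets.

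For the first claim, I will first reduce to a single Lipschitz $k$-simplex $\sigma\colon\Delta_k\to\cR$, since the chain case follows by the linearity already present in \eqref{eq:mu_sigma}. Let $(\{\Delta_{k,i}\},\{\phi_i\})$ and $(\{\Delta_{k,j}'\},\{\phi_j'\})$ be the two sets of data defining $\mu_\sigma$ and $\mu_\sigma'$, and pass to a common refinement $\{\Delta_\alpha\}$ of the subdivisions. On each piece $\Delta_\alpha$, contained in some $\Delta_{k,i}\cap\Delta_{k,j}'$, the map $\tau_\alpha=\phi_i^{-1}\circ\phi_j'$ is bilipschitz on $(\phi_j')^{-1}(U_i\cap U_j')$ with some constant $L_\alpha$; since $\phi_i^{-1}\circ\sigma|_{\Delta_\alpha}=\tau_\alpha\circ((\phi_j')^{-1}\circ\sigma|_{\Delta_\alpha})$, the chain rule and the pointwise estimate $\lVert D\tau_\alpha\rVert\leq L_\alpha$ a.e.\ give
\[
L_\alpha^{-k}\,\lvert J_{(\phi_j')^{-1}\circ\sigma|_{\Delta_\alpha}}\rvert\;\leq\;\lvert J_{\phi_i^{-1}\circ\sigma|_{\Delta_\alpha}}\rvert\;\leq\;L_\alpha^k\,\lvert J_{(\phi_j')^{-1}\circ\sigma|_{\Delta_\alpha}}\rvert
\]
almost everywhere on $\Delta_\alpha$. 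Here I use the elementary Gram-matrix fact that $A^TA\leq L^2B^TB$ in the positive-definite order implies $\det(A^TA)\leq L^{2k}\det(B^TB)$, applied to $A=D(\phi_i^{-1}\circ\sigma|_{\Delta_\alpha})$ and $B=D((\phi_j')^{-1}\circ\sigma|_{\Delta_\alpha})$. Pushing forward and writing both measures on a Borel $A\subset\cR$ as integrals of these Jacobians over $\sigma^{-1}(A)\cap\Delta_\alpha$, then summing the finitely many $\alpha$, yields $C^{-1}\mu_\sigma'\leq\mu_\sigma\leq C\mu_\sigma'$ with $C=(\max_\alpha L_\alpha)^k<\infty$.

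For the smooth case, I again reduce to one simplex and subdivide so that $\sigma(\Delta_{k,i})\subset U_{\phi_i}$ for each piece. Write $\sigma_i=\phi_i^{-1}\circ\sigma|_{\Delta_{k,i}}$. The intrinsic Jacobian with respect to $g$ satisfies
\[
\lvert J_\sigma\rvert^g(x)=\sqrt{\det\bigl((\phi_i^{*}g)_{\sigma_i(x)}(D\sigma_i(x)e_a,\,D\sigma_i(x)e_b)\bigr)_{a,b=1}^{k}}
\]
by the definition of the adjoint in the problem statement, while the Euclidean-chart Jacobian replaces $\phi_i^{*}g$ with $g_{\mathrm{eucl}}$. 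Because $\phi_i^{*}g$ is continuous and positive definite and $\overline{\sigma_i(\Delta_{k,i})}$ is compact, there is a constant $C_i\geq1$ with $C_i^{-1}g_{\mathrm{eucl}}\leq\phi_i^{*}g\leq C_i\,g_{\mathrm{eucl}}$ on that compact set. The same Gram-determinant inequality then gives $C_i^{-k/2}\lvert J_{\sigma_i}\rvert^{\mathrm{eucl}}\leq\lvert J_\sigma\rvert^g\leq C_i^{k/2}\lvert J_{\sigma_i}\rvert^{\mathrm{eucl}}$ a.e.\ on $\Delta_{k,i}$; integrating and pushing forward yields $C^{-1}\nu_\sigma\leq\mu_\sigma\leq C\nu_\sigma$ with $C=\max_i C_i^{k/2}$.

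The main point requiring care is bookkeeping: two simplex-simplex subdivisions may not be compatible, and one must pass to a common refinement and argue piecewise on the finitely many refinement cells using the bilipschitz comparability of the chart transitions and of $\phi^{*}g$ with $g_{\mathrm{eucl}}$. Once the refinement setup and the Gram-matrix/linear-algebra lemma are in place, the inequalities are immediate pointwise estimates that integrate to the claimed measure-theoretic comparison.
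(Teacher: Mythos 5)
Your approach mirrors the paper's reduction — subdivide, reduce to a pointwise Jacobian comparison under a bilipschitz transition (or a bilipschitz chart into a smooth metric), then integrate — and your Gram-determinant fact is a correct and valid replacement for the paper's SVD-plus-Hadamard step. However, there is a gap in how you establish the quadratic-form inequality $A^T A \leq L_\alpha^2 B^T B$ with $A = D(\phi_i^{-1}\circ\sigma|_{\Delta_\alpha})$ and $B = D((\phi_j')^{-1}\circ\sigma|_{\Delta_\alpha})$. You obtain it from the chain rule $A = D\tau_\alpha(\sigma_j(x))\cdot B$, but this requires $\tau_\alpha$ to be differentiable at $\sigma_j(x)$, and since $\sigma_j$ is only a Lipschitz map from a $k$-dimensional domain into $\R^n$, the preimage under $\sigma_j$ of the $n$-dimensional null set on which $\tau_\alpha$ fails to be differentiable may have positive $k$-dimensional measure (for instance, $\sigma_j$ could map a positive-measure subset of $\Delta_\alpha$ onto a lower-dimensional piece of that null set). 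The chain-rule identity can therefore fail on a non-negligible set, which is precisely why the paper's proof never uses it: at a point $x$ where both $\sigma_j$ and $\tau_\alpha\circ\sigma_j$ are differentiable (a full-measure set by Rademacher alone, no chain rule needed), one estimates $|Av| = \lim_{t\to0} t^{-1}|\tau_\alpha\circ\sigma_j(x+tv)-\tau_\alpha\circ\sigma_j(x)| \leq L_\alpha\lim_{t\to0} t^{-1}|\sigma_j(x+tv)-\sigma_j(x)| = L_\alpha|Bv|$ and symmetrically $|Bv|\leq L_\alpha|Av|$, which yields $A^T A\leq L_\alpha^2 B^T B$ directly. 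After this your Gram-determinant argument carries through unchanged, so the fix is local and the structure of the proof survives; but the chain-rule step as written does not stand.

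A smaller issue in the smooth-metric case: you assert that $\phi_i^*g$ is continuous and then use compactness to obtain uniform comparability with $g_{\mathrm{eucl}}$. But the charts $\phi_i$ appearing in the definition of $\mu_\sigma$ are a priori only bilipschitz, so $\phi_i^*g$ is in general only bounded measurable, not continuous. You can repair this either by first reducing to smooth charts (using the first half of the lemma, which you have just proved, to pass from an arbitrary bilipschitz atlas to a smooth one), or by noting directly that bilipschitzness of $\phi_i$ between $(\Omega_{\phi_i}, g_{\mathrm{eucl}})$ and $(U_{\phi_i}, g)$ already gives $L^{-2} g_{\mathrm{eucl}} \leq \phi_i^*g \leq L^2 g_{\mathrm{eucl}}$ almost everywhere.
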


\begin{proof}
	By a finite subdivision and use of bilipschitz charts, the claims reduce to the fact that if $\sigma' \colon \Delta_k \to \Omega \subset \R^n$ is Lipschitz and $f \colon (\Omega, g_{\text{eucl}}) \to (\Omega', g) \subset (\R^n, g)$ is bilipschitz, where the Riemannian metric $g$ in the target is smooth, then $C^{-1}\abs{J_{f \circ \sigma'}} \leq \abs{J_{\sigma'}} \leq C \abs{J_{f \circ \sigma'}}$ a.e.\ in $\Delta_k$. For a simple proof of this fact, let $x$ be a point in the interior of $\Delta_k$ such that $f \circ \sigma'$ and $\sigma'$ are differentiable at $x$ and use the singular value decomposition of matrices to select an orthonormal basis $v_1, \dots, v_k$ of $\R^k$ such that the image vectors $w_i = D(f \circ \sigma')(x) v_i$ are $g$-orthogonal. Now, the bilipschitz condition of $f$ and Hadamard's inequality yield that
	\begin{align*}
		\abs{J_{f \circ \sigma'}} &= \prod_{i=1}^k \abs{w_i}_g\\
		&= \prod_{i=1}^k \lim_{t \to 0} \frac{d_g(f \circ \sigma'(x + tv_i), f \circ \sigma'(x))}{t}\\
		&\geq \prod_{i=1}^k C^{-1} \lim_{t \to 0} \frac{\abs{\sigma'(x + tv_i)- \sigma'(x)}}{t}\\
		&= C^{-k} \prod_{i=1}^k \abs{D\sigma'(x) v_i} \\
		&\geq C^{-k} \abs{J_{\sigma'}(x)}.
	\end{align*}
	The other direction of the estimate is analogously obtained by using the singular value decomposition theorem on $D\sigma'(x)$ instead.
\end{proof}

In particular, Lemma \ref{lem:well_def_exc_sets} states that different choices in the construction of the measures $\mu_\sigma$ lead to families $\cM = \{\mu_\sigma : \sigma \in C_k^{\lip}(\cR; \R)\}$ and $\cM' = \{\mu_\sigma' : \sigma \in C_k^{\lip}(\cR; \R)\}$ such that we have weak minorizations $\cM' \lesssim \cM \lesssim \cM'$. Therefore, these families have the same $p$-exceptional sets by Corollary \ref{cor:weak_minorization_p-exc}. These exceptional sets are the same as the exceptional sets of $\{\nu_\sigma : \sigma \in C_k^{\lip}(\cR; \R)\}$ in the smooth setting. Hence, we can define that a subset $\Sigma \subset C_k^{\lip}(\cR; \R)$ is \emph{$p$-exceptional} if the family of measures $\{\mu_\sigma : \sigma \in \Sigma\}$ is $p$-exceptional.

Before we proceed to define integration for $\omega \in L^p_\loc(\wedge^k T^* \cR)$, we prove one more lemma that will be repeatedly useful in following considerations. Let $\sigma \in C_k^{\lip}(\cR; \R)$, let $\phi \colon \Omega \to U$ be a chart and let $\sigma' \in C_k^{\lip}(\Omega; \R)$. We say that \emph{$\sigma$ covers $\sigma'$ via $\phi$} if there exists a constant $c > 0$ such that $\mu_\sigma \geq c \cdot \phi_* \nu_{\sigma'}$. By Lemma \ref{lem:well_def_exc_sets}, this is independent on the choice of $\mu_\sigma$.

\begin{lemma}\label{lem:covering_simplex}
	Suppose $\cR$ is a Lipschitz Riemannian manifold and $\phi \colon \Omega \to U$ is a chart.
	\begin{enumerate}
		\item\label{enum:cover_subdivision} Let $\sigma \in C_k^{\lip}(\cR; \R)$ and $\sigma' \colon \Delta_k \to \Omega$ be a Lipschitz $k$-simplex in $\Omega$. If $a (\phi \circ \sigma')$ (or any bilipschitz reparameterization) occurs in a subdivision of $\sigma$ with $a \in \R \setminus \{0\}$, then $\sigma$ covers $\sigma'$ via $\phi$.
		\item\label{enum:cover_families} Let $\Sigma' \subset C_k^{\lip}(\Omega; \R)$ and 
		\[
			\Sigma = \left\{ \sigma \in C_k^{\lip}(\cR; \R) : \sigma \text{ covers } \sigma' \text{ via } \phi \text{ for some } \sigma' \in \Sigma'\right\}.
		\]
		If $\Sigma'$ is $p$-exceptional, then $\Sigma$ is $p$-exceptional.
	\end{enumerate}
\end{lemma}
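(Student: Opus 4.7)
The plan is to deduce Part \eqref{enum:cover_subdivision} by unpacking the construction \eqref{eq:mu_sigma} of $\mu_\sigma$ and exploiting the flexibility granted by Lemma \ref{lem:well_def_exc_sets}, and then to derive Part \eqref{enum:cover_families} by combining Part \eqref{enum:cover_subdivision} with Fuglede's characterization of exceptional sets (Lemma \ref{lem:Fuglede_exceptional_lemma}) and the weak minorization principle (Corollary \ref{cor:weak_minorization_p-exc}).

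For Part \eqref{enum:cover_subdivision}, given a subdivision of $\sigma$ in which $a(\phi \circ \sigma')$ (or a bilipschitz reparameterization of it) appears as a term, I would first refine the subdivision so that each other piece lies in a chart domain, without splitting the piece $a(\phi \circ \sigma')$ itself; this is possible since its image already sits inside the single chart neighborhood $U = \phi(\Omega)$. Using this refined subdivision to compute $\mu_\sigma$ via \eqref{eq:mu_sigma}, with $\phi$ chosen as the chart for the $a(\phi \circ \sigma')$ piece, I would note that reparameterizing $\sigma'$ by a bilipschitz self-map of $\Delta_k$ leaves the measure $\nu_{\sigma'}$ unchanged, by a direct Euclidean change-of-variables computation against Borel test functions. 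The contribution of this single piece to $\mu_\sigma$ is therefore exactly $|a| \phi_* \nu_{\sigma'}$, and since every other summand is a nonnegative measure, this choice of $\mu_\sigma$ satisfies $\mu_\sigma \geq |a| \phi_* \nu_{\sigma'}$. Any other construction of $\mu_\sigma$ is then controlled from below by a constant multiple of this one by Lemma \ref{lem:well_def_exc_sets}, yielding the required covering.

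For Part \eqref{enum:cover_families}, by Lemma \ref{lem:Fuglede_exceptional_lemma} applied on $\Omega$ with Lebesgue measure, there exists a nonnegative $f \in L^p(\Omega)$ with $\int_\Omega f \dd \nu_{\sigma'} = \infty$ for every $\sigma' \in \Sigma'$. Define $g \colon \cR \to [0, \infty]$ by $g = f \circ \phi^{-1}$ on $U$ and $g = 0$ elsewhere. Since $\phi$ is bilipschitz as a chart on the Lipschitz Riemannian manifold $\cR$, the pullback $\phi^* \vol_g$ is comparable to Lebesgue measure on $\Omega$, so $g \in L^p(\cR, \vol_g)$. A change of variables then gives $\int_\cR g \dd (\phi_* \nu_{\sigma'}) = \int_\Omega f \dd \nu_{\sigma'} = \infty$ for every $\sigma' \in \Sigma'$, so the family $\{\phi_* \nu_{\sigma'} : \sigma' \in \Sigma'\}$ is $p$-exceptional on $\cR$ by another application of Lemma \ref{lem:Fuglede_exceptional_lemma}. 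Part \eqref{enum:cover_subdivision} furnishes, for each $\sigma \in \Sigma$, a corresponding $\sigma' \in \Sigma'$ and constant $c_\sigma > 0$ with $\mu_\sigma \geq c_\sigma \phi_* \nu_{\sigma'}$, so $\{\mu_\sigma : \sigma \in \Sigma\}$ weakly minorizes $\{\phi_* \nu_{\sigma'} : \sigma' \in \Sigma'\}$. Corollary \ref{cor:weak_minorization_p-exc} then forces $\Sigma$ to be $p$-exceptional, as desired.

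The main technical obstacle is the non-canonical nature of the construction of $\mu_\sigma$, which depends on the choice of subdivision and chart system; but this is precisely what Lemma \ref{lem:well_def_exc_sets} is designed to absorb. Beyond that, the argument is essentially a bookkeeping exercise coupling the definitions with the standard exceptional-set calculus from Section \ref{sec:modulusofmeasures}.
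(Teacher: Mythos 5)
Your proposal is correct and takes essentially the same route as the paper's own proof: for Part~\eqref{enum:cover_subdivision}, refine the subdivision so that $\abs{a}\,\phi_*\nu_{\sigma'}$ appears as one of the nonnegative summands defining $\mu_\sigma$ (with Lemma~\ref{lem:well_def_exc_sets} absorbing the dependence on the choices), and for Part~\eqref{enum:cover_families}, transfer the exceptionality witness from $\Omega$ to $\cR$ using the uniform comparability of $\phi^*\vol_g$ with Lebesgue measure and then invoke Corollary~\ref{cor:weak_minorization_p-exc}. One small note: in Part~\eqref{enum:cover_families} the weak minorization $\{\mu_\sigma : \sigma \in \Sigma\} \gtrsim \{\phi_*\nu_{\sigma'} : \sigma' \in \Sigma'\}$ is immediate from the definition of ``covers'' (and hence of $\Sigma$), so Part~\eqref{enum:cover_subdivision} is not actually needed for this implication.
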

\begin{proof}
	If $a (\phi \circ \sigma')$ occurs in a subdivision of $\sigma$, then by further subdividing one can make it so that $\abs{a} \phi_* \nu_{\sigma'}$ is one of the measures added together to form $\mu_\sigma$.
	This gives the first property.
	
	For the second, since $\phi^* \vol_g$ is a.e.\ uniformly comparable with the Lebesgue volume, it follows that the family of measures $\{\phi_* \nu_{\sigma'} : \sigma' \in \Sigma'\}$ is $p$-exceptional in $(\cR, \vol_g)$. Therefore, the family $\{\mu_\sigma : \sigma \in \Sigma\}$ is also $p$-exceptional by Corollary \ref{cor:weak_minorization_p-exc}.
\end{proof}

Suppose that $\omega \in L^p_\loc(\wedge^k T^* \cR)$, with the aim of defining the integral of $\omega$ over $p$-a.e.\ $\sigma \in C_k^{\lip}(\cR; \R)$. We select a countable collection of bounded bilipschitz charts $\phi_i \colon \Omega_i \to U_i$ such that the sets $U_i$ cover $\cR$ and fix a Borel representative of $\phi_i^* \omega \in L^p(\wedge^k T^* \Omega_i)$ for each $i$. We choose the measures $\mu_\sigma$ for all $\sigma \in C_k^{\lip}(\cR; \R)$ using only the charts $\phi_i$. 

For every chart $\phi_i$, let $\Sigma_i' \subset C_k^{\lip}(\Omega_i; \R)$ be the collection of all Lipschitz $k$-chains $\sigma$ such that the integral of $\phi_i^* \omega$ over $\sigma$ is not defined. Similarly, for any two charts $\phi_i$ and $\phi_j$ such that $U_i \cap U_j \neq \emptyset$, we denote $\Omega_{ij} = \phi_i^{-1}(U_i \cap U_j)$ and let $\Sigma_{ij}'$ be the family of $\sigma \in C_k^{\lip}(\Omega_{ij}; \R)$ such that the integral of $\phi_i^*\omega$ over $\sigma$ and the integral of $\phi_j^*\omega$ over $(\phi_j^{-1} \circ \phi_i)_* \sigma$ are both well-defined, but different. The families $\Sigma'_i$ and $\Sigma'_{ij}$ essentially encompass all the simplices where integration behaves poorly and are both $p$-exceptional in $\Omega_i$, since $\phi_i^* \omega \in L^p(\wedge^k T^* \Omega_i)$.

Let $\Sigma$ be the family of all $\sigma \in C_k^{\lip}(\cR; \R)$ such that $\sigma$ covers a $\sigma' \in \Sigma'_i \cup \bigcup_j \Sigma'_{ij}$ via $\phi_i$ for some $i$. Since there are only a countable amount of charts, $\Sigma$ is $p$-exceptional by Lemma \ref{lem:covering_simplex}. Moreover, for all $k$-chains $\sigma \in C_k^{\lip}(\cR; \R) \setminus \Sigma$, Lemma \ref{lem:covering_simplex} also yields that no subdivision contains a simplex where the integral behaves poorly. So integration of $\omega$ over $\sigma$ is defined as usual via subdivision and charts and the result is independent of the exact subdivision used.

Overall, the set $\Sigma$ and the values of the integral of $\omega$ over $\sigma \in C_k^{\lip}(\cR; \R) \setminus \Sigma$ depend on the choices of charts $\phi_i$ and Borel representatives $\phi_i^* \omega$. However, it is reasonably straightforward to see that a different choice of $\phi_i$ and $\phi_i^* \omega$ will only change the integral of $\omega$ in a $p$-exceptional family. 

The definition extends to $\omega \in L^p(\wedge^k T^* U)$ with $U \subset \cR$ by extending $\omega$ as zero outside $U$. If instead $U \subset M$ is an open subset of a closed Lipschitz $n$-submanifold $M$ in $\cR$, then by a similar extension by zero to $U' \subset \cR$ with $U' \cap M = U$, the definition also extends to $\omega \in L^p(\wedge^k T^* U)$ and $\omega \in L^p_\loc(\wedge^k T^* U)$.

We next prove Lipschitz manifold versions of two Euclidean results. The first is a version of the differential forms Fuglede Lemma \ref{lem:fuglede_for_forms_eucl}.

\begin{lemma}\label{lem:fuglede_for_forms}
	Let $\cR$ be a Lipschitz Riemannian manifold without boundary and let $p \in [1, \infty)$. If $\omega \in L^p(\wedge^k T^* \cR)$ and $\omega_i \to \omega$ in $L^p(\wedge^k T^* \cR)$, then there exists a subsequence $\omega_{i_j}$ of $\omega_i$ such that 
	\[
		\int_\sigma \omega_{i_j} \to \int_\sigma \omega
	\]
	for $p$-a.e.\ $\sigma \in C^{\lip}_k(\cR; \R)$.
\end{lemma}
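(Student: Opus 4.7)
The plan is to reduce to the Euclidean Fuglede lemma for forms (Lemma \ref{lem:fuglede_for_forms_eucl}) via a countable atlas of bilipschitz charts, then combine the resulting per-chart subsequences by a diagonal argument and transfer the $p$-exceptional sets back up to $\cR$ using the covering simplex lemma. Concretely, I would fix a countable family of bounded bilipschitz charts $\phi_i \colon \Omega_i \to U_i$ as in Section \ref{sec:integrationlpforms}, together with fixed Borel representatives of $\phi_i^*\omega$ and $\phi_i^*\omega_j$, and use them to define the measures $\mu_\sigma$ and the integrals of $\omega$ and each $\omega_j$ over $p$-a.e.\ $\sigma \in C_k^{\lip}(\cR; \R)$.

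The standard change-of-variables estimate \eqref{eq:bilip_Lp_estimate} (or its proof) shows that for each fixed $i$, one has $\phi_i^* \omega_j \to \phi_i^* \omega$ in $L^p(\wedge^k T^* \Omega_i)$. By Lemma \ref{lem:fuglede_for_forms_eucl}, we can extract from $(\omega_j)$ a subsequence along which $\int_{\sigma'} \phi_i^*\omega_j \to \int_{\sigma'} \phi_i^* \omega$ for $p$-a.e.\ $\sigma' \in C_k^{\lip}(\Omega_i; \R)$. Doing this iteratively for $i = 1, 2, 3, \dots$ and taking the usual diagonal subsequence $\omega_{i_j}$ produces a single subsequence such that, for every $i$, convergence holds outside a $p$-exceptional family $\Sigma_i' \subset C_k^{\lip}(\Omega_i; \R)$.

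Next, let $\Sigma$ be the family of all $\sigma \in C_k^{\lip}(\cR; \R)$ that cover some $\sigma' \in \Sigma_i'$ via $\phi_i$ for some $i$ (in the sense of Section \ref{sec:integrationlpforms}); augment $\Sigma$ with the $p$-exceptional families on which the integrals of $\omega$ or some $\omega_{i_j}$ fail to be defined or to be intrinsic. By Lemma \ref{lem:covering_simplex}\eqref{enum:cover_families} and the countable subadditivity of $\moddens_p$ (Lemma \ref{lem:Fuglede_properties}\eqref{enum:fuglede_outer_measure}), the set $\Sigma$ is $p$-exceptional in $\cR$. For any $\sigma \in C_k^{\lip}(\cR; \R) \setminus \Sigma$ we may pick a subdivision $\sigma = \sum_m a_m \sigma_m$ into Lipschitz simplices with each $\sigma_m$ contained in a single chart $U_{i(m)}$; by Lemma \ref{lem:covering_simplex}\eqref{enum:cover_subdivision} no component of this subdivision lies in the corresponding $\Sigma_{i(m)}'$, so
\[
    \int_{\sigma} \omega_{i_j} = \sum_m a_m \int_{\phi_{i(m)}^{-1} \circ \sigma_m} \phi_{i(m)}^* \omega_{i_j} \;\longrightarrow\; \sum_m a_m \int_{\phi_{i(m)}^{-1} \circ \sigma_m} \phi_{i(m)}^* \omega = \int_\sigma \omega,
\]
which is the desired conclusion.

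The only real obstacle is bookkeeping: one must ensure that the various choices of charts, Borel representatives and subdivisions used to define the integrals on $\cR$ in Section \ref{sec:integrationlpforms} are coordinated between $\omega$ and the subsequence $\omega_{i_j}$, so that the countable union of $p$-exceptional sets captured by $\Sigma$ suffices. This is handled cleanly by the covering simplex formalism of Lemma \ref{lem:covering_simplex}, which converts $p$-exceptionality statements in the chart domains directly into $p$-exceptionality statements on $\cR$.
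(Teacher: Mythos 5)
Your proof is correct and follows essentially the same route as the paper: a countable atlas, the Euclidean Fuglede lemma for forms in each chart, a diagonal subsequence, and Lemma \ref{lem:covering_simplex} to transfer the per-chart $p$-exceptional families to $\cR$. The paper's version is terser (it does not spell out the change-of-variables estimate for $\phi_i^*\omega_j \to \phi_i^*\omega$ or the final subdivision step), but no substantive difference exists.
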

\begin{proof}
	Let $\{\phi_l : l = 1, \dots\}$ be a countable cover of $\cR$ by bilipschitz charts. By the Euclidean result from Lemma \ref{lem:fuglede_for_forms_eucl} and a standard diagonal subsequence argument, we may select $i_j$ such that
	\[
		\int_{\sigma'} \phi_l^* \omega_{i_j} \to \int_{\sigma'} \phi_l^*\omega
	\]
	for every $l$ and $p$-a.e.\ $\sigma' \in C_k^{\lip}(\Omega_l; \R)$. We denote the set of $\sigma' \in C_k^{\lip}(\Omega_l; \R)$ for which this convergence does not hold by $\Sigma'_l$. The claim then follows by Lemma \ref{lem:covering_simplex}, which shows the $p$-exceptionality of the family of all $\sigma \in C_k^{\lip}(\cR; \R)$ which cover an element $\sigma' \in \Sigma'_l$ via $\phi_l$ for some $l \in \bN$.
\end{proof}

Second, we give a Lipschitz version of Proposition \ref{prop:wolfe_int_properties} part \eqref{enum:wolfe_borelforms}, which connects our two integrals.

\begin{lemma}\label{lem:two_integrals}
	Let $\cR$ be a Lipschitz Riemannian manifold without boundary, $M$ be a closed Lipschitz $n$-submanifold in $\cR$, $U \subset M$ be open in $M$ and  $\omega \in W^{d,\infty}(\wedge^k T^* U)$. For every $p \in [1, \infty)$, outside a $p$-exceptional subset of $C^{\lip}_k(U; \R)$, the integral of $\omega$ as an $L^p$-form over $p$-a.e.\ $\sigma \in C^{\lip}_k(U; \R)$ coincides with the integral of $\omega$ as a $W^{d,\infty}$-form over all $\sigma \in C^{\lip}_k(U; \R)$.
\end{lemma}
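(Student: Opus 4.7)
The plan is to reduce the claim to a comparison of Euclidean integrals inside a single bilipschitz chart, where Proposition \ref{prop:wolfe_int_properties} part \eqref{enum:wolfe_borelforms} does most of the work. Fix a countable locally finite cover of $U$ by bilipschitz charts $\phi_i \colon \Omega_i \to U_i$ of $\cR$, chosen so that each $\phi_i$ is either an interior chart of $M$ (with $U_i \subset \intr M$) or a boundary chart with $\phi_i(\Omega_i \cap \overline{\H^n_+}) \subset M$ and $\phi_i(\Omega_i \cap \H^n_-) \subset \cR \setminus M$; any chart collections appearing in the construction of the two integrals may be refined to this common family. For each $i$, pick a Borel representative of $\phi_i^* \omega$ on $\Omega_i$ in the interior case, or on $\Omega_i \cap \overline{\H^n_+}$ in the boundary case, and from it form the zero extension $\omega_{i,0} \in L^p(\wedge^k T^* \Omega_i)$, used by the $L^p$-integral, and the reflection extension $\alpha_i \in W^{d,\infty}(\wedge^k T^* \Omega_i)$ from Lemma \ref{lem:reflection_in_Rn}, used in the definition of the canonical $W^{d,\infty}$-integral at boundary charts. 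After subdivision, both integrals of $\omega$ over $\sigma$ are assembled from integrals in single charts; hence by Lemma \ref{lem:covering_simplex} it suffices to show for every $i$ the identity
\[
\int_{\Delta_k} (\sigma')^* \omega_{i,0} = \int_{\sigma'} \alpha_i
\]
for $p$-a.e.\ singular Lipschitz $k$-simplex $\sigma' \colon \Delta_k \to \Omega_i$ in the interior case, respectively $\sigma' \colon \Delta_k \to \Omega_i \cap \overline{\H^n_+}$ in the boundary case.

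The interior case is immediate, as $\omega_{i,0} = \alpha_i$ and Proposition \ref{prop:wolfe_int_properties} part \eqref{enum:wolfe_borelforms} equates the two sides for $p$-a.e.\ $\sigma'$. In the boundary case, applying the same Proposition to the Borel form $\alpha_i$ yields
\[
\int_{\sigma'} \alpha_i = \int_{\Delta_k} (\sigma')^* \alpha_i
\]
for $p$-a.e.\ $\sigma' \in C_k^{\lip}(\Omega_i; \R)$. Since $\alpha_i$ and $\omega_{i,0}$ coincide almost everywhere on $\Omega_i \cap \H^n_+$, they disagree only on a subset of $(\Omega_i \cap \H^n_-) \cup (\Omega_i \cap \partial \H^n_+)$. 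For $\sigma'$ with image in $\overline{\H^n_+}$, the measure $\nu_{\sigma'}$ is supported in $\overline{\H^n_+}$, so the discrepancy on $\Omega_i \cap \H^n_-$ is automatically invisible; and because $\Omega_i \cap \partial \H^n_+$ has zero Lebesgue $n$-measure, Lemma \ref{lem:Fuglede_properties} part \eqref{enum:fuglede_measure_zero} gives $\nu_{\sigma'}(\Omega_i \cap \partial \H^n_+) = 0$ for $p$-a.e.\ $\sigma'$. Thus $(\sigma')^* \alpha_i$ and $(\sigma')^* \omega_{i,0}$ agree almost everywhere on $\Delta_k$ for $p$-a.e.\ such $\sigma'$, and the two integrals match.

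The main obstacle is the boundary case, precisely because the two definitions of the integral use different extensions of $\phi_i^* \omega$ across $\partial \H^n_+$. This is resolved by combining Lemma \ref{lem:wolfe_int_one_half_dependence}, which legitimises the reflection extension as a choice of representative for the canonical integral, with Lemma \ref{lem:Fuglede_properties} part \eqref{enum:fuglede_measure_zero}, which discards the remaining measure-zero discrepancy on $\partial \H^n_+$ for $p$-a.e.\ chain in $\overline{\H^n_+}$. Finally, the countable subadditivity of $\moddens_p$ from Lemma \ref{lem:Fuglede_properties} part \eqref{enum:fuglede_outer_measure} assembles the chart-by-chart exceptional families, lifted to $U$ via Lemma \ref{lem:covering_simplex}, into a single $p$-exceptional subset of $C_k^{\lip}(U; \R)$ outside which the two integrals coincide.
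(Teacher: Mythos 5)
Your proof is correct and takes essentially the same route as the paper: cover $U$ with interior and boundary bilipschitz charts, pass to a single chart via subdivision and Lemma \ref{lem:covering_simplex}, and invoke Proposition \ref{prop:wolfe_int_properties} part \eqref{enum:wolfe_borelforms} in each chart. One remark on the boundary case: because you construct both $\omega_{i,0}$ and $\alpha_i$ from the \emph{same} Borel representative of $\phi_i^*\omega$ on $\Omega_i \cap \overline{\H^n_+}$, they coincide there \emph{exactly} (not merely almost everywhere), so for any Lipschitz $k$-chain with image in $\overline{\H^n_+}$ the pullbacks $(\sigma')^*\omega_{i,0}$ and $(\sigma')^*\alpha_i$ are literally equal and the $L^p$-integrals agree for \emph{every} such $\sigma'$; the appeal to Lemma \ref{lem:Fuglede_properties} part \eqref{enum:fuglede_measure_zero} to kill off a discrepancy on $\partial \H^n_+$ is therefore unnecessary, and the sentence claiming they ``coincide almost everywhere on $\Omega_i \cap \H^n_+$'' undersells your own construction. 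Removing that detour reproduces the paper's argument exactly.
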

\begin{proof}
	Suppose that $\phi$ is a chart such that $U_{\phi} \subset U \cap \intr (M)$. If we fix a Borel representative of $\phi^* \omega$ and let $\Sigma'$ be the set of $\sigma \in C^{\lip}_k(\Omega_i; \R)$ for which $\phi_i^* \omega$ does not satisfy the result, then $\Sigma'$ is $p$-exceptional by Proposition \ref{prop:wolfe_int_properties} part \eqref{enum:wolfe_borelforms}. 
	
	On the other hand, suppose that $\phi \colon \Omega_\phi \to U_\phi$ is a chart that takes $\Omega_\phi \cap \overline{\H^n_+}$ into $U$ and $\Omega_\phi \cap \partial \H^n_+$ into $U \cap \partial M$. Then the $W^{d,\infty}$-integral in $U_{\phi}$ is defined using the reflection extension of $\phi^* \omega$ outlined in Lemma \ref{lem:reflection_in_Rn} and the $L^p$-integral is defined using the extension of $\phi^* \omega$ by zero. These two extensions have Borel representatives $\alpha$ and $\beta$ that coincide in $\Omega_\phi \cap \overline{\H^n_+}$. In this case, $\alpha$ and $\beta$ have the exact same $L^p$-integral over every $\sigma \in C^{\lip}_k(\Omega_\phi \cap \overline{\H^n_+}; \R)$. If $\Sigma'$ is the set of $\sigma \in C^{\lip}_k(\Omega_\phi \cap \overline{\H^n_+}; \R)$ for which $\phi_i^* \omega$ does not satisfy the result, then the $L^p$-integral of $\alpha$ over every $\sigma \in \Sigma'$ differs from the corresponding $W^{d,\infty}$-integral. Similarly as before, $\Sigma'$ is $p$-exceptional by Proposition \ref{prop:wolfe_int_properties} part \eqref{enum:wolfe_borelforms}.
	
	The claim again follows by taking a countable cover of $U$ with such charts and by using Lemma \ref{lem:covering_simplex} on the corresponding sets $\Sigma'$ as in the previous lemma.
\end{proof}

To end this section, we comment on the measures $\nu_\sigma$. As mentioned earlier, if the Lipschitz Riemannian metric is only measurable, then we cannot define $\nu_\sigma$ canonically for every $\sigma \in C^{\lip}_k(M; \R)$. However, outside a $p$-exceptional family of Lipschitz $k$-chains with $p \in [1,\infty)$, the measure $\nu_\sigma$ can be defined. 

Indeed, in the following Lemma, we prove that one can define $\nu_\sigma$ outside a $p$-exceptional family of chains in a domain of $\R^n$ equipped with a bounded measurable Riemannian metric $g'$. Once this is done, if $(M, g)$ is a Lipschitz Riemannian manifold and $\phi \colon \Omega \to U$ is a chart, we can then define $\nu_\sigma$ for Lipschitz $k$-simplexes $\sigma \colon \Delta_k \to U$ by defining $\nu_{\phi^{-1} \circ \sigma}$ with respect to $\phi^* g$ and applying push-forward of measures with $\phi$. The definition for $p$-a.e.\ $\sigma \in C^{\lip}_k(M; \R)$ follows by a similar countable covering and subdivision argument as was used to define integration of $k$-forms over $p$-a.e.\ Lipschitz $k$-chain, including the use of Lemma \ref{lem:covering_simplex}. The measures $\nu_\sigma$ again depend on the choices made in the construction, but a different set of choices changes them only in a $p$-exceptional family. 

\begin{lemma}\label{lem:measurenotdefined}
	Let $U \subset \bR^n$ be an open set and let $g$ be a Lipschitz Riemannian metric on $U$.  There exists a collection of Lipschitz $k$-simplices $\cS$ so that $\nu_\sigma$ is defined by \eqref{eq:nu_sigma_def} and \eqref{eq:Jacobian} with respect to $g$ and whose complement in the set of all Lipschitz $k$-simplices is $p$-exceptional.
\end{lemma}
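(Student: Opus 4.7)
The plan is to control the Lebesgue null set where $g$ is undefined via Fuglede's null set lemma. Let $N \subset U$ be the Lebesgue null set outside of which $g_y$ is a well-defined positive-definite symmetric bilinear form on $\bR^n$. For every Lipschitz $k$-simplex $\sigma \colon \Delta_k \to U$, Rademacher's theorem gives $D\sigma$ a.e.\ on $\Delta_k$, so the Euclidean Jacobian $|J^{\mathrm{eucl}}_\sigma|(x) = \sqrt{\det D\sigma(x)^T D\sigma(x)}$ is defined a.e., and we obtain the Euclidean surface measure $\mu_\sigma$ on $U$ via $\int \eta \, d\mu_\sigma = \int_{\Delta_k} (\eta \circ \sigma) |J^{\mathrm{eucl}}_\sigma| \, d\vol_k$. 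These $\mu_\sigma$ are defined unconditionally for every $\sigma$ and together form the family of measures on $(U, \vol_n)$ underlying the $p$-modulus of Lipschitz $k$-chains in $U$.

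The first step is to apply Lemma \ref{lem:Fuglede_properties} \eqref{enum:fuglede_measure_zero} to the null set $N$: this yields $\mu_\sigma(N) = 0$ outside a $p$-exceptional family of simplices. Let $\cS$ be the collection of $\sigma$ for which this vanishing holds, so that its complement in the set of all Lipschitz $k$-simplices is $p$-exceptional. For $\sigma \in \cS$, the identity
\begin{equation*}
	0 = \mu_\sigma(N) = \int_{\sigma^{-1}(N)} |J^{\mathrm{eucl}}_\sigma| \, d\vol_k
\end{equation*}
forces $|J^{\mathrm{eucl}}_\sigma|(x) = 0$ for a.e.\ $x \in \sigma^{-1}(N)$; equivalently, $D\sigma(x)$ has rank strictly less than $k$ for a.e.\ $x$ at which $g_{\sigma(x)}$ is undefined. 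The $g$-Jacobian of \eqref{eq:Jacobian} is then defined piecewise: on $\sigma^{-1}(U \setminus N)$ the adjoint $D^T \sigma(x)$ with respect to $g_{\sigma(x)}$ exists, so the formula applies directly; on $\sigma^{-1}(N)$ the rank deficiency of $D\sigma(x)$ forces $\det D\sigma(x)^T M D\sigma(x) = 0$ for any positive-definite extension $M$ of $g$ through $N$, so that $|J_\sigma|(x) = 0$ unambiguously. This gives a well-defined a.e.\ function on $\Delta_k$, independent of any extension of $g$, and $\nu_\sigma$ is then given by \eqref{eq:nu_sigma_def}.

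I expect the only real conceptual obstacle to be the subtlety that for a general Lipschitz simplex $\sigma$, the preimage $\sigma^{-1}(N)$ of a Lebesgue null set $N$ need not itself be null in $\Delta_k$, so that $g \circ \sigma$ may be undefined on a positive-measure set and no Lusin-type $(N^{-1})$-property is available. Fuglede's lemma precisely circumvents this: although $\sigma^{-1}(N)$ may have positive measure, the Euclidean Jacobian $|J^{\mathrm{eucl}}_\sigma|$ must vanish there for $p$-a.e.\ $\sigma$, which is exactly what is needed to make any reasonable definition of the Riemannian Jacobian unambiguously zero there and hence well-defined modulo a $p$-exceptional family.
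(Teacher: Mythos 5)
Your proof is correct and takes a genuinely more elementary route than the paper's. The paper chooses a sequence of smooth metrics $g_n \to g$ pointwise a.e.\ and in $L^p$ and then invokes the convergence part of Fuglede's lemma (Lemma \ref{lem:Fuglede_properties}\eqref{enum:fuglede_convergence}) to pass to a subsequence with $\smallnorm{g_n - g}_{\text{op}} \to 0$ in $L^1(\mu_\sigma)$ for $p$-a.e.\ $\sigma$; this shows $g$ is a well-defined positive-definite form $\mu_\sigma$-a.e.\ and hence that $\nu_\sigma$ can be defined. You instead apply the null-set part (Lemma \ref{lem:Fuglede_properties}\eqref{enum:fuglede_measure_zero}) directly to the Lebesgue-null set $N$ where $g$ fails to be a well-defined positive-definite form, getting $\mu_\sigma(N) = 0$ for $p$-a.e.\ $\sigma$, and then you make explicit the linear-algebraic observation that rank deficiency of $D\sigma$ on $\sigma^{-1}(N)$ forces the Riemannian Jacobian to be zero there regardless of any extension of $g$ — which is precisely what resolves the absence of a Lusin $(N^{-1})$ property for $k < n$ that you correctly identify as the crux. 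Your version is shorter, avoids an approximation argument, and uses a weaker part of Fuglede's lemma; it also surfaces the key step (the Euclidean Jacobian vanishing a.e.\ on $\sigma^{-1}(N)$) that the paper leaves implicit in its phrase ``on $\sigma$, $g$ is a Lipschitz Riemannian metric outside a set of $0$ measure with respect to $\mu_\sigma$.'' One minor point worth stating if you were to polish this: you should fix a Borel representative of $g$ and a Borel choice of $N$ once and for all, so that $\abs{J_\sigma}$ is a measurable function on $\Delta_k$; a different choice alters $\nu_\sigma$ only on a further $p$-exceptional family, as the same argument shows.
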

\begin{proof}
	Let $g_n$ be a sequence of smooth metrics that converge to $g$ pointwise almost everywhere and in $L^p$.  Let $\mu_\sigma$ be the surface measure of $\sigma$ with respect to the Euclidean metric on $U$.  By Lemma \ref{lem:Fuglede_properties}, outside a $p$-exceptional set of Lipschitz $k$-simplices,
	\begin{align*}
		\lim_{n \to \infty} \int_\sigma \|g_n -g \|_{\text{op}}d\mu_\sigma =0.
	\end{align*}
	Therefore, on $\sigma$, $g$ is a Lipschitz Riemannian metric outside a set of $0$ measure with respect to $\mu_\sigma$.  This allows us to define $\nu_\sigma$ with respect to $g$.
\end{proof}

\section{Sobolev cohomology theories}\label{sec:sobolevhomology}

We next discuss the relevant parts of Sobolev cohomology theory on Lipschitz manifolds.

\subsection{Local Sobolev cohomology}
Let $M$ be a closed Lipschitz $n$submanifold in an oriented Lipschitz Riemannian $n$-manifold $\cR$ without boundary, let $D$ be a closed Lipschitz $(n-1)$-submanifold in $\partial M$ and let $p \in [1, \infty]$. We note that the map $U \mapsto W^{d,p}_{T(D), \loc}(\wedge^k T^* U)$ is a sheaf on $M$. 

In order to define a cohomology theory in the Sobolev setting the only analytic tool needed is a local exactness result. This is provided by a Sobolev variant of the Poincar\'e lemma. A version in the Euclidean setting with $D = \emptyset$ can be found in \cite[Section 4]{Iwaniec-Lutoborski}. We give the version of the result that we require, relying heavily on the exposition in \cite[Section 4]{Iwaniec-Lutoborski} for the proof.

\begin{lemma}\label{lem:poincare_with_boundary_vanishing}
	Let $M$ be a closed Lipschitz $n$-submanifold in an oriented Lipschitz Riemannian $n$-manifold $\cR$ without boundary, let $p \in [1, \infty]$ and let $D$ be a closed Lipschitz $(n-1)$-submanifold in $\partial M$.
	Let $x \in M$, $k \geq 1$, $U$ a neighborhood of $x$ in $M$ and suppose that $\omega \in W^{d,p}_{T(D), \loc}(\wedge^k T^* U)$ is such that $d\omega = 0$.  There exists a neighborhood $V \subset U$ of $x$ and a form $\tau \in W^{d,p}_{T(D), \loc}(\wedge^{k-1} T^* V)$ such that $\omega \vert_V = d\tau$.
\end{lemma}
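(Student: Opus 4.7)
The plan is to argue locally via bilipschitz charts, reducing to the Euclidean Sobolev Poincar\'e lemma of Iwaniec-Lutoborski and handling the boundary condition on $D$ through the extension-by-zero characterization of weak vanishing. I would split the argument into four cases by the stratum of $M$ containing $x$: $x \in \intr(M)$, $x \in \partial M \setminus D$, $x \in \intr(D)$, or $x \in \partial D$. In the first two cases the $T(D)$-condition is vacuous in a small enough neighborhood of $x$, so a chart into a Euclidean ball $B$ (combined, in the free-boundary case, with the reflection of Lemma \ref{lem:reflection_in_Rn} to extend $\phi^*\omega$ to a closed $W^{d,p}$-form on all of $B$) reduces the problem to the standard Sobolev Poincar\'e lemma on a convex Euclidean domain.

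The case $x \in \intr(D)$ is the main one. Take a boundary chart $\phi : B \to U_\phi$ with $\phi(B \cap \overline{\H^n_+}) = U_\phi \cap M$ and $\phi(B \cap \partial\H^n_+) \subset \intr(D)$. By the very definition of weak vanishing, the zero-extension $\tilde\omega$ of $\phi^*\omega$ across $\partial\H^n_+$ is a weakly closed $W^{d,p}$-form on $B$. Applying Iwaniec-Lutoborski on a smaller ball $B'$ produces $\tilde\tau_0 \in W^{d,p}(\wedge^{k-1} T^* B')$ with $d\tilde\tau_0 = \tilde\omega$, whose restriction to $B' \cap \H^n_-$ is weakly closed since $\tilde\omega$ vanishes there. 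For $k \geq 2$ a second application of the Poincar\'e lemma on the convex lower half-ball yields $\eta \in W^{d,p}(\wedge^{k-2} T^*(B' \cap \H^n_-))$ with $d\eta = \tilde\tau_0\vert_{B' \cap \H^n_-}$, and the analogue of Lemma \ref{lem:reflection_in_Rn} extends $\eta$ by reflection across $\partial\H^n_+$ to some $\eta_{\mathrm{ext}} \in W^{d,p}(\wedge^{k-2} T^* B'')$ on a still smaller ball. The corrected primitive $\tilde\tau := \tilde\tau_0 - d\eta_{\mathrm{ext}}$ then satisfies $d\tilde\tau = \tilde\omega$ on $B''$ while vanishing identically on $B'' \cap \H^n_-$, so it agrees with the zero-extension of its restriction to $B'' \cap \overline{\H^n_+}$; pulling back via $\phi$ yields the desired $\tau \in W^{d,p}_{T(D), \loc}$. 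For the degenerate degree $k = 1$, the $\eta$-step is replaced by subtracting from $\tilde\tau_0$ its almost-everywhere-constant value on the connected set $B' \cap \H^n_-$.

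The case $x \in \partial D$ is reduced to the $\intr(D)$ case by a bilipschitz unfolding of the slit. Using a chart identifying $M \cap U_\phi$ with $B \cap \overline{\H^n_+}$, $\intr(D) \cap U_\phi$ with $B \cap \{x_n = 0,\, x_1 > 0\}$, and $E \cap U_\phi$ with $B \cap \{x_n = 0,\, x_1 \leq 0\}$, the extension-by-zero domain becomes the slit ball $\tilde V' = B \setminus \{x_n = 0,\, x_1 \leq 0\}$. The map $f : \tilde V' \to \{u \in B : u_1 > 0\}$ acting as the polar-angle halving $(r, \theta) \mapsto (r, \theta/2)$ with $\theta \in (-\pi, \pi)$ in the $(x_1, x_n)$-plane and as the identity in the remaining coordinates is bilipschitz onto the convex half-ball, with $f(B \cap \H^n_\pm) = \{u \in B : u_1 > 0,\, \pm u_n > 0\}$. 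Pulling back $\tilde\omega$ via $f^{-1}$ and using Lemma \ref{lem:bilip_chain_rule} yields a weakly closed $W^{d,p}$-form on the convex half-ball that vanishes on the convex lower quadrant, after which the argument of the $\intr(D)$ case applies verbatim with the half-ball replacing $B$ and the lower quadrant replacing the lower half-ball. Pulling back via $f$ then produces the desired $\tau$.

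The main obstacle is the correction step at the heart of the $\intr(D)$ argument (which, via unfolding, also powers the $\partial D$ argument): extracting from the closedness of the intermediate primitive $\tilde\tau_0$ on the extension side of the domain an exact correction $d\eta_{\mathrm{ext}}$ whose subtraction annihilates $\tilde\tau_0$ precisely where $\tilde\omega$ was extended by zero, while preserving $W^{d,p}$-regularity across the interface. The reflection extension of Lemma \ref{lem:reflection_in_Rn} provides the $L^p$-compatible extension mechanism and forces the separate treatment of $k = 1$, while the bilipschitz unfolding resolves the non-convexity of the slit domain that otherwise obstructs applying Iwaniec-Lutoborski in the $\partial D$ case.
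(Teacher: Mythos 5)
Your treatment of $x \in \intr(M)$ and $x \in \partial M \setminus D$ agrees with the paper's, and your route through $x \in \intr(D)$ is a valid alternative to the paper's argument: the paper exploits the freedom to place the averaging kernel $\eta$ of the Cartan homotopy operator $T$ in \eqref{eq:cartan_homotopy_op_def} inside the convex set $B \cap \H^n_-$ where the zero-extension $\tilde\omega$ vanishes, which makes $T\tilde\omega$ vanish there at once, whereas you obtain the same vanishing by a correction step: apply $T$ once, observe that the intermediate primitive is weakly closed on $B' \cap \H^n_-$, solve a second Poincar\'e problem there, reflect, and subtract. Both work, including your fix for $k = 1$.

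The case $x \in \partial D$ has a genuine gap. The angle-halving map $f \colon \tilde V' \to \{u_1 > 0\} \cap B$, $(r,\theta) \mapsto (r,\theta/2)$, is \emph{not} bilipschitz. Its pointwise derivative has singular values $1$ and $1/2$, but the slit ball $\tilde V' = B \setminus \{x_n = 0,\ x_1 \leq 0\}$ is not quasi-convex, so a bounded derivative does not give a Lipschitz bound: for $x_\eps = (-r\cos\eps,\ r\sin\eps, 0, \dots)$ and $y_\eps = (-r\cos\eps,\ -r\sin\eps, 0, \dots)$ on opposite sides of the slit, $\abs{x_\eps - y_\eps} = 2r\sin\eps$ while $\abs{f(x_\eps) - f(y_\eps)} = 2r\cos(\eps/2)$, so $\abs{f(x_\eps)-f(y_\eps)}/\abs{x_\eps - y_\eps} \to \infty$ as $\eps \to 0$. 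Consequently $f^{-1}$ is Lipschitz but not bilipschitz, and Lemma \ref{lem:bilip_chain_rule}, which you invoke to pull $\tilde\omega$ back to the half-ball, does not apply as stated. The conclusion can be rescued---both $f$ and $f^{-1}$ are \emph{locally} bilipschitz, and the pointwise bounds on $Df^{\pm1}$ together with the Lusin property give the needed global $L^p$ estimates, so one can run the chain rule on a cover by small balls and patch---but that is a different and more delicate argument than the single citation you give. The paper sidesteps the non-quasi-convexity entirely by using a \emph{slanted} boundary chart sending $\intr D$ to $\{x_n = x_1,\ x_1 > 0\}$ rather than to $\{x_n = 0,\ x_1 > 0\}$: after zero-extension into $B \cap \H^n_+$ and reflection, the form vanishes on the convex set $\{\abs{x_n} < x_1\}$, and the same kernel-placement trick as in the $\intr(D)$ case finishes the argument with no unfolding at all.
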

\begin{proof}
	We first recall an explicit construction for $\tau$ in the Euclidean setting. Let $B = B^n(0, 1)$. Fix $\eta \in C^\infty_0(B)$ such that $\eta \geq 0$ and $\int_B \eta = 1$ and define a linear operator $T \colon C^\infty(\wedge^k T^* B) \to C^\infty(\wedge^{k-1} T^* B)$ by
	\begin{equation}\label{eq:cartan_homotopy_op_def}
		(T\omega)_x(v_1, \dots, v_{k-1}) = \int_B \eta(y) \int_0^1 t^{k-1} \omega_{tx + (1-t)y}(x-y, v_1, \dots, v_{k-1}) \dd t \dd y.
	\end{equation}
	Then $\omega = d T(\omega) + T(d\omega)$ and $\norm{T\omega}_{L^p} \leq C_{\eta} \norm{\omega}_{L^p}$ for all $p \in [1, \infty]$ and all $\omega \in C^\infty(\wedge^k T^* B)$ (see \cite[(4.6) and (4.15)]{Iwaniec-Lutoborski}). If $p \in [1,\infty)$, then $C^\infty(\wedge^k T^* B)$ is dense in $L^p(\wedge^k T^* B)$ and hence $T$ extends to a bounded linear map $T \colon L^p(\wedge^k T^* B) \to L^p(\wedge^{k-1} T^* B)$. 
	Moreover, a similar extension remains valid for $p = \infty$ even though $C^\infty(\wedge^k T^* B)$ is not dense in $L^\infty(\wedge^k T^* B)$.
	Indeed, we may smoothly approximate $\omega \in L^\infty(\wedge^k T^* B)$ with $\omega_j \in C^\infty(\wedge^k T^* B)$ such that $\norm{\omega_j}_{L^\infty} \leq \norm{\omega}_{L^\infty}$ and $\norm{\omega_j - \omega}_{L^1} \to 0$.
	In which case, $T(\omega_j)$ converges pointwise to a unique $T(\omega) \in L^1(\wedge^k T^* B)$ and the uniform bound $\norm{T(\omega_j)}_{L^\infty} \leq C_{\eta} \norm{\omega}_{L^\infty}$ yields that $\norm{T(\omega)}_{L^\infty} \leq C_{\eta} \norm{\omega}_{L^\infty}$. 
	
	If $p \in [1, \infty)$, it follows by smooth approximation that $T(\omega) \in W^{d,p}(\wedge^{k-1} T^* B)$ and $\omega = d T(\omega) + T(d\omega)$ if $\omega \in W^{d,p}(\wedge^{k} T^* B)$ (see \cite[Lemma 4.2]{Iwaniec-Lutoborski}). The same is also true for $\omega \in W^{d,\infty}(\wedge^k T^* B)$, since $W^{d,\infty}(\wedge^k T^* B) \subset W^{d,1}(\wedge^k T^* B)$ implies that $d T(\omega) = \omega - T(d\omega)$ weakly and hence $\norm{d T(\omega)}_{L^\infty} \leq \norm{\omega}_{L^\infty} + \norm{T(d\omega)}_{L^\infty} < \infty$. In the Euclidean setting, if $\omega \in W^{d,p}(\wedge^k T^* B)$ weakly, then a choice of $\tau = T(\omega)$ yields $\tau \in W^{d,p}(\wedge^{k-1} T^* B)$ and $d\tau = \omega$ weakly.
	
	We return to the Lipschitz manifold setting. Suppose first that $x \in \intr(M)$.  We can choose a bilipschitz chart $\phi \colon B \to V$ such that $x \in V \subset U \cap \intr(M)$. The Euclidean result implies that $\phi^* \omega \in W^{d,p}(\wedge^k T^* B)$ and the desired $\tau \in W^{d,p}(\wedge^{k-1} T^* V)$ is given by $\phi^* \tau = T(\phi^* \omega)$.
	
	The next case we consider is when $x \in \partial M \setminus D$. There exists a bilipschitz chart $\phi \colon B \to V$ such that $x \in V$, $V \cap M \subset U$, $V \cap D = \emptyset$, $\phi(B \cap \overline{\H^n_+}) = V \cap M$ and $\phi (B \setminus \overline{\H^n_+}) = V \setminus M$. In this case, the reflection Lemma \ref{lem:reflection_in_Rn} extends $\phi^* \omega$ to $B$. A valid choice of $\tau$ is given by $\phi^* \tau = T(\phi^* \omega)$ on $B\cap\H^n_+$ and $\phi^* \tau = 0$ on $B\cap\H^n_-$.
	
	Next, we consider the case $x \in \intr D$. There exists a bilipschitz chart $\phi \colon B \to V$ such that $x \in V$, $V \cap M \subset U$, $V \cap \partial M \subset \intr D$, $\phi(B \cap \overline{\H^n_+}) = V \cap M$ and $\phi (B \setminus \overline{\H^n_+}) = V \setminus M$. If we extend $\phi^* \omega$ by zero to $B$, the extension is in $W^{d,p}(\wedge^k T^* B)$ with $\spt (\phi^* \omega) \subset \overline{\H^n_+}$. By selecting the $\eta$ in the definition of $T$ to satisfy $\spt \eta \subset B \cap \H^n_-$ and since $B \cap \H^n_-$ is convex, it follows from \eqref{eq:cartan_homotopy_op_def} that $T(\phi^* \omega) = 0$ on $B \cap \H^n_-$ (see also the comment before \cite[(4.14)]{Iwaniec-Lutoborski}). Thus, $\phi^* \tau = T(\phi^* \omega)$ remains a valid choice that vanishes weakly on $D$.
	
	We have one more remaining case, which is $x \in \partial D$. For this case, we select the bilipschitz chart $\phi \colon B \to V$, where $\phi(B \cap \{x \in \R^n : x_n = 0, x_1 < 0\}) = V \cap (\partial M \setminus D)$ and $\phi(B \cap \{x \in \R^n : x_n = x_1, x_1 > 0\}) = V \cap \intr D$. By extending $\phi^* \omega$ by zero to $B \cap \H^n_+$ and then further extending to $B$ by Lemma \ref{lem:reflection_in_Rn}, we obtain an extension $\phi^* \omega \in W^{d,p}(\wedge^k T^* B)$ that vanishes in $H = B \cap \{x \in \R^n : \abs{x_n} < x_1\}$. Choosing $\eta$ such that $\spt \eta \subset H$, $T(\phi^* \omega)$ vanishes on $H$ due to convexity of $H$. The desired $\tau$ is given by $\phi^*\tau = T(\phi^* \omega)$ in $B \cap \H^n_+$ and $\phi^* \tau = 0$ in $B \cap \H^n_-$.
\end{proof}

Let $H_{p}^k(M, D)$ denote the degree $k$ cohomology space of the complex $W^{d,p}_{T(D), \loc}(\wedge^* T^* M)$. We call $H_{p}^k(M, D)$ the degree $k$ \emph{relative $L^p_\loc$-cohomology of the pair $(M, D)$}. Due to the Poincar\'e lemma shown in Lemma \ref{lem:poincare_with_boundary_vanishing}, the methods of sheaf cohomology now apply to the spaces $H_{p}^k(M, D)$.  We begin with the following corollary of the Poincar\'e lemma. The proof assumes familiarity with sheaf theory and sheaf cohomology; for standard reference works on the topic, we refer the reader to the comprehensive treatment in \cite{Bredon_book}, as well as the more focused introductions in \cite[Chapter 5]{Warner_book} and \cite[Chapter II]{Wells_ComplexMfldAnalysis}.

\begin{cor}\label{cor:Lp_Linfty_cohom_eq}
	Let $M$ be a closed Lipschitz $n$-submanifold in an oriented Lipschitz Riemannian $n$-manifold $\cR$ without boundary, let $p \in [1, \infty)$ and let $D$ be a closed Lipschitz $(n-1)$-submanifold in $\partial M$. The relative $L^p_\loc$-cohomology spaces $H_{p}^k(M, D)$ and $H_{\infty}^k(M, D)$ are isomorphic as vector spaces, with the isomorphism induced by the inclusion maps $W^{d,\infty}_{T(D), \loc}(\wedge^k T^* M) \hookrightarrow W^{d,p}_{T(D), \loc}(\wedge^* T^k M)$.
\end{cor}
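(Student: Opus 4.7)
The plan is to invoke standard sheaf-cohomology machinery. For each $p \in [1, \infty]$, let $\cF^\bullet_p$ denote the complex of sheaves on $M$ with $\cF^k_p(U) = W^{d,p}_{T(D), \loc}(\wedge^k T^* U)$ and differentials given by weak exterior derivatives. Lemma \ref{lem:poincare_with_boundary_vanishing} yields exactness at every $\cF^k_p$ with $k \geq 1$ at the stalk level, so $\cF^\bullet_p$ is a resolution of the kernel sheaf $\cK_p = \ker(d \colon \cF^0_p \to \cF^1_p)$. A direct analysis shows $\cK_p$ is independent of $p$: an element $f \in \cK_p(U)$ satisfies $df = 0$ weakly and is therefore locally constant on $\intr(M) \cap U$, while the weakly vanishing tangential part condition forces its extension by zero past $U \cap \intr(D)$ to remain locally constant, hence $f$ must vanish near $U \cap \intr(D)$. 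Thus each $\cK_p$ is the extension by zero to $M$ of the constant sheaf $\underline{\bR}$ on $M \setminus D$, which I denote simply $\cK$.

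Next, I would verify that each $\cF^k_p$ is a fine sheaf, hence acyclic. Paracompactness of $M$ provides Lipschitz partitions of unity subordinate to any locally finite open cover, and multiplication by a compactly supported Lipschitz function $\psi$ preserves membership in $W^{d,p}_{T(D),\loc}(\wedge^k T^* U)$: the Leibniz rule $d(\psi \omega) = d\psi \wedge \omega + \psi \, d\omega$ yields an $L^p_\loc$ differential, and the weak vanishing of $\omega_T$ on $D$ is preserved under multiplication by $\psi$ because extension by zero commutes with scalar multiplication by a Lipschitz cutoff. The potentially subtle preservation of boundary behavior near $\partial D$ can be confirmed by approximating $\omega$ in $W^{d,p}$-norm by elements of $W^{d,\infty}_{T(D)}(\wedge^k T^* U)$ with compact support in $U \setminus D$, as afforded by Lemma \ref{lem:W_infinity_approx_boundary}.

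By the abstract de Rham theorem (see, e.g., \cite[Chapter 5]{Warner_book} or \cite{Bredon_book}), the cohomology of the complex of global sections of a fine resolution coincides with the sheaf cohomology of the resolved sheaf, yielding $H^k_p(M, D) \cong H^k(M; \cK) \cong H^k_\infty(M, D)$. The required isomorphism is realized by the inclusion $\cF^\bullet_\infty \hookrightarrow \cF^\bullet_p$, which forms a morphism of fine resolutions extending the identity $\cK \to \cK$; any such morphism induces the sheaf-cohomology isomorphism upon passing to global sections. The main technical obstacle will be the verification of fineness in the relative setting, specifically ensuring the Leibniz rule interacts correctly with weakly vanishing tangential parts near the free boundary $\partial D$, which motivates the appeal to Lemma \ref{lem:W_infinity_approx_boundary}.
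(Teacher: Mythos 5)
Your argument is correct and follows the same route as the paper: both identify $\cF^\bullet_p$ (the paper's $\cW^*_{p,D}$) as a fine resolution of the sheaf of locally constant functions vanishing on $D$ via the Poincar\'e lemma of Lemma \ref{lem:poincare_with_boundary_vanishing}, and then invoke the abstract de~Rham/uniqueness-of-resolutions theorem (\cite[II.4.2]{Bredon_book}) to conclude that the inclusion of complexes induces an isomorphism in cohomology. You flesh out the identification of the kernel sheaf and the fineness verification in more detail than the paper does, but the structure of the proof is identical.
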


\begin{proof}
	Based on the Poincar\'e Lemma \ref{lem:poincare_with_boundary_vanishing}, we obtain that the graded differential sheaves $\cW_{p, D}^* \colon U \mapsto W^{d,p}_{T(D), \loc}(\wedge^* T^* U)$ and $\cW_{\infty, D}^* \colon U \mapsto W^{d,\infty}_{T(D), \loc}(\wedge^* T^* U)$ on $M$ each form a fine torsionless resolution of the sheaf $\cR_D$ of locally constant functions on $M$ which vanish on $D$. Since the inclusion maps $W^{d,\infty}_{T(D), \loc}(\wedge^k T^* M) \hookrightarrow W^{d,p}_{T(D), \loc}(\wedge^* T^k M)$ form a sheaf homomorphism of resolutions, they must induce an isomorphism in cohomology by \cite[II.4.2]{Bredon_book}.
\end{proof}

\subsection{Lipschitz homology and the de Rham theorem}

On smooth manifolds $M$, the de Rham theorem shows that the cohomology of the complex $C^\infty(\wedge^* T^* M)$ is dual to singular homology and the duality map is explicitly given by integration over smooth simplices. Since we can integrate $W^{d,\infty}_{T(D), \loc}(\wedge^* T^* M)$-forms over Lipschitz simplices, a similar explicit duality relation between Lipschitz homology and $L^\infty_\loc$-cohomology exists. 

\begin{prop}\label{prop:de_Rham_thm_in_Lip_setting}
	Let $M$ be a closed Lipschitz $n$-submanifold in an oriented Lipschitz Riemannian $n$-manifold $\cR$ without boundary, let $p \in [1, \infty]$ and let $D$ be a closed Lipschitz $(n-1)$-submanifold in $\partial M$. There is a canonical isomorphism
	\[
		H^k_\infty(M, D) \cong \hom (H_k^{\lip}(M, D; \Z), \R) \cong \hom (H_k^{\lip}(M, D; \R), \R),
	\]
	which, for $[\omega] \in H^k_\infty(M, D)$ and $[\sigma] \in H_k^{\lip}(M, D; \K)$ with $\K \in \{\Z, \R\}$, is given by
	\begin{equation}\label{eq:de_rham_isomorphism}
		[\omega]([\sigma]) = \int_\sigma \omega.
	\end{equation}
	Moreover, this isomorphism maps the wedge product to the cup product on the level of cohomology classes: if $E = \partial M \setminus D$, then $[\omega] \cup [\omega'] = [\omega \wedge \omega']$ for all $[\omega] \in H^k_\infty(M, D), [\omega'] \in H^l_\infty(M, E)$, where $[\omega \wedge \omega'] \in H^{k+l}_\infty(M, \partial M)$.
\end{prop}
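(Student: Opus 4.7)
The plan is to adapt the standard sheaf-theoretic proof of the de Rham theorem to the Lipschitz setting, making essential use of Corollary \ref{cor:Lp_Linfty_cohom_eq} and the integration theory developed in the preceding sections.

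First, I would verify that the pairing $(\omega, \sigma) \mapsto \int_\sigma \omega$ is well-defined on (co)homology classes. For a closed $\omega \in W^{d,\infty}_{T(D), \loc}(\wedge^k T^* M)$ and a relative cycle $\sigma$, that is $\partial\sigma \in C_{k-1}^{\lip}(D;\R)$, changing $\sigma$ by $\partial\eta + c$ with $c \in C_k^{\lip}(D;\R)$ alters the integral by $\int_\eta d\omega + \int_c \omega$, which vanishes by closedness of $\omega$ together with Lemma \ref{lem:canonical_int_vanishing_boundary_values}. Dually, if $\omega = d\tau$ with $\tau \in W^{d,\infty}_{T(D), \loc}(\wedge^{k-1} T^* M)$, then Proposition \ref{prop:Lipschitz_stokes} gives $\int_\sigma \omega = \int_{\partial\sigma} \tau$, which again vanishes by Lemma \ref{lem:canonical_int_vanishing_boundary_values}. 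Formula \eqref{eq:de_rham_isomorphism} thus yields a well-defined linear map $\Phi \colon H^k_\infty(M, D) \to \hom(H_k^{\lip}(M, D; \Z), \R)$.

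Second, to show that $\Phi$ is an isomorphism, I would compare two resolutions of the sheaf $\cR_D$ of locally constant real-valued functions on $M$ vanishing on $D$. The proof of Corollary \ref{cor:Lp_Linfty_cohom_eq} already produces the fine torsionless resolution $\cW^*_{\infty, D}$. A second such resolution $\cC^*_D$ is obtained by sheafifying the presheaf $U \mapsto \hom(C_*^{\lip}(U, U \cap D; \Z), \R)$: fineness follows from the existence of Lipschitz partitions of unity on $M$, and local exactness follows from a Lipschitz version of the standard subdivision argument, using that in a bilipschitz boundary chart the model half-space (or quarter-space near $\partial D$) admits Lipschitz straight-line deformation retracts preserving the relevant boundary condition. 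Integration, viewed as $\omega \mapsto (\sigma \mapsto \int_\sigma\omega)$, then defines a morphism of complexes $\cW^*_{\infty, D} \to \cC^*_D$ which is the identity on $\cR_D$ in degree zero. The uniqueness theorem for sheaf cohomology from \cite[II.4.2]{Bredon_book} implies that this morphism induces an isomorphism on cohomology, which is precisely $\Phi$. The identification $\hom(H_k^{\lip}(M,D;\Z), \R) \cong \hom(H_k^{\lip}(M,D;\R), \R)$ follows from the universal coefficient theorem, since $\R$ is a flat $\Z$-module.

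Third, the wedge--cup compatibility is a naturality statement in sheaf cohomology. Lemma \ref{lem:mixedboundarydata} promotes the wedge product to a pairing of sheaf resolutions $\cW^*_{\infty, D} \otimes \cW^*_{\infty, E} \to \cW^*_{\infty, \partial M}$ refining the multiplication $\cR_D \otimes \cR_E \to \cR_{\partial M}$ in degree zero; the Alexander--Whitney cup product provides a pairing $\cC^*_D \otimes \cC^*_E \to \cC^*_{\partial M}$ refining the same multiplication. By an acyclic-models argument, or equivalently by the uniqueness of cohomological products refining a fixed coefficient pairing, the two induced products on sheaf cohomology coincide under the isomorphism from step two, yielding $\Phi([\omega] \cup [\omega']) = \Phi([\omega \wedge \omega'])$. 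The main technical obstacle I anticipate is in step two: establishing local exactness of $\cC^*_D$ near $\partial D$ requires running the Lipschitz subdivision and straight-line homotopy arguments so that chains in $D$ are mapped to chains in $D$, with care analogous to the boundary modification technique employed in the proof of Lemma \ref{lem:W_infinity_approx_boundary}. Once this local exactness is in hand, the rest is a routine application of the uniqueness machinery of sheaf cohomology.
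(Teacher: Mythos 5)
Your proposal follows essentially the same sheaf-theoretic route the paper uses: well-definedness via Lemma \ref{lem:canonical_int_vanishing_boundary_values} and Proposition \ref{prop:Lipschitz_stokes}, two acyclic resolutions of $\cR_D$ compared via \cite[II.4.2]{Bredon_book}, and a uniqueness argument to identify the wedge and cup products. One minor divergence: the paper shows the Lipschitz cochain sheaves $\tilde{\cL}^*_D$ are flabby (cochains extend trivially by zero), which is cleaner than establishing fineness for a singular cochain sheaf via partitions of unity as you suggest; either way the sheaves are acyclic, so this is a matter of taste, but flabbiness avoids some delicacy.

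There is one genuine gap you should fill. Bredon's uniqueness theorem applied to the morphism of resolutions $\cW^*_{\infty,D}\to\cC^*_D$ gives an isomorphism $H^*(\cW^*_{\infty,D}(M))\cong H^*(\cC^*_D(M))$, where $\cC^*_D$ is the sheafification of the cochain presheaf. But the target you want is $\hom(H_k^{\lip}(M,D;\Z),\R)$, i.e.\ the cohomology of the presheaf's global sections $\cL_D^*(M)$, and a priori $H^*(\cC^*_D(M))$ and $H^*(\cL_D^*(M))$ need not coincide (sheafification and global sections do not commute in general). You need the additional step that the natural map $\cL_D^*(M)\to\cC^*_D(M)$ induces an isomorphism in cohomology; this is where the subdivision argument actually earns its keep, and it is a separate use of subdivision from the local exactness you already invoke. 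The paper cites \cite[Section 5.32]{Warner_book} precisely for this comparison. Your proposal references subdivision but only in the context of proving local exactness of $\cC^*_D$, so without this step your identification ``which is precisely $\Phi$'' is unjustified. Once that comparison is inserted, the rest of your argument is sound.
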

The argument is essentially the standard sheaf-theoretic proof of de Rham -type theorems. For the convenience of the reader we provide a proof in the appendix. It is also notable that \eqref{eq:de_rham_isomorphism} also defines a chain map from $W^{d, \infty}_{T(D), \loc}(\wedge^* T^* M)$ to $\hom(C^{\lip}_*(M; \Z), \R)$, but this map does not preserve products: the equivalence of $\cup$ and $\wedge$ applies only for cohomology classes. 

\section{$p$-exceptional homology classes}\label{sec:pexceptional}

In this section we show that homology classes are not $p$-exceptional

\begin{prop}\label{prop:no_exc_hom_classes}
	Let $M$ be a closed Lipschitz $n$-submanifold in an oriented Lipschitz Riemannian $n$-manifold $\cR$ without boundary, let $p \in (1, \infty)$ and $k \in \{0, \dots, \infty\}$ and let $D$ be a closed Lipschitz $(n-1)$-submanifold in $\partial M$. If $[\sigma] \in H_k^{\lip}(M, D; \K)$ for $\K \in \{\Z, \R\}$, then $[\sigma]$ is not $p$-exceptional.
\end{prop}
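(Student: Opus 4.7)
The plan is to invoke Fuglede's characterization of exceptional sets (Lemma \ref{lem:Fuglede_exceptional_lemma}): it is enough to produce, for every non-negative $f \in L^p(M, \vol_g)$ and every chosen representative $\sigma$, a homologous chain $\sigma' \in [\sigma]$ satisfying $\int_M f \, d\mu_{\sigma'} < \infty$. The cases $k = 0$ (discrete chains) and $k > n$ (vanishing Jacobians) are immediate, so we focus on $1 \leq k \leq n$ and fix a representative $\sigma$ of the class.

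The core local device is a translation--mollification argument in Euclidean charts. Working in a bilipschitz chart $\phi \colon \Omega \to U \subset M$, consider a Lipschitz $k$-simplex $\tau \colon \Delta_k \to \Omega$ with image compactly contained in $\Omega$. For small $y \in \R^n$, the translate $\tau_y(x) := \tau(x) + y$ is homologous to $\tau$ via the prism chain $H_y(x, s) := \tau(x) + s y$. For a fixed compactly supported mollifier $\eta \in C_0^\infty(\R^n)$ with $\int \eta = 1$, Fubini gives
\begin{align*}
    \int_{\R^n} \eta(y) \int_\Omega (f \circ \phi) \, d\mu_{\tau_y} \, dy
    = \int_{\Delta_k} |J_\tau|(x) \, \big( \check{\eta} \ast (f \circ \phi) \big)(\tau(x)) \, dx,
\end{align*}
which is finite because $\check{\eta} \ast (f \circ \phi) \in L^\infty(\R^n)$ by H\"older and $|J_\tau| \in L^\infty(\Delta_k)$. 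Therefore the set of $y$ for which $\int f \, d\mu_{\tau_y}$ is finite has positive Lebesgue measure.

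To globalize, the plan is to cover the compact image of $\sigma$ by finitely many bilipschitz charts, subdivide $\sigma$ so that every simplex lies in a single chart, and then perturb the simplices one at a time according to an enumeration. Shared internal $(k-1)$-faces must be perturbed consistently; prescribing the translate on an already-treated adjacent face cuts out only a lower-dimensional affine constraint in the space of translation parameters, and the Fubini good-set still meets that constraint in positive measure. Concatenating the individual prism chains produces the $(k+1)$-chain realizing the homology between $\sigma$ and the final $\sigma'$, and linearity of the integral guarantees $\int f \, d\mu_{\sigma'} < \infty$.

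The principal obstacle is the relative case, where we additionally require $\partial \sigma' \subset D$. A rigid translation pushes faces of $\sigma$ that lie in $D$ out of $D$, and correcting this with a chain inside $D$ would require controlling the $L^p$-norm of $f$ on $(n-1)$-dimensional slices, which is not available. The resolution is to use the stratified chart types from the proof of Lemma \ref{lem:W_infinity_approx_boundary} -- interior, boundary of $M$ away from $D$, interior of $D$, and wedge-like charts near $\partial D$ -- and, in charts meeting $\partial M$, to replace rigid translations by isotopies $H_y(x, s) = \tau(x) + s y \psi(x)$ built from smooth interior cut-offs $\psi$ that vanish on those faces of $\Delta_k$ whose image lies in $D$. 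The Fubini estimate adapts to this setting with an additional Jacobian cost depending only on $\psi$ and the Lipschitz constant of $\tau$, still leaving a positive-measure set of admissible $y$. The main technical burden is arranging the cut-off profiles consistently across the overlapping chart types in the induction, so that internal faces and stratum-fixed faces are handled simultaneously and the final chain remains a valid representative of $[\sigma] \in H_k^{\lip}(M, D; \K)$.
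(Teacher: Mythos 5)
Your local Fubini device is the right instinct and is essentially the dual of the paper's core computation: you average the surface integral over translations of $\sigma$, whereas the paper averages the density $\rho$ over perturbations of $M$, and these meet in the same Fubini swap. The per-simplex estimate via $\check{\eta}\ast(f\circ\phi)\in L^\infty_{\loc}$ is correct.

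The gap is in the globalization. With a rigid per-simplex translation $\tau\mapsto\tau+y$, matching the translated face of $\sigma_2$ to an already-translated adjacent face of $\sigma_1$ forces $y=y_1$ exactly: this is a \emph{single point} in $\R^n$, not a ``lower-dimensional affine constraint,'' and the Fubini-good set (merely of positive Lebesgue measure) has no reason to contain it. Even if the constraint really were a proper affine subspace $A\subsetneq\R^n$, the conclusion would still fail: a full-measure subset of $\R^n$ (e.g.\ $\{y_1\neq 0\}$) can miss a codimension-one affine set entirely, so ``positive measure'' gives you nothing about $A$. Without consistency on shared faces, the prism boundaries do not cancel and the resulting $\sigma'$ is not a cycle (or relative cycle), so $\sigma'\notin[\sigma]$ and the argument breaks. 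Adding thin ``filler'' chains along the mismatched faces does not obviously help, because $f$ can still integrate to $\infty$ against the filler's surface measure.

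The paper resolves exactly this by never translating simplices independently. It builds bilipschitz self-homeomorphisms $F_{i,v}\colon M\to M$ that equal the identity outside a single chart (via the cut-off $G_v(x)=x+\eta(x)v$ with $\eta$ vanishing on $\partial B$), so that $(F_{i,v})_*\sigma$ moves every simplex of $\sigma$ consistently and stays in $[\sigma]$ automatically. Dually, the paper keeps $\sigma$ fixed and instead mollifies the putative exceptional density $\rho$ by averaging $\rho\circ F_{i,v}$ over $v$, which sidesteps the need to exhibit a single good $\sigma'$; one then shows by induction over a finite chart cover that the mollified $\rho_m$ is continuous and finite on the total image of $\sigma$, while the infinite-integral property is preserved via Fubini and the fact $(F_{i,v})_*\sigma\in[\sigma]$. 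Your cut-off isotopies $H_y(x,s)=\tau(x)+sy\psi(x)$ in the boundary charts are in the right spirit, but the same cut-off mechanism is needed in the interior charts too, precisely to recast your per-simplex translations as restrictions of global self-maps and eliminate the gluing problem. The relative case then imposes an additional, genuinely delicate requirement---after the perturbations the part of $\sigma$ pushed into $\intr M_\eps$ must still be a Lipschitz chain---which the paper handles via the monotonicity estimate \eqref{eq:t_increasing} and careful choices of $\eps_i$; a vague appeal to ``arranging the cut-off profiles consistently'' is not enough to cover this.
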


We present the argument in increasingly general cases, building up to Proposition \ref{prop:no_exc_hom_classes}. The case involving relative homology presents significant technical difficulties, which make the originally simple argument far more difficult to follow. We start with the case where $M$ has no boundary, where the proof is straightforward.

\begin{lemma}\label{lem:no_exceptional_hom_classes_boundaryless}
	Let $M$ be an oriented Lipschitz Riemannian manifold without boundary and let $p \in (1, \infty)$, $k \in \{0, \dots, \infty\}$. If $[\sigma] \in H_k^{\lip}(M; \K)$, for $\K \in \{\Z, \R\}$, then $[\sigma]$ is not $p$-exceptional.
\end{lemma}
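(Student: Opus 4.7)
The plan is to argue by contradiction via Fuglede's characterization of $p$-exceptional families (Lemma~\ref{lem:Fuglede_exceptional_lemma}). Suppose $[\sigma]$ is $p$-exceptional; then there exists a nonnegative $f \in L^p(M)$ with $\int f \, d\mu_{\sigma'} = \infty$ for every $\sigma' \in [\sigma]$. The goal is to construct some representative $\sigma' \in [\sigma]$ with $\int f \, d\mu_{\sigma'} < \infty$, contradicting the assumption.

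The underlying mechanism is Euclidean. If $\tilde\sigma \colon \Delta_k \to B^n$ is a compactly-supported Lipschitz cycle in the ball, then each translate $\tilde\sigma + y$ is homologous to $\tilde\sigma$ through the straight-line homotopy $H(x,t) = \tilde\sigma(x) + ty$, and a direct Fubini--Hölder calculation gives
\[
\int_{B^n(0,r)} \int f \, d\mu_{\tilde\sigma+y} \, dy \;\leq\; |B^n(0,r)|^{1/q} \|f\|_{L^p} \int_{\Delta_k} |J_{\tilde\sigma}| \, dx \;<\; \infty,
\]
where $q$ is the conjugate exponent to $p$. Hence $\int f \, d\mu_{\tilde\sigma+y} < \infty$ for almost every sufficiently small $y$, and the translated cycle stays in $B^n$ and is homologous to the original there. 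The remaining work is to upgrade this elementary estimate to the manifold setting while staying inside $[\sigma]$.

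To globalize, I would cover the compact set $\spt\sigma$ by finitely many bilipschitz charts $\phi_i \colon B^n \to U_i$, $i=1,\dots,m$, and fix Lipschitz cutoffs $\chi_i$ with $\chi_i \equiv 1$ on nested subdomains $U_i' \Subset U_i$ whose union still covers $\spt\sigma$. For parameters $V = (v_1,\dots,v_m) \in (\R^n)^m$ with $|V|$ small, define ``cutoff translation'' maps
\[
T_i^{v_i}(p) = \phi_i\bigl(\phi_i^{-1}(p) + \chi_i(p) v_i\bigr) \text{ on } U_i, \qquad T_i^{v_i}(p) = p \text{ off } U_i,
\]
each of which is bilipschitz on $M$ for small $|v_i|$ and homotopic to $\id$ via $t \mapsto T_i^{tv_i}$. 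Set $h_V = T_m^{v_m} \circ \cdots \circ T_1^{v_1}$, which is bilipschitz and homotopic to $\id$, and realize $(h_V)_*\sigma - \sigma$ as the boundary of a Lipschitz $(k+1)$-chain built from the cylinder homotopies (subdivided chart-by-chart). Thus $(h_V)_*\sigma \in [\sigma]$. After subdividing $\sigma = \sum_j k_j \sigma_j$ so each $\sigma_j$ has image in some $U_{i(j)}'$, I would apply a Fubini argument in the $v_{i(j)}$-variable: writing $h_V = A \circ T_{i(j)}^{v_{i(j)}} \circ B$ with $A, B$ bilipschitz uniformly in $V$, a linear change of variables in $v_{i(j)}$ followed by a bilipschitz change of variables absorbs $A$ and reduces to the Euclidean estimate, giving $\int_{|V|<r} \int f \, d\mu_{(h_V)_*\sigma} \, dV < \infty$. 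Consequently some $V$ yields the desired $\sigma' = (h_V)_*\sigma$.

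The main obstacle is the construction and control of the global perturbation $h_V$ in the low regularity of the Lipschitz setting: one must check that $h_V$ remains bilipschitz for small $|V|$, that the straight-line homotopies paste into a coherent Lipschitz $(k+1)$-chain realizing $(h_V)_*\sigma \in [\sigma]$, and, most delicately, that for each piece $\sigma_j$ lodged in $U_{i(j)}'$ the factor $T_{i(j)}^{v_{i(j)}}$ acts as an honest translation in the chart $\phi_{i(j)}$ on the image of $\sigma_j$ after the earlier factors have displaced it. Only once this ``clean translation on target'' picture is arranged does the Fubini step go through uniformly in $V$. This chart bookkeeping is what the introduction alludes to when describing the proof as ``mostly local'' yet technical enough to warrant its own section.
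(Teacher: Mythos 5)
Your proposal is correct, and it reaches the contradiction along a genuinely different route even though it shares the paper's underlying perturbation machinery. Both arguments cover the compact image $K$ of a fixed representative $\sigma$ by finitely many bilipschitz charts, build the same family of cutoff translation maps (the paper's $F_{i,v}$ coincide with your $T_i^{v_i}$), observe that each perturbed cycle stays in $[\sigma]$, and hinge on a Fubini swap over the translation parameters. Where you diverge is in how the decisive finiteness is established. The paper iteratively averages the density to $\rho_m(x) = \int \rho(F_{1,v_1} \circ \cdots \circ F_{m,v_m}(x))\,dV$, uses the Hardy--Littlewood maximal inequality to keep each $\rho_i$ in $L^p$, verifies that averaging a locally integrable function over a ball of positive, continuously varying radius produces a continuous finite-valued function near $K$, and then contradicts the Fubini consequence $\int \rho_m\,d\mu_\sigma = \infty$ with the boundedness of the continuous $\rho_m$ on the compact $K$. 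You instead estimate the Fubini-averaged quantity $\int_{V}\int f\,d\mu_{(h_V)_*\sigma}\,dV$ directly by H\"older, conclude $\int f\,d\mu_{(h_V)_*\sigma} < \infty$ for a.e.\ small $V$, and contradict the hypothesis for the homologous cycle $(h_V)_*\sigma$. After the Fubini swap these two finiteness claims are the same statement in disguise, but your H\"older estimate proves it more cheaply, bypassing both the maximal inequality and the continuity verification. The obstacle you flag --- arranging that $T_{i(j)}^{v_{i(j)}}$ acts as an honest chart translation on the displaced $\sigma_j$ --- is genuine but resolvable: require $\chi_{i(j)} \equiv 1$ on a nested neighbourhood compactly containing $U_{i(j)}'$ and shrink the parameter radius so the earlier factors cannot push $\sigma_j$ out of it. One caveat worth recording: the paper's mollification strategy is the one it later adapts, with considerable additional effort, to the relative-boundary case of Proposition~\ref{prop:no_exc_hom_classes}, so if your shortcut were to replace theirs you would still need to check that it survives the inward-only translations required there.
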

\begin{proof}[Idea of proof]
    Suppose that $c \in H_k^{\lip}(M; \K)$ is not $p$-exceptional. Then there is a function $\rho \in L^p(M)$ such that the integral of $\rho$ over the measure of every $\sigma \in c$ is infinite. The idea is to select an arbitrary chain $\sigma \in c$ and to construct a function $\rho'$ by smoothing $\rho$ in a neighborhood of the total image of $\sigma$. By the averaging properties of the convolution, the property that the integral of $\rho'$ over the measure of $\sigma$ is infinite remains true. However, the resulting $\rho'$ will be continuous and finite-valued on the compact total image of $\sigma$ and $\rho'$ will therefore have a maximal value on it. This is a contradiction.
\end{proof}
\begin{proof}[Proof of Lemma \ref{lem:no_exceptional_hom_classes_boundaryless}]
	Suppose towards contradiction that $c \in H_k^{\lip}(M; \K)$ is $p$-exceptional. By Lemma \ref{lem:Fuglede_exceptional_lemma} there exists a Borel function $\rho \colon M \to [0, \infty]$ such that $\norm{\rho}_{L^p} < \infty$, but $\rho$ has infinite integral over every $\mu_{\sigma}$, $\sigma \in c$. We fix a representative $\sigma \in c$ and note that the total image $K$ of $\sigma$ is compact. 
	
	We select bilipschitz charts $\phi_i \colon B^n(0, 2) \to M$, $i \in \{1, \dots, m\}$, such that $U_i = \phi_i(B^n(0, 1))$ cover $K$. We proceed by induction. Suppose that $\rho_{i-1} \in L^p(M)$ is such that $\rho_{i-1}$ is continuous and finite-valued on $U_1 \cup \dots \cup U_{i-1}$ and that $\rho_{i-1}$ has infinite integral over every $\mu_{\sigma}$, $\sigma \in c$. We construct $\rho_i$ that is continuous and finite-valued on $U_1 \cup \dots \cup U_{i}$ and has otherwise the same properties. Note that we take $\rho_0 = \rho$. Once these properties are shown for all $\rho_i$, we can pick $\rho' = \rho_m$ and the proof is complete.
	
	We fix $\eps > 0$ and define a function $\eta \colon B^n(0, 2) \to [0,\eps]$ by
	\[
		\eta(x) = \max\left( 0, (1 - \abs{x}) \eps \right)
	\] 
	For every vector $v \in B^n(0, 1)$, we define a homeomorphism $G_v : B^n(0, 2) \to B^n(0, 2)$ by
	\[
		G_v(x) = x + \eta(x) v.
	\]
	With a suitable choice of $\eps > 0$ dependent only on $n$, this is bilipschitz. By setting $F_{i, v} = \phi_i \circ G_v \circ \phi_i^{-1}$ in $U_i$ and extending to the rest of $M$ as identity, we hence obtain a bilipschitz automorphism $F_{i, v} \colon M \to M$. We next define $\rho_{i} \colon M \to [0, \infty]$ in the following way:
	\[
		\rho_i(x) = \int_{B^n(0, 1)} \rho_{i-1} \circ F_{i, v}(x) \dd v.
	\]
	
	We first show that the integral of $\rho_i$ with respect to $\mu_{\sigma}$ is still infinite for every $\sigma \in c$. For this, note that $(F_{i, v})_* \sigma$ is Lipschitz homotopic to $\sigma$ via $H(z, t) = ((F_{i, tv})_* \sigma)(z)$ and thus $(F_{i, v})_* \sigma \in c$. Hence,
	\begin{align}\label{eq:infinite_average}
	    \begin{split}
    		\int_M \rho_i(x) \dd \mu_{\sigma}(x) &= \int_M \int_{B^n(0, 1)} \rho_{i-1} \circ F_{i, v}(x) \dd v \dd \mu_{\sigma}(x)\\
    		&= \int_{B^n(0, 1)} \int_M \rho_{i-1}(z) \dd (F_{i, v})_* \mu_{\sigma}(z) \dd v\\
    		&\geq C^{-1} \int_{B^n(0, 1)} \int_M \rho_{i-1}(z) \dd \mu_{(F_{i, v})_*\sigma}(z) \dd v\\
    		&=  C^{-1} \int_{B^n(0, 1)} \infty \dd v = \infty.
		\end{split}
	\end{align}
	Note that the inequality here is a change of variables that uses the fact that $F_{i, v}$ is bilipschitz.
	
	Next, we show that $\rho_i$ is still in $L^p(M)$. It is enough to check that $\rho_i \circ \phi_i \in L^p(B^n(0, 1))$. However, if $x \in B^n(0, 1)$,
	\[
	\rho_i \circ \phi_i(x) = \int_{B^n(0, 1)} \rho_{i-1} \circ \phi_i(x + \eta(x) v) \dd v = \frac{1}{\eta^n(x)} \int_{B^n(x, \eta(x))} \rho_{i-1} \circ \phi_i(w) \dd w.
	\]
	Notably, $\rho_i \circ \phi_i(x)$ is bounded from above up to a constant by the maximal function $\maxfun(\rho_{i-1} \circ \phi_i)(x)$. By the Hardy-Littlewood maximal inequality for $p > 1$, we have $\rho_i \circ \phi_{i} \in L^p(B^n(0, 1))$ and hence $\rho_i \in L^p(M)$.
	
	It remains to prove continuity and finiteness of $\rho_i$ on $U_1 \cup \dots \cup U_{i}$. Finiteness on $U_i$ can be immediately seen from the previously stated expression for $\rho_i \circ \phi_{i}$, since $\eta(x) >  0$ and $\rho_{i-1} \circ \phi_i$ is in $L^1(B^n(x, \eta(x)))$. Suppose then that $x_k, x \in B^n(0, 1)$ for $k \in \Z_{> 0}$ and that $x_k \to x$ as $k \to \infty$. Then, denoting $B_x = B^n(x, \eta(x))$ and $B_{x, k} = B^n(x_k, \eta(x_k))$, we have
	\begin{align*}
		&\abs{\rho_i \circ \phi_i(x) - \rho_i \circ \phi_i(x_k)}\\
		&\qquad\leq \frac{1}{\eta^n(x)} \int_{(B_x \cup B_{x,k}) \setminus (B_x \cap B_{x,k})} \rho_{i-1} \circ \phi_i + \abs{\frac{1}{\eta^n(x_k)} - \frac{1}{\eta^n(x)}} \int_{B^n(0, R)} \rho_{i-1} \circ \phi_i,
	\end{align*}
	where both right hand side terms tend to 0 since $\eta$ is continuous and $\rho_{i-1} \circ \phi_i$ is $L^1(B^n(0, 1))$. We conclude that $\rho_i$ is continuous at $\phi_i(x)$.
	
	Moreover, since $\rho_i = \rho_{i-1}$ outside $U_i$, we have that $\rho_i$ is finite-valued in $U_{1} \cup \dots \cup U_{i-1}$ and continuous in $(U_{1} \cup \dots \cup U_{i-1}) \setminus \overline{U_i}$. Finally, suppose that $x \in \partial B^n(0, 1)$ and $\phi_i(x) \in U_1 \cup \dots \cup U_{i-1}$. Then $\rho_{i-1} \circ \phi_i$ is continuous at $x$ and hence if $x_k \in B^n(0, 1)$ are such that $x_k \to x$, we get
	\begin{align*}
		\abs{\rho_i \circ \phi_i(x) - \rho_i \circ \phi_i(x_k)} &\leq \int_{B^n(0, 1)} \abs{\rho_{i-1} \circ \phi_i(x_k + \eta(x_k) v) - \rho_{i-1} \circ \phi_i(x)}\\
		& \leq \vol_n(B^n(0, 1)) \max_{w \in B^n(x, \abs{x-x_k} + \eta(x_k))} \abs{\rho_{i-1} \circ \phi_i(w) - \rho_{i-1} \circ \phi_i(x)}\\
		&\xrightarrow{k \to \infty} 0.
	\end{align*}
	From this, it follows that $\rho_i$ is continuous at $\phi_i(x) \in (U_{1} \cup \dots \cup U_{i-1}) \cap \partial U_i$, completing the proof that $\rho_i$ is continuous in $U_1 \cup \dots \cup U_i$. 
\end{proof}

The non-relative case where $M$ has boundary is a reduction to the previous proof.

\begin{lemma}\label{lem:no_exceptional_hom_classes_bdry_non-relative}
	Let $M$ be a closed Lipschitz $n$-submanifold in an oriented Lipschitz Riemannian $n$-manifold $\cR$ without boundary and let $p \in (1, \infty)$, $k \in \{0, \dots, \infty\}$. Then no homology class $[\sigma] \in H_k^{\lip}(M; \K)$ is $p$-exceptional, where $\K \in \{\Z, \R\}$.
\end{lemma}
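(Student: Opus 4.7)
The plan is to reduce the boundary case to the boundaryless Lemma \ref{lem:no_exceptional_hom_classes_boundaryless} by finding, for any given $c \in H_k^{\lip}(M; \K)$, a representative $\sigma'$ of $c$ that is supported entirely in $\intr(M)$. Once that is achieved, since the chain $\sigma'$ is a Lipschitz $k$-cycle in the boundaryless oriented Lipschitz Riemannian $n$-manifold $\intr(M)$, it represents a class $c' \in H_k^{\lip}(\intr(M); \K)$ which is sent to $c$ by the inclusion-induced map $j_* \colon H_k^{\lip}(\intr(M); \K) \to H_k^{\lip}(M; \K)$. Crucially, every chain $\tau' \in c'$ is also in $c$, since a Lipschitz homology in $\intr(M)$ is also a Lipschitz homology in $M$.

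Assuming this reduction step, the contradiction is quick. Suppose $c$ were $p$-exceptional. By Lemma \ref{lem:Fuglede_exceptional_lemma}, there would be a non-negative $\rho \in L^p(M, \vol_g)$ with $\int_M \rho \, d\mu_\tau = \infty$ for every $\tau \in c$. Since $\partial M$ has volume zero, $\rho$ restricts to an element of $L^p(\intr(M), \vol_g)$ whose integral against $\mu_{\tau'}$ is infinite for every $\tau' \in c'$ (because each such $\mu_{\tau'}$ is supported in $\intr(M)$ and every $\tau' \in c'$ belongs to $c$). By Lemma \ref{lem:Fuglede_exceptional_lemma} again, $c'$ would be $p$-exceptional in $\intr(M)$, contradicting Lemma \ref{lem:no_exceptional_hom_classes_boundaryless}.

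The main obstacle is thus the reduction step: producing a homologous representative $\sigma'$ of $c$ in $\intr(M)$. I would obtain this from a bilipschitz collar neighborhood of $\partial M$, i.e., a bilipschitz embedding $\Phi \colon \partial M \times [0, 1) \to M$ with $\Phi(x, 0) = x$; the existence of such collars for Lipschitz manifolds with boundary is classical (Luukkainen--V\"ais\"al\"a). Using $\Phi$ together with a cut-off in the collar parameter, one constructs a bilipschitz map $r \colon M \to M$ such that $r(M) \subset \intr(M)$ and a Lipschitz homotopy $H \colon M \times [0,1] \to M$ from $\id_M$ to $r$. Given any representative $\sigma \in c$, the standard prism construction applied to $H$ yields a Lipschitz chain $P(\sigma) \in C_{k+1}^{\lip}(M; \K)$ with $\partial P(\sigma) = r_* \sigma - \sigma - P(\partial \sigma)$; since $\partial \sigma = 0$, the chain $\sigma' := r_* \sigma$ is homologous to $\sigma$ and is supported in $\intr(M)$, completing the reduction. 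The only mildly delicate point is verifying that, even when $M$ is non-compact, the collar exists globally and $r$ can be chosen to be a bilipschitz self-map of $M$; this is handled by the standard global version of the collar theorem and by using the collar parameter itself to define $r$ as a self-homeomorphism which is the identity outside a neighborhood of $\partial M$.
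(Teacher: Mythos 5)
Your proof is correct and takes essentially the same approach as the paper's: use a collar neighborhood of $\partial M$ to replace a representative of $c$ by one supported in $\intr(M)$, then reduce to the boundaryless Lemma \ref{lem:no_exceptional_hom_classes_boundaryless}. Where the paper re-runs the boundaryless proof on the interior representative (keeping the ambient manifold $M$), you apply that lemma's conclusion directly to $\intr(M)$ via the observation that $c' \subset c$ and $L^p(M, \vol_g) = L^p(\intr(M), \vol_g)$, which is a slightly tidier finish; one small inaccuracy is that your retraction $r$ need only be Lipschitz and Lipschitz-homotopic to $\id_M$ (the paper's collar projection $P^\eps$ is not even injective), so requiring bilipschitzness is unnecessary, and the phrase ``bilipschitz self-map'' is off since $r(M) \subsetneq M$.
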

\begin{proof}
	There exists a locally Lipschitz \emph{collar neighborhood} of $\partial M$ in $M$. That is, there exists a locally Lipschitz homeomorphism $F_{\partial M} \colon (\partial M) \times [0, 2\eps] \to U \subset M$ such that $F_{\partial M}(x, 0) = x$. We refer to  \cite[Prop. 3.42]{Hatcher_AlgTopo} for the general idea of such constructions. For every $\delta \in [0, 2\eps]$, we define the projection map $P^\delta \colon M \to M$ where $P^\delta = \id_M$ outside $F_{\partial M}(\partial M \times [0, \delta])$ and $P^\delta(F_{\partial M}(x, t)) = F_{\partial M}(x, \delta)$ for $t \in [0, \delta]$. Now, if $\sigma \in c \in H_k^{\lip}(M; \K)$, then $\sigma' = P^{\eps}_* \sigma$ is Lipschitz homotopic to $\sigma$ by $H(x, t) = P^{t\eps}_*(\sigma(x))$. Hence, $\sigma' \in c$ and since $\sigma'$ does not meet the boundary of $M$, we can repeat the proof of Lemma \ref{lem:no_exceptional_hom_classes_boundaryless} on it with no significant changes.
\end{proof}

The final most general case is the full relative case of Proposition \ref{prop:no_exc_hom_classes}. Here, the extra technical hurdles caused by relative homology take far more effort to overcome.

\begin{proof}[Idea of the proof of Proposition \ref{prop:no_exc_hom_classes}]
    We use a locally Lipschitz collar neighborhood to find a smaller submanifold $M_\eps \approx M$ inside $M$ such that $M \setminus \intr M_\eps$ is essentially $\partial M \times [0, \eps]$. If $H_k^{\lip}(M, D; \K)$ has a $p$-exceptional class $c$, then $H_k^{\lip}(M_\eps, D_\eps; \K)$ also has a $p$-exceptional class $c_\eps$, where $D_\eps$ is $D \times \{\eps\}$. We pick $\rho \in L^p(M_\eps)$ such that the integral of $\rho$ over the measures of every chain $\sigma_\eps \in c_\eps$ is infinite and extend $\rho$ by zero to all of $M$. Moreover, we fix an element $\sigma_\eps \in c_\eps$ and turn it into an element $\sigma \in c$ by adding a chain that is essentially ``$\partial \sigma_\eps \times [0, \eps]$''. 
    
    The strategy is again to construct a $\rho'$ by smoothing out $\rho$ in a neighborhood of $\sigma_\eps$. Notably, our convolutions will now only translate the underlying space inwards in any neighborhoods meeting $\partial M_\eps$. Since $\rho$ was zero in $M \setminus M_\eps$, $\rho'$ will be continuous and finite in a neighborhood of the total image of $\sigma$ and it then remains to show that the integral of $\rho'$ over the measure of $\sigma$ remains infinite. 
    
    The idea for this is that after any of the translations used to perform the convolution, the part of $\sigma$ that gets translated inside $M_\eps$ will be an element of $c_\eps$ and hence $\rho$ will have infinite integral over it. A major technical issue, however, is that the intersection of a Lipschitz chain and a Lipschitz subdomain is not always a Lipschitz chain: a simple example is the graph of a Lipschitz function $f \colon \R \to \R$ crossing the $x$-axis at infinitely many points, in which case the part in the upper half-space is not a singular chain due to having infinitely many components. Regardless, using the fact that our chosen chain $\sigma$ is "straight" in the region $M \setminus \intr M_\eps \approx \partial \sigma_\eps \times [0, \eps]$ and the fact that any boundary translations in our convolution are inwards into $M_\eps$, we are able to verify that the required intersection is Lipschitz if the translations we use in the convolution are small enough.
\end{proof}
\begin{proof}[Full proof of Proposition \ref{prop:no_exc_hom_classes}]
	We define $F_{\partial M} \colon (\partial M) \times [0, 2\eps] \to U \subset M$ and $P^\delta$ for $\delta \in [0, 2\eps]$ as in the proof of Lemma \ref{lem:no_exceptional_hom_classes_bdry_non-relative}. For $\delta \in [0, 2\eps]$, we denote $M_{\delta} = P^\delta(M)$, $\partial M_{\delta} = P^\delta(\partial M)$, $D_\delta = P^\delta(D)$ and $\partial D_\delta = P^\delta(\partial D)$. We also define an alternate map $Q^\delta \colon M \to M_\delta$ for $\delta \in [0, 2\eps)$, where $Q^\delta = \id_M$ outside $U$ and $Q^\delta(F_{\partial M}(x, t)) = F_{\partial M}(x, \delta + (1-\delta/(2\eps))t)$. In particular, $Q^\delta$ is a locally bilipschitz homeomorphism $M \to M_\delta$.
	
	Suppose towards contradiction that $c \in H_k^{\lip}(M, D; \K)$ is $p$-exceptional. We denote $c_\eps = Q^\eps_* c \in H^k_{\lip}(M_\eps, D_\eps; \K)$. Since $Q^\eps$ is locally bilipschitz, it follows that $c_\eps$ is also $p$-exceptional. We consequently find a $\rho \in L^p(M_\eps)$ such that for every $\sigma \in c_\eps$, the integral of $\rho$ over $\mu_\sigma$ is infinite. We extend $\rho$ to $M$ by setting $\rho = 0$ on $M \setminus M_\eps$.
	
	We fix a Lipschitz $k$-chain $\sigma_\eps \in c_\eps$, in which case $\partial \sigma_\eps \in C_{k-1}^{\lip}(D_\eps; \K)$. Let $K$ be the total image of $\sigma_\eps$. We may assume that $K \cap (\partial M_\eps \setminus \intr D_\eps) = \emptyset$. Indeed, this can be achieved by similar methods as in the proof of Lemma \ref{lem:no_exceptional_hom_classes_bdry_non-relative}, by using a locally Lipschitz map similar to $P^\delta$ that takes $\partial M_\eps \setminus D_\eps$ inside $M_\eps$ and $D_\eps$ inside $\intr D_\eps$. 
	
	The boundary $\partial \sigma_\eps$ is in $C^{\lip}_{k-1}(\partial M_\eps, \K)$. We use $\partial \sigma_\eps$ to define a chain $\sigma_c \in C^{\lip}_k(U; \K)$ as follows: for every simplex $\gamma \colon \Delta_{k-1} \to \partial M_\eps$ in the sum representation of $\partial \sigma_\eps$, we define $\gamma_c \colon \Delta_{k-1} \times [0, \eps] \to U$ such that $F_{\partial M}^{-1} \circ \gamma_c(x, t) = ((F_{\partial M}^{-1} \circ \gamma(x))_{\partial M}, t)$. Every $\gamma_c$ can be understood as a Lipschitz $k$-chain following a suitable subdivision of the domain. Due to this, we treat $\gamma_c$ like singular Lipschitz simplices during the rest of the proof. We define $\sigma_c$ to be the sum of these $\gamma_c$ with the corresponding coefficients from $\partial \sigma_\eps$. We then fix $\sigma = \sigma_\eps + \sigma_c$. Since $\sigma_c$ is essentially a straight Lipschitz homotopy from $\partial \sigma_\eps$ to $(Q^\eps)^{-1}_* \partial \sigma_\eps$, it follows that $\partial \sigma \in \intr D$.
	
	We next pick charts $\phi_i \colon (-2, 2)^n \to M$, $i \in \{1, \dots, m\}$, such that the sets $U_i = \phi_i((-1, 1)^n)$ cover $K$. Because of our assumption that $K \cap (\partial M_\eps \setminus \intr D_\eps) = \emptyset$, we can restrict our attention to charts of two types. The first type consists of charts inside the interior of $M_\eps$, i.e.\ $U_i \subset M_\eps$. The second type consists of rectangular boundary charts around points of $D_\eps$. We may choose these to be such that $U_i = F_{\partial M} (V_i \times (0, 2\eps))$ and $\phi_i = F_{\partial M} \circ (\phi_i' \times ((\eps/2) \id_\R + \eps))$ where $V_i \subset \intr D$ and $\phi_i' \colon (-2,2)^{n-1} \to V_i$ is a bilipschitz chart.
	
	For every $i \in \{1, \dots, m\}$ we let $\eps_i > 0$ and define maps $\eta_i \colon (-2, 2)^n \to [0, \eps_i]$ by $\eta_i(x) = \max(0, 1-\abs{x}_{\ell^\infty})\eps_i$. That is, $\eta_i$ are similar as in Lemma \ref{lem:no_exceptional_hom_classes_boundaryless}, except adjusted to use $\ell^\infty$-norms due to our domain being a square. Further following the proof of Lemma \ref{lem:no_exceptional_hom_classes_boundaryless}, we also define for every $v \in B^n(0, 1)$ a map $G_{i, v} \colon (-2, 2)^n \to (-2, 2)^n$ by $G_{i, v}(x) = x + \eta_i(x) v$, assume $\eps_i$ are small enough that $G_{i, v}$ are bilipschitz and set $F_{i,v} = \phi_i \circ G_{i, v} \circ \phi_i^{-1}$ in $U_i$ and $F_{i, v} = \id_M$ elsewhere. We again set $\rho_0 = \rho$ and define inductively that
	\[
		\rho_i(x) = \int_{B^n(0, 1) \cap \H^n_{+}} \rho_{i-1} \circ F_{i, v}(x) \dd v.
	\]
	Note the difference to Lemma \ref{lem:no_exceptional_hom_classes_boundaryless} that the averaging occurs over the upper half ball.
	
	By repeating the corresponding parts of the proof of Lemma \ref{lem:no_exceptional_hom_classes_boundaryless} with trivial changes, we are able to conclude that $\rho_m$ is in $L^p(M)$ and that $\rho_m$ is continuous and finite-valued on $U_1 \cup \dots \cup U_m$. Moreover, since the $\rho$ we started with is identically zero in $M \setminus M_\eps$ and hence continuous there, it in fact follows that $\rho_m$ is continuous and finite-valued in $U_1 \cup \dots \cup U_m \cup (M \setminus M_\eps)$. 
    Since the total image of $\sigma$ lies in $U_1 \cup \dots \cup U_m \cup (M \setminus M_\eps)$, it only remains to show that the integral of $\rho_m$ over $\mu_\sigma$ is infinite, in which case a contradiction follows. 
	
	For this, fix $v_1, \dots, v_m \in B^n(0, 1) \cap \H^n_+$ and denote $\sigma_{v_1, \dots, v_m} = (F_{m, v_m})_* \dots (F_{2,v_2})_* (F_{1, v_1})_* \sigma$. Due to our use of $v_i$ in the upper half space and to how our boundary charts are set up, the total image of $(F_{m, v_m})_* \dots (F_{2,v_2})_* (F_{1, v_1})_* \sigma_\eps$ lies inside $\intr M_\eps$. Therefore any intersections of $\sigma_{v_1, \dots, v_m}$ with $\partial M_\eps$ will occur in the $(F_{m, v_m})_* \dots (F_{2,v_2})_* (F_{1, v_1})_* \sigma_c$-part. The Lipschitz constants of $G_{i, sv_i}$ have an upper bound independent of the choice of $s \in [0,1]$ and $v_i \in B^n(0, 1)$. Hence, we may fix a constant $L \in (1, \infty)$ independent of $s$ and $v_i$ (but dependent on the charts $\phi_i$) such that the maps $\phi_i^{-1} \circ \Phi_{i'} \circ \phi_{i''}$ are all $L$-Lipschitz in their domains of definition.
	
	For every $i \in \{0, \dots, m\}$, we define $\Phi_i \colon \partial_M \times [0, \eps] \times [0, 1] \to M$ by
	\begin{align*}
	    	\Phi_i(x, t, s) = F_{i, sv_i} \circ F_{i-1, sv_{i-1}} \circ \dots \circ F_{1, sv_1} \circ F_{\partial_M} (x, t).
	\end{align*}
	We fix a chart $\phi = \phi_j' \times \id_{[0, \eps]} \times \id_{[0, 1]}$ where $\phi_j' \colon (-2, 2)^{n-1} \to V_j$ is from the definition of one of the charts $\phi_i$ that meet the boundary of $M_\eps$. For every $i \in \{0, m\}$ and $(x, s) \in V_j \times [0, 1]$, we denote by $t_i(x, s)$ the largest $t \in [0, \eps]$ such that $\Phi_i(x, t, s) \in \partial M_\eps$; note that $t_0(x,s) \equiv \eps$.
	We claim that for every $i \in \{1, m\}$, if we select $\eps_i$ small enough, then the function $t_i$ is Lipschitz in $V_j$. First we will show that for all $x, x' \in V_j$, $s_1, s_2 \in [0, 1]$, $t_1 \in [0, t_{i-1}(x, s_1)]$ and $t_2 \in [0, t_{i-1}(x, s_2)]$ with $t_1 < t_2$, we have
	\begin{align}\label{eq:t_increasing}
	\begin{split}
		2^{-i} (t_1 - t_2) - (1-2^{-i}) &\left( \abs{(\phi_j')^{-1}(x)-(\phi_j')^{-1}(x')} + \abs{s_1 - s_2}\right)\\
		&\leq (F_{\partial M}^{-1}(\Phi_i(x, t_1, s_1)))_{[0, 2\eps]} - (F_{\partial M}^{-1}(\Phi_i(x', t_2, s_2)))_{[0, 2\eps]}\\
		&\leq (2-2^{-i}) (t_1 - t_2) + (1-2^{-i}) \left( \abs{(\phi_j')^{-1}(x)-(\phi_j')^{-1}(x')} + \abs{s_1 - s_2}\right).
	\end{split}
	\end{align}
	In fact, $t_i(x, s_1)$ is the only $t \in [0, \eps]$ such that $\Phi_i(x, t, s_1) \in \partial M_\eps$.

	We prove \eqref{eq:t_increasing} by induction. 
	Suppose that the claim holds for $i-1$, where the case of $i-1 = 0$ is clear since $(F_{\partial M}^{-1}(\Phi_0(x, t, s)))_{[0, 2\eps]} = t$.
	If $\Phi_{i-1}(x, t, s) \in M \setminus M_{\eps}$, then $\Phi_i(x, t, s) \in M \setminus M_{2\eps}$ and so \eqref{eq:t_increasing} is well defined. Next, supposing that $(x, s, t) \in V_j \times [0, \eps] \times [0, 1]$, if we have $\Phi_{i-1}(x, t, s) \in U_i$, we can compute that
	\begin{align*}
		(F_{\partial M}^{-1}(\Phi_i(x, t,s)))_{[0, 2\eps]}
		&= (F_{\partial M}^{-1} \circ \phi_i \circ G_{i, sv_i} \circ \phi_i^{-1} \circ (\Phi_{i-1}(x, t,s)))_{[0, 2\eps]}\\
		&= (\eps/2) (G_{i, sv_i} \circ \phi_i^{-1} (\Phi_{i-1}(x, t,s)))_n + \eps\\
		&= (\eps/2) (\phi_i^{-1} (\Phi_{i-1}(x, t,s)))_n + \eps + (\eps/2) \eps_i (sv_i)_n (1 - \lvert\phi_i^{-1} (\Phi_{i-1}(x, t,s))\rvert_{\ell^\infty})\\
		&= (F_{\partial M}^{-1}(\Phi_{i-1}(x, t,s)))_{[0, \eps]} + (\eps/2) \eps_i s (v_i)_n (1 - \lvert\phi_i^{-1} (\Phi_{i-1}(x, t,s))\rvert_{\ell^\infty}) .
	\end{align*}
	On the other hand, if $\Phi_{i-1}(x, t,s) \notin U_i$, then $(F_{\partial M}^{-1}(\Phi_{i}(x, t,s)))_{[0, \eps]} = (F_{\partial M}^{-1}(\Phi_{i-1}(x, t,s)))_{[0, \eps]}$. It follows that if $\Phi_{i-1}(x, t_1,s_1)$ and $\Phi_{i-1}(x', t_2,s_2)$ are both in $U_i$, we may estimate using $s_1, s_2 \leq 1$ and $\lvert\phi_i^{-1} (\Phi_{i-1}(x, t_1, s_1))\rvert_{\ell^\infty} \leq 1$ that
	\begin{equation}\label{eq:double_diff_case_1}\begin{aligned}
		&\Big\lvert\bigl((F_{\partial M}^{-1}(\Phi_{i}(x, t_1, s_1)))_{[0, 2\eps]} - (F_{\partial M}^{-1}(\Phi_{i}(x', t_2, s_2)))_{[0, 2\eps]}\bigr)\\
		&\qquad\qquad - \bigl((F_{\partial M}^{-1}(\Phi_{i-1}(x, t_1,s_1)))_{[0, 2\eps]} - (F_{\partial M}^{-1}(\Phi_{i-1}(x', t_2,s_2)))_{[0, 2\eps]}\bigr)\Big\rvert\\
		&\qquad \leq (\eps/2) \eps_i \left( \abs{s_1 - s_2} + \abs{s_1 \lvert\phi_i^{-1} (\Phi_{i-1}(x, t_1,s_1))\rvert_{\ell^\infty} - s_2\lvert\phi_i^{-1} (\Phi_{i-1}(x', t_2,s_2))\rvert_{\ell^\infty}} \right)\\
		&\qquad \leq (\eps/2) \eps_i \left( 2\abs{s_1 - s_2} + \abs{ \lvert\phi_i^{-1} (\Phi_{i-1}(x, t_1,s_1))\rvert_{\ell^\infty} - \lvert\phi_i^{-1} (\Phi_{i-1}(x', t_2,s_2))\rvert_{\ell^\infty}} \right)\\
		&\qquad \leq (\eps/2) \eps_i  \left( 2\abs{s_1 - s_2} + \abs{ \phi_i^{-1} (\Phi_{i-1}(x, t_1,s_1)) - \phi_i^{-1} (\Phi_{i-1}(x', t_2,s_2))} \right) \\
		&\qquad \leq \eps \eps_i (1 + L)\left( (t_1 - t_2) + \abs{(\phi_j')^{-1}(x)-(\phi_j')^{-1}(x')} + \abs{s - s'}\right).
	\end{aligned}\end{equation}
	If on the other hand $\Phi_{i-1}(x, t_1,s_1) \in U_i$ and $\Phi_{i-1}(x', t_2,s_2) \notin U_i$, then there exists a convex combination $(x'', t_3, s_3) = \phi(\lambda\phi^{-1}(x, t_1, s_1) + (1-\lambda)\phi^{-1}(x', t_2, s_2)) \in V_i \times [0, \eps] \times [0, 1]$, $\lambda \in (0, 1]$, such that $\Phi_{i-1}(x'', t_3,s_3) \in (\phi_i (-2, 2)^n) \setminus U_i$. Hence, we obtain that $\lvert\phi_i^{-1} (\Phi_{i-1}(x, t_1,s_1))\rvert_{\ell^\infty} < 1 \leq \lvert\phi_i^{-1} (\Phi_{i-1}(x'', t_3,s_3))\rvert_{\ell^\infty} \leq 2$.
	We instead estimate
	\begin{equation}\label{eq:double_diff_case_2}\begin{aligned}
		&\Big\lvert\bigl((F_{\partial M}^{-1}(\Phi_{i}(x, t_1, s_1)))_{[0, 2\eps]} - (F_{\partial M}^{-1}(\Phi_{i}(x', t_2, s_2)))_{[0, 2\eps]}\bigr)\\
		&\qquad\qquad - \bigl((F_{\partial M}^{-1}(\Phi_{i-1}(x, t_1,s_1)))_{[0, 2\eps]} - (F_{\partial M}^{-1}(\Phi_{i-1}(x', t_2,s_2)))_{[0, 2\eps]}\bigr)\Big\rvert\\
		&\qquad \leq (\eps/2) \eps_i \abs{1 - \lvert\phi_i^{-1} (\Phi_{i-1}(x, t_1,s_1))\rvert_{\ell^\infty}}\\
		&\qquad \leq (\eps/2) \eps_i  \abs{\lvert\phi_i^{-1} (\Phi_{i-1}(x, t_1,s_1))\rvert_{\ell^\infty} - \lvert\phi_i^{-1} (\Phi_{i-1}(x'', t_3,s_3))\rvert_{\ell^\infty}}\\
		&\qquad \leq (\eps/2) \eps_i L \left( (t_1 - t_3) + \abs{(\phi_j')^{-1}(x)-(\phi_j')^{-1}(x'')} + \abs{s_1 - s_3} \right)\\
		&\qquad \leq (\eps/2) \eps_i L \left( (t_1 - t_2) + \abs{(\phi_j')^{-1}(x)-(\phi_j')^{-1}(x')} + \abs{s_1 - s_2} \right).
	\end{aligned}\end{equation}
	The case $\Phi_{i-1}(x, t_1,s_1) \notin U_i, \Phi_{i-1}(x', t_2,s_2) \in U_i$ is identical to the previous case and in the final case $\Phi_{i-1}(x, t_1,s_1) \notin U_i, \Phi_{i-1}(x', t_2,s_2) \notin U_i$, we have
	\begin{equation}\label{eq:double_diff_case_3}\begin{aligned}
			&\Big\lvert\bigl((F_{\partial M}^{-1}(\Phi_{i}(x, t_1, s_1)))_{[0, 2\eps]} - (F_{\partial M}^{-1}(\Phi_{i}(x', t_2, s_2)))_{[0, 2\eps]}\bigr)\\
			&\qquad\qquad - \bigl((F_{\partial M}^{-1}(\Phi_{i-1}(x, t_1,s_1)))_{[0, 2\eps]} - (F_{\partial M}^{-1}(\Phi_{i-1}(x', t_2,s_2)))_{[0, 2\eps]}\bigr)\Big\rvert = 0.
	\end{aligned}\end{equation} 
	Thus, by choosing $\eps_i < \eps^{-1} (1 + L)^{-1} 2^{-i}$ and by combining \eqref{eq:double_diff_case_1}-\eqref{eq:double_diff_case_3} with the case $i-1$ of \eqref{eq:t_increasing}, we get the case $i$ of \eqref{eq:t_increasing}.
	
	By using \eqref{eq:t_increasing} with $x = x'$ and $s_1 = s_2 = s$, it follows that $t \mapsto (F_{\partial M}^{-1}(H_{c,i}(x, t, s)))_{[0, 2\eps]}$ is increasing on $[0, t_{i-1}(x, s)]$. Moreover, since $F_{i, sv_i}$ translate inwards whenever $U_i$ meets $\partial M_\eps$, we have $F_{\partial M}^{-1}(H_{c,i}(x, t_{i-1}(x, s), s)) \geq \eps$. Hence, for every $(x, s) \in V_i \times [0, 1]$ there in fact exists exactly one $t_i(x, s) \in [0, \eps]$ such that $\Phi_i(x, t_i(x, s), s) \in \intr D_\eps$, that is, $(F_{\partial M}^{-1}(\Phi_i(x, t_i(x, s), s)))_{[0, 2\eps]} = \eps$. Moreover, by applying \eqref{eq:t_increasing} with $t_1 = t_{i}(x, s_1)$ and $t_2 = t_{i}(x', s_2)$, it follows that
	\[
		\abs{t_{i}(x, s_1) - t_{i}(x', s_2)} \leq 2^i (1 - 2^{-i}) \left( \abs{(\phi_j')^{-1}(x) - (\phi_j')^{-1}(x')} + \abs{s_1 - s_2}\right),
	\]
	which implies that $t_i \circ \phi$ is Lipschitz. The induction part is hence complete.
	
	With the map $t_m$, we may define a map $P' \colon \partial M \times [0, \eps] \times [0, 1] \to \partial M \times [0, \eps]$ by
	\[
		P'(x, t, s) = (x, \max(t, t_m(x, s))).
	\]
	With the help of this map, we then define $P \colon M \times [0, 1] \to M$ by setting $P(x, s) = F_{\partial M}(P'(F_{\partial M}^{-1}(x), s)$ on $M \setminus M_\eps$ and $P(x, s) = x$ on $M_\eps$. Finally, we define $\Theta \colon M \times [0, 1] \to M$ by
	\[
		\Theta(x, s) = F_{m, sv_m} \circ \dots \circ F_{1, sv_1} \circ P(x, s).
	\] 
	We note that the image of $\Theta$ lies in $M_\eps$. By our proof that $t_m$ is Lipschitz on any of our chosen boundary neighborhoods, we get that $\Theta$ is Lipschitz in a neighborhood of $K \times [0, 1]$. We denote $\Theta^s = \Theta(\cdot, s)$ for $s \in [0, 1]$.
	
	The map $\Theta$ hence provides a Lipschitz homotopy between $\Theta^1_* \sigma$ and $\Theta^0_* \sigma$. Recalling that $\sigma = \sigma_\eps + \sigma_c$ where $\sigma_c$ is a "straight" extension of $\partial \sigma_\eps$ to $M \setminus M_\eps$, let $\gamma_c \colon \Delta_{k-1} \times [0, \eps] \to M$ be one of the terms in the sum representation of $\sigma_c$. Since $t_m(\cdot, 0) \equiv \eps$, we have $\Theta^0_* \gamma_c(x, t) = \gamma_c(x, \eps) = \gamma(x)$ for all $(x, t) \in \Delta_{k-1} \times [0, \eps]$, where $\gamma$ again is the simplex of $\partial \sigma_\eps$ that $\gamma_c$ is a straight extension of. Since also $\Theta^0_* \sigma_\eps = \sigma_\eps$, we conclude that $\Theta^0_* \sigma$ is in the same Lipschitz homology class $c_\eps$ as $\sigma_\eps$. Moreover, the total image of the boundary $\partial \sigma$ is in $K \cap \partial M$, which every $\Theta_s$ maps into $D_\eps$. Because of this, $\Theta^1_* \sigma - \Theta^0_* \sigma \in \partial C^{\lip}_{k+1}(M_\eps; \K) + C^{\lip}_k(D_\eps; \K)$, implying that $\Theta^1_* \sigma \in c_\eps$.
	
	Finally, we return to the $k$-chain $\sigma_{v_1, \dots, v_m} = (F_{m, v_m})_* \dots (F_{2,v_2})_* (F_{1, v_1})_* \sigma$. Clearly we have $\Theta^1_* \sigma_\eps = (F_{m, v_m})_* \dots (F_{2,v_2})_* (F_{1, v_1})_* \sigma_\eps$. Moreover, if $\gamma_c$ is a component of the sum representation of $\sigma_c$, we have $\Theta^1_* \gamma_c(x, t) = (F_{m, v_m})_* \dots (F_{2,v_2})_* (F_{1, v_1})_* \gamma_c(x, t)$ when $t \geq t_m((Q^\eps)^{-1}_* \gamma(x), 1)$ and $\Theta^1_* \gamma_c(x, t) = (F_{m, v_m})_* \dots (F_{2,v_2})_* (F_{1, v_1})_* \gamma_c(x, t_m((Q^\eps)^{-1}_* \gamma(x), 1))$ otherwise. Notably, for $t \leq t_m((Q^\eps)^{-1}_* \gamma(x), 1)$, $\Theta^1_* \gamma_c(x, t)$ is independent of $t$ and this part will hence yield a zero-dimensional Jacobian if $\Theta^1_* \gamma_c$ is post-composed with a chart. It follows that $\mu_{\sigma_{v_1, \dots, v_m}}$ is bounded from below by a multiple of $\mu_{\Theta^1_* \sigma}$. Since $\Theta^1_* \sigma \in c_\eps$, the integral of $\rho$ over $\mu_{\Theta^1_* \sigma}$ is infinite by the definition of $\rho$ and therefore the integral of $\rho$ over $\mu_{\sigma_{v_1, \dots, v_m}}$ is also infinite. Since this holds for all choices of $v_1, \dots, v_m$, it follows by a computation as in \eqref{eq:infinite_average} that the integral of $\rho_m$ over $\mu_{\sigma}$ is infinite. The proof is hence complete.

\end{proof}

\section{Stokes-type theorems for $L^p$-forms}\label{sec:stokes}

In this section, we prove Stokes-type results for $L^p$-forms in the setting of Lipschitz manifolds. In particular, our aim is to provide a generalization of the theory discussed in \cite[Chapter III]{Fuglede_surface-modulus}, where instead of vector fields in Euclidean space, we consider Sobolev forms on Lipschitz domains in a Lipschitz manifold.

We start by pointing out that, by combining Proposition \ref{prop:Lipschitz_stokes} with the Fuglede lemma for forms given in Lemma \ref{lem:fuglede_for_forms}, we obtain the following $L^p$-version of the Lipschitz Stokes theorem.

\begin{cor}\label{cor:lipschitz_stokes_L^p}
	Let $M$ be a closed Lipschitz $n$-submanifold in an oriented Lipschitz Riemannian $n$-manifold $\cR$. Suppose that $\omega \in W^{d,p}(\wedge^{k-1} T^* M)$. There exists a $p$-exceptional $E_1 \subset C_k^{\lip}(M; \Z)$ and $E_2 \subset C_k^{\lip}(M; \Z)$ for which $\partial E_2 \subset C_{k-1}^{\lip}(M; \Z)$ is $p$-exceptional, such that
	\[
		\int_{\partial \sigma} \omega = \int_{\sigma} d\omega.
	\]
	for all $\sigma \in C_k^{\lip}(M; \R) \setminus (E_1 \cup E_2)$.
\end{cor}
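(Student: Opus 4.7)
The plan is to reduce to the pointwise Stokes theorem for $W^{d,\infty}$-forms (Proposition \ref{prop:Lipschitz_stokes}) by approximation, using Lemma \ref{lem:fuglede_for_forms} to transfer $L^p$-convergence into pointwise convergence of integrals over $p$-a.e.\ chain. First, by Lemma \ref{lem:W_infinity_approx_boundary} applied with $D = \emptyset$ (or Lemma \ref{lem:W_infinity_approximation} in the interior of $M$ with a boundary reflection argument as in Lemma \ref{lem:reflection_in_Rn} if needed), choose a sequence $\omega_j \in W^{d,\infty}(\wedge^{k-1}T^*M)$ with $\omega_j \to \omega$ in $W^{d,p}$. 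Then $\omega_j \to \omega$ in $L^p(\wedge^{k-1}T^*M)$ and $d\omega_j \to d\omega$ in $L^p(\wedge^k T^* M)$.

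Next, apply Lemma \ref{lem:fuglede_for_forms} twice and pass to a common subsequence (still denoted $\omega_j$): there exists a $p$-exceptional set $F \subset C_k^{\lip}(M; \R)$ such that $\int_\sigma d\omega_j \to \int_\sigma d\omega$ for every $\sigma \notin F$, and a $p$-exceptional set $\Sigma \subset C_{k-1}^{\lip}(M; \R)$ such that $\int_\tau \omega_j \to \int_\tau \omega$ for every $\tau \notin \Sigma$. In addition, Lemma \ref{lem:two_integrals} provides, for each $j$, a $p$-exceptional set $A_j \subset C_k^{\lip}(M; \R)$ on which the $L^p$-integral of $d\omega_j$ disagrees with its Wolfe $W^{d,\infty}$-integral, and a $p$-exceptional set $B_j \subset C_{k-1}^{\lip}(M; \R)$ on which the $L^p$-integral of $\omega_j$ disagrees with its Wolfe integral. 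Setting $E_1 = F \cup \bigcup_j A_j$ and $\Sigma' = \Sigma \cup \bigcup_j B_j$, both are $p$-exceptional by countable subadditivity of $\moddens_p$ (Lemma \ref{lem:Fuglede_properties} \eqref{enum:fuglede_outer_measure}).

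Now define $E_2 = \{\sigma \in C_k^{\lip}(M; \Z) : \partial \sigma \in \Sigma'\}$. By construction, $\partial E_2 \subset \Sigma'$ is $p$-exceptional, though $E_2$ itself need not be. For any $\sigma \in C_k^{\lip}(M; \R) \setminus (E_1 \cup E_2)$, we have the chain of equalities
\begin{align*}
\int_\sigma d\omega
&= \lim_{j \to \infty} \int_\sigma d\omega_j
= \lim_{j \to \infty} \int_\sigma^{W^{d,\infty}} d\omega_j
= \lim_{j \to \infty} \int_{\partial \sigma}^{W^{d,\infty}} \omega_j
= \lim_{j \to \infty} \int_{\partial \sigma} \omega_j
= \int_{\partial \sigma} \omega,
\end{align*}
where the first and last equalities use $\sigma \notin F$ and $\partial \sigma \notin \Sigma$ respectively, the second and fourth use $\sigma \notin A_j$ and $\partial \sigma \notin B_j$ for all $j$, and the middle equality is Proposition \ref{prop:Lipschitz_stokes} applied to each $\omega_j$. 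This yields the claim.

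The main conceptual obstacle, rather than any single hard estimate, is that the notion of $p$-exceptionality is intrinsically tied to the measures $\mu_\sigma$ of $k$-chains and these measures bear no uniform relation to $\mu_{\partial \sigma}$; consequently, the failure set for integration of the $(k-1)$-form $\omega$ over boundaries, pulled back to a set of $k$-chains, is only controllable at the level of its image under $\partial$, which is precisely why the two separate sets $E_1$ and $E_2$ appear in the statement.
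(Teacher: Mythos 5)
Your proof is correct and follows essentially the same route as the paper's: approximate $\omega$ by $W^{d,\infty}$-forms, apply the Lipschitz Stokes theorem (Proposition \ref{prop:Lipschitz_stokes}) to the approximants, and pass to the limit via Lemma \ref{lem:fuglede_for_forms} together with Lemma \ref{lem:two_integrals}. Your explicit construction of $E_1$ from the failure-of-convergence set $F$ and the Wolfe-vs-$L^p$ discrepancy sets $A_j$, and of $E_2$ as the $\partial$-preimage of the analogous $(k-1)$-chain failure set $\Sigma'$, is simply a more detailed rendering of the paper's compressed two-sentence argument.
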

\begin{proof}
	By Lemma \ref{lem:W_infinity_approximation}, we can approximate $\omega$ by $W^{d,\infty}$-forms $\omega_i$. By Proposition \ref{prop:Lipschitz_stokes}, every $\omega_i$ satisfies the desired Stokes' theorem for every $\sigma \in C_k^{\lip}(M; \R)$. Then we extend $\omega$, $\omega_i$, $d\omega$ and $d\omega_i$ by zero to $\cR$.
	By Lemmas \ref{lem:fuglede_for_forms} and \ref{lem:two_integrals} and after moving to a subsequence, $\int_{\sigma} d\omega_j \to \int_{\sigma} d\omega$ outside a $p$-exceptional set of $\sigma \in C_k^{\lip}(M; \Z)$.  Additionally, $\int_{\partial \sigma} \omega_j \to \int_{\partial \sigma} \omega$ outside a $p$-exceptional set of $\partial \sigma \in \partial C_{k-1}^{\lip}(M; \Z)$.
\end{proof}

Having two exceptional sets in Corollary \ref{cor:lipschitz_stokes_L^p} is necessary. For the necessity of $E_2$, we can take a smooth $\omega \in C^\infty_0(\wedge^{n-1} \R^n)$ and redefine it on the boundary of a single $n$-simplex $\sigma$, making the formula fail on every simplex $\sigma'$ with $\partial \sigma' = \partial \sigma$.
The singleton $\{\partial \sigma\}$ is $p$-exceptional, but the family of $n$-simplices $\sigma'$ with $\partial \sigma' = \partial \sigma$ is not. 

For the necessity of $E_1$, we redefine a smooth $d\omega \in C^\infty_0(\wedge^k \R^n)$ with $k < n$ on a single $k$-chain $\sigma$ with $\partial \sigma = 0$. The formula fails for all $k$-chains $\sigma + \sigma'$ where the image of $\sigma'$ is disjoint from the image of $\sigma$.
The family of such $k$-chains is exceptional, but the family of their boundaries is not.

\subsection{The Fuglede-type characterization}
The main result of this section characterizes closedness and exactness for Sobolev forms purely by integration over $k$-chains. Moreover, it also includes a similar characterization of the weak vanishing of $\omega_T$ in such cases.  

\begin{theorem}\label{thm:integral_characterization_thm}
	Suppose that $M$ is a closed Lipschitz $n$-submanifold in an oriented Lipschitz Riemannian $n$-manifold $\cR$ without boundary and $D$ is a closed Lipschitz $(n-1)$-submanifold in $\partial M$. Let $p \in (1, \infty)$ and $\omega \in L^p_\loc(\wedge^k T^* M)$. The following are equivalent.
	\begin{itemize}
		\item $\omega \in W^{d,p}_{T(D), \loc}(\wedge^k T^* M)$ and $d\omega = 0$.
		\item For $p$-a.e.\ $\sigma \in C_k^{\lip}(D; \Z) + \partial C_{k+1}^{\lip}(M; \Z) \subset C_k^{\lip}(M; \Z)$, 
		\[
			\int_{\sigma} \omega = 0.
		\]
	\end{itemize}
	The following are also equivalent.
	\begin{itemize}
		\item $\omega = d\tau$ for some $\tau \in W^{d,p}_{T(D), \loc}(\wedge^{k-1} T^* M)$.
		\item For $p$-a.e.\ $\sigma \in C_{k+1}^{\lip}(M; \Z)$ such that $\partial \sigma \in C_k^{\lip}(D; \Z)$, 
		\[
			\int_{\sigma} \omega = 0.
		\]
	\end{itemize}
\end{theorem}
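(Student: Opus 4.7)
The plan is to split the theorem into its four implications: the two forward directions, which will follow from Stokes' theorem after approximation by $W^{d,\infty}_{T(D)}$-forms, and the two backward directions, which require a local mollification argument for closedness and an application of the Sobolev--de Rham theory of Section \ref{sec:sobolevhomology} for exactness.

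For the forward directions, given $\omega \in W^{d,p}_{T(D),\loc}$ with $d\omega = 0$, I would pick approximations $\omega_j \in W^{d,\infty}_{T(D)}$ via Lemma \ref{lem:W_infinity_approx_boundary} with $\omega_j \to \omega$ in $W^{d,p}$ and $\spt \omega_j \subset M \setminus D$. For $\sigma = \sigma_D + \partial\tau$ with $\sigma_D \in C_k^{\lip}(D;\Z)$ and $\tau \in C_{k+1}^{\lip}(M;\Z)$, canonical Stokes (Proposition \ref{prop:Lipschitz_stokes}) and Lemma \ref{lem:canonical_int_vanishing_boundary_values} give
\[
\int_\sigma \omega_j = \int_{\sigma_D}\omega_j + \int_{\partial\tau}\omega_j = 0 + \int_\tau d\omega_j;
\]
since $d\omega_j \to 0$ in $L^p$, Lemma \ref{lem:fuglede_for_forms} lets us pass to a subsequence and conclude $\int_\sigma\omega = 0$ for $p$-a.e.\ such $\sigma$. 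The forward direction of exactness is analogous: for $\omega = d\tau$, approximate $\tau$ by $\tau_j \in W^{d,\infty}_{T(D)}$, note that $\int_\sigma d\tau_j = \int_{\partial\sigma}\tau_j = 0$ whenever $\partial\sigma$ lies in $D$ (Lemma \ref{lem:canonical_int_vanishing_boundary_values}), and pass to the $L^p$-limit via Lemmas \ref{lem:fuglede_for_forms} and \ref{lem:two_integrals}.

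The core of the argument is the backward direction of the closedness equivalence, which I would prove chart-by-chart via mollification. In a bilipschitz chart $\phi \colon \Omega \to U$, pull $\omega$ back; if $U$ meets $\intr D$, extend $\omega$ by zero past $\intr D$. Setting $\omega_\eps = \omega * \psi_\eps$, for any smooth $(k+1)$-simplex $\sigma$ in the mollification domain, classical Stokes and Fubini yield
\[
\int_\sigma d\omega_\eps = \int_{\partial\sigma}\omega_\eps = \int_{B(0,\eps)}\psi_\eps(v) \int_{\partial\sigma_v}\omega \, dv,
\]
where $\sigma_v(x) = \sigma(x) - v$. The key observation is that the set $\{v \in B(0,\eps): \partial\sigma_v \in E\}$ cannot have positive measure for the $p$-exceptional set $E$ coming from the hypothesis: by Lemma \ref{lem:Fuglede_exceptional_lemma} that would force an $L^p$ function $\rho$ satisfying $\int \rho \, d\mu_{\partial\sigma_v} = \infty$ on a positive-measure set, contradicting the finiteness of $\int_{B(0,\eps)}\int \rho \, d\mu_{\partial\sigma_v}\, dv = \int (\rho * \mathbf{1}_{B(0,\eps)})\,d\mu_{\partial\sigma}$ by Fubini. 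Hence $\int_\sigma d\omega_\eps = 0$ for every smooth $\sigma$, forcing $d\omega_\eps \equiv 0$ and therefore $d\omega = 0$ weakly as $\eps \to 0$. When the chart meets $\intr D$, I would split $\partial\sigma_v$ into parts in $\intr M$, on $\intr D$, and outside $M$: the last vanishes from the zero extension, the middle is covered by the $C_k^{\lip}(D;\Z)$ portion of the hypothesis, and the first becomes a chain boundary modulo a $D$-chain correction, again covered by the hypothesis.

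For the backward direction of exactness, observe that $C_k^{\lip}(D;\Z) + \partial C_{k+1}^{\lip}(M;\Z)$ sits inside the set of relative $k$-cycles, so the hypothesis implies that of the closedness equivalence and the already-established Part 1 gives $\omega \in W^{d,p}_{T(D),\loc}$ with $d\omega = 0$. By Corollary \ref{cor:Lp_Linfty_cohom_eq}, choose a $W^{d,\infty}_{T(D),\loc}$ representative $\omega' = \omega - d\tau$ with $\tau \in W^{d,p}_{T(D),\loc}$. For $p$-a.e.\ relative cycle $\sigma$, combining Lemma \ref{lem:two_integrals} for $\omega'$, Corollary \ref{cor:lipschitz_stokes_L^p} for $d\tau$, and the forward direction of exactness applied to $\tau$ (to kill $\int_{\partial\sigma}\tau$ via $\partial\sigma \subset D$ and Lemma \ref{lem:canonical_int_vanishing_boundary_values}), the $L^p$-integral $\int_\sigma \omega$ equals the canonical integral $\int_\sigma \omega'$. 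By hypothesis this vanishes on $p$-a.e.\ relative cycle; by Proposition \ref{prop:no_exc_hom_classes} every homology class in $H_k^{\lip}(M,D;\Z)$ has a non-exceptional representative, and by Proposition \ref{prop:de_Rham_thm_in_Lip_setting} the canonical pairing depends only on the class, so it vanishes on all of $H_k^{\lip}(M,D;\Z)$. The de Rham isomorphism then forces $[\omega'] = 0$ in $H^k_\infty(M,D)$ and hence in $H^k_p(M,D)$, yielding $\omega = d\tau''$ for some $\tau'' \in W^{d,p}_{T(D),\loc}$. I expect the main technical obstacle to be the backward direction of closedness at points of $\partial D$, where a single chart must simultaneously accommodate $\intr D$, $\partial M \setminus D$, and $\partial D$; the decomposition of $\partial \sigma_v$ into chain-boundary and $D$-chain pieces will require the adapted boundary charts of Lemma \ref{lem:W_infinity_approx_boundary}, together with careful bookkeeping to merge the various exceptional sets into a single $p$-exceptional set.
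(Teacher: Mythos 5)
The student's plan correctly identifies the four implications and gets three of them essentially right, but the forward closedness direction contains a genuine gap.

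Your approach to forward closedness — approximate $\omega$ by $\omega_j \in W^{d,\infty}_{T(D)}$, write $\int_\sigma \omega_j = \int_\tau d\omega_j$ for $\sigma = \sigma_D + \partial\tau$, and pass to the limit via $d\omega_j \to 0$ in $L^p$ — does not close. The Fuglede lemma applied to $d\omega_j \to 0$ yields a $p$-exceptional set of $(k+1)$-chains $\tau$ outside which $\int_\tau d\omega_j \to 0$, but the statement you must prove is a $p$-a.e.\ claim about the $k$-chains $\sigma$. The boundary operator does not preserve $p$-exceptionality — this is precisely the obstruction the paper addresses in the remarks after Corollary \ref{cor:lipschitz_stokes_L^p}, where it exhibits a $p$-exceptional $E_2$ of $(k+1)$-chains whose boundaries form a non-exceptional family of $k$-chains. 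So "some $\tau$ with $\sigma = \sigma_D + \partial\tau$ lies outside the $\tau$-exceptional set" is not guaranteed outside a $p$-exceptional set of $\sigma$'s, and your argument gives no way to show that it is. Contrast this with the forward exactness direction, which genuinely does work by approximation: there the constraint $\partial\sigma \in C_{k-1}^{\lip}(D;\Z)$ is a condition on $\sigma$ itself, and since $\tau_j$ vanishes in a neighborhood of $D$, the equality $\int_{\partial\sigma}\tau_j = 0$ holds for \emph{all} such $\sigma$, with no exceptional set in the $(k+1)$-simplex needed. The paper classifies the forward closedness implication as one of the two that require global input, and proves it (Lemma \ref{lem:chara_implication_1}) by first invoking Corollary \ref{cor:Lp_Linfty_cohom_eq} to replace $\omega$ by a cohomologous $\omega' \in W^{d,\infty}_{T(D),\loc}$ with $d\omega' = 0$, splitting off an exact correction $d\tau$ which is handled by the forward exactness lemma; for the single fixed $W^{d,\infty}$-form $\omega'$, Stokes and Lemma \ref{lem:canonical_int_vanishing_boundary_values} hold for \emph{all} $\sigma \in \Sigma$, no Fuglede passage needed. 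You should restructure your forward closedness argument along those lines rather than trying to mimic the forward exactness argument.

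The remaining three directions match the paper. Your backward closedness mollification–Fubini argument is the paper's strategy (Lemmas \ref{lem:zero_ints_implies_weak_cl_Rn} and \ref{lem:zero_ints_implies_weak_cl_Rn_bdry}), though note the paper restricts to isometrically embedded $(k+1)$-balls rather than arbitrary smooth simplices precisely so that the intersection $\partial\sigma' \cap \overline{\H^n_+}$ is a genuine Lipschitz chain — you flag this at the end, but the ball restriction is the cleanest way to guarantee the "chain boundary modulo a $D$-chain correction" decomposition you invoke actually exists. Your backward exactness argument is the paper's Lemma \ref{lem:chara_implication_4}. Two minor omissions worth mentioning: Lemma \ref{lem:W_infinity_approx_boundary} applies to $W^{d,p}_{T(D)}$, not to the $\loc$-space, so the forward directions need the paper's exhaustion by compact submanifolds $M_i$ before approximation; and in the backward closedness argument you should verify that the extended-by-zero form on the boundary chart genuinely satisfies the hypothesis of the interior lemma, which is what the paper does explicitly.
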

The theorem immediately implies Theorem \ref{thm:characterization_thmintro} by considering $M = \cc R$ and $\partial M = \emptyset$.
Theorem \ref{thm:integral_characterization_thm} consists of four individual implications. Two of them can be proven locally and we begin with those statements. The remaining two require global information, which in our proof is provided by the use of $L^p$-cohomology. Note that the proofs of almost all of the implications also apply for $p=1$, except for one proof which uses Proposition \ref{prop:no_exc_hom_classes}.

\subsection{The local implications}

We start by showing the following part of Theorem \ref{thm:integral_characterization_thm}.

\begin{lemma}\label{lem:chara_implication_2}
	Suppose that $M$ is a closed Lipschitz $n$-submanifold in an oriented Lipschitz Riemannian $n$-manifold $\cR$ without boundary and $D$ is a closed Lipschitz $(n-1)$-submanifold in $\partial M$. Let $p \in [1, \infty)$ and $\omega \in L^p_\loc(\wedge^k T^* M)$. If 
	\[
		\int_{\sigma} \omega = 0,
	\]
	for $p$-a.e.\ $\sigma \in C_k^{\lip}(D; \Z) + \partial C_{k+1}^{\lip}(M; \Z)$, then $\omega \in W^{d,p}_{T(D), \loc}(\wedge^k T^* M)$ and $d\omega = 0$.
\end{lemma}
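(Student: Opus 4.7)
My plan is to exploit the sheaf property of $U \mapsto W^{d,p}_{T(D),\loc}(\wedge^k T^\ast U)$ and reduce to verifying weak closedness in bilipschitz charts on $M$. In such a chart $\phi \colon B \to U$ the problem becomes Euclidean, and two essential cases arise: (I) an interior chart with $U \subset \intr M$, where I must show $d\omega = 0$ weakly on $B$; and (II) a boundary chart with $\phi(\overline{\H^n_+} \cap B) = U \cap M$ and $\phi(\partial \H^n_+ \cap B) \subset \intr D$, where I must show that the extension by zero $\tilde\omega$ of $\omega$ from $B \cap \overline{\H^n_+}$ to $B$ satisfies $d\tilde\omega = 0$ weakly on $B$ (this is exactly the definition of weak tangential vanishing combined with weak closedness). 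The remaining subcase where a chart meets both $\intr D$ and $\partial M \setminus \intr D$ reduces to (II) with minor bookkeeping near $\partial D$.

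For case (I), mollify: $\omega_\eps = \omega \ast \rho_\eps$ with a standard smooth mollifier is smooth on $\{x \in B : \dist(x,\partial B) > \eps\}$ and $\omega_\eps \to \omega$ in $L^p_\loc(B)$. For any axis-aligned $(k+1)$-cube $\tau_0$ in this region, a direct change of variables yields
\[
    \int_{\partial \tau_0} \omega_\eps \;=\; \int \rho_\eps(\xi) \paren{\int_{\partial(\tau_0 - \xi)} \omega}\, d\xi.
\]
I would then check that for Lebesgue-a.e. $\xi \in \spt \rho_\eps$ the translate $\partial(\tau_0 - \xi)$ lies outside the $p$-exceptional family provided by the hypothesis: taking $f \geq 0$ in $L^p$ as the witness from Lemma \ref{lem:Fuglede_exceptional_lemma}, Fubini gives
\[
    \int_{\spt \rho_\eps} \int f \, d\mu_{\partial(\tau_0 - \xi)}\, d\xi \;=\; \int f \cdot (\mathbbm{1}_{\spt \rho_\eps} \ast \mu_{\partial \tau_0}) \;<\; \infty,
\]
since $\mathbbm{1}_{\spt \rho_\eps} \ast \mu_{\partial \tau_0}$ is bounded with compact support. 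Hence $\int_{\partial \tau_0} \omega_\eps \equiv 0$, and smooth Stokes combined with a shrinking-cube argument forces $d\omega_\eps \equiv 0$. Passing $\eps \to 0$ in the weak definition against $\eta \in C^\infty_0(\wedge^{n-k-1}T^\ast B)$ yields $d\omega = 0$ weakly.

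For case (II), I would apply the same mollification scheme to $\tilde\omega$ on $B$. For axis-aligned $(k+1)$-cubes $\tau_0 \subset B$ with $x_n$-range containing zero, decompose $\tau_0 = \tau_0^+ + \tau_0^-$ into its pieces in $\overline{\H^n_+}$ and $\overline{\H^n_-}$; the shared interface $k$-face $F \subset \partial \H^n_+$ appears with opposite orientations in $\partial \tau_0^\pm$ and cancels, giving $\partial \tau_0 = \partial \tau_0^+ + \partial \tau_0^-$. Therefore
\[
    \int_{\partial \tau_0} \tilde\omega \;=\; \int_{\partial \tau_0^+} \omega \;+\; \int_{\partial \tau_0^-} \tilde\omega,
\]
and in the second term the non-interface faces of $\partial \tau_0^-$ lie in $\H^n_-$ where $\tilde\omega = 0$, so only the interface piece $-F$ contributes $-\int_F \omega$. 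The Fubini argument from (I) applies separately to both integrals: for a.e. $\xi$ the translate $(\tau_0 - \xi)^+ \in C_{k+1}^{\lip}(M;\Z)$ avoids the $\partial C_{k+1}^{\lip}(M;\Z)$-part of the exceptional family, and for a.e. $\xi$ the translate $F - \xi \in C_k^{\lip}(D;\Z)$ avoids the $C_k^{\lip}(D;\Z)$-part. The main subtlety is coordinating these two a.e.-conditions, but each yields a Lebesgue-null set of bad $\xi$'s whose union remains null, and for a.e. $\xi$ both terms vanish. Thus $\int_{\partial \tau_0} \tilde\omega_\eps = 0$, the shrinking-cube argument forces $d\tilde\omega_\eps \equiv 0$, and $d\tilde\omega = 0$ weakly follows in the $L^p$-limit.
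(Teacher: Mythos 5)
Your overall strategy — reduce by bilipschitz charts to Euclidean pictures, mollify, apply Fubini together with a $p$-exceptional witness function from Lemma \ref{lem:Fuglede_exceptional_lemma}, and pass to the $L^p$-limit — is exactly the one used in the paper (which packages the two Euclidean cases as Lemmas \ref{lem:zero_ints_implies_weak_cl_Rn} and \ref{lem:zero_ints_implies_weak_cl_Rn_bdry}). Your interior case (I) is a valid variant: axis-aligned cubes work just as well as the paper's embedded balls, and running the Fubini argument directly against the witness $f$ is a perfectly good replacement for the paper's intermediate observation that the set of bad translation vectors has zero Lebesgue measure.

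The boundary case (II), however, has a genuine gap. Your formula for the inner integral after the change of variables involves the chain $(\tau_0 - \xi)^+ = (\tau_0-\xi)\cap\overline{\H^n_+}$ and its interface with $\partial\H^n_+$, and the interface of the translated cube is $F_\xi := (\tau_0-\xi)\cap\partial\H^n_+$, \emph{not} the translate $F - \xi$: the latter lies in the hyperplane $\{x_n = -\xi_n\}$ and hence is not a chain in $C_k^{\lip}(D;\Z)$ whenever $\xi_n \neq 0$. More importantly, the Fubini argument from case (I) does \emph{not} ``apply separately to both integrals'' as stated. The case-(I) computation relies essentially on the family $\{\partial(\tau_0-\xi)\}_\xi$ consisting of Euclidean translates of a single fixed chain, so that $\int f \, d\nu_{\partial(\tau_0-\xi)} = \int f(\cdot - \xi)\, d\nu_{\partial\tau_0}$ and Tonelli produces a bounded compactly supported convolution. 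Neither $\{\partial((\tau_0-\xi)^+)\}_\xi$ nor $\{F_\xi\}_\xi$ has this property: both change shape as $\xi_n$ varies (the chopped cube grows or shrinks, and $F_\xi$ disappears entirely once $\xi_n$ leaves the range of $I_n$). To make this work one needs an additional minorization step of the type the paper uses in Lemma \ref{lem:zero_ints_implies_weak_cl_Rn_bdry}: observe that $\nu_{\partial(\tau_0-\xi)\cap\overline{\H^n_+}} \leq \nu_{\partial(\tau_0-\xi)}$, so $p$-exceptionality of the intersected family forces $p$-exceptionality of the translation-invariant family of full boundaries (Lemma \ref{lem:Fuglede_properties}\eqref{enum:fuglede_minorization}), and only then apply the case-(I) Fubini computation. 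Your identification of the ``main subtlety'' as coordinating two a.e.\ conditions misses the point — the union of two null sets is trivially null; the real subtlety is precisely this loss of translation-invariance, which the proposal as written does not address.
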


We first prove Euclidean versions of the result, starting without boundary conditions.

\begin{lemma}\label{lem:zero_ints_implies_weak_cl_Rn}
	For $k \in \{1, \dots, n\}$, let $\cB^n_k \subset C_{k}^{\lip}(\R^n; \Z)$ denote the collection of all isometrically embedded $k$-balls in $\R^n$. Let $k \geq 0$ and $\omega \in L^{p}_\loc(\wedge^k \R^n)$, where $p \in [1, \infty)$. If
	\[
	\int_{\partial \sigma} \omega = 0,
	\]
	for $p$-a.e.\ $\partial \sigma \in \partial \cB^n_{k+1}$, then $d\omega = 0$ weakly.
\end{lemma}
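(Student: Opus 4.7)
The plan is a mollification argument reducing to the classical smooth Stokes theorem. Fix a standard radial mollifier $\rho_\eps \in C^\infty_0(B^n(0, \eps))$ with $\int \rho_\eps = 1$ and set $\omega_\eps = \omega * \rho_\eps$ component-wise, so $\omega_\eps$ is smooth and $\omega_\eps \to \omega$ in $L^p_\loc$. By continuity of the weak differential, it suffices to show $d\omega_\eps \equiv 0$ for every small $\eps > 0$, whence $d\omega = 0$ weakly.

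Since $d\omega_\eps$ is smooth, $d\omega_\eps \equiv 0$ is equivalent to the claim $\int_B d\omega_\eps = 0$ for every isometrically embedded $(k+1)$-ball $B \subset \R^n$: for any $x$ and orthonormal $(k+1)$-frame $(e_1, \ldots, e_{k+1})$ at $x$, applying the identity to shrinking balls $B_r$ of radius $r$ in the plane $x + \Span(e_1, \ldots, e_{k+1})$ recovers each component $(d\omega_\eps)_x(e_1, \ldots, e_{k+1})$ via
\[
(d\omega_\eps)_x(e_1, \ldots, e_{k+1}) = \lim_{r \to 0} \frac{1}{\vol(B_r)} \int_{B_r} d\omega_\eps.
\]
By the classical Stokes theorem, this in turn reduces to proving $\int_{\partial B} \omega_\eps = 0$ for every such $B$.

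I would fix one embedded $(k+1)$-ball $B_0$ with boundary $k$-sphere $S_0$ and parametrize the translates $S_0 + y$ by translated isometries from a standard $S^k$; using translation-equivariance of convolution, a direct pullback computation gives
\[
\int_{S_0 + y} \omega_\eps = \int_{\R^n} \rho_\eps(z) \int_{S_0 + y - z} \omega \, dz.
\]
If the inner integral $G(y) := \int_{S_0 + y} \omega$ vanishes for Lebesgue-a.e.\ $y \in \R^n$, then the right-hand side vanishes for every $y$. Since every isometrically embedded $(k+1)$-ball in $\R^n$ is a translate of some $B_0$ as $B_0$ ranges over radii and $(k+1)$-plane orientations, this yields $\int_{\partial B} \omega_\eps = 0$ for every embedded $B$.

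The main obstacle is the Fubini-type claim that $G(y) = 0$ for Lebesgue-a.e.\ $y$, since $p$-exceptionality in $\partial \cB^n_{k+1}$ does not a priori transfer to Lebesgue-null sets on the translation parameter. To handle it, I would apply Lemma \ref{lem:Fuglede_exceptional_lemma} to obtain a nonnegative $f \in L^p(\R^n)$ with $\int_\sigma f \, d\mathcal{H}^k = \infty$ for every $\sigma$ in the $p$-exceptional subfamily $E \subset \partial \cB^n_{k+1}$. For any bounded $K \subset \R^n$, Fubini and the isometric parametrization of $S_0 + y$ give
\[
\int_K \int_{S_0 + y} f \, d\mathcal{H}^k \, dy = \int_{S_0} \int_K f(x + y) \, dy \, d\mathcal{H}^k(x),
\]
which is finite by Hölder's inequality applied to the inner right-hand integral. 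Hence $\int_{S_0 + y} f \, d\mathcal{H}^k < \infty$, and therefore $\partial(B_0 + y) \notin E$, for a.e.\ $y \in K$; for such $y$ the hypothesis forces $G(y) = 0$, and exhausting $\R^n$ by bounded sets closes the argument.
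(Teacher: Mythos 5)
Your proposal is correct and follows essentially the same route as the paper's proof: mollify $\omega$, use Fubini and the translation structure to write $\int_{\partial\sigma}\omega_\eps$ as a convolution of the function $y\mapsto\int_{\partial\sigma-y}\omega$, reduce the a.e.\ vanishing of that function to a Lebesgue-measure-zero statement about translation parameters via Lemma \ref{lem:Fuglede_exceptional_lemma} and a Fubini/H\"older estimate, and finish with the classical smooth Stokes theorem. The only cosmetic difference is that you prove the key measure-zero claim directly (finiteness of $\int_{S_0+y}f$ for a.e.\ $y$), whereas the paper phrases it as the contrapositive (assuming $m_n(A)>0$ and deriving a contradiction); the underlying Fubini argument is the same.
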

\begin{proof}
    We first show that if $\sigma \in \cB^n_{k+1}$ and $A \subset \R^n$ is a measurable set such that the family $\cS_A := \{\partial\sigma - y : y \in A\}$ is $p$-exceptional, then $m_n(A) = 0$. By subadditivity of $m_n$ and $\moddens_p$, we may assume that $A$ is bounded and in particular $m_n(A) < \infty$. Since $\spt \nu_{\partial\sigma}$ is also bounded, we may select a bounded $B \subset \R^n$ such that $x - y \in B$ whenever $y \in A$ and $x \in \spt \nu_{\partial\sigma}$. Suppose towards contradiction that $m_n(A) > 0$. Lemma \ref{lem:Fuglede_exceptional_lemma} yields a function $\rho \in L^p(\R^n)$ such that the integral of $\rho$ over every $\partial \sigma - y$ with $y \in A$ is infinite. By Tonelli's theorem and our counterassumption that $m_n(A) > 0$, we have
	\begin{align*}
		\int_{\partial \sigma} \int_A \rho(x-y) \dd y \dd x
		&= \int_A \int_{\partial \sigma} \rho(x-y) \dd x \dd y\\
		&= \int_A \int_{\partial \sigma - y} \rho(x') \dd x' \dd y = \infty.
	\end{align*}
	Hence, the function
	\[
	    x \mapsto \int_A \rho(x-y)dy
	\]
	must be unbounded over $\spt \nu_{\partial\sigma}$. However, since $\rho \in L^p(\bR^n)$ and $B \subset \R^n$ is bounded, we have $\rho \in L^1(B)$. It follows that
	\begin{align*}
	    \int_A \rho(x-y)dy \le \int_{B} \rho < \infty,
	\end{align*}
	for all $x \in \spt \nu_{\partial\sigma}$, resulting in a contradiction. We conclude that indeed $m_n(A) = 0$.

	Suppose then that
	\[
	    \int_{\partial \sigma} \omega = 0
	\]
	for $p$-a.e.\ $\partial \sigma \in \partial \cB^n_{k+1}$. We take the convolution $\omega_j = \eta_j \ast \omega$ for a sequence of mollifying kernels $\eta_j \in C^\infty_0(\R^n)$. Fix $\partial \sigma \in \partial \cB^n_{k+1}$.
	By Fubini's theorem and a change of variables,
	\begin{align*}
		\int_{\partial\sigma} \omega_j 
		&= \int_{\partial\sigma} \int_{\R^n} \omega_{x-y} \eta_j(y) \dd y \dd x\\
		&= \int_{\R^n} \eta_j(y) \int_{\partial\sigma} \omega_{x-y} \dd x \dd y\\
		&= \int_{\R^n} \eta_j(y) \left( \int_{\partial\sigma - y} \omega\right) \dd y.
	\end{align*}
    For $p$-a.e.\ element of $\{\partial\sigma - y : y \in \R^n\}$, we have by assumption that
	\[
	    \int_{\partial\sigma - y} \omega = 0.
	\]
	The $p$-exceptional family where this does not hold is of the form $\cS_A$ for some $A \subset \R^n$.
	We can place $A$ between two measurable sets $A^{-} \subset A \subset A^{+}$ with the same $m_n$-measure. By the statement shown in the beginning of the proof, we have $m_n(A^{-}) = 0$, which also yields $m_n(A^{+}) = 0$. 
	So
	\[
	\int_{\R^n} \eta(y) \left( \int_{\partial\sigma - y} \omega\right) \dd y
	= \int_{\R^n\setminus A^{+}} 0 \dd y = 0.
	\]
	In conclusion, the integral of $\omega_j$ over every $\partial \sigma \in \partial \cB^n_{k+1}$ is zero.
	
	Since $\omega_j$ are smooth, it follows by the standard smooth Stokes' theorem that the integral of $d\omega_j$ over every $\sigma \in \cB^n_{k+1}$ is zero. This is only possible if $d\omega_j$ is identically zero. 
	Let $V \subset \R^n$ be a bounded domain, in which case  $\omega_j \to \omega$ in $L^p(\wedge^k V)$. Since $d\omega_j \to 0$ in $L^p(\wedge^k V)$, we obtain that $d\omega = 0$ weakly in $V$. By the locality properties of the weak differential, it follows that $d\omega = 0$ weakly in $\R^n$.
\end{proof}

Next, we give a Euclidean version with boundary.

\begin{lemma}\label{lem:zero_ints_implies_weak_cl_Rn_bdry}
	Let $\H^n$ denote the upper half-space of $\R^n$. For $k \in \{1, \dots, n\}$, let $\cB^n_k \subset C_{k}^{\lip}(\R^n; \Z)$ denote the collection of all isometrically embedded $k$-balls in $\R^n$. Let $k \geq 0$ and $\omega \in L^{p}_\loc(\wedge^k \H^n)$ where $p \in [1, \infty)$. If
	\[
	\int_{\sigma} \omega = 0,
	\]
	for $p$-a.e.\ $\sigma \in C_{k}^{\lip}(\R^n; \Z)$ of the form $\sigma = \partial \sigma' \cap \overline{\H^n}$ where $\sigma' \in \cB^n_{k+1}$, then $d\omega = 0$ weakly in $\H^n$ and $\omega_T$ vanishes weakly on $\partial \H^n$.
\end{lemma}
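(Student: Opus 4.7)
The plan is to reduce the lemma to its boundaryless analogue, Lemma~\ref{lem:zero_ints_implies_weak_cl_Rn}, by extending $\omega$ by zero across $\partial\H^n$. Let $\tilde\omega\in L^p_\loc(\wedge^k T^*\R^n)$ denote the resulting extension. The key observation I would use is that
\[
    \int_{\partial\sigma'}\tilde\omega \;=\; \int_{\partial\sigma'\cap\overline{\H^n}}\omega
\]
for $p$-a.e.\ $\sigma'\in\cB^n_{k+1}$. This is because $\tilde\omega$ vanishes on $\R^n\setminus\overline{\H^n}$ and the piece of $\partial\sigma'$ lying on $\partial\H^n$ is $\nu_{\partial\sigma'}$-negligible except in the degenerate case of $\sigma'$ being contained in the hyperplane $\partial\H^n$, which forms a $p$-exceptional family by Lemma~\ref{lem:Fuglede_properties}, \eqref{enum:fuglede_measure_zero}, since $\partial\H^n$ has $n$-dimensional Lebesgue measure zero.

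Next I would transfer the $p$-exceptional hypothesis from $\overline{\H^n}$ to $\R^n$ via Lemma~\ref{lem:Fuglede_exceptional_lemma}. Taking an $L^p$ density $\rho$ on $\overline{\H^n}$ furnished by the hypothesis, I extend it by zero to $\tilde\rho\in L^p(\R^n)$ and apply the same mass-decomposition to $\tilde\rho$ as to $\tilde\omega$. This yields $\int_{\partial\sigma'}\tilde\rho\,d\nu = \int_{\partial\sigma'\cap\overline{\H^n}}\rho\,d\nu = \infty$ whenever $\int_{\partial\sigma'}\tilde\omega\ne 0$, so Lemma~\ref{lem:Fuglede_exceptional_lemma} shows that the family of $\partial\sigma'\in\partial\cB^n_{k+1}$ carrying nonzero integral of $\tilde\omega$ is $p$-exceptional in $\R^n$. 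Lemma~\ref{lem:zero_ints_implies_weak_cl_Rn} then applies to give $d\tilde\omega=0$ weakly on $\R^n$.

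Finally, I would translate $d\tilde\omega=0$ into the two conclusions of the lemma. By locality of weak differentials, $d\omega = 0$ weakly on $\H^n$. Moreover, the fact that the zero-extension of $\omega$ past $\partial\H^n$ has weak differential equal to the zero-extension of $d\omega\equiv 0$ is, by the definition given in Section~\ref{sec:lipschitzmfds}, precisely the statement that $\omega_T$ vanishes weakly on $\partial\H^n$. The analytic heavy lifting is carried out by Lemma~\ref{lem:zero_ints_implies_weak_cl_Rn}, so the main obstacle here is simply checking the integral-identity and $p$-exceptionality transfer between half-space chains and full-sphere chains; both are straightforward because $\partial\H^n$ is a Lebesgue-null hyperplane and the only degeneracy (balls sitting inside $\partial\H^n$) is automatically $p$-exceptional.
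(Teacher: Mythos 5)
Your proposal is correct and follows essentially the same strategy as the paper's own proof: extend $\omega$ by zero across $\partial\H^n$, argue that the integral of the extension vanishes over $p$-a.e.\ $\partial\sigma'\in\partial\cB^n_{k+1}$, and then invoke Lemma \ref{lem:zero_ints_implies_weak_cl_Rn}. The only genuine difference is how the $p$-exceptionality is transferred from half-sphere chains $\partial\sigma'\cap\overline{\H^n}$ to full spheres $\partial\sigma'$: the paper observes that $\nu_{\partial\sigma'}\geq\nu_{\partial\sigma'\cap\overline{\H^n}}$, so the half-sphere family minorizes the full-sphere family and Lemma \ref{lem:Fuglede_properties}\eqref{enum:fuglede_minorization} gives the modulus inequality directly, while you unpack this via the density characterization of Lemma \ref{lem:Fuglede_exceptional_lemma}. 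These are equivalent tools (Corollary \ref{cor:weak_minorization_p-exc} is proved from the latter), so there is no real gain or loss either way; the minorization phrasing is perhaps a bit cleaner. One small imprecision: the degenerate case of $\sigma'\subset\partial\H^n$ does not actually break the identity $\int_{\partial\sigma'}\tilde\omega=\int_{\partial\sigma'\cap\overline{\H^n}}\omega$ (it holds trivially there since the two chains coincide and $\tilde\omega=\omega$ on $\overline{\H^n}$); the relevant point is just that the part of $\partial\sigma'$ lying on $\partial\H^n$ is $\nu_{\partial\sigma'}$-null for every non-degenerate sphere. Since that degenerate family is in any case $p$-exceptional, this slip does not affect the argument.
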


\begin{proof}
	We extend $\omega$ into $\R^n$ by setting $\omega = 0$ in $\R^n \setminus \H^n$. We wish to prove that this extended $\omega$ satisfies $d\omega = 0$ in $\R^n$. It  immediately follows from this that $d\omega = 0$ in $\H^n$ and that $\omega_T = 0$ weakly on $\partial \H^n$.
	
	We point out that for every $\partial \sigma' \in \partial \cB^n_{k+1}$ that meets $\H^n$, the intersection $\partial \sigma' \cap \overline{\H^n}$ is clearly a valid element of $C_k^{\lip}(\R^n; \Z)$. (This is our reason for focusing on $\sigma' \in \cB^n_{k+1}$, as this would not be true for general $\sigma' \in C_{k+1}^{\lip}(\R^n; \Z)$ due to, e.g., the possibility of infinitely many crossings.) By the assumption, the integral of $\omega$ over $p$-a.e.\ such $\partial \sigma' \cap \overline{\H^n}$ vanishes. Since the extension of $\omega$ vanishes outside $\H^n$, the integral of $\omega$ over the rest of $\partial \sigma'$ also vanishes. It follows that the integral of $\omega$ over every $\partial \sigma' \in \partial \cB^n_{k+1}$ is zero, outside a collection $\cS$ for which the collection $\cS' = \{\sigma \cap \H^n : \sigma \in \cS\}$ is $p$-exceptional. Since $\cS'$ minorizes $\cS$, it follows that $\cS$ is $p$-exceptional by Lemma \ref{lem:Fuglede_properties} part \eqref{enum:fuglede_minorization}. The claim follows by Lemma \ref{lem:zero_ints_implies_weak_cl_Rn}.
\end{proof}

\begin{proof}[Proof of Lemma \ref{lem:chara_implication_2}]
	Let $\omega \in L^p_\loc(\wedge^k T^* M)$ be such that the integral of $\omega$ vanishes over $p$-a.e.\ $\sigma \in C_k^{\lip}(D; \Z) + \partial C_{k+1}^{\lip}(M; \Z)$. We extend $\omega$ by zero to $N = (\cR \setminus \partial M) \cup \intr D$, in which case $\omega \in L^p_\loc(\wedge^k T^* N)$. Let $B = B^n(0, 1)$ be the unit Euclidean ball. We cover $N$ with countably many bilipschitz charts $\phi_i \colon B \to U_i$, where either $U_i \subset \intr M$ or $U_i \subset \cR \setminus M$ or $\phi_i$ maps the upper and lower half-spaces of $B$ into $\intr(M)$ and $\cR \setminus M$ respectively. In the last case, the boundary $B \cap \partial \H^n_+$ is mapped into $D$.
	
	If $U_i \subset \cR \setminus M$, then $\phi_i^* \omega$ is identically zero a.e.\ on $B$ and $d \phi_i^* \omega = 0$. If $U_i \subset \intr M$, then we note that $p$-exceptional sets of $C_k^{\lip}(B; \R)$ and $C_k^{\lip}(U_i; \R)$ are in one-to-one correspondence by $(\phi_i)_*$ and our assumption yields that the integral of $\phi_i^* \omega$ vanishes over $p$-a.e.\ $\sigma \in C^{\lip}_k(B; \R)$ with $\partial \sigma = 0$. We fix a diffeomorphism $f \colon \R^n \to B$ and we similarly get that the integral of $f^* \phi_i^* \omega$ vanishes over $p$-a.e.\ $\sigma \in C^{\lip}_k(\R^n; \R)$ with $\partial \sigma = 0$. This is because we can write $\R^n$ as a union of increasingly large domains on which $f$ is bilipschitz and every $\sigma \in C^{\lip}_k(\R^n; \R)$ is contained in one of these domains. In particular, Lemma \ref{lem:zero_ints_implies_weak_cl_Rn} implies that $d f^* \phi_i^* \omega = 0$ and consequently $d \phi_i^* \omega = 0$ by use of Lemma \ref{lem:bilip_chain_rule} on subdomains. 
	
	For the remaining charts $\phi_i$ that map $B \cap \partial \H^n_+$ into $\intr D$, we similarly obtain that $d \phi_i^* \omega = 0$ by using Lemma \ref{lem:zero_ints_implies_weak_cl_Rn_bdry}.
	So the form $\phi_i^* \omega$ is weakly closed for a countable cover of $N$ by bilipschitz charts. It follows that $d\omega = 0$ weakly in $N = (\cR \setminus \partial M) \cup \intr D$. Consequently, $d\omega = 0$ weakly in $M$ and $\omega_T$ vanishes weakly on $D$.
\end{proof}

The other part of Theorem \ref{thm:integral_characterization_thm} that does not require cohomological tools is the following.

\begin{lemma}\label{lem:chara_implication_3}
	Suppose that $M$ is a closed Lipschitz $n$-submanifold in an oriented Lipschitz Riemannian $n$-manifold $\cR$ without boundary and $D$ is a closed Lipschitz $(n-1)$-submanifold in $\partial M$. Let $p \in [1, \infty)$ and $\omega \in L^p_\loc(\wedge^k T^* M)$. If $\omega = d\tau$ for some $\tau \in W^{d,p}_{T(D), \loc}(\wedge^{k-1} T^* M)$, then for $p$-a.e.\ $\sigma \in C_k^{\lip}(M; \Z)$ such that $\partial \sigma \in C_{k-1}^{\lip}(D; \Z)$, we have
	\[
		\int_{\sigma} \omega = 0.
	\]
\end{lemma}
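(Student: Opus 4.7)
The overall strategy is Stokes' theorem combined with an approximation of $\tau$ by $W^{d,\infty}$-forms whose supports avoid $D$. Formally, one expects $\int_\sigma \omega = \int_\sigma d\tau = \int_{\partial\sigma} \tau$, and the last integral should vanish whenever $\partial\sigma$ lies in $D$ because the weak tangential part of $\tau$ vanishes on $D$. The content of the proof is turning this heuristic into a rigorous statement for a general $L^p_\loc$ Sobolev form $\tau$ with weakly vanishing tangential part.

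The first key step is to construct a global approximating sequence $\tau_j \in W^{d,\infty}_{T(D), \loc}(\wedge^{k-1} T^* M)$ with $\spt \tau_j \cap D = \emptyset$ such that $\tau_j \to \tau$ and $d\tau_j \to \omega$ in $L^p_\loc$. I would take a locally finite cover $\{V_i\}$ of $M$ by precompact open sets together with a subordinate Lipschitz partition of unity $\{\eta_i\}$. Each $\eta_i \tau$ lies in $W^{d,p}_{T(D)}(\wedge^{k-1} T^* V_i)$ with compact support, so Lemma \ref{lem:W_infinity_approx_boundary} supplies $\tau_i^j \in W^{d,\infty}_{T(D)}(\wedge^{k-1} T^* V_i)$ with $\spt \tau_i^j \subset V_i \setminus D$ and $\tau_i^j \to \eta_i \tau$ in $W^{d,p}$-norm. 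Extending by zero and selecting a diagonal index $j \mapsto j_i(j)$ with sufficiently rapid convergence, the locally finite sum $\tau_j := \sum_i \tau_i^{j_i(j)}$ belongs to $W^{d,\infty}_{T(D), \loc}(\wedge^{k-1} T^* M)$, has support disjoint from $D$, and satisfies the required local $L^p$-convergence of both $\tau_j$ and $d\tau_j$.

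Given this approximation, for every $\sigma \in C_k^{\lip}(M; \R)$ Proposition \ref{prop:Lipschitz_stokes} yields
\[
    \int_\sigma d\tau_j = \int_{\partial\sigma} \tau_j.
\]
If $\partial\sigma \in C_{k-1}^{\lip}(D; \Z)$, Lemma \ref{lem:canonical_int_vanishing_boundary_values} applied to $\tau_j \in W^{d,\infty}_{T(D), \loc}$ forces $\int_{\partial\sigma} \tau_j = 0$, and therefore $\int_\sigma d\tau_j = 0$. Finally, exhausting $M$ by precompact open sets $U_1 \subset U_2 \subset \cdots$ and applying Lemma \ref{lem:fuglede_for_forms} on each $U_n$ (where $d\tau_j \to \omega$ in $L^p$), a standard diagonal extraction produces a subsequence $j_k$ along which $\int_\sigma d\tau_{j_k} \to \int_\sigma \omega$ for $p$-a.e.\ $\sigma \in C_k^{\lip}(M; \R)$, using that a countable union of $p$-exceptional subsets is $p$-exceptional. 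Combining the two facts, $\int_\sigma \omega = 0$ for $p$-a.e.\ $\sigma \in C_k^{\lip}(M; \Z)$ with $\partial\sigma \in C_{k-1}^{\lip}(D; \Z)$.

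The main obstacle is engineering the global $W^{d,\infty}$-approximation so that it simultaneously preserves the weakly vanishing tangential part on $D$, keeps $\spt \tau_j$ disjoint from $D$, and converges in $W^{d,p}$-norm on every precompact set. Once this is in place, Stokes' theorem, Lemma \ref{lem:canonical_int_vanishing_boundary_values}, and Fuglede's lemma for forms fit together cleanly.
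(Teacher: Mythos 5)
Your proof is correct and follows essentially the same route as the paper: approximate $\tau$ by $W^{d,\infty}$-forms supported off $D$ via Lemma \ref{lem:W_infinity_approx_boundary}, apply the Lipschitz Stokes theorem so that the boundary integrals over $\partial\sigma \subset D$ vanish, and pass to the limit via Fuglede's lemma for forms. The only difference is cosmetic: the paper handles the $L^p_\loc$-to-$L^p$ reduction by exhausting $M$ with compact Lipschitz $n$-submanifolds and replacing $M$ with one of them, whereas you glue local approximations through a partition of unity and diagonalize, exhausting by precompact sets in the Fuglede step; both are valid.
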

\begin{proof}
	We may take a sequence of compact Lipschitz $n$-submanifolds $M_i \subset M$ such that $\bigcup_i M_i = M$, $\bigcup_i \partial M_i = \partial M$ and $D \cap \partial M_i$ is a closed Lipschitz $(n-1)$-submanifold of $\partial M_i$. If $\sigma \in C_k^{\lip}(M; \Z)$, we necessarily have that $\sigma \in C_k^{\lip}(M_i; \Z)$ for some $i$. Since the union of countably many $p$-exceptional sets is $p$-exceptional, we may assume that $\tau \in W^{d,p}_{T(D)}(\wedge^{k-1} T^* M)$ by replacing $M$ with one of the submanifolds $M_i$.
	
	By Lemma \ref{lem:W_infinity_approx_boundary}, we may approximate $\tau$ in the $\norm{\cdot}_{W^{d,p}}$-norm with $\tau_i \in W^{d,\infty}(\wedge^{k-1} T^* M)$ such that $\spt \tau_i \cap D = \emptyset$. Let $\omega_i = d\tau_i$. Suppose that $\sigma \in C_k^{\lip}(M; \Z)$ is such that $\partial \sigma \in C_k^{\lip}(D; \Z)$. Since $\tau_i$ vanishes in a neighborhood of $D$,
	\[
		\int_\sigma \omega_i = \int_{\partial \sigma} \tau_i = 0,
	\]
	by Proposition \ref{prop:Lipschitz_stokes}.
	Since $\omega_i \to \omega$ in the $L^p$-norm, we obtain by Lemma \ref{lem:fuglede_for_forms} that the integral of $\omega$ over $p$-a.e.\ such $\sigma$ vanishes.
\end{proof}

\subsection{The global implications}

The remaining two parts of Theorem \ref{thm:integral_characterization_thm} use results related to Sobolev cohomology. The first one we prove is the following.

\begin{lemma}\label{lem:chara_implication_1}
	Suppose that $M$ is a closed Lipschitz $n$-submanifold in an oriented Lipschitz Riemannian $n$-manifold $\cR$ without boundary and $D$ is a closed Lipschitz $(n-1)$-submanifold in $\partial M$. Let $p \in [1, \infty)$ and $\omega \in W^{d,p}_{T(D), \loc}(\wedge^k T^* M)$ with $d\omega = 0$. Then
	\[
		\int_{\sigma} \omega = 0,
	\]
	for $p$-a.e.\ $\sigma \in C_k^{\lip}(D; \Z) + \partial C_{k+1}^{\lip}(M; \Z)$.
\end{lemma}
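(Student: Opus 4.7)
The plan is to approximate $\omega$ by $W^{d,\infty}$-forms for which boundary integrals can be computed explicitly, and then pass to the limit via Fuglede's lemma. First I would reduce to the case of compact $M$ using a compact exhaustion and the countable subadditivity of $p$-modulus from Lemma \ref{lem:Fuglede_properties}, since any single $\sigma = \sigma_D + \partial \sigma'$ has compact total image contained in some piece of the exhaustion. Then I would invoke Lemma \ref{lem:W_infinity_approx_boundary} to obtain a sequence $\omega_j \in W^{d, \infty}_{T(D)}(\wedge^k T^* M)$ with $\spt \omega_j \subset M \setminus D$ and $\omega_j \to \omega$ in the $W^{d, p}$-norm; in particular $d\omega_j \to d\omega = 0$ in $L^p$.

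The core computation is then direct and local in $j$: for any decomposition $\sigma = \sigma_D + \partial \sigma'$, Lemma \ref{lem:canonical_int_vanishing_boundary_values} yields $\int_{\sigma_D} \omega_j = 0$ (since $\spt \omega_j$ is disjoint from $D$), while Proposition \ref{prop:Lipschitz_stokes} yields $\int_{\partial \sigma'} \omega_j = \int_{\sigma'} d\omega_j$. Hence by linearity $\int_\sigma \omega_j = \int_{\sigma'} d\omega_j$. Applying Lemma \ref{lem:fuglede_for_forms} to $\omega_j \to \omega$ at degree $k$ and to $d\omega_j \to 0$ at degree $k+1$, and passing to a common subsequence, one gets $\int_\sigma \omega_j \to \int_\sigma \omega$ for $p$-a.e.\ $\sigma \in C_k^{\lip}(\cR; \R)$ and $\int_{\sigma'} d\omega_j \to 0$ for $p$-a.e.\ $\sigma' \in C_{k+1}^{\lip}(\cR; \R)$. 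Lemma \ref{lem:two_integrals} reconciles the canonical and $L^p$-integrals outside a further $p$-exceptional set. Combining these, $\int_\sigma \omega = 0$ whenever both Fuglede convergences apply to the decomposition $\sigma = \sigma_D + \partial \sigma'$.

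The hard part will be ensuring that the bad set is $p$-exceptional as a subfamily of $C_k^{\lip}(D; \Z) + \partial C_{k+1}^{\lip}(M; \Z)$ rather than merely as a family of decomposition pairs $(\sigma_D, \sigma')$: a $p$-exceptional family of $(k+1)$-chains $\sigma'$ need not push forward under $\partial$ to a $p$-exceptional family of $k$-chains, so the naive argument only controls pairs. The natural way around this is to apply the $L^p$-Stokes theorem, Corollary \ref{cor:lipschitz_stokes_L^p}, directly to $\omega$ at degree $k+1$, which is tailored to produce exceptional sets $E_1, E_2 \subset C_{k+1}^{\lip}(M; \Z)$ with $E_1$ itself $p$-exceptional and $\partial E_2 \subset C_k^{\lip}(M; \Z)$ also $p$-exceptional, and outside which $\int_{\partial \sigma'} \omega = \int_{\sigma'} d\omega = 0$. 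Coupled with the parallel and simpler fact, obtained by the same approximation, that $\int_{\sigma_D} \omega = 0$ for $p$-a.e.\ $\sigma_D \in C_k^{\lip}(D; \Z)$, this should yield the desired $p$-a.e.\ vanishing on $C_k^{\lip}(D; \Z) + \partial C_{k+1}^{\lip}(M; \Z)$, modulo a careful accounting of how the different exceptional sets sit inside $C_k^{\lip}(M; \Z)$ under the sum map.
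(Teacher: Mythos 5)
Your approach breaks down exactly at the spot you flag as the ``hard part,'' and the fix you propose does not close the gap. The worry about exceptional sets of decomposition pairs versus $k$-chains is well founded: if you invoke Corollary \ref{cor:lipschitz_stokes_L^p} at degree $k+1$, you obtain $E_1, E_2 \subset C_{k+1}^{\lip}(M;\Z)$ with $E_1$ $p$-exceptional and $\partial E_2$ $p$-exceptional, but $\partial E_1$ need not be $p$-exceptional as a family of $k$-chains; the paper's own remark after Corollary \ref{cor:lipschitz_stokes_L^p} constructs precisely such an $E_1$ ``exceptional, but the family of their boundaries is not.'' So you have no control over the set of $\sigma = \sigma_D + \partial\sigma'$ arising from $\sigma' \in E_1$, and the argument stalls. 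Notice also that your alternative bookkeeping --- $\int_\sigma \omega_j = \int_{\sigma'} d\omega_j$ is independent of the choice of $\sigma'$ --- only recasts the problem: the bad set becomes $\{\sigma : \int_\sigma \omega_j \not\to 0\}$, which after the degree-$k$ Fuglede step is essentially $\{\sigma : \int_\sigma\omega \neq 0\}$, i.e.\ what you are trying to prove is $p$-exceptional. That is circular.

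The paper avoids the issue entirely by a cohomological decomposition rather than an approximation. Since $d\omega = 0$, Corollary \ref{cor:Lp_Linfty_cohom_eq} writes $\omega = \omega' + d\tau$ with $\omega' \in W^{d,\infty}_{T(D),\loc}(\wedge^k T^*M)$, $d\omega'=0$, and $\tau \in W^{d,p}_{T(D),\loc}(\wedge^{k-1}T^*M)$. For the $W^{d,\infty}$-form $\omega'$ the canonical Wolfe integral is defined over \emph{every} Lipschitz chain, so Proposition \ref{prop:Lipschitz_stokes} gives $\int_{\partial\sigma''}\omega' = \int_{\sigma''}d\omega' = 0$ and Lemma \ref{lem:canonical_int_vanishing_boundary_values} gives $\int_{\sigma_D}\omega' = 0$, both with no exceptional set at all. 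The $d\tau$ piece is dispatched by Lemma \ref{lem:chara_implication_3}, which produces a $p$-exceptional set living directly in $C_k^{\lip}(M;\Z)$ and containing the bad chains; since every $\sigma$ in $C_k^{\lip}(D;\Z) + \partial C_{k+1}^{\lip}(M;\Z)$ has $\partial\sigma \in C_{k-1}^{\lip}(D;\Z)$, it applies. The crucial structural difference from your plan is that the only exceptional set in the paper's proof is for the $d\tau$ part, and it lives at the right degree from the start; nothing needs to be pushed forward through $\partial$.

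Your reduction to compact $M$ via an exhaustion and countable subadditivity is fine (it mirrors the opening of the proof of Lemma \ref{lem:chara_implication_3}), and your invocations of Lemma \ref{lem:W_infinity_approx_boundary}, Lemma \ref{lem:canonical_int_vanishing_boundary_values}, Proposition \ref{prop:Lipschitz_stokes}, Lemma \ref{lem:fuglede_for_forms}, and Lemma \ref{lem:two_integrals} are individually correct. The gap is solely in the final exceptional-set accounting, and the missing ingredient is Corollary \ref{cor:Lp_Linfty_cohom_eq}.
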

\begin{proof}
	Since $d\omega = 0$, we may use the equivalence of cohomologies result of Corollary \ref{cor:Lp_Linfty_cohom_eq} to conclude that $\omega = \omega' + d\tau$, where $\omega' \in W^{d,\infty}_{T(D), \loc}(\wedge^k T^* M)$ with $d\omega' = 0$ and $\tau \in W^{d,p}_{T(D), \loc}(\wedge^{k-1} T^* M)$. By Lemma \ref{lem:chara_implication_3}, we know that the integral of $d\tau$ vanishes over $p$-a.e.\ $\sigma \in C_k^{\lip}(D; \Z) + \partial C_{k+1}^{\lip}(M; \Z)$, since $\partial \sigma \in C_{k-1}^{\lip}(D; \Z)$ for any such $\sigma$. We have hence reduced the claim to the corresponding claim for $\omega'$.
	
	Let $\sigma = \sigma' + \partial \sigma''$, where $\sigma' \in C^{\lip}_k(D; \Z)$. We immediately get via the Stokes Theorem from Proposition \ref{prop:Lipschitz_stokes} that
	\[
		\int_{\partial \sigma''} \omega' = \int_{\sigma''} d\omega' = \int_{\sigma''} 0 = 0.
	\]
	Moreover, by Lemma \ref{lem:canonical_int_vanishing_boundary_values}, we have that
	\[
		\int_{\sigma'} \omega' = 0.
	\]
	Hence, the integral of $\omega'$ over $\sigma$ vanishes and the claim follows.
\end{proof}

The last remaining implication is the following.

\begin{lemma}\label{lem:chara_implication_4}
	Suppose that $M$ is a closed Lipschitz $n$-submanifold in an oriented Lipschitz Riemannian $n$-manifold $\cR$ without boundary and $D$ is a closed Lipschitz $(n-1)$-submanifold in $\partial M$. Let $p \in (1, \infty)$ and $\omega \in L^{p}_\loc(\wedge^k \H^n)$ be such that for $p$-a.e.\ $\sigma \in C_k^{\lip}(M; \Z)$ with $\partial \sigma \in C_{k-1}^{\lip}(D; \Z)$, we have
	\[
		\int_{\sigma} \omega = 0.
	\]
	Then $\omega = d\tau$ weakly for some $\tau \in W^{d,p}_{T(D), \loc}(\wedge^{k-1} T^* M)$.
\end{lemma}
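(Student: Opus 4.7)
The plan is to reduce the question to the statement that the de Rham class of a suitable $W^{d,\infty}$-representative vanishes. First, since $C_k^{\lip}(D;\Z) + \partial C_{k+1}^{\lip}(M;\Z)$ is contained in the family of $\sigma \in C_k^{\lip}(M;\Z)$ with $\partial\sigma \in C_{k-1}^{\lip}(D;\Z)$, the hypothesis implies the hypothesis of Lemma \ref{lem:chara_implication_2}. Thus $\omega \in W^{d,p}_{T(D), \loc}(\wedge^k T^* M)$ with $d\omega = 0$. By Corollary \ref{cor:Lp_Linfty_cohom_eq}, the cohomology class of $\omega$ in $H^k_p(M, D)$ lifts to a class in $H^k_\infty(M, D)$; concretely, we may write $\omega = \omega' + d\tau$ with $\omega' \in W^{d,\infty}_{T(D), \loc}(\wedge^k T^* M)$, $d\omega' = 0$ and $\tau \in W^{d,p}_{T(D), \loc}(\wedge^{k-1} T^* M)$. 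Thus it suffices to show $\omega' = d\tau''$ for some $\tau'' \in W^{d,\infty}_{T(D), \loc}(\wedge^{k-1} T^* M)$, as then $\omega = d(\tau + \tau'')$ is of the required form.

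By the de Rham isomorphism of Proposition \ref{prop:de_Rham_thm_in_Lip_setting}, $[\omega'] = 0$ in $H^k_\infty(M, D)$ if and only if $\int_{\sigma_0} \omega' = 0$ for every $[\sigma] \in H_k^{\lip}(M, D; \Z)$ with some (equivalently, every) representative $\sigma_0$. I would verify this by comparing the three interpretations of the integral in play. Applying Lemma \ref{lem:chara_implication_3} to $\tau$, the $L^p$-integral $\int_\sigma d\tau = 0$ for $p$-a.e.\ $\sigma \in C_k^{\lip}(M;\Z)$ with $\partial\sigma \in C_{k-1}^{\lip}(D;\Z)$. Combined with the hypothesis $\int_\sigma \omega = 0$, we deduce that the $L^p$-integral $\int_\sigma \omega' = 0$ for $p$-a.e.\ such $\sigma$. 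Lemma \ref{lem:two_integrals} then transfers this to the canonical $W^{d,\infty}$-integral of $\omega'$ at the cost of another $p$-exceptional set, yielding a $p$-exceptional $E \subset C_k^{\lip}(M;\Z)$ so that for every $\sigma \notin E$ with $\partial \sigma \in C_{k-1}^{\lip}(D;\Z)$, the canonical $W^{d,\infty}$-integral $\int_\sigma \omega' = 0$.

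To finish, I would fix any $[\sigma] \in H_k^{\lip}(M, D; \Z)$ and use Proposition \ref{prop:no_exc_hom_classes} to assert that $[\sigma]$ is not $p$-exceptional; so $[\sigma]$ contains a representative $\sigma_0 \notin E$, and then $[\omega']([\sigma]) = \int_{\sigma_0} \omega' = 0$. Since this holds for every class, the de Rham isomorphism gives $[\omega'] = 0$. The main obstacle is conceptual: our hypothesis is phrased for $L^p$-integration, while the de Rham pairing uses canonical $W^{d,\infty}$-integration on \emph{all} chains, so the argument genuinely needs both Lemma \ref{lem:two_integrals} to bridge the two integration theories and Proposition \ref{prop:no_exc_hom_classes} to guarantee that no entire homology class is trapped in the $p$-exceptional set. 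This is the only step requiring $p > 1$.
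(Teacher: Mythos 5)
Your proof takes essentially the same route as the paper's: reduce via Lemma \ref{lem:chara_implication_2} to the weakly closed case, use Corollary \ref{cor:Lp_Linfty_cohom_eq} to split $\omega = \omega' + d\tau$ with $\omega'$ a $W^{d,\infty}$-representative, and then show $[\omega'] = 0$ in $H^k_\infty(M,D)$ by pairing against every homology class and invoking Proposition \ref{prop:no_exc_hom_classes} to find a non-exceptional representative. The one respect in which you are more explicit than the paper is that you cite Lemma \ref{lem:two_integrals} to pass from the $L^p$-integral of $\omega'$ (which is what the hypothesis plus Lemma \ref{lem:chara_implication_3} controls) to the canonical $W^{d,\infty}$-integral (which is what Proposition \ref{prop:de_Rham_thm_in_Lip_setting} pairs with homology); the paper leaves this bridge implicit, so flagging it is a small but genuine improvement in rigor rather than a different approach.
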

\begin{proof}
	By Lemma \ref{lem:chara_implication_2}, we have $\omega \in W^{d,p}_{T(D), \loc}(\wedge^k T^* M)$ and $d\omega = 0$. By Corollary \ref{cor:Lp_Linfty_cohom_eq}, we have $\omega = \omega' + d\tau$, where $\omega' \in W^{d,\infty}_{T(D), \loc}(\wedge^k T^* M)$ and $\tau \in W^{d,p}_{T(D), \loc}(\wedge^{k-1} T^* M)$. In particular, it is enough to show the claim for $\omega'$.
	
	Fix $[\sigma] \in H^{\lip}_k(M, D; \R)$. By Proposition \ref{prop:de_Rham_thm_in_Lip_setting}, there exists $c_{[\sigma]} \in \R$ such that
	\[
		\int_{\sigma'} \omega' = c_{[\sigma]},
	\]
	for every $\sigma' \in [\sigma]$. By our assumption and Lemma \ref{lem:chara_implication_3}, the integral of $\omega'$ vanishes over $p$-a.e.\ $\sigma' \in [\sigma]$. Since $[\sigma]$ is not $p$-exceptional by Proposition \ref{prop:no_exc_hom_classes}, we have that $c_{[\sigma]} = 0$. Referring again to Proposition \ref{prop:de_Rham_thm_in_Lip_setting}, we must have $[\omega'] = [0]$. This means that $\omega' = d\tau'$ for some $\tau' \in W^{d,\infty}_{T(D), \loc}(\wedge^{k-1} T^* M)$.
\end{proof}

The proof of Theorem \ref{thm:integral_characterization_thm} is complete by combining Lemmas \ref{lem:chara_implication_2}, \ref{lem:chara_implication_3}, \ref{lem:chara_implication_1} and \ref{lem:chara_implication_4}.

\section{Modulus with differential forms}\label{sec:moddifforms}
In this section we define a version of modulus based on differential forms.  This modulus will exhibit duality properties for dual homology classes, which we will prove in the following section. For the rest of this section, let $M$ be an closed Lipschitz $n$-submanifold in an oriented Lipschitz Riemannian $n$-manifold $\cR$ without boundary and let $D \subset \partial M$ be a closed Lipschitz $(n-1)$-submanifold of $\partial M$, with all the cases $\partial M = \emptyset$, $D = \emptyset$ and $\partial D = \emptyset$ permitted.

Before we give the definition for differential form modulus, we define the standard surface modulus $\Mod(\Sigma)$ for families $\Sigma$ of Lipschitz $k$-chains on $M$. The difficulty in the definition of the standard modulus is that the measure of a Lipschitz $k$-chain is not canonically defined, as discussed in Section \ref{sec:integrationlpforms}. However, we have given a valid definition of $p$-exceptional families of $k$-chains in Section \ref{sec:integrationlpforms} and by Lemma \ref{lem:measurenotdefined} and the surrounding discussion, we may define $\nu_\sigma$ for $p$-a.e.\ Lipschitz $k$-chain $\sigma$. 

Hence, for any $\Sigma \subset C^{\lip}_k(M; \R)$, we select a $p$-exceptional family $\Sigma_p \subset \Sigma$ such that, for every $\sigma \in \Sigma \setminus \Sigma_p$ and every Lipschitz $k$-simplex $\sigma_i$ in the sum representation of $\sigma$, we have that $\nu_{\sigma_i}$ exists and is uniformly comparable with $\mu_{\sigma_i}$, where $\mu_{\sigma_i}$ is the measure on $\sigma_i$ that is defined via charts in Section \ref{sec:integrationlpforms}. We then define
\[
    \Mod_p(\Sigma) = \Mod_p(\{\nu_\sigma :  \sigma \in \Sigma \setminus \Sigma_p\}).
\]
We also denote $\rho \in \essadm_p(\Sigma)$ if $\rho \in \adm(\{\nu_\sigma : \sigma \in \Sigma \setminus \Sigma_p\})$ for some $p$-exceptional $\Sigma_p$ as above. This definition of $p$-modulus of surface families is consistent with the Euclidean one by \eqref{eq:modulus_essential_def}. Moreover, for another choice of exceptional set $\Sigma_p$, we observe that $\{\nu_{\sigma} : \sigma \in (\Sigma_p \setminus \Sigma_p') \cup (\Sigma_p' \setminus \Sigma_p)\}$ is $p$-exceptional: Indeed, if $\sigma \in \Sigma_p' \setminus \Sigma_p$, then $\nu_\sigma$ is uniformly comparable with $\mu_\sigma$ and hence $\{\nu_\sigma : \sigma \in \Sigma_p' \setminus \Sigma_p\} \gtrsim \{\mu_\sigma : \sigma \in \Sigma_p'\}$ where the latter is $p$-exceptional. Hence, the value of $\Mod_p(\Sigma)$ is independent on the choice of $\Sigma_p$.

We also give a lemma related to exceptional sets of formal sums that is of use to us later. 
Given two families of formal sums $\Sigma$ and $\Sigma'$, we denote by $\Sigma + \Sigma'$ the family of all formal sums $\sigma + \sigma'$ where $\sigma \in \Sigma$ and $\sigma' \in \Sigma'$.

\begin{lemma}\label{lem:sum_exceptional_set}
	Let $\Sigma$ and $\Sigma'$ be families of Lipschitz $k$-chains on $M$ with $k < n$ and let $p \in [1, \infty)$. Then
	\[
		\moddens_p(\Sigma + \Sigma') \geq 2^{-p}\min(\moddens_p(\Sigma), \moddens_p(\Sigma')).
	\]
	In particular, if $\Sigma + \Sigma'$ is $p$-exceptional, then $\Sigma$ or $\Sigma'$ is $p$-exceptional.
\end{lemma}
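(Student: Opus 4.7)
The approach I would take is a direct admissibility argument that exploits the linearity $\nu_{\sigma + \sigma'} = \nu_{\sigma} + \nu_{\sigma'}$ built into the definition of $\nu$ on formal sums of Lipschitz simplices. First, I would fix $\rho \in \essadm_p(\Sigma + \Sigma')$, so that for some $p$-exceptional family $E \subset \Sigma + \Sigma'$ one has $\int \rho\, d\nu_\tau \geq 1$ for every $\tau \in (\Sigma + \Sigma') \setminus E$ on which $\nu_\tau$ is defined. Linearity then yields the pairwise inequality
\[
    \int_M \rho \, d\nu_\sigma + \int_M \rho\, d\nu_{\sigma'} \geq 1
\]
for every pair $(\sigma,\sigma') \in \Sigma \times \Sigma'$ with $\sigma + \sigma' \notin E$ and both $\nu_\sigma, \nu_{\sigma'}$ defined.

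The next step is a dichotomy based on the subfamily $\Sigma'_{\mathrm{small}} := \{\sigma' \in \Sigma' : \int \rho\, d\nu_{\sigma'} < 1/2\}$. If $\Sigma'_{\mathrm{small}}$ is $p$-exceptional in $\Sigma'$, then $\int \rho\, d\nu_{\sigma'} \geq 1/2$ for $p$-almost every $\sigma' \in \Sigma'$, so that $2\rho \in \essadm_p(\Sigma')$ and $\moddens_p(\Sigma') \leq 2^p \|\rho\|_p^p$. Otherwise I would choose a \emph{generic} $\sigma_0' \in \Sigma'_{\mathrm{small}}$ whose fiber $E_{\sigma_0'} := \{\sigma \in \Sigma : \sigma + \sigma_0' \in E\}$ is $p$-exceptional in $\Sigma$, in which case the pairwise inequality forces $\int \rho\, d\nu_\sigma > 1/2$ for every $\sigma \in \Sigma \setminus E_{\sigma_0'}$, so that $2\rho \in \essadm_p(\Sigma)$ and $\moddens_p(\Sigma) \leq 2^p \|\rho\|_p^p$. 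In either case $\min(\moddens_p(\Sigma), \moddens_p(\Sigma')) \leq 2^p \|\rho\|_p^p$, and taking the infimum over $\rho$ delivers the stated inequality. The ``in particular'' clause is then the specialization to $\moddens_p(\Sigma + \Sigma') = 0$.

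The main obstacle is justifying the existence of the generic $\sigma_0'$ in the second case. For this I would apply Lemma \ref{lem:Fuglede_exceptional_lemma} to the $p$-exceptional set $E$ to produce a function $f \in L^p$ with $\int f \, d\nu_\tau = \infty$ for every $\tau \in E$. By part \eqref{enum:fuglede_integrability} of Lemma \ref{lem:Fuglede_properties}, the subfamily $\Sigma'_0 \subset \Sigma'$ on which $\int f\, d\nu_{\sigma_0'} = \infty$ is itself $p$-exceptional; for every $\sigma_0' \in \Sigma' \setminus \Sigma'_0$ the linearity of $\nu$ forces $\int f\, d\nu_\sigma = \infty$ throughout $E_{\sigma_0'}$, so $E_{\sigma_0'}$ is $p$-exceptional by the converse direction of Lemma \ref{lem:Fuglede_exceptional_lemma}. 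Since the current branch assumes $\Sigma'_{\mathrm{small}}$ is not $p$-exceptional, the difference $\Sigma'_{\mathrm{small}} \setminus \Sigma'_0$ is nonempty and any element of it is a valid choice of $\sigma_0'$. A minor bookkeeping issue is that $\nu_\sigma$ is only defined outside a $p$-exceptional set, but this is benign since we may freely replace $\nu_\sigma$ by the uniformly comparable chart-based measures $\mu_\sigma$ there, or equivalently absorb the discrepancy into the exceptional families above.
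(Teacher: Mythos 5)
Your proof is correct and shares the same high-level skeleton as the paper's (a dichotomy combined with a ``doubled density'' $2\rho$), but there are two substantive differences worth flagging.

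First, the claimed ``linearity $\nu_{\sigma+\sigma'} = \nu_\sigma + \nu_{\sigma'}$ built into the definition'' is not actually correct. Since $\nu_\sigma$ is built from the \emph{absolute values} of the coefficients in the reduced formal sum (otherwise it is not a measure), equality fails whenever $\sigma$ and $\sigma'$ share a simplex with opposite-sign coefficients and positive measure; this cancellation is exactly why the paper introduces the set $\Sigma'_\sigma$ and invokes the $k<n$ hypothesis to prove it is $p$-exceptional. Fortunately, everything in your argument actually only uses the subadditivity $\nu_{\sigma+\sigma'} \leq \nu_\sigma + \nu_{\sigma'}$, which does hold unconditionally and goes in exactly the right direction in both places you need it: it gives $\int \rho\, d\nu_\sigma + \int\rho\, d\nu_{\sigma'} \geq \int\rho\, d\nu_{\sigma+\sigma'} \geq 1$, and it gives $\int f\, d\nu_\sigma \geq \int f\, d\nu_{\sigma+\sigma_0'} - \int f\, d\nu_{\sigma_0'} = \infty$. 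So the proof stands once you replace ``linearity'' by ``subadditivity,'' and a side effect of that replacement is that your argument does not appear to use $k<n$ at all, unlike the paper's, which needs it precisely to handle the cancellation set $\Sigma'_\sigma$.

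Second, the organization is genuinely different from the paper. The paper iterates over $\sigma\in\Sigma$, and for each fixed $\sigma$ argues universally over $\sigma'\in\Sigma'\setminus(\cE_{\Sigma'}\cup\Sigma'_\sigma)$; the exceptional set on the $\Sigma'$ side is then $\cE_{\Sigma'}\cup\Sigma'_\sigma$, which is why the exceptionality of $\Sigma'_\sigma$ is needed there. You instead split $\Sigma'$ into $\Sigma'_{\mathrm{small}}$ and its complement, and in the non-exceptional branch you invoke Lemma~\ref{lem:Fuglede_exceptional_lemma} and Lemma~\ref{lem:Fuglede_properties}\eqref{enum:fuglede_integrability} to produce a ``generic'' $\sigma_0'$ whose fiber $E_{\sigma_0'}$ is $p$-exceptional. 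This generic-element step is not in the paper; it is a clean alternative mechanism that replaces the paper's reliance on the cancellation analysis. Both approaches deliver the same $2^{-p}$ estimate. Your bookkeeping comment about the $p$-exceptional family on which $\nu_\sigma$ is undefined is correct and matches how the paper handles the issue.
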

\begin{proof}
	We fix $p$-exceptional families $\cE_{\Sigma}$, $\cE_{\Sigma'}$ and $\cE_{\Sigma + \Sigma'}$ such that
	\begin{gather*}
		\Mod_p(\Sigma) = \Mod_p(\{\nu_\sigma : \sigma \in \Sigma\setminus \cE_{\Sigma}\}), \qquad
		\Mod_p(\Sigma') = \Mod_p(\{\nu_{\sigma'} : \sigma' \in \Sigma'\setminus \cE_{\Sigma'}\}),\\
		\Mod_p(\Sigma + \Sigma') = \Mod_p(\{\nu_\sigma : \sigma \in (\Sigma+\Sigma')\setminus \cE_{\Sigma + \Sigma'}\}).
	\end{gather*}
	Note that by shrinking $\cE_{\Sigma + \Sigma'}$, we may assume that $\cE_{\Sigma + \Sigma'} \cap ((\Sigma \setminus \cE_\Sigma) + (\Sigma' \setminus \cE_{\Sigma'})) = \emptyset$. Indeed, if we suppose that $\sigma \in \Sigma \setminus \cE_\Sigma$ and $\sigma' \in \Sigma' \setminus \cE_{\Sigma'}$, then for all singular $k$-simplices $\sigma_i$ in the sum representations of $\sigma$ and $\sigma'$, the surface measures $\nu_{\sigma_i}$ exist and are comparable to $\mu_{\sigma_i}$. Hence, we may remove $\sigma + \sigma'$ from $\cE_{\Sigma + \Sigma'}$ if needed.
	
	Next, let $\sigma = a_1 \sigma_1 + \dots + a_m \sigma_m \in \Sigma \setminus \cE_{\Sigma}$. Let $\Sigma'_\sigma$ denote the collection of all $\sigma' \in \Sigma' \setminus \cE_{\Sigma'}$ such that $\nu_{\sigma + \sigma'} \neq \nu_{\sigma} + \nu_{\sigma'}$.
	Every $\sigma' = a_1' \sigma_1' + \dots + a'_{m'} \sigma'_{m'} \in \Sigma'_\sigma$ shares a positive-measured surface with $\sigma$; i.e.\ $\sigma_i = \sigma'_{i'}$ for some indices $i$ and $i'$, $a_i$ and $a'_{i'}$ have opposite signs and $\nu_{\sigma_i} \neq 0$. 
	
	The family $\Sigma'_i$ of all $\sigma' \in \Sigma' \setminus \cE_{\Sigma'}$ containing $\sigma_i$ in their formal sum representation is in fact $p$-exceptional, as long as $\nu_{\sigma_i} \neq 0$. Indeed, the density $\rho$ which is $\infty$ on the image of $\sigma_i$ and $0$ elsewhere is essentially admissible for $\Sigma'_i$.
	Since $k < n$, the surface $\sigma_i$ has zero $n$-dimensional volume and the integral of $\rho$ over $M$ vanishes, implying $p$-exceptionality of $\Sigma'_i$. Since $\Sigma'_\sigma$ is contained in the finite union of all $\Sigma'_i$ where $\nu_{\sigma_i} \neq 0$, we conclude that $\Sigma'_\sigma$ is $p$-exceptional.
	
	Next, suppose that $\rho$ is admissible for $\{\nu_\sigma : \sigma \in (\Sigma+\Sigma')\setminus \cE_{\Sigma + \Sigma'}\}$ and let $\sigma \in \Sigma\setminus \cE_{\Sigma}$. Using $\cE_{\Sigma + \Sigma'} \cap ((\Sigma \setminus \cE_\Sigma) + (\Sigma' \setminus \cE_{\Sigma'})) = \emptyset$, it follows that for every $\sigma' \in \Sigma' \setminus \cE_{\Sigma'}$ we have
	\[
		\int_M \rho \dd \nu_{\sigma + \sigma'} \geq 1.
	\]
	If $\sigma' \in \Sigma' \setminus (\cE_{\Sigma'} \cup \Sigma'_{\sigma})$, we then have $\nu_{\sigma + \sigma'} = \nu_\sigma + \nu_{\sigma'}$ and therefore one of the following two conditions is true:
	\begin{align*}
		\int_M 2\rho \dd \nu_{\sigma} \geq 1
		\quad \text{or} \quad
		\int_{M} 2\rho \dd \nu_{\sigma'} \geq 1.
	\end{align*}
	If the latter is the case for every $\sigma' \in \Sigma' \setminus (\cE_{\Sigma'} \cup \Sigma'_{\sigma})$, then since $\Sigma'_{\sigma}$ is $p$-exceptional, it follows that $2\rho \in \essadm_p(\{\nu_{\sigma'} : \sigma' \in \Sigma'\setminus \cE_{\Sigma'}\})$ and therefore
	\begin{equation}\label{eq:mod_est_second_term}
		\int_M \rho^p \vol_M \geq 2^{-p} \moddens_p(\Sigma').
	\end{equation}
	Hence, either the integral of $2\rho$ over $\nu_{\sigma}$ is at least 1 or \eqref{eq:mod_est_second_term} holds. By repeating this for all $\sigma \in \Sigma\setminus \cE_{\Sigma}$, we conclude that either $2\rho \in \adm(\{\nu_\sigma : \sigma \in \Sigma\setminus \cE_{\Sigma}\})$ or \eqref{eq:mod_est_second_term} holds. 
	In the former case, we have
	\begin{equation}\label{eq:mod_est_first_term}
		\int_M \rho^p \vol_M \geq 2^{-p} \moddens_p(\Sigma).
	\end{equation}
	So either \eqref{eq:mod_est_second_term} or \eqref{eq:mod_est_first_term} holds for an arbitrary $\rho \in \adm(\{\nu_\sigma : \sigma \in (\Sigma+\Sigma')\setminus \cE_{\Sigma + \Sigma'}\})$, completing the proof.
\end{proof}

We now proceed to discuss modulus with differential forms. Suppose that $\cc S$ is a family of Lipschitz $k$-chains on $M$. If $p \in [1,\infty)$, then we say that a $k$-form $\omega \in L^p(\wedge^k T^*M)$ is \emph{$p$-weakly admissible} for $\cS$ if
\begin{equation}\label{eq:admissibility_forms}
	\int_\sigma \omega \geq 1,
\end{equation}
for $p$-a.e.\ $\sigma \in \cS$. The set of such $\omega \in L^p(\wedge^k T^*M)$ is denoted $\essadm_p^k(\cS)$. We define the \emph{differential $p$-modulus} of $\cS$ by
\[
	\modform_p(\cS) = \inf_{\omega \in \essadm_p^k(\cS)} \int_M \abs{\omega}^p \vol_M.
\]
The case $k = 1$ corresponds to an unweighted version of the vector modulus defined by Aikawa and Ohtsuka \cite{Aikawa-Ohtsuka_vector-modulus}.
Unlike in the case of the standard modulus, the use of $p$-weakly admissible $k$-forms results in a different theory than if \eqref{eq:admissibility_forms} were required for every $\sigma \in \cS$, as noted by Aikawa and Ohtsuka \cite[p. 64]{Aikawa-Ohtsuka_vector-modulus}. 
For example, consider the case when $\cS = \{\sigma, \sigma'\}$, where $\sigma, \sigma' \colon \overline{B_{\R^k}(0, 1)} \to M$ and $\sigma(x_1, \dots, x_{k-1}, x_k) = \sigma'(x_1, \dots, x_{k-1}, -x_k)$. 
No form can possibly satisfy \eqref{eq:admissibility_forms} for both $\sigma$ and $\sigma'$, as the integrals are equal but with opposite signs. 
This would result in an infinite modulus for $\cS$ under a definition requiring \eqref{eq:admissibility_forms} for every element of $\cS$. However, assuming $k < n$, the shared image of $\sigma$ and $\sigma'$ has zero volume and hence $\moddens_p(\cS) = 0$. 
Consequently $\essadm_p^k(\cS) = L^p(\wedge^k M)$ and hence $\modform_p(\cS) = 0$.

The moduli $\moddens_p(\cS)$ and $\modform_p(\cS)$ may in general assume different values (see e.g.\ \cite[p. 201]{Freedman-He_Modulus-duality}). However, for every $\omega \in \essadm_p^k(\cS)$, we have that $\abs{\omega} \in \essadm_p(\cS)$ and 
\begin{equation}\label{eq:modulus_comparison}
	\moddens_p(\cS) \leq \modform_p(\cS).
\end{equation}

\subsection{Existence of Minimizers}
\begin{prop}\label{prop:minimizer_exists}
	Let $p \in [1, \infty)$ and let $\cS \subset C_k^{\text{lip}}(M;\bR)$. Then $\essadm_p^k(\cS)$ is a closed subset of $L^p(\wedge^k T^*M)$.  Additionally, if $\essadm_p^k(\cS) \neq \emptyset$, then there exists a form $\omega \in \essadm_p^k(\cS)$ such that
	\begin{equation}\label{eq:norm_minimizer}
		\int_M \abs{\omega}^p \vol_M = \modform_p(\cS).
	\end{equation}
	If $p > 1$, $\omega$ is unique.
\end{prop}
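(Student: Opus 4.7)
The plan is to apply the direct method of the calculus of variations, resting on two structural facts about $\essadm_p^k(\cS)$: it is convex and closed in $L^p(\wedge^k T^*M)$. Convexity is immediate: if $\omega, \omega' \in \essadm_p^k(\cS)$ are admissible outside $p$-exceptional sets $E, E'$, then by linearity of integration any convex combination $\lambda \omega + (1-\lambda) \omega'$ is admissible outside $E \cup E'$, which remains $p$-exceptional by countable subadditivity of $\moddens_p$ (Lemma \ref{lem:Fuglede_properties}\eqref{enum:fuglede_outer_measure}).

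Closedness is the main analytic input. Given $\omega_i \to \omega$ in $L^p(\wedge^k T^*M)$ with $\omega_i \in \essadm_p^k(\cS)$, I would apply Lemma \ref{lem:fuglede_for_forms} to pass to a subsequence (still denoted $\omega_i$) along which $\int_\sigma \omega_i \to \int_\sigma \omega$ for $p$-a.e.\ $\sigma \in C_k^{\lip}(M; \R)$. Each $\omega_i$ is admissible outside some $p$-exceptional $E_i \subset \cS$; the countable union $\bigcup_i E_i$ is again $p$-exceptional, and combining it with the exceptional set from the Fuglede lemma yields a $p$-exceptional $E$ off which $\int_\sigma \omega = \lim_i \int_\sigma \omega_i \geq 1$. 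Hence $\omega \in \essadm_p^k(\cS)$.

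For existence of a minimizer when $p > 1$, take a minimizing sequence $\omega_i \in \essadm_p^k(\cS)$. It is bounded in the reflexive space $L^p(\wedge^k T^*M)$, so after extracting a subsequence, $\omega_i \rightharpoonup \omega$ weakly. Mazur's lemma produces a sequence of convex combinations of the $\omega_i$ converging strongly to $\omega$ in $L^p$; each such combination lies in $\essadm_p^k(\cS)$ by convexity, and by closedness $\omega \in \essadm_p^k(\cS)$. Weak lower semicontinuity of the $L^p$-norm gives $\int_M |\omega|^p \vol_M \leq \liminf_i \int_M |\omega_i|^p \vol_M = \modform_p(\cS)$, and admissibility of $\omega$ forces the matching lower bound, establishing \eqref{eq:norm_minimizer}. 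Uniqueness for $p > 1$ then follows from strict convexity of the $L^p$-norm: two distinct minimizers $\omega \neq \omega'$ would have $\|\tfrac12(\omega+\omega')\|_p < \modform_p(\cS)^{1/p}$, while $\tfrac12(\omega+\omega')$ is admissible by convexity, contradicting minimality.

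The main obstacle is the closedness step. Admissibility is a pointwise-in-$\sigma$ condition that only holds outside a form-dependent $p$-exceptional set, so one must track and combine countably many such sets when passing to an $L^p$-limit; it is precisely the modulus being an outer measure together with the Fuglede-type convergence (Lemma \ref{lem:fuglede_for_forms}) that lets this work. The case $p = 1$ is genuinely more delicate since $L^1$ is not reflexive and a minimizing sequence need not have a weakly convergent subsequence in $L^1$; establishing existence there would require either a uniform integrability argument or a reformulation permitting measure-valued minimizers.
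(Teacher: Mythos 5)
Your argument is essentially the same as the paper's: closedness via the Fuglede-type convergence lemma for forms, existence via a weakly convergent subsequence upgraded by Mazur's lemma to a strongly convergent sequence of convex combinations, and uniqueness via strict convexity of the $L^p$-norm. Your direct strict-convexity argument for uniqueness is if anything a touch cleaner than the paper's, which detours through the dichotomy $\omega' = t\omega$ versus strict triangle inequality. The one point worth flagging is the $p = 1$ case you raised in your last paragraph. The proposition as stated in the paper does claim existence of a minimizer for all $p \in [1, \infty)$, but the paper's own proof also invokes Banach--Alaoglu to extract a weakly convergent subsequence in $L^p$, which fails for $p = 1$ since $L^1$ is not reflexive (and is not a dual space, so weak-$*$ limits need not live in $L^1$). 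So you have accurately identified a gap in the paper's argument; only the $p > 1$ case is actually established by the method used, and indeed the main duality theorems in the paper are all stated for $p, q \in (1,\infty)$, so the $p=1$ existence claim is never needed downstream.
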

\begin{proof}
	For the closedness of $\essadm_p^k(\cS)$, if $\omega_i \in \essadm_p^k(\cS)$ and $\omega_i \to \omega$ in  $L^p(\wedge^k T^*M)$, then by Lemma \ref{lem:fuglede_for_forms},
	\[
		\int_\sigma \omega \geq \liminf_{i \to \infty} \int_\sigma \omega_{i} \geq 1
	\]
	for $p$-a.e.\ $\sigma \in \cS$.
	
	Let  $(\omega_i)$ be a norm-infimizing sequence in $\essadm_p^k(\cS)$. The  Banach-Alaoglu theorem yields a subsequence $(\omega_{i_j})$ and a limit $\omega \in L^p(\wedge^k M)$ such that $\omega_{i_j} \to \omega$ weakly. 
	By Mazur's lemma, there exists  a sequence $(\omega_l')$ such that every $\omega_l'$ is a convex combination of elements in the sequence $(\omega_{i_j})$ with $j \geq l$ and $\omega_l' \to \omega$ strongly. Since $\essadm_p^k(\cS)$ is closed under convex combinations, it follows that $\omega_l' \in \essadm_p^k(\cS)$ for every $l$. 
	We have already shown in this case that $\omega \in \essadm_p^k(\cS)$. 
	Finally,  $\norm{\omega}_{L^p} = \lim_{l \to \infty} \norm{\omega_l'}_{L^p} \leq \limsup_{j \to \infty} \smallnorm{\omega_{i_j}}_p$, which implies that $\omega$ satisfies \eqref{eq:norm_minimizer}.
	
	Assume now that $p > 1$, with goal of showing uniqueness of $\omega$. We may assume $\moddens_p(\cS) > 0$, as otherwise $\omega = 0$ is clearly the unique element of $\essadm_p^k(\cS)$ with the minimal $L^p$-norm. Suppose that $\omega' \in \essadm_p^k(\cS)$ is another minimizer of the $L^p$-norm in $\essadm_p^k(\cS)$. 
	Since $p > 1$, the $L^p$-norm is strictly convex. So either $\omega' = t\omega$ for some $t \in \R$ or $\norm{(\omega' + \omega)/2}_{L^p} < \norm{\omega'}_{L^p}/2 + \norm{\omega}_{L^p}/2$. 
	The latter is impossible since $(\omega' + \omega)/2 \in \essadm_p^k(\cS)$ and $\omega, \omega'$ are $L^p$-norm minimizers in $\essadm_p^k(\cS)$. 
	In the remaining case $\omega' = t\omega$ and $\abs{t} = 1$ since $\norm{\omega}_{L^p} = \norm{\omega'}_{L^p} \neq 0$. The admissibility of $\omega$ and $\omega'$ gives that $t = 1$.
\end{proof}

\subsection{Properties of Admissible Forms for Homology Classes}
We next prove that admissible forms for a homology family of surfaces are closed and satisfy appropriate boundary conditions.
Even though the family of admissible forms $\essadm_p^k(c)$ for $\modform_p(c)$, where $c$ is a homology class, includes no assumptions on their differentiability, any $p$-weakly admissible form $\omega$ will satisfy $d\omega = 0$ weakly.

\begin{lemma}\label{lem:admissible_are_weakly_closed}
	Let $c \in H_k^{\lip}(M,D; \Z)$ with $k < n$. Then every $\omega \in \essadm_p^k(c)$ satisfies $d\omega = 0$ in the weak sense and $\omega \in W^{d,p}_{T(D),\text{loc}}(\wedge^k T^*M)$.
\end{lemma}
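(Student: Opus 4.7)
The plan is to verify the hypothesis of the second equivalence in Theorem \ref{thm:integral_characterization_thm} for $\omega$: I will show that $\int_\eta \omega = 0$ for $p$-a.e.\ $\eta \in \cS_0 := C_k^{\lip}(D;\Z) + \partial C_{k+1}^{\lip}(M;\Z)$. Once this is established, the theorem immediately yields $\omega \in W^{d,p}_{T(D),\loc}(\wedge^k T^*M)$ with $d\omega = 0$.

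Let $\cE \subset c$ denote the $p$-exceptional set of representatives of $c$ where admissibility fails, that is, where $\int_\sigma \omega < 1$ or the integral is undefined. By Proposition \ref{prop:no_exc_hom_classes}, the class $c$ is not $p$-exceptional, so I can fix some $\sigma_0 \in c \setminus \cE$. Because $\int_{\sigma_0}\omega \geq 1$, the measure $\nu_{\sigma_0}$ is a nontrivial finite Borel measure with compact support, which makes the singleton family $\{\sigma_0\}$ not $p$-exceptional: a suitable multiple of the characteristic function of a compact neighborhood of the image of $\sigma_0$ serves as an admissible density with finite $L^p$-norm. The key structural fact is that $c$ carries an affine translation action by $\cS_0$: for every $\eta \in \cS_0$, the chain $\sigma_0 + \eta$ again represents $c$, since $\eta$ is a relative $k$-boundary.

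For each $n \in \Z$, define $\cE_n' := \{\eta \in \cS_0 : \sigma_0 + n\eta \in \cE\}$. The inclusion $\{\sigma_0\} + n\cE_n' \subseteq \cE$ shows that $\{\sigma_0\} + n\cE_n'$ is $p$-exceptional, and Lemma \ref{lem:sum_exceptional_set} (applicable since $k < n$) combined with the non-exceptionality of $\{\sigma_0\}$ forces the scaled family $n\cE_n' = \{n\eta : \eta \in \cE_n'\}$ to be $p$-exceptional. A direct scaling of admissible densities (if $\rho$ is admissible for $n\cE_n'$, then $|n|\rho$ is admissible for $\cE_n'$) then shows that $\cE_n'$ itself is $p$-exceptional for each $n \neq 0$; the case $n=0$ is trivial since $\cE_0' = \emptyset$ by the choice of $\sigma_0 \notin \cE$. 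Therefore $\cE'' := \bigcup_{n \in \Z}\cE_n'$ is $p$-exceptional as a countable union. For every $\eta \in \cS_0 \setminus \cE''$ and every $n \in \Z$, we have $\sigma_0 + n\eta \notin \cE$, so by linearity of integration over chains,
\[
    \int_{\sigma_0}\omega + n\int_\eta \omega = \int_{\sigma_0 + n\eta}\omega \geq 1.
\]
Since this inequality must hold for arbitrarily large positive and negative $n \in \Z$, we conclude $\int_\eta\omega = 0$, which is exactly the hypothesis needed to invoke Theorem \ref{thm:integral_characterization_thm}.

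The main technical obstacle is the bookkeeping of $p$-exceptional sets under the affine translation: I need to know that $\{\sigma_0\}$ is itself not $p$-exceptional in order to apply Lemma \ref{lem:sum_exceptional_set} in the correct direction, and I need the scaling $n \mapsto |n|\rho$ to transfer exceptionality from $n\cE_n'$ to $\cE_n'$. The hypothesis $k < n$ is only used through Lemma \ref{lem:sum_exceptional_set}; the rest of the argument is a purely algebraic exploitation of integer linear combinations, which is why it suffices to consider integer coefficients.
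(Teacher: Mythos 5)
Your approach is close in spirit to the paper's, but it has a fatal gap in the step where you claim that the singleton family $\{\sigma_0\}$ is not $p$-exceptional. In fact, whenever $k < n$ and $p \in [1,\infty)$, the singleton $\{\sigma_0\}$ \emph{is} $p$-exceptional: the support of $\nu_{\sigma_0}$ is a compact set of zero $n$-dimensional Lebesgue measure, so for every $\eps > 0$ one may take $f = 1/\nu_{\sigma_0}(M)$ on a small neighborhood of the image of $\sigma_0$ of volume at most $\eps$ and $f = 0$ elsewhere; this $f$ is admissible for $\{\sigma_0\}$ with $\|f\|_{L^p}^p \leq \eps / \nu_{\sigma_0}(M)^p \to 0$. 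Finiteness of the modulus (which is what your constant-on-a-neighborhood density establishes) does not give positivity. Because $\{\sigma_0\}$ is $p$-exceptional, the implication in Lemma \ref{lem:sum_exceptional_set} — ``$\Sigma + \Sigma'$ exceptional $\Rightarrow$ $\Sigma$ or $\Sigma'$ exceptional'' — gives you no information about $n\cE_n'$: both disjuncts may hold trivially because of $\{\sigma_0\}$. So you cannot conclude that $\cE_n'$ is $p$-exceptional, and the rest of the argument does not go through.

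The structural fix, and what the paper actually does, is to replace the single representative $\sigma_0$ by a non-exceptional \emph{subfamily} of $c$: decompose $c$ into $\cC_l = \{\sigma \in c : \int_\sigma \omega \leq l\}$, use Proposition \ref{prop:no_exc_hom_classes} and countable subadditivity of $\moddens_p$ to find $l$ with $\moddens_p(\cC_l) > 0$, decompose the bad $\eta$'s into $\cS_k = \{\eta : \int_\eta\omega \leq -1/k\}$ (after sign normalization), find a non-exceptional $\cS_k$, and observe that $\cC_l + a\cS_k \subset c$ with integral $\leq l - a/k < 1$ for $a$ large. Admissibility forces this sum to be $p$-exceptional, contradicting Lemma \ref{lem:sum_exceptional_set} applied to two \emph{non-exceptional} families. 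Your translation-action idea and your use of integer multiples $n\eta$ to force vanishing integrals are a nice mechanism, but they hinge on a non-exceptional anchor — a family, not a point. (Also, a minor slip: you invoke the ``second equivalence'' of Theorem \ref{thm:integral_characterization_thm} but you mean the first, the one characterizing weak closedness with boundary conditions; the second characterizes weak exactness.)
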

\begin{proof}
	Suppose that either conclusion does not hold for $\omega \in \essadm_p^k(c)$. By Theorem \ref{thm:integral_characterization_thm}, there exists a family $\cS \subset C_k^{\text{lip}}(D;\bZ) +\partial C_{k+1}^{\lip}(M; \Z)$ such that $\cS$ is not $p$-exceptional and
	\[
		\int_\sigma \omega \neq 0,
	\]
	for all $\sigma \in \cS$. 
	For all $k \in \bN$, define the family $\cS_k$ by
	\begin{align*}
		\cS_k &= \left\{ \sigma \in C_k^{\text{lip}}(D;\bZ) +\partial C_{k+1}^{\lip}(M; \Z) : \int_\sigma \omega \leq -\frac{1}{k} \right\}.
	\end{align*}
	We also denote $-\cS_k = \{-\sigma : \sigma \in \cS_k\}$ and $a\cS_k = \{a\sigma : \sigma \in \cS_k\}$ for $a \in \Z$. 
	
	Since $\moddens_p$ is an outer measure and $\cS \subset \bigcup_{k \in \bN} \cS_k \cup (-\cS_k)$, we have that $\moddens_p(\cS_k) > 0$ or $\moddens_p(-\cS_k) > 0$ for some $k \in \bN$. However, $\moddens_p(\cS_k) = \moddens_p(-\cS_k)$, which implies that $\moddens_p(\cS_k) > 0$ for some $k \in \bN$. 
	It follows that for this specific $k$, $\moddens_p(a\cS_k) > 0$ for every $a \in \bN$.
	Additionally, $a\cS_k \subset C_k^{\text{lip}}(D;\bZ) + \partial C_{k+1}^{\lip}(M; \Z)$.
	
	For all $l \in \bN$, define the sets $\cC_l$ by
	\begin{align*}
		\cC_l &= \left\{ \sigma \in c : \int_\sigma \omega \leq l \right\}.
	\end{align*}
	We have that $c = \cC \cup \bigcup_{l \in \bN} \cC_l$, where $\cC$ is the $p$-exceptional set of elements of $c$ where the integral of $\omega$ is not defined. Since $\moddens_p(c) > 0$ by Proposition \ref{prop:no_exc_hom_classes}, the subadditivity of $\moddens_p$ again implies that $\moddens_p(\cC_l) > 0$ for some $l \in \bN$.
	
	Let $a$ be large enough so that $l - a/k < 1$ and consider the set $\cE = \cC_l + a\cS_k$. Then $\cE \subset c$ and moreover, for every $\sigma \in \cE$, we have
	\[
		\int_{\sigma} \omega \leq l - \frac{a}{k} < 1.
	\]
	Since $\omega \in \essadm_p^k(c)$, we must have that the set $\cE$ is $p$-exceptional. However, $\cE = \cC_l + a\cS_k$, where neither of the sets $\cC_l$ nor $a\cS_k$ is $p$-exceptional. This contradicts Lemma \ref{lem:sum_exceptional_set}, completing the proof of the claim.	
\end{proof}

We also have a converse result concerning exact forms.

\begin{lemma}\label{lem:adding_a_weakly_exact_form}
	Let $p \in [1, \infty)$ and $c \in H_k^{\lip}(M, D; \Z)$ with $k > 0$. If $\omega \in \essadm_p^k(c)$ and $\tau \in W^{d,p}_{T(D)}(\wedge^{k-1} T^*M)$, then $\omega + d\tau \in \essadm_p^k(c)$.
\end{lemma}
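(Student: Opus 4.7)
The plan is to directly invoke the already-established Stokes-type vanishing result for weakly exact forms, namely Lemma \ref{lem:chara_implication_3}, combined with the admissibility of $\omega$.

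First, I would observe that every chain $\sigma \in c \in H_k^{\lip}(M,D;\Z)$ is by definition an element of $C_k^{\lip}(M;\Z)$ whose boundary satisfies $\partial \sigma \in C_{k-1}^{\lip}(D;\Z)$. Hence, applying Lemma \ref{lem:chara_implication_3} to $\tau \in W^{d,p}_{T(D)}(\wedge^{k-1} T^*M) \subset W^{d,p}_{T(D),\loc}(\wedge^{k-1} T^*M)$ produces a $p$-exceptional family $\cE_1 \subset \{\sigma \in C_k^{\lip}(M;\Z) : \partial \sigma \in C_{k-1}^{\lip}(D;\Z)\}$ outside of which
\[
    \int_\sigma d\tau = 0.
\]
In particular, $c \cap \cE_1$ is $p$-exceptional, so $\int_\sigma d\tau = 0$ for $p$-a.e.\ $\sigma \in c$.

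Second, since $\omega \in \essadm_p^k(c)$, there is a $p$-exceptional $\cE_2 \subset c$ such that $\int_\sigma \omega \geq 1$ for all $\sigma \in c \setminus \cE_2$. Additionally, the integral $\int_\sigma(\omega+d\tau)$ is constructed in Section \ref{sec:integrationlpforms} so that outside a $p$-exceptional family $\cE_3 \subset C_k^{\lip}(M;\R)$ it coincides with $\int_\sigma \omega + \int_\sigma d\tau$ (linearity holds on the complement of the exceptional set where all three integrals are defined by the standard chart-and-subdivision construction). Taking the union $\cE_1 \cup \cE_2 \cup (\cE_3 \cap c)$, which is again $p$-exceptional by subadditivity of $\moddens_p$, we conclude that for $p$-a.e.\ $\sigma \in c$,
\[
    \int_\sigma(\omega + d\tau) = \int_\sigma \omega + \int_\sigma d\tau \geq 1 + 0 = 1,
\]
so $\omega + d\tau \in \essadm_p^k(c)$.

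The proof is entirely routine once Lemma \ref{lem:chara_implication_3} is available; no step presents a genuine obstacle. The only subtle point worth verifying carefully is the linearity of the integral $\int_\sigma(\omega + d\tau) = \int_\sigma \omega + \int_\sigma d\tau$ outside a $p$-exceptional set, which follows immediately from the definition of integration of $L^p$-forms over $p$-a.e.\ Lipschitz $k$-chain via charts and Borel representatives, since one may fix compatible Borel representatives of $\omega$, $d\tau$, and $\omega + d\tau$ simultaneously.
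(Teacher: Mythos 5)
Your proof is correct and takes the same route as the paper: both invoke Lemma \ref{lem:chara_implication_3} to see that $\int_\sigma d\tau = 0$ for $p$-a.e.\ $\sigma \in c$, then combine with the admissibility of $\omega$. Your version just spells out the bookkeeping of the exceptional sets and the linearity of the integral, which the paper leaves implicit.
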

\begin{proof}
	We have, due to Lemma \ref{lem:chara_implication_3}, that the integral of $d\tau$ over $p$-a.e.\ $\sigma \in c$ is zero. It follows that the integrals of $\omega + d\tau$ and $\omega$ coincide over $p$-a.e.\ $\sigma \in c$ and hence $\omega + d\tau \in \essadm_p^k(c)$.
\end{proof}

\subsection{Minimizers are \emph{p}-harmonic}
For $p \in [1, \infty)$, a $k$-form $\omega \in L^p(\wedge^k M)$ is \emph{$p$-harmonic} if
\begin{gather}
	\label{enum:p-harm_d} d\omega = 0 \quad \text{and}\\
	\label{enum:p-harm_dstar} d( \abs{\omega}^{p-2} \hodge \omega) = 0.
\end{gather}

We next prove the following fact about cohomology minimizing forms on closed manifolds with boundary. Let $E = \partial M \setminus \intr D$, which is a closed Lipschitz $(n-1)$-submanifold of $\partial M$. 

\begin{lemma}\label{lem:p_harm_in_cohom_class}
	Let $p \in (1, \infty)$ and let $\omega \in L^p(\wedge^k T^*M)$ with $d\omega = 0$ weakly, where $0 < k < n$. Then there exists a unique $\omega_0 \in \omega + dW^{d,p}_{T(D)}(\wedge^{k-1} T^*M)$ such that $\omega_0$ is $p$-harmonic and $\abs{\omega_0}^{p-2} \omega_0 \in W^{d^*, q}_{N(E)}(\wedge^{k} T^* M)$, where $q^{-1} + p^{-1} = 1$. Moreover, $\omega_0$ is the unique $L^p$-norm minimizing element in $\omega + dW^{d,p}_{T(D)}(\wedge^{k-1} T^*M)$.  
\end{lemma}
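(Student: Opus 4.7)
The plan is to establish existence via the direct method of the calculus of variations, derive the Euler--Lagrange equation, and then unpack it using the characterization of weakly vanishing normal parts to obtain both the $p$-harmonicity and the boundary behavior. Uniqueness will come from strict convexity of the $L^p$-norm.

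First, I would pick a minimizing sequence $\omega_i = \omega + d\tau_i$ for the functional $\tau \mapsto \int_M |\omega + d\tau|^p \vol_M$. The norm bound together with Banach--Alaoglu gives a weakly $L^p$-convergent subsequence $\omega_i \rightharpoonup \omega_0$, and Mazur's lemma places $\omega_0$ in the strong $L^p$-closure of $\omega + dW^{d,p}_{T(D)}(\wedge^{k-1}T^*M)$; weak lower semicontinuity of $\|\cdot\|_p$ shows $\omega_0$ minimizes the $L^p$-norm on that closure. Continuity of the weak $d$ yields $d\omega_0 = 0$. The main technical obstacle of the proof lies here: promoting $\omega_0 - \omega$ from the $L^p$-closure into $dW^{d,p}_{T(D)}(\wedge^{k-1}T^*M)$ itself. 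I would address this by combining the local exactness from Lemma \ref{lem:poincare_with_boundary_vanishing} with the $L^p_{\loc}$-versus-$L^\infty_{\loc}$ cohomology equivalence of Corollary \ref{cor:Lp_Linfty_cohom_eq} to upgrade the limit to the stronger conclusion $\omega_0 - \omega = d\tau_0$ for some $\tau_0 \in W^{d,p}_{T(D)}(\wedge^{k-1}T^*M)$.

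Next, I would differentiate $t \mapsto \|\omega_0 + t\, d\tau\|_p^p$ at $t = 0$, a standard convex variation justified for $p > 1$, to obtain the Euler--Lagrange equation
\[
\int_M \langle |\omega_0|^{p-2}\omega_0, d\tau\rangle \vol_M = 0 \qquad \text{for all } \tau \in W^{d,p}_{T(D)}(\wedge^{k-1}T^*M).
\]
Setting $\zeta = \hodge(|\omega_0|^{p-2}\omega_0) \in L^q(\wedge^{n-k}T^*M)$, this rewrites (up to a sign) as $\int_M d\tau \wedge \zeta = 0$. Extending $\zeta$ by zero past $\intr E$ and testing against compactly supported forms in $W^{d,\infty}\bigl(\wedge^{k-1}T^*((\cR \setminus \partial M) \cup \intr E)\bigr)$, whose restrictions to $M$ lie in $W^{d,\infty}_{T(D)}(\wedge^{k-1}T^*M) \subset W^{d,p}_{T(D)}$, the vanishing of these integrals is exactly the definition of $\zeta$ having weakly vanishing tangential part on $E$. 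This yields $\zeta \in W^{d,q}_{T(E)}(\wedge^{n-k}T^*M)$ with $d\zeta = 0$; applying $\hodge$ translates this to $|\omega_0|^{p-2}\omega_0 \in W^{d^*,q}_{N(E)}(\wedge^k T^*M)$ with $d^*(|\omega_0|^{p-2}\omega_0) = 0$. Combined with $d\omega_0 = 0$, this is exactly the $p$-harmonicity plus the required weak normal-vanishing on $E$.

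Finally, for uniqueness, I would argue as in Proposition \ref{prop:minimizer_exists}: if $\omega_0, \omega_0' \in \omega + dW^{d,p}_{T(D)}(\wedge^{k-1}T^*M)$ were two $L^p$-norm minimizers, their midpoint would still lie in the affine subspace and, by strict convexity of $\|\cdot\|_p$ for $p > 1$, would have strictly smaller norm unless $\omega_0 = \omega_0'$. Since both the $p$-harmonicity condition and the $W^{d^*,q}_{N(E)}$ condition were derived as consequences of being the minimizer, the characterization by these analytical conditions is simultaneously the characterization of the unique $L^p$-norm minimizer.
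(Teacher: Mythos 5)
Your overall architecture (direct method via Banach--Alaoglu and Mazur, Euler--Lagrange variation to get $p$-harmonicity and the boundary condition, strict convexity for uniqueness of the minimizer) matches the paper's, but there are two genuine gaps.

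\textbf{Gap 1: promoting the limit into $dW^{d,p}_{T(D)}$.} You correctly identify this as the main obstacle, but the proposed resolution via Lemma \ref{lem:poincare_with_boundary_vanishing} and Corollary \ref{cor:Lp_Linfty_cohom_eq} does not close it. Those two results only tell you that a weakly closed $W^{d,p}_{T(D),\loc}$-form is locally exact and that the $L^p_\loc$- and $L^\infty_\loc$-cohomology spaces coincide; neither gives you the \emph{global} statement that the cohomology class of $\omega_0 - \omega$ in $H^k_p(M,D)$ vanishes. That requires a continuity argument for the pairing with homology along the minimizing sequence. The paper instead applies the Fuglede lemma for forms (Lemma \ref{lem:fuglede_for_forms}) to a subsequence of the $d\tau_i$, showing $\int_\sigma(\omega - \omega_0) = 0$ for $p$-a.e.\ relative cycle $\sigma$, and then invokes the characterization Theorem \ref{thm:integral_characterization_thm} (which absorbs the de Rham duality and the non-exceptionality of homology classes) to conclude $\omega - \omega_0 = d\tau_0$ with $\tau_0 \in W^{d,p}_{T(D)}$. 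Without some such global integration argument the step you flag as the ``main technical obstacle'' remains open.

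\textbf{Gap 2: uniqueness.} You establish that the $L^p$-norm minimizer in the affine class is unique (strict convexity) and that it satisfies the $p$-harmonicity and $W^{d^*,q}_{N(E)}$ conditions. But the lemma asserts uniqueness \emph{among} forms in $\omega + dW^{d,p}_{T(D)}(\wedge^{k-1}T^*M)$ satisfying those two analytical conditions, i.e.\ the converse implication that any $\omega'$ in the class that is $p$-harmonic with $|\omega'|^{p-2}\omega' \in W^{d^*,q}_{N(E)}$ must already be the minimizer. Your sentence ``the characterization by these analytical conditions is simultaneously the characterization of the unique $L^p$-norm minimizer'' asserts exactly this converse without proving it. The paper closes this by an integration-by-parts computation: writing $\omega_0 = \omega' + d\tau$ and using the weak tangential vanishing of $|\omega'|^{p-2}\hodge\omega' \wedge \tau$ on $\partial M$ (Lemma \ref{lem:mixedboundarydata} combined with Theorem \ref{thm:integral_characterization_thm}) to show $\|\omega'\|_p^p = \int_M \langle \omega_0, |\omega'|^{p-2}\omega'\rangle \leq \|\omega_0\|_p\|\omega'\|_p^{p-1}$, whence $\|\omega'\|_p \leq \|\omega_0\|_p$ and thus $\omega' = \omega_0$. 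Some argument of this kind is needed; strict convexity alone does not supply it.
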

\begin{proof}
	Let $\omega + d\tau_i$ be an $L^p$-norm minimizing sequence.  By the Banach-Alaoglu theorem, there exists a subsequence that converges to $\omega_0 \in L^p(\wedge^k T^*M)$. By taking convex combinations and applying Mazur's lemma, we may assume that the sequence converges strongly to a form $\omega_0 \in W^{d,p}(\wedge^k T^*M)$.
	
	We need to show that $\omega - \omega_0 \in dW^{d,p}_{T(D)}(\wedge^{k-1} T^*M)$, $\omega_0$ is $p$-harmonic with $\abs{\omega_0}^{p-2} \omega_0 \in W^{d^*, q}_{N(E)}(\wedge^{k} T^* M)$ and that no other element in $\omega + dW^{d,p}_{T(D)}(\wedge^{k-1} T^*M)$ minimizes the $L^p$-norm. Out of these, the uniqueness of the minimizer follows from the strict concavity of the $L^p$-norm.
	
	For exactness, consider $\sigma \in C_k^{\text{lip}}(M;\bZ)$ such that $\partial \sigma \in C_k^{\text{lip}}(D;\bZ)$.  By Lemma \ref{lem:fuglede_for_forms}, there exists a subsequence so that
	\begin{align*}
	    \int_\sigma \omega - \omega_0 = \lim_{j \to \infty} \int_\sigma d\tau_{i_j} = 0,
	\end{align*}
	for $p$-a.e.\ such $\sigma$.  By Theorem \ref{thm:integral_characterization_thm}, $\omega - \omega_0 = d\tau_0$ is exact and $\tau_0 \in W^{d,p}_{T(D)}(\wedge^{k-1} T^*M)$.
	Since $\omega$ is closed, we have that $\omega_0$ is also closed.
	
	To prove that $\omega_0$ is $p$-harmonic we use a variational argument.  Fix $\phi \in W^{d,\infty}_{T(\partial M)}(\wedge^{k-1}T^*M)$. By the minimizing property of $\omega_0$,
	\begin{align*}
	   0 &= \frac{d}{d\epsilon} \int_M |\omega_0 + \epsilon d\phi|^{p}\bigg|_{\epsilon = 0} \\
	   &= p \int_M d\phi \wedge |\omega_0|^{p-2}\hodge \omega_0.
	\end{align*}
	Since $\phi$ was an arbitrary flat form, we have that $\omega_0$ is $p$-harmonic.  
	In the case where $E = \partial M \setminus D$ is a closed Lipschitz $(n-1)$-submanifold of $\partial M$, let $\phi \in W^{d,\infty}_{T(D)}(\wedge^{k-1}T^*M)$. The calculation above still holds and we see that $|\omega_0|^{p-2}\hodge \omega_0$ has weakly vanishing tangential part on $E$. It follows that $\abs{\omega_0}^{p-2} \omega_0 \in W^{d^*, q}_{N(E)}(\wedge^{k} T^* M)$, completing the proof of existence for $\omega_0$ as in the statement. 

	For uniqueness, suppose that $\omega'$ is $p$-harmonic with $\abs{\omega'}^{p-2} \omega' \in W^{d^*, q}_{N(E)}(\wedge^{k} T^* M)$ and that $\omega_0 = \omega' + d\tau$ for some $\tau \in W^{d,p}_{T(D)}(\wedge^{k-1} T^*M)$. We compute that
	\begin{align*}
		\norm{\omega'}_{L^p}^p
		&= \int_M \omega' \wedge (\abs{\omega'}^{p-2}\hodge\omega')
		\\
		&= \int_M (\omega_0 - d\tau) \wedge (\abs{\omega'}^{p-2}\hodge\omega')\\
		&= \int_M \omega_0 \wedge (\abs{\omega'}^{p-2}\hodge\omega')
		+ (-1)^{k} \int_M \tau \wedge d(\abs{\omega'}^{p-2}\hodge\omega') \\
		&= \int_M \ip{\omega_0}{
		\abs{\omega'}^{p-2}\omega'}
		\leq \norm{\omega_0}_{L^p} \norm{\omega'}_{L^p}^{p-1}.
	\end{align*}
	Note that the integration by parts in the third line is justified: we can use Lemma \ref{lem:mixedboundarydata} to conclude that $\abs{\omega'}^{p-2}\hodge\omega' \wedge \tau$ has weakly vanishing tangential part on $\partial M$ and consequently the integral of $d(\abs{\omega'}^{p-2}\hodge\omega' \wedge \tau)$ over $M$ vanishes by a use of Theorem \ref{thm:integral_characterization_thm}.
	It follows that $\smallnorm{\omega'}_p \leq \norm{\omega_0}_{L^p}$.  Since $\omega_0$ is the unique $L^p$-norm minimizing element in $\omega + dW^{d,p}_{T(D)}(\wedge^{k-1} T^*M)$, we must have $\omega' = \omega_0$.
\end{proof}

Our observations regarding the set $\essadm_p^k(c)$ for a homology class $c$ imply that the minimal element in $\essadm_p^k(c)$ is $p$-harmonic.

\begin{prop}\label{prop:minimizer_is_p-harm_compact}
	Let $p \in (1, \infty)$ and let $c \in H_k^{\lip}(M, D; \Z)$ with $0 < k < n$. Suppose that $\essadm_p^k(c)\neq\emptyset$ and let $\omega \in \essadm_p^k(c)$ be the unique element with
	\[
		\modform_p(c) = \int_M \abs{\omega}^p \vol_M.
	\]
	Then $\omega$ is $p$-harmonic, $\omega$ has weakly vanishing tangential part on $D$ and $\abs{\omega}^{p-2}\omega$ has weakly vanishing normal part on $E$.
\end{prop}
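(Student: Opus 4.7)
The proof is a synthesis of the three lemmas established just before the statement. The plan is as follows.

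First, I would apply Lemma \ref{lem:admissible_are_weakly_closed} directly to the minimizer $\omega \in \essadm_p^k(c)$. This immediately yields that $\omega$ is weakly closed and that $\omega \in W^{d,p}_{T(D), \loc}(\wedge^k T^* M)$; in particular, the tangential part of $\omega$ vanishes weakly on $D$. Since $d\omega = 0$ and $\omega \in L^p$, we also have $\omega \in W^{d,p}_{T(D)}(\wedge^k T^* M)$, so the first two of the three conclusions reduce to verifying $p$-harmonicity and the normal boundary condition.

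Next, I would use Lemma \ref{lem:adding_a_weakly_exact_form} to observe that the entire affine subspace
\[
	\omega + dW^{d,p}_{T(D)}(\wedge^{k-1} T^* M) \subset \essadm_p^k(c).
\]
Since $\omega$ is the unique $L^p$-norm minimizer of $\essadm_p^k(c)$ by Proposition \ref{prop:minimizer_exists} and it lies in this subspace, $\omega$ must also be the unique $L^p$-minimizer within $\omega + dW^{d,p}_{T(D)}(\wedge^{k-1} T^* M)$.

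Finally, I would invoke Lemma \ref{lem:p_harm_in_cohom_class} applied to $\omega$. That lemma produces a unique $L^p$-minimizer $\omega_0$ in $\omega + dW^{d,p}_{T(D)}(\wedge^{k-1} T^* M)$, which is $p$-harmonic and satisfies $\abs{\omega_0}^{p-2}\omega_0 \in W^{d^*,q}_{N(E)}(\wedge^k T^* M)$; by uniqueness of the minimizer established in the previous step, $\omega_0 = \omega$. This delivers both the $p$-harmonicity of $\omega$ and the weakly vanishing normal part of $\abs{\omega}^{p-2}\omega$ on $E$, completing the proof.

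There is no real obstacle — the work has been done in the preceding lemmas. The only subtlety worth confirming in passing is that Lemma \ref{lem:p_harm_in_cohom_class} applies: this needs $\omega$ to be weakly closed (granted by step one) and $0 < k < n$ (assumed in the hypothesis).
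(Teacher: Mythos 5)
Your proof is correct and follows essentially the same route as the paper: apply Lemma \ref{lem:admissible_are_weakly_closed} for closedness and the tangential condition, use Lemma \ref{lem:adding_a_weakly_exact_form} to see $\omega + dW^{d,p}_{T(D)} \subset \essadm_p^k(c)$, and then identify $\omega$ with the unique $p$-harmonic minimizer from Lemma \ref{lem:p_harm_in_cohom_class}. The paper's own proof is a slightly more compressed version of exactly this argument.
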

\begin{proof}
	Recall that $\omega$ exists and is unique by Proposition \ref{prop:minimizer_exists}. 
	By Lemma \ref{lem:admissible_are_weakly_closed}, $d\omega = 0$ in the weak sense and $\omega$ has vanishing tangential part on $D$. For every $\tau \in W^{d,p}_{T(D)}(\wedge^{k-1} T^*M)$, we have that $\omega + d\tau \in \essadm_p^k(c)$ by Lemma \ref{lem:adding_a_weakly_exact_form}. Among such $\omega + d\tau$, there exists a unique $p$-harmonic form by Lemma \ref{lem:p_harm_in_cohom_class} with the correct boundary values. This $p$-harmonic element has the minimal $L^p$-norm. Since $\omega$ has the minimal $L^p$-norm in $\essadm_p^k(c)$, it follows that $\omega$ must be $p$-harmonic.
\end{proof}

\begin{proof}[Proof of Theorem \ref{thm:modcapconnection}]
    Proposition \ref{prop:minimizer_is_p-harm_compact} immediately implies the Theorem as long as $\essadm_p^k(c) \ne \emptyset$.  Since $c$ is not a torsion element, this follows from Proposition \ref{prop:de_Rham_thm_in_Lip_setting}.
\end{proof}

\section{Duality}\label{sec:duality}

In this section we connect the modulus-minimizing forms with their Poincar\'e duals using the Hodge duality of $p$-harmonic and $q$-harmonic forms. In particular, we show for compact $M$ that the $q$-harmonic Hodge dual of the modulus-minimizer of $c \in H_k^{\lip}(M,D; \Z)$ is a $q$-harmonic representative of the Poincar\'e dual $c^*_{PD} \in H^{n-k}_{\lip}(M,E; \Z)$ of $c$. For a general reference of such Poincar\'e duals, we refer to \cite[Theorem 3.43]{Hatcher_AlgTopo}.

For the duration of this section, let $M$ be a compact Lipschitz $n$-submanifold in an oriented Lipschitz Riemannian $n$-manifold $\cR$ without boundary. The set $D \subset \partial M$ will be a closed Lipschitz $(n-1)$-submanifold of $\partial M$ with boundary. Additionally, we have that $E = \partial M \setminus D$ is also a closed Lipschitz $(n-1)$-submanifold with boundary.

We begin with a Lemma connecting the Poincar\'e dual to integration.

\begin{lemma}\label{lem:poincare_duality_integration}
	Let $p, q \in [1, \infty]$ with $p^{-1} + q^{-1} \leq 1$, let $c \in H_k^{\lip}(M, D; \Z)$, let $c_\R$ be the element of $H_k^{\lip}(M, D; \R)$ containing $c$ and let $c^*_{PD, \R} \in H^{n-k}_{\lip}(M, E; \R)$ be the Poincar\'e dual of $c_{\R}$. Suppose that $c^*_{PD, \R}$ is mapped to $[\alpha] \in H_q^{n-k}(M, E)$ in the isomorphisms of Proposition \ref{prop:de_Rham_thm_in_Lip_setting} and Corollary \ref{cor:Lp_Linfty_cohom_eq} and suppose that $\omega \in W^{d,p}_{T(D), \loc}(\wedge^k T^*M)$ with $d\omega = 0$. Then for $p$-a.e.\ $\sigma \in c$, we have
	\[
		\int_\sigma \omega = \int_M \alpha \wedge \omega
	\]
\end{lemma}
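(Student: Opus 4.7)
The strategy is to express both sides of the claimed identity as cohomological pairings and then invoke topological Poincar\'e-Lefschetz duality transported through the de Rham isomorphisms of Proposition \ref{prop:de_Rham_thm_in_Lip_setting} and Corollary \ref{cor:Lp_Linfty_cohom_eq}. Both sides are well-defined: the assumption $p^{-1} + q^{-1} \le 1$ and H\"older's inequality give $\alpha \wedge \omega \in L^1(\wedge^n T^* M)$, so $\int_M \alpha \wedge \omega$ is a finite Lebesgue integral against $\vol_M$; meanwhile $\int_\sigma \omega$ is defined for $p$-a.e.\ $\sigma \in c$ since $\omega \in L^p_\loc$ and $M$ is compact.

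First I would reduce both $\omega$ and $\alpha$ to $W^{d,\infty}_\loc$-representatives of their cohomology classes. By Corollary \ref{cor:Lp_Linfty_cohom_eq}, write $\omega = \omega_0 + d\tau$ with $\omega_0 \in W^{d, \infty}_{T(D), \loc}(\wedge^k T^* M)$ weakly closed and $\tau \in W^{d,p}_{T(D), \loc}(\wedge^{k-1} T^* M)$. Every chain $\sigma \in c$ satisfies $\partial \sigma \in C^{\lip}_{k-1}(D; \Z)$, so Lemma \ref{lem:chara_implication_3} gives $\int_\sigma d\tau = 0$ for $p$-a.e.\ $\sigma \in c$ and thus the left-hand side is unchanged under $\omega \mapsto \omega_0$. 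For the right-hand side, Lemma \ref{lem:mixedboundarydata} yields $\alpha \wedge \tau \in W^{d,1}_{T(\partial M), \loc}(\wedge^{n-1} T^* M)$ with $d(\alpha \wedge \tau) = (-1)^{n-k} \alpha \wedge d\tau$ via $d\alpha = 0$, and the integral of this exact $W^{d,1}_{T(\partial M), \loc}$-form over a fundamental-class representative $\sigma_M$ of $[M, \partial M]$ vanishes by another application of Lemma \ref{lem:chara_implication_3}, forcing $\int_M \alpha \wedge d\tau = 0$. An analogous procedure for $\alpha$ gives $\alpha = \alpha_0 + d\tau'$ with $\alpha_0 \in W^{d,\infty}_{T(E), \loc}$ and $\tau' \in W^{d,q}_{T(E), \loc}$, reducing the right-hand side to $\int_M \alpha_0 \wedge \omega_0$.

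With both forms in $W^{d,\infty}_\loc$, Lemma \ref{lem:mixedboundarydata} shows $\alpha_0 \wedge \omega_0 \in W^{d, \infty}_{T(\partial M), \loc}(\wedge^n T^* M)$ is weakly closed, and Proposition \ref{prop:de_Rham_thm_in_Lip_setting} identifies $[\alpha_0 \wedge \omega_0] = [\alpha_0] \cup [\omega_0]$ in $H^n_\infty(M, \partial M)$. Viewing $[\omega_0]$ as a class in $H^k_{\lip}(M, D; \R)$ via de Rham, integration over the fundamental-class representative $\sigma_M$ gives
\[
\int_{\sigma_M} \alpha_0 \wedge \omega_0 = \langle c^*_{PD, \R} \cup [\omega_0], [M, \partial M] \rangle,
\]
since $[\alpha_0]$ is identified with $c^*_{PD, \R}$ by hypothesis. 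The defining property of the Poincar\'e-Lefschetz dual $c^*_{PD, \R} = PD(c_\R)$ in Lipschitz singular cohomology then supplies $\langle c^*_{PD, \R} \cup [\omega_0], [M, \partial M] \rangle = \langle [\omega_0], c_\R \rangle = \int_\sigma \omega_0$ for any $\sigma \in c$. Identifying the Wolfe integral $\int_{\sigma_M} \alpha_0 \wedge \omega_0$ with the Lebesgue integral $\int_M \alpha_0 \wedge \omega_0$ through a $\sigma_M$ arising from a Lipschitz triangulation of $M$ then completes the proof.

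The main obstacle will be the invocation of topological Poincar\'e-Lefschetz duality in the Lipschitz-singular setting together with the identification of the Wolfe canonical integral against a fundamental-class representative with the Lebesgue integral against $\vol_M$; both are standard once Lipschitz singular (co)homology is known to coincide with ordinary singular (co)homology on compact Lipschitz manifolds and a Lipschitz triangulation of $M$ representing $[M, \partial M]$ is used, but some bookkeeping is needed in the relative case where $\partial M$ splits as $D \cup E$. A secondary technical point is the vanishing of the boundary terms for the $W^{d, 1}_{T(\partial M), \loc}$-forms $\alpha \wedge \tau$ and $\tau' \wedge \omega_0$ in the first reduction; this may be handled directly via Lemma \ref{lem:chara_implication_3}, or bypassed entirely by first approximating $\tau$ and $\tau'$ by $W^{d, \infty}$-forms compactly supported away from $D$ and $E$ respectively via Lemma \ref{lem:W_infinity_approx_boundary}, after which the boundary terms vanish from elementary Stokes considerations.
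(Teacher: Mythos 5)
Your proof takes essentially the same route as the paper: both reduce $\omega$ (and, explicitly or implicitly, $\alpha$) to $W^{d,\infty}$-representatives via Corollary \ref{cor:Lp_Linfty_cohom_eq}, then chain the cap-cup adjunction, the Poincar\'e--Lefschetz identity $c_\R = [M,\partial M] \cap c^*_{PD,\R}$, and the product-preserving de Rham isomorphism of Proposition \ref{prop:de_Rham_thm_in_Lip_setting} to reach $\int_M \alpha \wedge \omega$, handling the left-hand side via Lemma \ref{lem:chara_implication_3}. One caution on the boundary-term step: your first suggestion, applying Lemma \ref{lem:chara_implication_3} to the exact $W^{d,1}_{T(\partial M),\loc}$-form $d(\alpha \wedge \tau)$ over a fundamental-class chain $\sigma_M$, does not close the case $p^{-1}+q^{-1}=1$, since that lemma only yields vanishing for $1$-a.e.\ chain, a single $\sigma_M$ is $1$-exceptional, and Proposition \ref{prop:no_exc_hom_classes} (which could rescue this by making the whole class $[M,\partial M]$ non-exceptional) is stated only for $p\in(1,\infty)$; moreover it silently identifies the surface integral over $\sigma_M$ with the Lebesgue integral. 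Your second suggestion --- approximate $\tau, \tau'$ by compactly supported $W^{d,\infty}$-forms via Lemma \ref{lem:W_infinity_approx_boundary} and conclude by elementary Stokes (or equivalently test $d(\alpha_0 \wedge \tau_j)$ against a cutoff $\equiv 1$ near $M$) --- is the one that actually works in all cases and is, in effect, what the paper's appeal to ``the isomorphisms respect the wedge product'' hides.
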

\begin{proof}
	The Poincar\'e duality of $c_{\R}$ and $c^*_{PD, \R}$ means that $c_{\R} = [M] \cap c^*_{PD, \R} = [M] \cap [\alpha]$, where $[M]$ is the orientation class of $H_n^{\lip}(M, \partial M; \R)$. In particular, since the isomorphism of Proposition \ref{prop:poincare_dual_connection} takes the wedge product to the cup product and since the isomorphisms of Corollary \ref{cor:Lp_Linfty_cohom_eq} respect the wedge product $W^{d,p}_{T(D), \loc}(\wedge^k T^* M) \times W^{d, q}_{T(E), \loc}(\wedge^{n-k} T^* M) \to W^{d,1}_{T(\partial M), \loc}(\wedge^{n} T^* M)$ provided by Lemma \ref{lem:mixedboundarydata}, we have 
	\begin{align*}
		[\omega](c) = [\omega](c_{\R})
		= [\omega]([M] \cap [\alpha])
		= ([\alpha] \cup [\omega])([M])
		= [\alpha \wedge \omega](M)
		= \int_M \alpha \wedge \omega,
	\end{align*}
	where $[\omega]$ denotes the cohomology class of $\omega$ in $H_{p}^{k}(M, D)$. On the other hand, relying on Corollary \ref{cor:Lp_Linfty_cohom_eq}, we may select $\omega' \in [\omega] \cap W^{d,\infty}_{T(D), \loc}(\wedge^k T^* M)$, in which case $\omega = \omega' + d\tau$ for some $\tau \in W^{d,p}_{T(D), \loc}(\wedge^{k-1} T^* M)$. Thus, by Proposition \ref{prop:de_Rham_thm_in_Lip_setting} and Theorem \ref{thm:integral_characterization_thm}, we have
		\begin{align*}
		[\omega](c) &= [\omega'](c)
		= \int_\sigma \omega'
		= \int_{\sigma} \omega + d\tau
		= \int_{\sigma} \omega
	\end{align*}
	for $p$-a.e.\ $\sigma \in c$.
\end{proof}

\begin{prop}\label{prop:poincare_dual_connection}
	Let $c \in H_k^{\lip}(M,D; \Z)$, let $p, q \in (1, \infty)$ with $p^{-1} + q^{-1} = 1$. Suppose that $c\notin \tor(H_k^{\lip}(M,D; \Z))$. Then $\essadm_p^k(c)\neq \emptyset$ and for $q$-a.e.\ $\sigma' \in C_k^{\lip}(M; \Z)$ with $\partial \sigma' \in C_{k+1}^{\lip}(E; \Z)$, we have
	\[
		\int_{\sigma'} 	\frac{(-1)^{k(n-k)} \abs{\omega}^{p-2}\hodge\omega}{\norm{\omega}_{L^p}^p}
		= c_{\text{PD}}^*(\sigma'),
	\]
	where $c_{\text{PD}}^* \in H_{\lip}^{n-k}(M,E; \Z)$ is the Poincar\'e dual of $c$ and $\omega$ is the unique element in $\essadm_p^k(c)$ such that $\norm{\omega}_{L^p}^p = \modform_p(c)$.
\end{prop}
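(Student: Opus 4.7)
The plan is to combine the Euler-Lagrange equations for the minimizer from Theorem \ref{thm:modcapconnection} with two applications of Lemma \ref{lem:poincare_duality_integration}: one direct application to $c$, and one to the homology class of $\sigma'$ obtained by swapping the roles of $(k, p, D)$ with $(n-k, q, E)$. Since $c$ is non-torsion, $c_\R \in H_k^{\lip}(M, D; \R)$ is nonzero, so Proposition \ref{prop:de_Rham_thm_in_Lip_setting} supplies a $W^{d,\infty}_{T(D), \loc}$-representative of a cohomology class evaluating to $1$ on $c$, which lies in $\essadm_p^k(c)$. Propositions \ref{prop:minimizer_exists} and \ref{prop:minimizer_is_p-harm_compact} then produce the unique $L^p$-norm minimizer $\omega$, which is weakly closed, $p$-harmonic, has weakly vanishing tangential part on $D$, and whose dual $\abs{\omega}^{p-2}\omega$ has weakly vanishing normal part on $E$. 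These conditions imply that $\zeta := (-1)^{k(n-k)}\abs{\omega}^{p-2}\hodge\omega / \norm{\omega}_{L^p}^p$ lies in $W^{d,q}_{T(E), \loc}(\wedge^{n-k} T^*M)$ with $d\zeta = 0$. Lemma \ref{lem:poincare_duality_integration} shows that $\int_\sigma \omega$ is constant for $p$-a.e.\ $\sigma \in c$, and minimality of $\omega$ forces this constant to be $1$ (rescaling would otherwise give a smaller admissible form); a direct calculation using $\hodge\omega \wedge \omega = (-1)^{k(n-k)}\abs{\omega}^2 \vol_M$ further yields $\int_M \zeta \wedge \omega = 1$.

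The core step is a variational ``cup-partner'' identity for $\zeta$. For any closed $\eta \in W^{d,p}_{T(D), \loc}(\wedge^k T^*M)$ with $\int_\sigma \eta = 0$ for $p$-a.e.\ $\sigma \in c$, every perturbation $\omega + t\eta$ still lies in $\essadm_p^k(c)$, so minimality of $\omega$ forces the derivative of $\norm{\omega + t\eta}_{L^p}^p$ at $t=0$ to vanish. Differentiating under the integral (justified for $p > 1$ by dominated convergence) yields $p\int_M \langle \abs{\omega}^{p-2}\omega, \eta\rangle \vol_M = 0$, and rewriting the inner product as a wedge product gives $\int_M \zeta \wedge \eta = 0$. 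Combined with $\int_M \zeta \wedge \omega = 1$ and the decomposition $\eta = \left(\int_\sigma \eta\right)\omega + \tilde\eta$ for a general closed $\eta$, this yields
\[
    \int_M \zeta \wedge \eta = \int_\sigma \eta \qquad \text{for $p$-a.e.\ } \sigma \in c \text{ and all closed } \eta \in W^{d,p}_{T(D), \loc}(\wedge^k T^*M).
\]

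To finish, fix $\sigma'$ as in the statement, let $c' = [\sigma'] \in H_{n-k}^{\lip}(M, E; \Z)$, and let $\beta' \in W^{d,p}_{T(D), \loc}(\wedge^k T^*M)$ be a closed representative of the Poincar\'e dual $(c')^*_{PD, \R} \in H^k_{\lip}(M, D; \R)$ obtained from Proposition \ref{prop:de_Rham_thm_in_Lip_setting} and Corollary \ref{cor:Lp_Linfty_cohom_eq}. Applying the role-swap of Lemma \ref{lem:poincare_duality_integration} with $(c', \zeta, \beta')$ in place of $(c, \omega, \alpha)$ gives, for $q$-a.e.\ $\sigma' \in c'$,
\[
    \int_{\sigma'} \zeta = \int_M \beta' \wedge \zeta = (-1)^{k(n-k)} \int_M \zeta \wedge \beta' = (-1)^{k(n-k)} \int_\sigma \beta',
\]
the final equality using the variational identity with $\eta = \beta'$. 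On the other hand, the cup-cap adjunction used in the proof of Lemma \ref{lem:poincare_duality_integration} gives $c^*_{PD, \R}(c') = ((c')^*_{PD, \R} \cup c^*_{PD, \R})([M]) = \int_M \beta' \wedge \alpha$ for any closed $L^q$-representative $\alpha$ of $c^*_{PD, \R}$, while Lemma \ref{lem:poincare_duality_integration} applied directly to $c$ with $\omega = \beta'$ gives $\int_M \alpha \wedge \beta' = \int_\sigma \beta'$. Hence $c^*_{PD}(\sigma') = c^*_{PD, \R}(c') = (-1)^{k(n-k)} \int_\sigma \beta' = \int_{\sigma'} \zeta$. Since $M$ is compact, $H_{n-k}^{\lip}(M, E; \Z)$ is countable, so the union of the $q$-exceptional sets over all $c'$ remains $q$-exceptional, yielding the claim for $q$-a.e.\ $\sigma'$ as in the statement.

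The main obstacle is the sign bookkeeping: the factor $(-1)^{k(n-k)}$ in the definition of $\zeta$ must cancel precisely with the sign arising from anticommuting a $k$-form past an $(n-k)$-form, so that the two applications of Lemma \ref{lem:poincare_duality_integration} combine cleanly. A more abstract alternative would invoke Lipschitz Poincar\'e-Lefschetz duality directly to conclude $[\zeta] = c^*_{PD, \R}$ from the variational identity by the non-degeneracy of the cup product pairing on Sobolev cohomology, collapsing the final paragraph into a single appeal to duality; the approach above avoids this by exploiting the symmetry of Lemma \ref{lem:poincare_duality_integration} and so stays within tools established earlier in the paper.
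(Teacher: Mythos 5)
Your proof is correct, but it takes a genuinely different route from the paper. The paper's proof goes ``top-down'': it starts by choosing the unique $q$-harmonic representative $\zeta$ of the Poincar\'e dual class $c_{PD,\R}^*$ (via Lemma \ref{lem:p_harm_in_cohom_class}), constructs $\omega' = \norm{\zeta}_{L^q}^{-q}\abs{\zeta}^{q-2}\hodge\zeta$, checks that $\omega'$ is weakly closed with the right boundary data, verifies $\omega' \in \essadm_p^k(c)$ with $\int_\sigma \omega' = 1$ via a single application of Lemma \ref{lem:poincare_duality_integration}, and then pins down $\omega' = \omega$ by a H\"older-inequality comparison of $L^p$-norms with the (a priori unknown) minimizer. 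That argument makes the explicit formula $\omega = \norm{\zeta}_{L^q}^{-q}\abs{\zeta}^{q-2}\hodge\zeta$ visible, which is convenient later for Theorem \ref{thm:mainthmrelative}. You instead go ``bottom-up'': you start from the minimizer, derive an Euler--Lagrange ``cup-partner'' identity $\int_M \zeta \wedge \eta = \int_\sigma \eta$ for closed $\eta \in W^{d,p}_{T(D),\loc}$ by perturbing $\omega$ inside $\essadm_p^k(c)$ in zero-period directions (a strictly larger perturbation class than the exact-form perturbations used in Lemma \ref{lem:p_harm_in_cohom_class}, so this genuinely uses admissibility), and then close the argument by a symmetric double application of Lemma \ref{lem:poincare_duality_integration} plus the cup-cap adjunction. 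Your sign bookkeeping and your use of the fact that minimality forces the constant $\int_\sigma \omega$ to equal exactly $1$ are both correct, as is the reduction of the $q$-a.e.\ claim to a countable union over homology classes. The trade-off: your route never explicitly identifies $\zeta$ as the $q$-harmonic representative of the Poincar\'e dual cohomology class (you only get the integral identity, which by Lemma \ref{lem:chara_implication_4} does determine the cohomology class a posteriori), whereas the paper obtains that structural information as a byproduct. Both proofs rest on the same pillars (Lemmas \ref{lem:poincare_duality_integration}, \ref{lem:p_harm_in_cohom_class}, and Proposition \ref{prop:minimizer_is_p-harm_compact}), so neither is substantially more elementary; yours trades a short H\"older comparison for a variational identity and a more involved final chain of equalities.
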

\begin{proof}
	Let $c_\R \in H_k^{\lip}(M, D; \R)$ be the real homology class containing the integral homology class $c$ and let $c_{PD, \R}^* \in H^{n-k}_{\lip}(M, E; \R)$ be the Poincar\'e dual of $c_\R$. Then $c_{PD}(\sigma') = c_{PD, \R}(\sigma')$ for every $\sigma' \in C_k^{\lip}(M; \Z)$ with $\partial \sigma' \in C_{k+1}^{\lip}(E; \Z)$. By the de Rham theorems of Proposition \ref{prop:de_Rham_thm_in_Lip_setting} and Corollary \ref{cor:Lp_Linfty_cohom_eq}, $c_{PD, \R}$ is represented by a unique $[\zeta] \in H_q(M, E)$. By Lemma \ref{lem:p_harm_in_cohom_class}, there exists a unique $q$-harmonic element in $[\zeta]$, which we may assume to be $\zeta \in W^{d,p}_{T(E), \loc}(\wedge^{n-k} T^* M)$. Since $c \notin \tor(H_k^{\lip}(M,D; \Z))$, we have $c_{PD, \R} \neq 0$ and consequently $\zeta$ is not identically zero.
	
	Our goal is to show that $\essadm_p^k(c)$ is non-empty and that the $L^p$-norm minimizing element $\omega$ satisfies
	\begin{equation}\label{eq:lp_hodge_dual}
		\zeta = \frac{(-1)^{k(n-k)} \abs{\omega}^{p-2}\hodge\omega}{\norm{\omega}_{L^p}^p}.
	\end{equation}
	We show \eqref{eq:lp_hodge_dual} by showing the equivalent equality:
	\begin{equation}\label{eq:lq_hodge_dual}
		\omega = \frac{\abs{\zeta}^{q-2}\hodge\zeta}{\norm{\zeta}_{L^q}^q}.
	\end{equation}
	Let $\omega' = \norm{\zeta}_{L^q}^{-q} \abs{\zeta}^{q-2}\hodge\zeta$, with the aim of showing $\omega' = \omega$. We note that
	\begin{equation}\label{eq:dual_norm_inverse}
		\norm{\omega'}_{L^p} = \frac{1}{\norm{\zeta}_{L^q}^{q}} 
		\left( \int_M \abs{\zeta}^{(q-1)p} \right)^\frac{1}{p}
		=  \frac{1}{\norm{\zeta}^{q}_{L^q}} \norm{\zeta}_{L^q}^{\frac{q}{p}}
		= \norm{\zeta}_{L^q}^{\frac{q(1-p)}{p}}
		= \norm{\zeta}_{L^q}^{-1}.
	\end{equation}

	Since $\zeta$ is $q$-harmonic, a calculation shows that $d\omega' = 0$. Hence, Lemma \ref{lem:poincare_duality_integration} yields that
	\[
		\int_{\sigma} \omega' = \int_M \zeta \wedge \omega' = \norm{\zeta}_{L^q}^{-q} \int_M \zeta \wedge \abs{\zeta}^{q-2}\hodge\zeta = 1
	\]
	for $p$-a.e.\ $\sigma \in c$. Therefore $\omega' \in \essadm_p^k(c)$ and notably $\essadm_p^k(c)$ is non-empty.
	
	Finally, we show that $\norm{\omega'}_{L^p} \leq \norm{\omega}_{L^p}$, which will imply that $\omega' = \omega$ since $\omega$ is the unique minimizer in $\essadm_p^k(c)$.
	We have again by Lemma \ref{lem:poincare_duality_integration} that
	\begin{align*}
		\int_\sigma \omega = \int_M \zeta \wedge \omega,
	\end{align*}
	for $p$-a.e.\ $\sigma \in c$.  Since $c$ is not $p$-exceptional by Proposition \ref{prop:no_exc_hom_classes}, we may choose a $\sigma \in c$ so that this holds and
	\begin{align*}
		1 \le \int_\sigma \omega \le \|\omega\|_{L^p}\|\zeta\|_{L^q}.
	\end{align*}
	By \eqref{eq:dual_norm_inverse},
	\begin{align*}
		\|\omega'\|_{L^p} \le \|\omega\|_{L^p},
	\end{align*}
	and the proof is therefore complete.

\end{proof}

The proof of Proposition \ref{prop:poincare_dual_connection} provides a way to find the minimizer of $\modform_p$ using the Hodge star and Poincar\'e duality.
Let $c \in H_k^{\lip}(M,D;\bZ)$, then $c_{PD}^*$ is represented by a $q$-harmonic form $\zeta$.  The minimizer of $\modform_p(c)$ is
\begin{align*}
    \omega = \frac{\abs{\zeta}^{q-2}\hodge\zeta}{\norm{\zeta}_{L^q}^q}.
\end{align*}
Since both of these operations are injective, we have shown the following proposition.
\begin{prop}\label{prop:modminimizerinjective}
    Let $\cF_p \colon H_k^{\lip}(M,D;\bZ) \to L^p(\wedge^kT^*M)$ give the unique minimizer of $\modform_p$ for a homology class. The operator $\cF_p$ is injective.
\end{prop}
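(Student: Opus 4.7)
The plan is to invert the two-step construction of the modulus minimizer given in the proof of Proposition \ref{prop:poincare_dual_connection}. Given $c \in H_k^{\lip}(M,D;\Z)$, that construction first passes to the Poincaré dual $c_{\text{PD}}^* \in H^{n-k}_{\lip}(M,E;\Z)$ and then forms $\omega = \abs{\zeta}^{q-2}\hodge\zeta / \norm{\zeta}_{L^q}^q$, where $\zeta$ is the unique $q$-harmonic representative of $c_{\text{PD}}^*$ with weakly vanishing tangential part on $E$ (Lemma \ref{lem:p_harm_in_cohom_class}). To prove injectivity of $\cF_p$, it suffices to show that both steps can be inverted.

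For the second step, suppose $\cF_p(c_1) = \cF_p(c_2) = \omega$ and let $\zeta_i$ denote the $q$-harmonic representative associated with $c_i$. Inverting the Hodge formula pointwise, by applying $\hodge$ and using $\hodge^2 = (-1)^{k(n-k)}$, the conjugate exponent identity $(p-1)(q-1)=1$, and the normalization $\norm{\omega}_{L^p}\norm{\zeta_i}_{L^q}=1$ proven in the course of Proposition \ref{prop:poincare_dual_connection}, yields
\[
    \zeta_i = \frac{(-1)^{k(n-k)}\abs{\omega}^{p-2}\hodge\omega}{\norm{\omega}_{L^p}^p}.
\]
Thus $\zeta_1 = \zeta_2$ in $L^q(\wedge^{n-k}T^*M)$.

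To invert the Poincaré duality step, I would apply the integral identity from Proposition \ref{prop:poincare_dual_connection}: for $q$-a.e.\ $\sigma' \in C_{n-k}^{\lip}(M;\Z)$ with $\partial\sigma' \in C_{n-k-1}^{\lip}(E;\Z)$, one has $(c_i)_{\text{PD}}^*(\sigma') = \int_{\sigma'}\zeta_i$. Since no homology class is $q$-exceptional by Proposition \ref{prop:no_exc_hom_classes}, each class in $H_{n-k}^{\lip}(M,E;\Z)$ has a representative on which the formula holds, so $(c_1)_{\text{PD}}^*$ and $(c_2)_{\text{PD}}^*$ agree as integer-valued functionals on $H_{n-k}^{\lip}(M,E;\Z)$. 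Integer Poincaré-Lefschetz duality then forces $c_1 = c_2$. The main technical point to watch is this last step, upgrading equality of the integer pairings to equality of the integer cohomology classes; this is precisely where the integer-coefficient formulation of Proposition \ref{prop:poincare_dual_connection}, rather than its real counterpart, is essential.
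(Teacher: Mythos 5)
Your proposal mirrors the paper's proof exactly: the paper's one-line argument is that $\cF_p$ is the composition $c \mapsto c_{PD}^* \mapsto \zeta \mapsto \omega$ with each step injective, and you simply spell out the inversion of the last two steps (recover $\zeta$ from $\omega$ via $\hodge$ and \eqref{eq:lp_hodge_dual}, then recover $c_{PD}^*$ from $\zeta$ via the pairing in Proposition \ref{prop:poincare_dual_connection}).

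However, the concern you yourself flag at the end is not merely a "technical point to watch" — it is a genuine gap, and the claim that "the integer-coefficient formulation of Proposition \ref{prop:poincare_dual_connection} is essential" does not resolve it. Knowing the values $(c_i)_{PD}^*(\sigma')\in\Z$ for every relative cycle $\sigma'$ cannot distinguish two integer cohomology classes that differ by a torsion class, since torsion is annihilated by the pairing into $\Z$ (and, before that, already annihilated in passing to the de Rham representative $\zeta$). In fact the map $\cF_p$ genuinely fails to be injective when $H_k^{\lip}(M,D;\Z)$ has torsion: if $c_1 - c_2 = t$ is a nonzero torsion class, then by Lemma \ref{lem:admissible_are_weakly_closed} every admissible form is weakly closed with vanishing tangential part on $D$, and by Lemma \ref{lem:poincare_duality_integration} its integral over $p$-a.e.\ cycle in $c_i$ equals the real pairing $[\omega](c_i)$, which only sees the image of $c_i$ in $H_k^{\lip}(M,D;\R)$; hence $\essadm_p^k(c_1) = \essadm_p^k(c_2)$ and $\cF_p(c_1) = \cF_p(c_2)$. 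So $\cF_p$ is only injective on $H_k^{\lip}(M,D;\Z)$ modulo torsion. This is a latent issue in the paper's own proof as well — the composition $c \mapsto \zeta$ factors through the real de Rham space — so your proof inherits rather than introduces the gap; but it would be more accurate to state the proposition (and your argument) with a torsion-free hypothesis or with $H_k^{\lip}(M,D;\Z)/\tor$ as the domain.
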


We can now prove Theorem \ref{thm:mainthmrelative} for a general compact, oriented Lipschitz manifold $M$ of dimension $n$ that is a subset of $\cc R$, an $n$-dimensional Lipschitz Riemannian manifold without boundary.
\begin{named}{Theorem \ref{thm:mainthmrelative}}
    Let $p,q \in (1,\infty)$ satisfy $p^{-1} + q^{-1} = 1$.  If $c \in  H_k^{\lip}(M,D;\bZ)$ is a non-torsion element, then there exists a unique element $c' \in H_{n-k}^{\lip}(M,E;\bR)$ such that the Poincar\'e dual of $c$ maps $c'$ to $1$ and
    \begin{align*}
        \dMod_p(c)^{1/p}\dMod_q(c')^{1/q} = 1.
    \end{align*}
\end{named}
\begin{proof}
	Let $c_{PD}^*$ be the Poincar\'e dual of $c$. Then $c_{PD}^*$ is a nontorsion element of $H^{n-k}_{\lip}(M,E; \Z)$. 
	Let $\omega$ be the unique $p$-modulus minimizing element in $\essadm_p^k(c)$ and let $\zeta = (-1)^{k(n-k)} \norm{\omega}_{L^p}^{-p} \abs{\omega}^{p-2}\hodge\omega$.
	Due to Proposition \ref{prop:poincare_dual_connection}, $c_{PD}^*$ maps to $[\zeta]$ via the canonical map $H^{n-k}_{\lip}(M, E; \Z) \to H^{n-k}_{\lip}(M, E; \R)$ followed by the de Rham isomorphism.
	
	We let $c' \in H_{n-k}^{\lip}(M, E; \R)$ be the Poincar\'e dual of the cohomology class $[(-1)^{k(n-k)} \omega] \in H_p^k(M, D; \R)$. By Lemma \ref{lem:poincare_duality_integration}, we conclude that for $q$-a.e.\ every $\sigma' \in c'$, we have
	\[
	\int_{\sigma'} \zeta = \int_M (-1)^{k(n-k)} \omega \wedge \zeta = \int_M \omega \wedge \norm{\omega}_{L^p}^{-p} \abs{\omega}^{p-2} \hodge \omega = 1.
	\]
	It follows that $\zeta$ is weakly $q$-admissible for $c'$ and that $\zeta$ has integral $1$ over $q$-a.e.\ element of $c'$.
	
	Suppose that $\zeta'$ is the unique minimizer of the $q$-modulus for the class $c'$. Then $\zeta'$ is $q$-harmonic and has weakly vanishing tangential part on $E$ by Proposition \ref{prop:minimizer_is_p-harm_compact}. Since $c'$ is not $q$-exceptional by Proposition \ref{prop:no_exc_hom_classes}, we consequently have that $[\zeta'](c') = 1$. Hence, there exists $\sigma' \in c'$ such that the following computation, which again uses Lemma \ref{lem:poincare_duality_integration} along with \eqref{eq:dual_norm_inverse}, is valid: 
	\[
	1 = \int_{\sigma'} \zeta' = \int_M (-1)^{k(n-k)} \omega \wedge \zeta'
	\leq \norm{\omega}_{L^p} \norm{\zeta'}_{L^q} = \norm{\zeta}_{L^q}^{-1} \norm{\zeta'}_{L^q}.
	\]
	
	Thus, $\norm{\zeta}_{L^q} \leq \norm{\zeta'}_{L^q}$. But since $\zeta \in \essadm_q^{n-k}(c')$ and since $\zeta'$ is the unique $L^q$-norm minimizer in $\essadm_q^{n-k}(c')$, we must have $\zeta = \zeta'$. In conclusion, \eqref{eq:dual_norm_inverse} implies that
	\[
	\modform_p(c)^{\frac{1}{p}} \modform_q(c')^{\frac{1}{q}}
	= \norm{\omega}_{L^p} \norm{\zeta'}_{L^q}
	= \norm{\omega}_{L^p} \norm{\zeta}_{L^q}
	= 1.
	\]
	
	It remains to show that $c'$ is unique. For this, suppose that $a \in H_{n-k}^{\lip}(M,E;\bR), c_{PD}^*(a) = 1$ and
	\begin{align*}
		\modform_p(c)^{1/p}\modform_q(a)^{1/q} = 1.
	\end{align*}
	Let $\xi$ denote the $L^q$-norm minimizing element in $\essadm_q^{n-k}(a)$, in which case our assumption on the moduli yields $\norm{\xi}_{L^q} = \norm{\omega}_{L^p}^{-1}$. If we denote $\omega' = (-1)^{k(n-k)} \norm{\xi}_{L^q}^{-q} \abs{\xi}^{q-2} \hodge \xi$, then $d\omega' = 0$ and $\omega'$ has vanishing tangential part on $D$ by Proposition \ref{prop:minimizer_is_p-harm_compact}.
	Additionally, by \eqref{eq:dual_norm_inverse}, the norms must satisfy $\norm{\omega'}_{L^p} = \norm{\xi}_{L^q}^{-1} = \norm{\omega}_{L^p}$. Moreover, by Proposition \ref{prop:poincare_dual_connection}, $[\omega']$ is the Poincar\'e dual of $a$. 
	
	By our assumption that $c_{PD}^*(a) = 1$ and since $c_{PD}^*$ corresponds to $[\zeta]$ in the de Rham map, we may again select a suitable $\sigma' \in a$ and use Lemma \ref{lem:poincare_duality_integration} and \eqref{eq:dual_norm_inverse} to obtain that
	\[
	1 = \int_{\sigma'} \zeta = \int_M \omega' \wedge \zeta
	\leq \norm{\omega'}_{L^p} \norm{\zeta}_{L^q}
	= \norm{\omega}_{L^p} \norm{\omega}_{L^p}^{-1} = 1. 
	\]
	It follows that
	\[
	\int_M \omega' \wedge \zeta = \norm{\omega'}_{L^p} \norm{\zeta}_{L^q}.
	\]
	By the equality condition of H\"older's inequality,
	\[
	\norm{\omega'}_{L^p}^{-1} \omega' = (-1)^{k(n-k)} \norm{\zeta}_{L^q}^{-q+1} \abs{\zeta}^{q-2} \hodge \zeta.
	\] 
	By using \eqref{eq:dual_norm_inverse} and $\omega = \norm{\zeta}_{L^q}^{-q} \abs{\zeta}^{q-2} \hodge \zeta$, it follows that $\norm{\omega'}_{L^p}^{-1} \omega' = (-1)^{k(n-k)} \norm{\omega}_{L^p}^{-1} \omega$ and since $\norm{\omega'}_{L^p} = \norm{\omega}_{L^p}$, we therefore have $\omega' = (-1)^{k(n-k)} \omega$. Since $a$ is the Poincar\'e dual of $[\omega']$ and $c'$ is the Poincar\'e dual of $[(-1)^{k(n-k)} \omega]$, we conclude that $a = c'$.

\end{proof}

As a consequence, we prove Theorem \ref{thm:mainthmZ}, a duality result for a special class of manifolds.  As in Theorem $\ref{thm:mainthmrelative}$, let $M$ be a compact, oriented Lipschitz Riemannian $n$-manifold that is a subset of $\cc R$, a Lipschitz Riemannian $n$-dimensional manifold.
\begin{named}{Theorem \ref{thm:mainthmZ}}
	Let $p,q \in (1,\infty)$ satisfy $p^{-1} + q^{-1} = 1$.  If $H_k^{\lip}(M,D;\bZ) \cong H_{n-k}^{\lip}(M,E;\bZ) \cong \bZ$ and $c \in  H_k^{\lip}(M,D;\bZ)$ and $c' \in H_{n-k}^{\lip}(M,E;\bZ)$ are the generators of the homology groups, then
    \begin{align*}
        \dMod_p(c)^{1/p}\dMod_q(c')^{1/q} = 1.
    \end{align*}
\end{named}
\begin{proof}
	Let $a \in H_{n-k}^{\lip}(M, E; \R)$ be the unique element provided by Theorem \ref{thm:mainthmrelative} such that $c^*_{PD}(a) = 1$ and $\modform_p(c)^{1/p} \modform_q(a)^{1/q} = 1$. Since $c$ and $c'$ are generators, we must have $c^*_{PD}(c') = \pm 1$. Moreover, $c^*_{PD}$ when understood as a map $H_{n-k}^{\lip}(M, E; \R) \to \R$ is a non-zero linear map between 1-dimensional spaces, so it is necessarily injective. It follows that the only classes in $H_{n-k}^{\lip}(M, E; \R)$ that $c^*_{PD}$ maps to $\pm 1$ are given by $\pm c'$ after the relevant natural identifications. Hence, we must have $a = \pm c'$. In particular, $a$ has integer coefficients due to $c'$ having integer coefficients and the desired duality property holds for $c$ and $c'$ due to it holding for $c$ and $a$.	
\end{proof}

Corollary \ref{cor:modineq} follows immediately from Equation \eqref{eq:modulus_comparison}.  It is a generalization of \cite[Theorem 1.1]{Lohvansuu_duality}.

\section{Appendix}
\subsection{Proofs of Statements in Section 4}
In the first part of the appendix, we sketch the proofs of Proposition \ref{prop:wolfe_int_properties} and Lemma \ref{lem:bilip_change_of_vars}, which we expect to be well known to any readers familiar with the theory of integration of flat forms presented in the books of Federer \cite{Federer_book} and Whitney \cite{Whitney_book}. Our exposition relies mostly on the results of Federer \cite{Federer_book}.

\begin{proof}[{Proof of Proposition \ref{prop:wolfe_int_properties}}]
	Let $\sigma \colon \Delta_k \to \Omega$ be a singular Lipschitz $k$-simplex. Note that we may extend $\sigma$ to a map $\sigma \colon \R^k \to \R^n$ with the same Lipschitz constant by the Kirszbraun extension. The formula
	\[
		[\sigma](\omega) = \int_\sigma \omega = \int_{\Delta_k} \sigma^* \omega
	\]
	for $\omega \in C^\infty_0(\wedge^k \R^n)$ defines a $k$-current $[\sigma]$ on $\R^n$. In particular, by the integral formula \cite[4.1.25]{Federer_book}, this current is given by the push-forward $[\sigma] = \sigma_* [\Delta_k]$, where $[\Delta_k]$ denotes the $k$-current on $\R^k$ of integration over $\Delta_k$ and $\sigma_*$ is defined for Lipschitz maps as in \cite[4.1.14]{Federer_book}. Notably, the current $\sigma_* [\Delta_k]$ flat by \cite[4.1.14]{Federer_book}, since $[\Delta_k]$ is flat due to being normal. 
	
	By the result given in \cite[4.1.18]{Federer_book}, we can find differential forms $\xi \in L^1(\wedge^{n-k} T^* \R^n)$ and $\zeta \in L^1(\wedge^{n-k-1} T^* \R^n)$ such that $\spt(\xi) \cap \spt(\zeta)$ is compactly contained in $\Omega$ and
	\begin{equation}\label{eq:Federer_flat_representation}
		[\sigma](\omega) = \int_{\R^n} (\omega \wedge \xi + d\omega \wedge \zeta)
	\end{equation}
	for all $\omega \in C^\infty_0(\wedge^k T^* \R^n)$. This allows for the definition of the canonical integration operator, as the right hand side of \eqref{eq:Federer_flat_representation} defines an integral for $\omega \in W^{d, \infty}(\wedge^k T^* \Omega)$ over $\sigma$. This definition is independent on $\xi$ and $\zeta$: Indeed, if $\xi'$ and $\zeta'$ are another such choice, then \eqref{eq:Federer_flat_representation} implies that $d(\zeta - \zeta') = \xi - \xi'$ weakly and hence approximating $\zeta - \zeta'$ in $W^{d,1}_0(\wedge^{n-k-1} T^* \Omega)$ with smooth compactly supported forms shows that the defined integrals are the same. The above definition then generalizes to $\sigma \in C^{\lip}_k(\Omega; \R)$ linearly.
	
	Property \eqref{enum:wolfe_convergence} immediately follows due to dominated convergence, as the absolute value of the integrand in \eqref{eq:Federer_flat_representation} is dominated by $(\sup_j \norm{\omega_j}_{L^\infty}) \abs{\xi} + (\sup_j \norm{d\omega_j}_{L^\infty}) \abs{\zeta} \in L^1(\Omega)$. Property \eqref{enum:wolfe_smoothforms} also immediately follows from the definition, since if $\omega$ is smooth, then we can find a $\omega' \in C^\infty_0(\wedge^k T^* \R^n)$ such that $\omega' = \omega$ in $\spt(\xi) \cap \spt(\zeta)$. Property \eqref{enum:wolfe_restriction} is also immediate, since if the image of $\sigma$ is in $\Omega \cap \Omega'$, we may choose $\spt(\xi) \cap \spt(\zeta) \subset \Omega \cap \Omega'$.
	
	For property \eqref{enum:wolfe_stokes}, we refer again to \cite[4.1.14]{Federer_book} where it is pointed out that $\sigma_* \partial = \partial \sigma_*$. This implies that $[\partial \sigma]$ is the distributional boundary of the current $[\sigma]$. Hence, we get the desired Stokes theorem for $\omega \in C^\infty_0(\wedge^{k} T^* \R^n)$. A version for $\omega \in C^\infty(\wedge^{k} T^* \Omega)$ again follows by considering a $\omega' \in C^\infty_0(\wedge^{k} T^* \Omega)$ that coincides with $\omega$ on $\spt(\xi) \cap \spt(\zeta)$ and the full version for $\omega \in W^{d, \infty}(\wedge^k T^* \Omega)$ follows by mollifying $\omega$ and applying property \eqref{enum:wolfe_convergence}.
	
	For property \eqref{enum:wolfe_borelforms}, we again approximate the $\omega \in W^{d, \infty}(\wedge^k T^* \Omega)$ smoothly by mollification in some domain $\Omega' \subset \Omega$. The mollified $\omega_j$ satisfy property \eqref{enum:wolfe_smoothforms}. We have convergence on the left side by property \eqref{enum:wolfe_convergence} and after moving to a subsequence of $\omega_j$, the Fuglede Lemma \ref{lem:fuglede_for_forms_eucl} yields convergence on the right hand side on $p$-a.e.\ $\sigma \in C^{\lip}_k(\Omega'; \R)$, for every $p \in [1, \infty)$. Since the image of every $\sigma \in C^{\lip}_k(\Omega; \R)$ is compactly contained in $\Omega$, we hence get property \eqref{enum:wolfe_borelforms} by using an increasing sequence of subdomains.
	
	The final property \eqref{enum:wolfe_continuity} follows from the fact that \cite[4.1.18]{Federer_book} allows us to select $\xi \in L^1(\wedge^{n-k} T^* \R^n)$ and $\zeta \in L^1(\wedge^{n-k-1} T^* \R^n)$ for $\sigma + \partial \tau$ such that $\norm{\xi}_{L^1} + \norm{\zeta}_{L^1} \leq \nu_{\sigma}(\Omega) + \nu_{\tau}(\Omega) + \eps$ for any $\eps > 0$. This fact is stated using the flat norm of $\sigma + \partial \tau$, but the flat norm of $\sigma + \partial \tau$ is bounded from above by $\nu_\sigma(\Omega) + \nu_\tau(\Omega)$. 
\end{proof}

\begin{proof}[{Proof of Lemma \ref{lem:bilip_change_of_vars}}]
	Let $L$ be the bilipschitz constant of $f$. We consider first the case $p = \infty$. For this, fix a Lipschitz $k$-simplex $\sigma \colon \Delta_k \to \Omega$. We again mollify $f$ in some region $\Omega' \subset \Omega$ containing $\sigma(\Delta_k)$, achieving smooth maps $f_i \colon \Omega' \to \R^n$ such that $\abs{f_i - f} \to 0$ uniformly, $\abs{Df_i} \leq L$ a.e.\ and $Df_i \to Df$ pointwise a.e. We again refer to \cite[4.1.14 and 4.1.25]{Federer_book}, notably to the fact that $f_* \sigma_* = (f \circ \sigma)_*$ under Federer's definition of the push-forward, to obtain that $[f_* \sigma] = f_* [\sigma]$. Notably, for $\omega \in C^\infty_0(\wedge^k T^*\R^n)$, this definition immediately gives that $(f_* [\sigma])(\omega) = \lim_{i \to \infty} ((f_i)_* [\sigma])(\omega)$. 
	
	Let $\omega \in C^\infty_0(\wedge^k T^* \R^n)$, in which case $f^* \omega \in W^{d, \infty}(\wedge^k T^* \Omega)$ with $f^* d\omega = d f^* \omega$ by Lemma \ref{lem:bilip_chain_rule}. Moreover, the absolute continuity of $\omega$ yields that $f_i^* \omega \to f^* \omega$ in the sense of property \eqref{enum:wolfe_convergence}. Using these observations and a bilipschitz change of variables yields
	\begin{align*}
		\int_{f_* \sigma} \omega &= (f_* [\sigma])(\omega) \lim_{i \to \infty} (f_i)_*[\sigma](\omega) \\
		&= \lim_{i \to \infty} \int_\sigma f_i^* \omega\\
		&= \int_\sigma f^* \omega\\
		&= \int_\Omega (f^*\omega \wedge \xi + f^*d\omega \wedge \zeta) \\
		&= \int_{f(\Omega)} (\omega \wedge (f^{-1})^*\xi + d\omega \wedge (f^{-1})^*\zeta),
	\end{align*}
	where $\xi$ and $\zeta$ are as in \eqref{eq:Federer_flat_representation}. In particular, this shows that $\xi' = (f^{-1})^* \xi$ and $\zeta' = (f^{-1})^* \zeta$ yield a valid choice of $L^1$-forms to represent integration over $f_* \sigma$.
	
	For any $\omega \in W^{d,p}(\wedge^k T^* \Upsilon)$, we use Lemma \ref{lem:bilip_chain_rule} and a bilipschitz change of variables to compute that
	\begin{align*}
		\int_{f_* \sigma} \omega &= \int_{f(\Omega)} (\omega \wedge (f^{-1})^*\xi + d\omega \wedge (f^{-1})^*\zeta)\\
		&= \int_{\Omega} (f^*\omega \wedge \xi + f^*d\omega \wedge \zeta)\\
		&= \int_{\Omega} (f^*\omega \wedge \xi + df^*\omega \wedge \zeta)\\
		&= \int_{\sigma} f^* \omega.
	\end{align*}
	This completes the proof of the case $p = \infty$. The case $p < \infty$ then follows by approximating $\omega \in W^{d,p}(\wedge^k T^*\Omega)$ smoothly by $\omega_j \in C^\infty(\wedge^k T^* \Omega)$, using the case $p = \infty$  on $\omega_j$ in a subdomain $\Omega' \subset \Omega$ and by using Lemma \ref{lem:fuglede_for_forms_eucl}.
\end{proof}

\subsection{Proof of the Sobolev de Rham theorem in Section 5}

In this second part of the appendix, we sketch the main points of the proof of Proposition \ref{prop:de_Rham_thm_in_Lip_setting}. The reader is assumed to be familiar with the standard terminology of sheaf theory and sheaf cohomology.

\begin{proof}[Sketch of proof of Proposition \ref{prop:de_Rham_thm_in_Lip_setting}]
	By Lemma \ref{lem:canonical_int_vanishing_boundary_values}, the integration map over Lipschitz chains yields a linear presheaf morphism from the sheaves $\cW_{\infty, D}^k \colon U \mapsto W^{d, \infty}_{T(D), \loc}(\wedge^k T^* U)$ to the presheaves $\cL_{D}^k = \{ f \in \hom (C_k^{\lip}(U; \Z), \R) : f\vert_{C_k^{\lip}(U \cap D; \Z)} \equiv 0 \}$. Moreover, by the Stokes theorem of Proposition \ref{prop:Lipschitz_stokes}, this map commutes with the differentials of the corresponding complexes. The sheaves $\tilde{\cL}_D^k$ generated by $\cL_{D}^k$ are flabby, as Lipschitz cochains can be trivially zero-extended to larger domains. Moreover, like $\cW_{\infty, D}^k$, the sheaves $\tilde{\cL}_D^k$ also form a resolution of $\cR_D$, since Lipschitz cochains on balls/half-balls of $\R^n$ satisfy a Poincar\'e lemma. Hence, relying on \cite[II.4.2]{Bredon_book}, the integration map yields an isomorphism from $H^\infty(M, D; \R)$ to the cohomology of $\tilde{\cL}_{D}^k(M)$. Moreover, the cohomologies of $\tilde{\cL}_{D}^k(M)$ and $\cL_{D}^k(M)$ are naturally isomorphic by an argument based on subdivision of simplices: an exposition of the argument in the standard case of continuous singular cochains can be found in \cite[Section 5.32]{Warner_book}, requiring essentially no changes for our case of Lipschitz singular cochains vanishing on $D$.
	
	For the product structure, we note that $\wedge$ induces a product on $W^{d,\infty}_{\loc}(\wedge^* T^*U)$ which satisfies $f \wedge g = fg$ for functions $f, g \in W^{d, \infty}_{\loc}(\wedge^0 T^* U)$ and $d(\omega \wedge \omega') = d\omega \wedge \omega' + (-1)^k \omega \wedge d\omega'$ for $\omega \in W^{d, \infty}_{\loc}(\wedge^k T^* U)$, $\omega' \in W^{d, \infty}_{\loc}(\wedge^l T^* U)$. Since the graded differential sheaf $\cW_{\infty}^* : U \mapsto W^{d, \infty}_\loc(\wedge^* T^* U)$ forms a fine torsionless resolution of the constant sheaf on $M$, we have that $\cW_\infty^* \otimes \cA$ is a fine resolution for any sheaf $\cA$ of vector spaces (see e.g.\ \cite[Corollary II.9.18]{Bredon_book}). By the discussion in \cite[II.7.5]{Bredon_book}, it follows that $\wedge$ computes the cup product of sheaf cohomology. In particular, the wedge product $(\cW_{\infty}^* \otimes \cR_D)(M) \otimes (\cW_{\infty}^* \otimes \cR_{E})(M) \to (\cW_{\infty}^* \otimes \cR_{\partial M})(M)$ computes the cup product  $H^*(M; \cR_D) \otimes H^*(M; \cR_{E}) \to  H^*(M; \cR_{\partial M})$ in sheaf cohomology, where we use the identification $\cR_{D} \otimes \cR_{E} \cong \cR_{\partial M}$. 
	
	The space $(\cW_{\infty}^* \otimes \cR_D)(U)$ is isomorphic to $\{\omega \in W^{d,\infty}_{\loc}(\wedge^* T^* U) : \spt \omega \cap D = \emptyset\}$. Hence, $\cW_{\infty}^* \otimes \cR_D$ is naturally isomorphic to a sub-resolution of $\cW_{\infty, D}^*$. The same is true for $E$ and $\partial M$ and the embeddings respect wedge products. Since the wedge product $W^{d,\infty}_{T(D), \loc}(\wedge^* T^*M) \times W^{d,\infty}_{T(E), \loc}(\wedge^* T^*M) \to W^{d,\infty}_{T(\partial M), \loc}(\wedge^* T^*M)$ is well defined by Lemma \ref{lem:mixedboundarydata}, we may use \cite[II.4.2]{Bredon_book} to conclude that this wedge product computes the cup product $H^*(M; \cR_D) \times H^*(M; \cR_{E}) \to  H^*(M; \cR_{\partial M})$. 
	
	Similar ideas show that the cup product of singular Lipschitz cohomology computes the cup product of sheaf cohomology. Indeed, recall that the cup product of relative Lipschitz cohomology is defined by a cup product map $\cL_{D}^* \otimes \cL_{E}^* \to \cL_{D + E}^*$, where $\cL_{D + E}^*$ denotes cochains vanishing on every element of $C_*^{\lip}(D; \Z) + C_*^{\lip}(E; \Z)$. By a similar proof as for $\cL_{D}^*$, the sheaves $\tilde{\cL}_{D + E}^*$ generated by $\cL_{D + E}^*$ form a resolution of $\cR_{\partial M}$ and the cohomologies of $\tilde{\cL}_{D + E}^*(M)$ and $\cL_{D + E}^*(M)$ are naturally isomorphic. Since $\tilde{\cL}_{\emptyset}^*$ form a flabby resolution of the constant sheaf of $M$, the same argument as above leads to the conclusion that the cup product $(\tilde{\cL}_{\emptyset}^* \otimes \cR_D)(M) \otimes (\tilde{\cL}_{\emptyset}^* \otimes \cR_{E})(M) \to (\tilde{\cL}_{\emptyset}^* \otimes \cR_{\partial M})(M)$ computes the cup product of sheaf cohomology. We then observe similarly as before that $\tilde{\cL}_{\emptyset}^* \otimes \cR_D$, $\tilde{\cL}_{\emptyset}^* \otimes \cR_E$ and $\tilde{\cL}_{\emptyset}^* \otimes \cR_{\partial M}$ can be identified with sub-resolutions of $\tilde{\cL}_{D}^*$, $\tilde{\cL}_{E}^*$ and $\tilde{\cL}_{D+E}^*$, respectively. Hence, the cup products $\cL_{D}^*(M) \otimes \cL_{E}^*(M) \to \cL_{D + E}^*(M)$ and $(\tilde{\cL}_{\emptyset}^* \otimes \cR_D)(M) \otimes (\tilde{\cL}_{\emptyset}^* \otimes \cR_{E})(M) \to (\tilde{\cL}_{\emptyset}^* \otimes \cR_{\partial M})(M)$ yield the same cup product in map cohomology, completing the proof.
\end{proof}


\bibliographystyle{abbrv}
\bibliography{Surface_Modulus}

\end{document}